\def\red{\color{red}}
\def\rr{{\mathbb R}}
\def\rn{{\mathbb{R}^n}}
\def\zz{{\mathbb Z}}
\def\nn{{\mathbb N}}
\def\cd{{\mathcal D}}
\def\cf{{\mathcal F}}
\def\cp{{\mathcal P}}
\def\cs{{\mathcal S}}
\def\cx{{\mathcal X}}
\def\cy{{\mathcal Y}}
\def\fz{\infty }
\def\az{\alpha}
\def\dz{\delta}
\def\lz{\lambda}
\def\tz{\theta}
\def\lf{\left}
\def\r{\right}
\def\la{\langle}
\def\ra{\rangle}
\def\hs{\hspace{0.35cm}}
\def\ls{\lesssim}
\def\noz{\nonumber}
\def\wz{\widetilde}
\def\gfz{\genfrac{}{}{0pt}{}}
\def\dsum{\displaystyle\sum}
\def\dint{\displaystyle\int}
\def\dfrac{\displaystyle\frac}
\def\dsup{\displaystyle\sup}
\def\dlim{\displaystyle\lim}
\def\loc{{\mathop\mathrm{\,loc\,}}}
\def\supp{\mathop\mathrm{\,supp\,}}
\def\BMO{\mathop\mathrm{\,BMO\,}}
\def\bmo{\mathop\mathrm{\,bmo\,}}
\def\Xint#1{\mathchoice
{\XXint\displaystyle\textstyle{#1}}%
{\XXint\textstyle\scriptstyle{#1}}%
{\XXint\scriptstyle\scriptscriptstyle{#1}}%
{\XXint\scriptscriptstyle\scriptscriptstyle{#1}}%
\!\int}
\def\XXint#1#2#3{{\setbox0=\hbox{$#1{#2#3}{\int}$ }
\vcenter{\hbox{$#2#3$ }}\kern-.6\wd0}}
\def\dashint{\Xint-}
 \def\la{{\langle}}
 \def\ra{{\rangle}}
\def\({\left(}
\def \){ \right)}
\def\lz{{\lambda}}
 \def\BB{{\mathbb B}}
\newtheorem{theorem}{Theorem}[section]
\newtheorem{lemma}[theorem]{Lemma}
\newtheorem{corollary}[theorem]{Corollary}
\newtheorem{proposition}[theorem]{Proposition}
\theoremstyle{definition}
\newtheorem{remark}[theorem]{Remark}
\newtheorem{definition}[theorem]{Definition}
\renewcommand{\appendix}{\par
   \setcounter{section}{0}%
   \setcounter{subsection}{0}%
   \setcounter{subsubsection}{0}%
   \gdef\thesection{\@Alph\c@section}%
   \gdef\thesubsection{\@Alph\c@section.\@arabic\c@subsection}%
   \gdef\theHsection{\@Alph\c@section.}%
   \gdef\theHsubsection{\@Alph\c@section.\@arabic\c@subsection}%
   \csname appendixmore\endcsname
 }
\numberwithin{equation}{section}
\begin{document}

\arraycolsep=1pt

\title{\bf\Large Bilinear Decomposition and Divergence-Curl Estimates on Products
Related to Local Hardy Spaces and Their Dual Spaces
\footnotetext{\hspace{-0.35cm} 2020 {\it
Mathematics Subject Classification}. Primary
42B30; Secondary 42B35, 42B15, 46E30, 42C40.
\endgraf {\it Key words and phrases.} (local) Hardy space, dual space,
Orlicz space, bilinear decomposition, divergence-curl estimate, renormalization of wavelets, atom.
\endgraf This project is supported
by the National Natural Science Foundation of China (Grant Nos.
11761131002, 11971058, 11671185 and 11871100).}}
\author{Yangyang Zhang, Dachun Yang\,\footnote{Corresponding
author, E-mail: \texttt{dcyang@bnu.edu.cn}/{\red April 28, 2020}/Final version.}
\  and Wen Yuan}
\date{}
\maketitle

\vspace{-0.7cm}

\begin{center}
\begin{minipage}{13cm}
{\small {\bf Abstract}\quad
Let $p\in(0,1)$, $\alpha:=1/p-1$ and, for any $\tau\in [0,\infty)$,
$\Phi_{p}(\tau):=\tau/(1+\tau^{1-p})$.
Let $H^p(\mathbb R^n)$, $h^p(\mathbb R^n)$ and $\Lambda_{n\alpha}(\mathbb{R}^n)$ be,
respectively, the Hardy space,
the local Hardy space and the inhomogeneous Lipschitz space
on $\mathbb{R}^n$. In this article, applying the
inhomogeneous renormalization of wavelets,
the authors establish a bilinear decomposition
for multiplications of elements in
$h^p(\mathbb R^n)$ [or $H^p(\mathbb R^n)$] and $\Lambda_{n\alpha}(\mathbb{R}^n)$,
and prove that these bilinear decompositions are sharp in some sense.
As applications, the authors also obtain some estimates of the product of elements in
the local Hardy space $h^p(\mathbb R^n)$ with $p\in(0,1]$ and its
dual space, respectively, with zero $\lfloor n\alpha\rfloor$-inhomogeneous curl and
zero divergence, where $\lfloor n\alpha\rfloor$ denotes the largest integer
not greater than $n\alpha$. Moreover, the authors find new structures
of $h^{\Phi_p}(\mathbb R^n)$ and $H^{\Phi_p}(\mathbb R^n)$ by showing that
$h^{\Phi_p}(\mathbb R^n)=h^1(\mathbb R^n)+h^p(\mathbb R^n)$ and
$H^{\Phi_p}(\mathbb R^n)=H^1(\mathbb R^n)+H^p(\mathbb R^n)$
with equivalent quasi-norms,
and also prove that the dual spaces of both $h^{\Phi_p}(\mathbb R^n)$ and $h^p(\mathbb R^n)$
coincide. These results give a complete picture on the
multiplication between the local Hardy space and
its dual space.}
\end{minipage}
\end{center}

\vspace{0.16cm}

\tableofcontents

\vspace{0.2cm}

\section{Introduction}\label{s1}

Motivated by developments in the geometric
function theory and the nonlinear elasticity (see, for instance, \cite{AIKM00,Ba77,Mu88,Mu90}),
Bonami et al. \cite{BIJZ07} initiated the study
of the bilinear decomposition of the product of Hardy spaces and their
dual spaces, which plays key roles
in improving the estimates of many nonlinear quantities
such as div-curl products, weak Jacobians (see, for instance,  \cite{BGK12,BFG,CLMS})
and commutators (see, for instance,  \cite{Ky13,LCFY}).
These works
\cite{BGK12,BFG,CLMS,Ky13,LCFY} inspire many new ideas in various research areas of mathematics
such as the compensated compactness theory in the
nonlinear partial differential equations and the study of the  existence and the regularity
for solutions to partial differential equations
where the uniform ellipticity is lost; see \cite{BIJZ07,He,IS,Le02,Mur78,Ta79} and their references.

The first significant result in this direction was made
by Bonami et al. \cite{BIJZ07}.
It was proved in \cite{BIJZ07} the following \emph{linear} decomposition:
\begin{align}\label{hrn}
H^1(\rn)\times\BMO(\rn)\subset L^1(\rn)+H_w^\Phi(\rn),
\end{align}
where $H_w^\Phi(\rn)$ denotes the weighted Orlicz Hardy
space associated to the weight function
$w(x):= 1/{\log(e+|x|)}$ for any $x\in\rn$
and to the Orlicz function
\begin{align}\label{111}
\Phi(\tau):= \tau/ {\log(e + \tau)},\ \ \forall\,\tau\in [0,\fz).
\end{align}
Precisely, for any given $f\in H^1(\rn)$, there
exist two bounded linear operators
$S_f:\, \BMO(\rn)\to L^1(\rn)$
and
$T_f:\, \BMO(\rn)\to H_w^\Phi(\rn)$
such that, for any $g\in \BMO(\rn)$,
$f\times g=S_f g+T_f g$.
This result was essentially improved by Bonami et al. in \cite{BGK12}, where they found two
bounded \emph{bilinear} operators $S$ and $T$ such that
the aforementioned decomposition \eqref{hrn} still holds true.
Precisely, via the wavelet multiresolution analysis, Bonami et al. \cite{BGK12}
proved the following
\emph{bilinear} decomposition:
\begin{align}\label{11z}
H^1(\rn)\times\BMO(\rn)\subset L^1(\rn)+H^{\rm log}(\rn),
\end{align}
where $H^{\rm log}(\rn)$ denotes
the Musielak--Orlicz Hardy space related to the Musielak--Orlicz function
\begin{align}\label{222}
\theta(x,\tau):=\frac{\tau}{\log(e+|x|)+\log(e+\tau)}
,\ \ \forall\,x\in\rn,\ \forall\,\tau\in [0,\fz)
\end{align}
(see \cite{Ky14}), which is smaller than  $H_w^\Phi(\rn)$ in \eqref{hrn}.
By proving that the dual space of $ H^{\rm log}(\rn)$
is the generalized ${\rm{BMO}}$ space ${\rm{BMO}^{\rm log}(\rn)}$
introduced by Nakai and Yabuta  \cite{NY85}, which also characterizes the set of
multipliers of $\BMO(\rn)$ (see also the recent survey \cite{n17} on this subject of Nakai),
Bonami et al. in \cite{BGK12} deduced that
$H^{\rm log}(\rn)$ in \eqref{11z} is sharp in some sense.
Moreover, Bonami et al. in \cite{bfgk}
proved that every atom of $H^{\rm log}(\rn)$ can be written
as a finite linear combination of product distributions in the
space $H^1(\rn)\times\BMO(\rn)$ and, in dimension one,
Bonami and Ky in \cite{BK} proved that $H^{\rm log}(\rr)$ is indeed the smallest space satisfying \eqref{11z}.
Recently, in \cite{bckly,blyy}, a bilinear decomposition theorem for multiplications of elements in
$H^p(\mathbb R^n)$
and its dual space $\mathfrak{C}_{\alpha}(\mathbb{R}^n)$
was established when $p\in(0,1)$ and $\alpha:=1/p-1$,
and the sharpness of this bilinear decomposition was also obtained therein.
The div-curl lemma is a prominent
tool in the study of nonlinear partial differential equations via the method
of compensated compactness (see, for instance, \cite{Mur78,Ta79}),
which has been investigated in
\cite{bckly,BFG,BGK12,CLMS}. When $p\in(1,\infty)$, with the Hardy space $H^1({\mathbb {R}^n})$
as the target space, some
estimates of the div-curl product of functions in
$L^p({\mathbb {R}^n})$ and $L^{p'}({\mathbb {R}^n})$ were established in \cite{CLMS},
where $1/p+1/p'=1$.
Using the local Hardy space $h^1({\mathbb {R}^n})$ as the target space,
Dafni in \cite{Da95} obtained
some nonhomogeneous estimates of div-curl products of functions in
$L^p({\mathbb {R}^n})$ and $L^{p'}({\mathbb {R}^n})$ with $p\in(1,\infty)$,
which was further developed, and applied to the divergence-curl decomposition
of the corresponding Hardy space on $\rn$ or its domains in Chang et al. \cite{cdy16,cdy10,cdy09,cds05}.
When $p\in(0,1]$,
as an application of the obtained bilinear decomposition, in \cite{bckly,BFG,BGK12}, some
estimates of the div-curl products of elements in the
Hardy space $H^{p}(\mathbb R^n)$ and its dual space were also established.
As for the local Hardy space, Cao et al. \cite{cky1} established an estimate of
div-curl products of functions in the local Hardy space $h^1(\rn)$ and $\bmo(\rn)$,
and no other estimates of div-curl products of elements in
the local Hardy space $h^p(\rn)$ with $p\in (0,1)$ and its dual space are known so far.

On the other hand, for the local Hardy space,
Bonami et al. \cite{bf} established some \emph{linear} decomposition
of the product of the local Hardy space and its dual space. Let $p\in(0,1)$ and $\alpha:=1/p-1$.
Precisely, it was proved in \cite{bf}
the following \emph{linear} decomposition:
$$
h^p(\rn)\times\Lambda_{n\az}(\rn)\subset L^1(\rn)+h^{p}(\rn),
$$
where $h^p(\mathbb R^n)$ denotes the local Hardy space introduced by Goldberg
\cite{g} and
$\Lambda_{n\alpha}(\mathbb{R}^n)$
the inhomogeneous Lipschitz space.
Moreover, Bonami et al. \cite{bf} introduced the local Hardy-type space $h_*^\Phi(\rn)$, where
$\Phi$ is as in \eqref{111},
and, using $h_*^\Phi(\rn)$ as the target space, Bonami et al. proved the
following \emph{linear} decomposition:
$$
h^1(\rn)\times\bmo(\rn)\subset L^1(\rn)+h_\ast^{\Phi}(\rn).
$$
Recently,
Cao et al. \cite{cky1} obtained the \emph{bilinear} decomposition of product distributions in the
local Hardy space $h^p(\rn)$ and its dual space for $p\in(\frac{n}{n+1},1]$.
Precisely, Cao et al. \cite{cky1} proved that
\begin{align}\label{cao1}
h^1(\rn)\times\bmo(\rn)\subset L^1(\rn)+h_\ast^{\Phi}(\rn),
\end{align}
\begin{align}\label{cao2}
h^1(\rn)\times\bmo(\rn)\subset L^1(\rn)+h^{\log}(\rn)
\end{align}
and, for any $p\in(\frac{n}{n+1},1)$ and $\alpha:=1/p-1$,
\begin{align}\label{cao}
h^p(\rn)\times\Lambda_{n\az}(\rn)\subset L^1(\rn)+h^{p}(\rn),
\end{align}
where $h^{\rm log}(\rn)$ (see \cite{yy2})
denotes the local Hardy space of Musielak--Orlicz
type associated to the Musielak--Orlicz function
$\tz$ as in \eqref{222}.
Cao et al. \cite{cky1} also established an estimate in $h_\ast^{\Phi}(\mathbb R^n)$ of div-curl products
of functions in $h^1(\rn)$ and $\bmo(\mathbb{R}^n)$, however, there exists a gap
in the proof of this div-curl estimate in \cite{cky1} because, in this proof, Cao et al. used
the boundedness on $h^1(\rn)$ of the Riesz transforms, which is not true.
The sharpness of the bilinear decompositions
\eqref{cao1} and \eqref{cao2} was recently obtained in \cite{zyy} by proving that \eqref{cao1} is sharp,
while \eqref{cao2} is not sharp. Moreover,
there exists a gap in the proof of the bilinear decomposition \eqref{cao} in \cite[Theorem 1.1(i)]{cky1}
and hence \eqref{cao} is questionable.
Thus, it is a quite natural question to find
a suitable local Hardy-type space which can
give a sharp bilinear decomposition of the product of the
local Hardy space $h^p(\rn)$ and its dual space
for any given $p\in(0,1)$, and to establish
some estimates of the div-curl product of elements in the local
Hardy space $h^{p}(\mathbb R^n)$ with $p\in(0,1]$ and its dual space.
Since the Riesz transforms are not bounded on $h^p(\rn)$ with $p\in(0,1]$,
the approach used in \cite{BGK12,bckly} to obtain these estimates of
div-curl products of elements in $H^p(\rn)$ and its dual space is no longer feasible for
$h^p(\rn)$ and its dual space. In this article, a main motivation for us to establish bilinear decompositions
of product spaces $h^p(\rn)\times \Lambda_{n\az}(\rn)$,
$H^p(\rn)\times \Lambda_{n\az}(\rn)$ and $H^1(\rn)\times \bmo(\rn)$
is to overcome this obstacle.

Let $p\in(0,1)$ and $\alpha:=1/p-1$.
In this article, we consider the local
Orlicz Hardy space $h^{\Phi_p}(\rn)$ and the
Orlicz Hardy space $H^{\Phi_p}(\rn)$ associated with the
Orlicz function
\begin{align}\label{333}
\Phi_{p}(\tau):=
\tau/(1+\tau^{1-p}),\ \ \forall\,\tau\in[0,\infty).
\end{align}
One of the main targets of this article is to establish the bilinear decompositions
$h^p(\rn)\times \Lambda_{n\az}(\rn)\subset L^1(\rn)+
h^{\Phi_p}(\rn)$ (see Theorem \ref{mainthm1} below),
$H^p(\rn)\times \Lambda_{n\az}(\rn)\subset L^1(\rn)+
H^{\Phi_p}(\rn)$ (see Theorem \ref{mainthm4} below)
and
$H^1(\rn)\times \bmo(\rn)\subset L^1(\rn)+
H_\ast^{\Phi}(\rn)$ (see Theorem \ref{mainthm7} below)
and, moreover, to prove that these bilinear decompositions are
sharp in some sense [see Remarks \ref{rem-add}, \ref{432}(iii) and \ref{442}(iii) below].
Borrowing some ideas from \cite{bckly,BGK12}, the main strategy for us to establish these
bilinear decompositions
is based on a technique of the renormalization of products of functions
(or distributions) via wavelets introduced by Coifman et al.  \cite{CDM95,Do95}, which
enables us to write the considered product into a sum of four bilinear operators.
However, since the approach to define these four bilinear operators
used in  \cite{bckly} is no longer feasible
for the local Hardy space $h^p(\rn)$, we employ a modified
and hence different method to introduce four different bilinear
operators to overcome this obstacle [see \eqref{eq-pi1} through \eqref{eq-pi4} below].
Then the problem can be reduced to the study of each bilinear operator $\Pi_i(f,g)$ for any $i\in\{1,\,2,\,3,\,4\}$.
The main difficulty is to deal with $\Pi_2$.
We do this by borrowing some ideas from \cite{FYL} and \cite{bckly},
and, therefore, reducing the estimation of the bilinear operator $\Pi_2(a,\,g)$ to that of $a P_B^sg$,
where $a$ denotes a (local) $(p,2,d)$-atom with integer
$d\geq2\lfloor n\az\rfloor$, and $P_B^sg$ the minimizing polynomial
of $g$ on the ball $B$ with degree $\le s$.
Here and thereafter, the \emph{symbol $\lfloor \beta\rfloor$}
for any $\beta\in\rr$ denotes the largest integer not greater than $\beta$.
As $P_B^sg$ is a polynomial, the term $a P_B^sg$ can still enjoy the high order vanishing
moment condition by requiring the wavelets
$\psi_I^\lambda$ to have the
sufficiently high order vanishing moment condition (see Proposition \ref{ap}, \ref{apW} and \ref{apW11} below).

Recall that, in \cite{bckly}, the following
\emph{bilinear} decomposition was proved:
for any given $p\in(0,1)$,
$$H^p(\rn)\times \mathfrak C_{\az}(\rn)\subset L^1(\rn)+H^{\phi_p}(\rn),$$
where $\mathfrak{C}_{\alpha}(\mathbb{R}^n)$ denotes
the Campanato space on $\mathbb{R}^n$ and $H^{\phi_p}(\rn)$ the Musielak--Orlicz Hardy
space associated with
the Musielak--Orlicz function $\phi_p$ defined
by setting, for any $x\in \mathbb{R}^n$ and $t\in [0,\,\infty)$,
\begin{align}\label{356}
\phi_{p}(x,\,t):=
\begin{cases}
\dfrac{t}{1+[t(1+|x|)^n]^{1-p}}& \qquad \textup{when}\ n(1/p-1)\notin \mathbb N,\vspace{0.2cm}\\
\dfrac{t}{1+[t(1+|x|)^n]^{1-p}[\log(e+|x|)]^p}& \qquad \textup{when}\ n(1/p-1)\in \mathbb N.
\end{cases}
\end{align}
Compared with
the results in \cite{bckly,cky1}, the Orlicz functions $\Phi_p$ and $\Phi$
in the target spaces $h^{\Phi_p}(\rn)$, $H^{\Phi_p}(\rn)$ and $H_\ast^{\Phi}(\rn)$
with $p\in(0,1)$ of the bilinear decompositions obtained
in this article look much simpler than the corresponding $\phi_p$ appearing in $H^{\phi_p}(\rn)$ of \cite{bckly},
or $\theta$ appearing in $H^{\rm log}(\rn)$ of \cite{cky1}. This is
because the function in $\Lambda_{n\az}(\rn)$ is \emph{bounded} but
the function in $\mathfrak{C}_{\alpha}(\mathbb{R}^n)$
and $\bmo(\rn)$ is not necessarily bounded. This reveals
the essential difference existing between the homogeneous
(or called the global) Hardy space and its dual space,
as well as between the inhomogeneous
(or called the local) Hardy space and its dual space.

Another contribution of this article is to characterize
the pointwise multiplier on the inhomogeneous Lipschitz space $\Lambda_{n\alpha}(\mathbb{R}^n)$,
with $\az\in(0,\infty)$,
by means of the dual space of $h^{\Phi_p}(\rn)$ (see Theorem \ref{thm-main2} below)
and the dual space of $H^{\Phi_p}(\rn)$ [see Theorem \ref{mainthm4}(ii) below].
Recall that the pointwise multiplier on $\BMO(\rn)$ was characterized
by Nakai and Yabuta \cite{NY85} (see also the recent survey \cite{n17} of Nakai).
We mention here that the class of pointwise multiplies of
$\Lambda_{n\alpha}(\mathbb{R}^n)$ for any $\az\in(0,\infty)$
was already characterized (see, for instance, \cite[(2.115)]{gg}),
which, however, is not connected with the dual space of $h^{\Phi_p}(\rn)$
or the dual space of $H^{\Phi_p}(\rn)$.
The obtained  pointwise multiplier conclusions on $\Lambda_{n\alpha}(\mathbb{R}^n)$ in this article imply that
the corresponding bilinear decomposition obtained in this article
is sharp in some sense [see Remarks \ref{rem-add} and \ref{432}(iii) below].
As an application of the target space $h^{\Phi_p}(\rn)$ obtained in the bilinear decomposition,
applying the bilinear decomposition
$H^p(\rn)\times \Lambda_{n\az}(\rn)\subset L^1(\rn)+
H^{\Phi_p}(\rn)$ (see Theorem \ref{mainthm4} below), we
obtain, in Theorem \ref{div} below, some estimates of products
of elements in the local Hardy space $h^p(\mathbb R^n)$ with $p\in(0,1)$
and its dual space, respectively, with zero $\lfloor n\alpha\rfloor$-inhomogeneous curl
(see Definition \ref{a2.15} below) and
zero divergence. Observe that the notion of the
zero $\lfloor n\alpha\rfloor$-inhomogeneous curl,
which is introduced in this article, perfectly
matches the locality (also called the inhomogeneity)
of both the local Hardy space $h^p(\mathbb R^n)$
and its dual space $\Lambda_{n\alpha}(\mathbb{R}^n)$.
Using the atomic characterization of the local Hardy space $h^1(\rn)$ and
the bilinear decomposition $H^1(\rn)\times \bmo(\rn)\subset L^1(\rn)+
H_\ast^{\Phi}(\rn)$ (see Theorem \ref{mainthm7} below),
we seal the gap appeared in \cite{cky1} of the estimate in $h_\ast^{\Phi}(\mathbb R^n)$
of div-curl products of functions in $h^1(\rn)$ and $\bmo(\mathbb{R}^n)$, and obtain the same estimate
under a weaker assumption on the curl (see Theorem \ref{sssa} below).
In addition, we find new structures of $h^{\Phi_p}(\mathbb R^n)$ and $H^{\Phi_p}(\mathbb R^n)$ 
by showing that, for any $p\in(0,1)$,
$h^{\Phi_p}(\rn)=h^1(\rn)+h^p(\rn)$ and $H^{\Phi_p}(\rn)=H^1(\rn)+H^p(\rn)$ with equivalent quasi-norms
(see Theorems \ref{FINN} and \ref{FIN2} below).
We also prove that the dual space of $h^{\Phi_p}(\mathbb R^n)$ coincides with the
dual space of $h^p(\mathbb R^n)$ (see Corollary \ref{11p} below).
The relationships $h^{\Phi_p}(\mathbb R^n)\subsetneqq h^{\phi_p}(\mathbb R^n)$
is also clarified (see Theorem \ref{cheng} below). These results,
together with the results in \cite{cky1,zyy}, give a complete story on the bilinear
decomposition of the multiplication between the local Hardy space and its dual space.

To be precise, this article is organized as follows.

In Section \ref{s2}, we first recall some notions concerning
the local Hardy space $h^p(\rn)$, the local Orlicz Hardy space
$h^{\Phi_p}(\rn)$ and their dual spaces with $p\in(0,1)$, where $\Phi_p$
is as in \eqref{333}. Then we prove that $aP_B^sg$, with $a$ being a
local $(p,2,d)$-atom, $s:=\lfloor n(1/p-1)\rfloor$ and
integer $d\geq2s$, is an element of $h^{\Phi_p}(\rn)$ (see Proposition \ref{ap} below),
which plays a key role in the proof of the bilinear decomposition of
products of elements in the local Hardy space and its dual space (see Theorem \ref{mainthm1} below).

In Section \ref{s3}, we characterize
the pointwise multipliers of $\Lambda_{n\alpha}(\mathbb{R}^n)$
for any $\az\in(0,\infty)$ (see Theorem \ref{thm-main2} below).
As a consequence of Theorem \ref{thm-main2}, we show that elements in $\cs(\rn)$
belong to the class of pointwise multipliers of $\Lambda_{n\alpha}(\mathbb{R}^n)$,
which guarantees that the product space $h^p(\rn)\times \Lambda_{n\alpha}(\mathbb{R}^n)$ is well defined.

In Section \ref{s4},
we first recall some basic
notions of the multiresolution analysis and
the renormalization of the products in $L^2(\rn)\times L^2(\rn)$
from \cite{CDM95,Do95}.
Based on the boundedness results
of the aforementioned four bilinear operators $\{\Pi_i\}_{i=1}^4$ [see \eqref{eq-pi1} through
\eqref{eq-pi4} below], we establish a bilinear decomposition for multiplications of elements in
$h^p(\mathbb R^n)$ and its dual space $\Lambda_{n\alpha}(\mathbb{R}^n)$
(see Theorem \ref{mainthm1} below).
We also establish the bilinear decompositions of the
product space $H^p(\rn)\times \Lambda_{n\alpha}(\rn)$ and the
product space $H^1(\rn)\times \bmo(\rn)$, where $\az:=1/p-1$, which are later used
in this article to achieve some
estimates of div-curl products of elements in $h^p(\rn)$ with $p\in(0,1]$ and its dual space
(see Theorems \ref{div} and \ref{sssa} below).
Moreover, we show that these bilinear decompositions
are sharp in some sense [see Remarks \ref{rem-add}, \ref{432}(iii) and \ref{442}(iii) below].

In Section \ref{s5},
we find new structures
of the spaces $h^{\Phi_p}(\mathbb R^n)$ and $H^{\Phi_p}(\mathbb R^n)$ by showing that, for any $p\in(0,1)$,
$h^{\Phi_p}(\rn)=h^1(\rn)+h^p(\rn)$ and $H^{\Phi_p}(\rn)=H^1(\rn)+H^p(\rn)$
with equivalent quasi-norms (see Theorems \ref{FINN} and \ref{FIN2} below).
Then we prove that the dual space of $h^{\Phi_p}(\mathbb R^n)$ coincides with the
dual space of $h^p(\mathbb R^n)$ (see Corollary \ref{11p} below).
The relationships $h^{\Phi_p}(\mathbb R^n)\subsetneqq h^{\phi_p}(\mathbb R^n)$
is also clarified in this section (see Theorem \ref{cheng} below).

In Section \ref{s6}, to match the inhomogeneity of the local Hardy
space and its dual space, we first introduce the notion of the
zero $\az$-inhomogeneous curl with $\az\in\zz_+$ (see Definition \ref{a2.15}
below). Applying the target space $h^{\Phi_p}(\mathbb R^n)$ and
the bilinear decomposition $H^p(\rn)\times \Lambda_{n\az}(\rn)\subset L^1(\rn)+
H^{\Phi_p}(\rn)$ (see Theorem \ref{mainthm4} below), we obtain
an estimate of div-curl products of elements in the local Hardy space $h^p(\rn)$ with $p\in(0,1)$
and its dual space (see Theorem \ref{div} below).
Using the atomic characterization of the local Hardy space $h^1(\rn)$ and
a bilinear decomposition of the product space $H^1(\rn)\times\bmo(\rn)$, in Theorem \ref{sssa} below,
we seal the gap existed in the estimation related to $h_\ast^{\Phi}(\mathbb R^n)$
of div-curl products of functions in $h^1(\rn)$ and $\bmo(\mathbb{R}^n)$
in \cite{cky1} with a weaker assumption on the curl.

Finally, we make some conventions on notation.
For any $x\in\rn$ and $r\in(0,\infty)$, let $B(x,r):=\{y\in\rn:|x-y|<r\}$.
For any $r\in(0,\infty)$, $f\in L^1_\loc(\rn)$ and $x\in\rn$, let
$$
\dashint_{B(x,\,r)}f(y)\,dy:=\frac1{|B(x,r)|}\int_{B(x,r)}f(y)\,dy,
$$
here and thereafter, $L^1_\loc(\rn)$ denotes the set of all locally 
integrable functions on $\rn$.
For any set $E$, we use $\mathbf1_{E}$ to denote its \emph{characteristic function}.
We also use $\vec{0}_n$ to denote the \emph{origin}
of $\rn$. Let $\mathcal{S}(\rn)$ denote the collection of all
\emph{Schwartz functions} on $\rn$, equipped
with the classical well-known topology
determined by a sequence of norms, and $\mathcal{S}'(\rn)$ its \emph{topological dual}, namely,
the set of all bounded linear functionals on $\mathcal{S}(\rn)$
equipped with the weak-$\ast$ topology.
Let $\mathbb{N}:=\{1,\,2,...\}$ and $\mathbb{Z}_+:=\mathbb{N}\bigcup\{0\}$.
Throughout this article, all cubes have their edges parallel to the coordinate axes.
We always denote by $Q$ for a cube of $\rn$, which is not necessary to be closed or open,
by $x_Q$ its \emph{center} and by $\ell(Q)$ its \emph{side length}.
For any cube $Q:=Q(x_Q,\ell(Q))\in\mathcal{Q}$,
with $x_Q\in\rn$ and $\ell(Q)\in(0,\infty)$,
and $\alpha\in(0,\infty)$, let $\alpha Q:=Q(x_Q,\alpha \ell(Q))$.
For any $\varphi\in\mathcal{S}(\rn)$ and $t\in(0,\infty)$,
let $\varphi_t(\cdot):=t^{-n}\varphi(t^{-1}\cdot)$. For any $s\in\mathbb{R}$,
we denote by $\lfloor s\rfloor$ the \emph{largest integer not greater than} $s$.
We always use $C$ to denote a \emph{positive constant}, which is independent of main parameters,
but it may vary from line to line.
Moreover, we use $C_{(\gamma,\ \beta,\ \ldots)}$ to denote a positive constant depending on the indicated
parameters $\gamma,\ \beta,\ \ldots$. If, for any real functions $f$ and $g$, $f\leq Cg$, we then write
$f\lesssim g$ and, if $f\lesssim g\lesssim f$, we then write $f\sim g$. For any $\alpha:=(\alpha_1,
\ldots, \alpha_n)\in\zz_+^n$ and $x:=(x_1,\ldots,x_n)\in\rn$, define $|\alpha|:=\alpha_1+ \cdots+\alpha_n$, $\partial^\alpha:=\partial_{x_1}^{\alpha_1}\cdots\partial_{x_n}^{\alpha_n}$ with
$\partial_{x_j}:=\frac{\partial}{\partial x_j}$ for any $j\in\{1,\ldots,n\}$, and
$x^\alpha:=x_1^{\alpha_1}\cdots x_n^{\alpha_n}$.

\section{Local Hardy-type spaces and their dual spaces}\label{s2}

In this section, we present some notions and basic properties on the local
Hardy space $h^p(\rn)$, the inhomogeneous Lipschitz space  $\Lambda_{n\alpha}(\mathbb{R}^n)$,
the local Orlicz Hardy space $h^{\Phi_p}(\rn)$
and its dual space $\mathcal{L}_{\loc}^{\Phi_p}(\rn)$.
The dual results of these function spaces come
from \cite{g, yy2}.

\begin{definition}\label{hp}
\begin{itemize}
\item[(i)]
Let $\varphi\in\mathcal{S}(\rn)$ and $f\in\cs'(\rn)$.
The \emph{local radial maximal function $m(f,\varphi)$} of $f$ associated to $\varphi$
is defined by setting, for any $x\in\rn$,
$$
m(f,\varphi)(x):=\sup_{s\in(0,1)}|f\ast\varphi_s(x)|,
$$
where, for any $s\in(0,\infty)$ and $x\in\mathbb R^n,\varphi_s(x):=s^{-n}\varphi(x/s)$.
\item[(ii)]
Let $p\in(0,\infty)$.
Then the \emph{local Hardy space $h^{p}(\rn)$}
is defined by setting
$$
h^{p}(\rn):=\lf\{f\in\cs'(\rn):\ \|f\|_{h^{p}(\rn)}:=\lf\|m(f,\varphi)\r\|_{L^p(\rn)}<\infty\r\},
$$
where $\varphi\in\mathcal{S}(\rn)$ satisfies
$
\int_{\rn}\varphi(x)\,dx\neq0.
$
\end{itemize}
\end{definition}

We refer the reader to \cite{g} for more properties on  $h^p(\rn)$.
The dual space of $h^{p}(\rn)$ turns out to be the
local Campanato space
which was first introduced by Campanato in \cite{Ca63,Ca64}.
Recall that,
for any $d\in\zz_+$, the
\emph{symbol $\cp_d(\rn)$} denotes the set of all polynomials with order at most $d$. In what follows,
for any $g\in L_{\loc}^1(\rn)$ and ball $B\subset\rn$,
we always use
$P_B^dg$ to denote the \emph{minimizing polynomial}
of $g$ on the ball $B$ with degree not greater than
$d$, which means $P_B^dg$ is the unique polynomial $P\in\cp_d(\rn)$
such that, for any polynomial $R\in\cp_d(\rn)$, $\int_B[g(x)-P(x)]R(x)\,dx=0$
(see \cite[Definition 6.1]{ns}).
It is well known that, if $f$ is locally integrable, then $P_B^df$ uniquely exists (see, for instance,
\cite{tw}).

\begin{definition}\label{campa}
Let $\alpha\in[0,\infty)$, $r\in[1,\infty)$ and $d\in\mathbb{Z}_+$ satisfy
$d\ge\lfloor n\alpha\rfloor$.
The \emph{local Campanato space} $\mathcal{L}_{\loc}^{\alpha,r,d}(\rn)$
is defined to be the set of all locally integrable functions $g$
such that
\begin{align*}
\|g\|_{\mathcal{L}_{\loc}^{\alpha,r,d}(\rn)}&:=
\sup_{\mathrm{ball\,}B\subset\rn,|B|<1}
\frac{1}{|B|^\alpha}\lf[\dashint_{B}|g(x)-P_B^d g(x)|^r\,dx\r]^{\frac{1}{r}}\\
&\quad+\sup_{\mathrm{ball\,}B\subset\rn,|B|\geq1}
\frac{1}{|B|^\alpha}\lf[\dashint_{B}|g(x)|^r\,dx\r]^{\frac{1}{r}}
<\infty,
\end{align*}
where $P_B^dg$ for any ball $B\subset\rn$ denotes the minimizing polynomial of $g$ on $B$ with degree
not greater than $d$.

When  $d:= \lfloor n\az\rfloor$ and $r:=1$, we denote $\mathcal{L}_{\loc}^{\az,r,d}(\rn)$
simply by $\mathcal{L}_{\loc}^{\az}(\rn)$.
\end{definition}

Also, let $\alpha\in[0,\infty)$, $r\in[1,\infty)$ and $d\in\mathbb{Z}_+$ satisfy
$d\ge\lfloor n\alpha\rfloor$. An equivalent norm of the local Campanato space is as follows (see, for instance, \cite[p.\,292]{GR85}):
For any $g\in\mathcal{L}_{\loc}^{\alpha,r,d}(\rn)$,
\begin{align}\label{f-encs}
\|g\|_{\mathcal{L}_{\loc}^{\alpha,r,d}(\rn)}&\sim
\sup_{\mathrm{ball\,}B\subset \rn,|B|<1}\inf_{P\in \mathcal{P}_d(\rn)}
\frac{1}{|B|^\az}\lf\{\dashint_B|g(x)-P(x)|^r\,dx\r\}^{\frac{1}{r}}\\ \noz
&\quad
+\sup_{\mathrm{ball\,}B\subset\rn,|B|\geq1}
\frac{1}{|B|^\alpha}\lf[\dashint_{B}|g(x)|^r\,dx\r]^{\frac{1}{r}},
\end{align}
where the
positive equivalence constants are independent of $g$.

We give several remarks on the relations between local Campanato spaces and some
other related function spaces.

\begin{remark}\label{r11}
\begin{enumerate}
\item[(i)] The space $\mathcal{L}_{\loc}^{0,1,0}(\rn)$
is just the space $\bmo(\rn)$ in \cite{g}, where the \emph{space $\bmo(\rn)$}
is defined to be the set of all measurable functions $f$ such that
\begin{equation*}
\|f\|_{\bmo(\rn)}:=\sup_{\mathrm{ball\,}B\subset\rn,|B|<1}
\frac{1}{|B|}\int_{B}|f(x)-f_B|\,dx
+\sup_{\mathrm{ball\,}B\subset\rn,|B|\geq1}\frac{1}{|B|}\int_{B}|f(x)|\,dx<\infty
\end{equation*}
with $f_B:=\frac{1}{|B|}
\int_{B}f(x)\,dx$.
\item[(ii)] When $p\in(0,1]$, $\alpha:=1/p-1$ and $d\in\mathbb{Z}_+$ satisfies
$d\ge\lfloor n\alpha\rfloor$, from \cite[Theorem 5]{g}
(see also \cite[Theorem 7.5]{yy}), we deduce
that
$$
(h^p(\rn))^*=\mathcal{L}_{\loc}^{\alpha,r,d}(\rn).
$$
\end{enumerate}
\end{remark}

Next, we recall the notion of inhomogeneous Lipschitz spaces
(see for instance \cite[Definition 1.4.2]{g14}).
For any $h\in \rn$, we define the \emph{difference operator} $D_h$
by setting, for any function $f:\ \rn \to \mathbb{C}$ and $x\in\rn$,
$$
D_h(f)(x) := f(x+h)-f(x).
$$
In general, for any $k\in\nn$, the $(k+1)$-order difference operator $D^{k+1}_h$
is defined by setting, for any continuous function $f$,
$$
D^{k+1}_h (f): = D^k_h(D_h(f)).
$$

\begin{definition}\label{def-Zygmund}
Let $\alpha\in(0, \infty)$.
Then the \emph{inhomogeneous Lipschitz space}
$\Lambda_\az(\rn)$ is defined to be the set of all measurable functions $f$ such that
there exists a continuous function $g$ satisfying that $f=g$ almost
everywhere and
\begin{align*}
\|f\|_{\Lambda_\az(\rn)}:=\|f\|_{L^\infty(\rn)}+ \sup_{x\in\rn}\sup_{h\in\rn\backslash\{\vec{0_n}\}}
\frac{|D^{\lfloor\alpha\rfloor+1}_h (g)(x)|}{|h|^{\alpha}}<\infty.
\end{align*}
\end{definition}

The following identification of local Campanato spaces and
inhomogeneous Lipschitz spaces can be found in \cite[Theorem 5]{g}
or \cite{Ca63,Ca64}.

\begin{lemma}\label{GR-DDY}
Let $\az\in(0,\infty)$ and $r\in[1,\infty)$.
Then $f\in\Lambda_{n\az}(\rn)$ if and only if
$f\in\mathcal{L}_{\loc}^{\alpha,r,\lfloor n\az\rfloor}(\rn)$. Moreover,
$$\|f\|_{\Lambda_{n\alpha}(\rn)}\sim \|f\|_{\mathcal{L}_{\loc}^{\alpha,r,\lfloor n\az\rfloor}(\rn)}$$
with the
positive equivalence constants independent of $f$.
\end{lemma}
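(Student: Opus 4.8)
The plan is to deduce the asserted equivalence, for every fixed $r\in[1,\infty)$, from the two one-sided estimates (Claim~1) $\|f\|_{\mathcal{L}_{\loc}^{\alpha,\infty,\lfloor n\az\rfloor}(\rn)}\lesssim\|f\|_{\Lambda_{n\az}(\rn)}$ and (Claim~2) $\|f\|_{\Lambda_{n\az}(\rn)}\lesssim\|f\|_{\mathcal{L}_{\loc}^{\alpha,1,\lfloor n\az\rfloor}(\rn)}$, where $\mathcal{L}_{\loc}^{\alpha,\infty,d}(\rn)$ denotes the space obtained from Definition \ref{campa} upon replacing each $[\dashint_B|\cdot|^r\,dx]^{1/r}$ by $\esssup_B|\cdot|$. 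Indeed, H\"older's inequality gives $\|f\|_{\mathcal{L}_{\loc}^{\alpha,1,d}(\rn)}\le\|f\|_{\mathcal{L}_{\loc}^{\alpha,r,d}(\rn)}\le\|f\|_{\mathcal{L}_{\loc}^{\alpha,\infty,d}(\rn)}$ for every $r\in[1,\infty)$ and every admissible $d\in\zz_+$, so Claims~1 and~2 at once yield $\|f\|_{\Lambda_{n\az}(\rn)}\sim\|f\|_{\mathcal{L}_{\loc}^{\alpha,r,\lfloor n\az\rfloor}(\rn)}$ for all such $r$. Throughout set $m:=\lfloor n\az\rfloor+1$, so that the difference operator $D_h^m$ annihilates every element of $\cp_{\lfloor n\az\rfloor}(\rn)$, and use freely the equivalent norm in \eqref{f-encs}.

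For Claim~1, let $f\in\Lambda_{n\az}(\rn)$ and let $g$ be its continuous representative. If $B\subset\rn$ is a ball with $|B|\ge1$, then, since $|B|^\az\ge1$, we have $|B|^{-\az}\esssup_B|g|\le\|g\|_{L^\infty(\rn)}\le\|f\|_{\Lambda_{n\az}(\rn)}$, which controls the large-ball supremum. If $B:=B(x_0,\rho)\subset\rn$ satisfies $|B|<1$, then, by Definition \ref{def-Zygmund}, $\sup_{0<|h|\le\rho}\|D_h^m g\|_{L^\infty(\rn)}\le\rho^{n\az}\|f\|_{\Lambda_{n\az}(\rn)}$, and hence, by the Whitney-type polynomial approximation inequality (which bounds the error of best approximation by $\cp_{m-1}(\rn)$ over a ball of radius $\rho$ by a multiple of this $m$-th order modulus of smoothness; see \cite{Ca63,Ca64} and \cite[Theorem 5]{g}), there is $P_B\in\cp_{\lfloor n\az\rfloor}(\rn)=\cp_{m-1}(\rn)$ with $\esssup_B|g-P_B|\lesssim\rho^{n\az}\|f\|_{\Lambda_{n\az}(\rn)}\sim|B|^\az\|f\|_{\Lambda_{n\az}(\rn)}$; in view of \eqref{f-encs} this controls the small-ball supremum, proving Claim~1.

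For Claim~2, let $f\in\mathcal{L}_{\loc}^{\alpha,1,\lfloor n\az\rfloor}(\rn)$ and abbreviate $L:=\|f\|_{\mathcal{L}_{\loc}^{\alpha,1,\lfloor n\az\rfloor}(\rn)}$. The key subclaim is that, for every ball $B:=B(x_0,\rho)\subset\rn$ with $|B|<1$,
\begin{equation}\label{ddy-key}
\esssup_{y\in B}\lf|f(y)-P_B^{\lfloor n\az\rfloor}f(y)\r|\lesssim|B|^\az L .
\end{equation}
Granting \eqref{ddy-key}: applying it along a chain of balls shrinking to a Lebesgue point of $f$, together with the large-ball part of $L$, shows that $f$ coincides almost everywhere with a continuous function $g$ with $\|g\|_{L^\infty(\rn)}\lesssim L$. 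Moreover, for $x\in\rn$ and $h\in\rn$ with $0<|h|$ small enough that $B:=B(x,2m|h|)$ obeys $|B|<1$, the $m+1$ points $x,x+h,\dots,x+mh$ all lie in $B$ and $D_h^m$ kills $P_B^{\lfloor n\az\rfloor}f\in\cp_{m-1}(\rn)$, so \eqref{ddy-key} gives
\begin{equation*}
\lf|D_h^m g(x)\r|=\lf|D_h^m\lf(g-P_B^{\lfloor n\az\rfloor}f\r)(x)\r|\lesssim\esssup_{y\in B}\lf|g(y)-P_B^{\lfloor n\az\rfloor}f(y)\r|\lesssim|B|^\az L\sim|h|^{n\az}L ,
\end{equation*}
while for the remaining (bounded away from $0$) values of $|h|$ one uses $|D_h^m g(x)|\le2^m\|g\|_{L^\infty(\rn)}$ and $|h|^{n\az}\gtrsim1$. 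Taking suprema over $x$ and $h$ and recalling Definition \ref{def-Zygmund} yields $\|f\|_{\Lambda_{n\az}(\rn)}\lesssim L$, which is Claim~2.

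The technical heart is the key subclaim \eqref{ddy-key}: it extracts $L^\infty$-control of the oscillation of $f$ over $B$ from the merely $L^1$-averaged Campanato condition. This is the local form of Campanato's theorem and is proved by the standard telescoping over $\{2^{-j}B\}_{j\in\zz_+}$; the two points requiring care are (a)~the estimate $\|P_{2^{-j-1}B}^{\lfloor n\az\rfloor}f-P_{2^{-j}B}^{\lfloor n\az\rfloor}f\|_{L^\infty(2^{-j-1}B)}\lesssim|2^{-j}B|^\az L$, which rests on the fact that, on the finite-dimensional space $\cp_{\lfloor n\az\rfloor}(\rn)$, the $L^\infty$-norm over a ball and the normalized $L^1$-norm over that ball are equivalent with a dilation-invariant constant, and (b)~the convergence of $\sum_{j\in\zz_+}|2^{-j}B|^\az=|B|^\az\sum_{j\in\zz_+}2^{-jn\az}$, which holds precisely because $n\az>0$. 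The only other nonroutine ingredient, used in Claim~1, is the Whitney-type inequality, which we invoke as a black box from the classical theory; complete details of both ingredients can be found in \cite{Ca63,Ca64} and \cite[Theorem 5]{g}.
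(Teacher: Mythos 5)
The paper does not actually prove this lemma: it is quoted as known, with a pointer to Goldberg \cite[Theorem 5]{g} and to Campanato \cite{Ca63,Ca64}. Your argument is a correct, self-contained rendition of that classical identification, and it is the standard proof: the reduction of all $r\in[1,\infty)$ to the two endpoint claims via H\"older and the inf-formulation \eqref{f-encs} is sound; Claim~1 is exactly the Whitney/Brudnyi inequality combined with the observation that the $(\lfloor n\az\rfloor+1)$-st modulus of smoothness on a ball of radius $\rho$ is at most $\rho^{n\az}\|f\|_{\Lambda_{n\az}(\rn)}$; and Claim~2 is Campanato's telescoping argument, whose two delicate points (the dilation-invariant equivalence of norms on $\cp_{\lfloor n\az\rfloor}(\rn)$, which is Lemma \ref{lem-ep} of the paper, and the convergence of the geometric series, which uses $n\az>0$) you identify correctly. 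Two spots are thinner than the rest but still routine: (a) to produce the \emph{continuous} representative $g$ in Claim~2 one should note that $g(y):=\lim_{j\to\fz}P_{B(y,2^{-j}\rho)}^{\lfloor n\az\rfloor}f(y)$ exists for \emph{every} $y$ and is a locally uniform limit of functions continuous in $y$, the uniformity coming from the same geometric telescoping; and (b) the bound $\|g\|_{L^\infty(\rn)}\ls L$ requires chaining down from a ball of volume one (where the large-ball half of the norm controls $\dashint_B|f|$) through Lemma \ref{lem-ep}, not just the oscillation estimate alone. With those two sentences added, the proof is complete, and it supplies a detail the paper delegates entirely to the literature.
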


Now we recall the notions of both Orlicz functions and Orlicz spaces (see, for instance, \cite{mmz}).

\begin{definition}\label{d1.1}
A function $\Phi:\ [0,\infty)\ \to\ [0,\infty)$ is called an \emph{Orlicz function} if it is
non-decreasing and satisfies $\Phi(0)= 0$, $\Phi(\tau)>0$ whenever $\tau\in(0,\infty)$ and $\lim_{\tau\to\infty}\Phi(\tau)=\infty$.
\end{definition}

An Orlicz function $\Phi$  is said to be
of \emph{lower} (resp., \emph{upper}) \emph{type} $p$ with
$p\in(-\infty,\infty)$ if
there exists a positive constant $C_{(p)}$, depending on $p$, such that, for any $\tau\in[0,\infty)$
and $s\in(0,1)$ [resp., $s\in [1,\infty)$],
\begin{equation*}
\Phi(s\tau)\le C_{(p)}s^p \Phi(\tau).
\end{equation*}
A function $\Phi:\ [0,\infty)\ \to\ [0,\infty)$ is said to be of
\emph{positive lower} (resp., \emph{upper}) \emph{type} $p$ if it is of lower
(resp., upper) type $p$ for some $p\in(0,\infty)$.

\begin{definition}\label{d1.2}
Let $\Phi$ be an Orlicz function with positive lower type $p_{\Phi}^-$ and positive upper type $p_{\Phi}^+$.
The \emph{Orlicz space $L^\Phi(\rn)$} is defined
to be the set of all measurable functions $f$ such that
 $$\|f\|_{L^\Phi(\rn)}:=\inf\lf\{\lambda\in(0,\infty):\ \int_{\rn}\Phi\lf(\frac{|f(x)|}{\lambda}\r)\,dx\le1\r\}<\infty.$$
\end{definition}

Now we recall the notions of local Orlicz Hardy spaces
and local Orlicz Campanato spaces  (see, for instance, \cite{yy,yy2}).

\begin{definition}\label{defn-hmo-1}
Let $\Phi$ be an Orlicz function with positive lower type $p_{\Phi}^-$ and positive upper
type $p_{\Phi}^+\in(0,1]$.
The \emph{local Orlicz Hardy
space $h^\Phi(\rn)$} is defined to be the set of all $f\in\cs'(\rn)$
such that $\|f\|_{h^\Phi(\rn)}:=
\|m(f,\varphi)\|_{L^\Phi(\rn)}<\infty$,
where $\varphi\in\mathcal{S}(\rn)$ satisfies $\int_{\rn}\varphi(x)\,dx\neq0$ and $m(f,\varphi)$ is as
in Definition \ref{hp}(i).
\end{definition}

\begin{definition}\label{defn-mocs-1}
Let $d\in\zz_+$ and $\Phi$ be an Orlicz function with positive lower type $p_{\Phi}^-$ and positive upper
type $p_{\Phi}^+\in(0,1]$.
The {\em local Orlicz Campanato space}
$\mathcal{L}_{\loc}^{\Phi,1,d}(\rn)$ is defined to be the set
of all locally integrable functions $g$ on $\rn$ such that
\begin{align*}
\|g\|_{\mathcal{L}_{\loc}^{\Phi,1,d}(\rn)}:&=
\dsup_{\mathrm{ball\,}B\subset\rn,|B|<1} \frac1{\|\mathbf1_B\|_{L^{\Phi}(\rn)}} \int_B |g(x)-P_B^d g(x)|\,dx\\
&\quad+\dsup_{\mathrm{ball\,}B\subset\rn,|B|\geq1} \frac1{\|\mathbf1_B\|_{L^{\Phi}(\rn)}} \int_B |g(x)|\,dx
<\infty,
\end{align*}
where $P_B^dg$ for any ball $B\subset\rn$ denotes the minimizing polynomial of $g$ on $B$ with degree
not greater than $d$.

When $d= \lfloor n(1/p^{-}_{\Phi}-1)\rfloor$, we denote
$\mathcal{L}_{\loc}^{\Phi,1,d}(\rn)$ simply by $ \mathcal{L}_{\loc}^{\Phi}(\rn)$.
\end{definition}

The following duality result is just \cite[Theorem 7.5]{yy2}.

\begin{lemma}\label{lem-dualMHC}
Let $\Phi$ be an Orlicz function
with positive lower type $p_{\Phi}^-$ and positive upper type $p_{\Phi}^+\in(0,1]$.
Let $d\in\zz_+$ be such that $d\ge \lfloor n(\frac{1}{p_{\Phi}^-}-1)\rfloor$.
Then
$(h^{\Phi}(\rn))^*=\mathcal{L}_{\loc}^{\Phi,1,d}(\rn).$
\end{lemma}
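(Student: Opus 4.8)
The plan is to establish the two inclusions $\mathcal{L}_{\loc}^{\Phi,1,d}(\rn)\subset(h^{\Phi}(\rn))^{*}$ and $(h^{\Phi}(\rn))^{*}\subset\mathcal{L}_{\loc}^{\Phi,1,d}(\rn)$, exactly as in the model case $\Phi(\tau)=\tau^{p}$ recorded in Remark \ref{r11}(ii). The bridge between the two is the atomic characterization of $h^{\Phi}(\rn)$ (valid under the present type hypotheses): every $f\in h^{\Phi}(\rn)$ admits a decomposition $f=\sum_{j}\lambda_{j}a_{j}$ in $\cs'(\rn)$, where each $a_{j}$ is a local $(\Phi,\infty,d)$-atom supported in a ball $B_{j}$ --- so $\|a_{j}\|_{L^{\infty}(\rn)}\le\|\mathbf1_{B_{j}}\|_{L^{\Phi}(\rn)}^{-1}$ and $\int_{\rn}a_{j}(x)x^{\beta}\,dx=0$ for all $|\beta|\le d$ whenever $|B_{j}|<1$ --- with the family $\{\lambda_{j}a_{j}\}_{j}$ controlled in the natural Orlicz quasi-norm by a multiple of $\|f\|_{h^{\Phi}(\rn)}$, and moreover finite linear combinations of such atoms form a dense subspace of $h^{\Phi}(\rn)$ with comparable quasi-norms. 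The hypothesis $d\ge\lfloor n(\frac{1}{p_{\Phi}^{-}}-1)\rfloor$ enters precisely here: it is the number of vanishing moments an atom must carry for this maximal-function/atom equivalence to hold for an Orlicz function of lower type $p_{\Phi}^{-}$ and upper type $p_{\Phi}^{+}\in(0,1]$; as a by-product it forces $\mathcal{L}_{\loc}^{\Phi,1,d}(\rn)$ to be independent of $d$ throughout this range, consistently with the asserted identity.

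For the inclusion $\mathcal{L}_{\loc}^{\Phi,1,d}(\rn)\subset(h^{\Phi}(\rn))^{*}$, I would fix $g\in\mathcal{L}_{\loc}^{\Phi,1,d}(\rn)$ and define $\ell_{g}(f):=\int_{\rn}f(x)g(x)\,dx$ first on the dense subspace of finite atomic combinations. For a single local $(\Phi,\infty,d)$-atom $a$ supported in a ball $B$ with $|B|<1$, the vanishing moments up to order $d$ give $\int_{\rn}ag=\int_{B}a(g-P_{B}^{d}g)$, so that
$$
\left|\int_{\rn}ag\right|\le\|a\|_{L^{\infty}(\rn)}\int_{B}|g-P_{B}^{d}g|\le\frac{1}{\|\mathbf1_{B}\|_{L^{\Phi}(\rn)}}\int_{B}|g-P_{B}^{d}g|\le\|g\|_{\mathcal{L}_{\loc}^{\Phi,1,d}(\rn)};
$$
when $|B|\ge1$ the same bound is immediate from $|\int_{\rn}ag|\le\|a\|_{L^{\infty}(\rn)}\int_{B}|g|$ together with the second supremum in the definition of the Campanato norm. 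Summing over a finite decomposition $f=\sum_{j}\lambda_{j}a_{j}$ and combining these per-atom bounds with the quasi-norm control on $\{\lambda_{j}a_{j}\}_{j}$ and the fact that $\Phi$ has upper type $\le1$ (a by-now-standard summation estimate) yields $|\ell_{g}(f)|\lesssim\|g\|_{\mathcal{L}_{\loc}^{\Phi,1,d}(\rn)}\|f\|_{h^{\Phi}(\rn)}$; by density, $\ell_{g}$ extends to an element of $(h^{\Phi}(\rn))^{*}$ of comparable norm.

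For the reverse inclusion, let $\ell\in(h^{\Phi}(\rn))^{*}$. For each ball $B$, I would note that any $\varphi\in L^{2}(\rn)$ supported in $B$ with vanishing moments up to order $d$ (no moment condition when $|B|\ge1$) is, after normalization, a constant multiple of a local $(\Phi,2,d)$-atom, whence $\|\varphi\|_{h^{\Phi}(\rn)}\lesssim|B|^{-1/2}\|\mathbf1_{B}\|_{L^{\Phi}(\rn)}\|\varphi\|_{L^{2}(\rn)}$; therefore $\ell$ restricts to a bounded functional, of norm at most a constant times $|B|^{-1/2}\|\mathbf1_{B}\|_{L^{\Phi}(\rn)}\|\ell\|$, on this closed subspace of $L^{2}(B)$, and the Riesz representation theorem (after a Hahn--Banach extension to all of $L^{2}(B)$) furnishes $g_{B}\in L^{2}(B)$ that represents it. A routine consistency argument along an exhausting sequence of balls patches the $g_{B}$, which are determined only modulo $\cp_{d}(\rn)$ on each ball, into a single $g\in L^{1}_{\loc}(\rn)$. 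To estimate $\|g\|_{\mathcal{L}_{\loc}^{\Phi,1,d}(\rn)}$, fix $B$ with $|B|<1$, choose $h$ with $\supp h\subset B$, $\|h\|_{L^{\infty}(\rn)}\le1$ and $\int_{B}(g-P_{B}^{d}g)h=\int_{B}|g-P_{B}^{d}g|$, and set $\widetilde h:=(h-P_{B}^{d}h)\mathbf1_{B}$; then $\widetilde h$ has vanishing moments up to order $d$, $\|\widetilde h\|_{L^{\infty}(\rn)}\lesssim1$ (boundedness of $h\mapsto P_{B}^{d}h$ on $L^{\infty}(B)$), and the defining orthogonality of the minimizing polynomial gives $\int_{B}|g-P_{B}^{d}g|=\int_{B}g\,\widetilde h=\ell(\widetilde h)$, hence $\|\mathbf1_{B}\|_{L^{\Phi}(\rn)}^{-1}\int_{B}|g-P_{B}^{d}g|\lesssim\|\ell\|$. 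The case $|B|\ge1$ is identical with $h\mathbf1_{B}$ in place of $\widetilde h$ and no polynomial subtraction. Thus $g\in\mathcal{L}_{\loc}^{\Phi,1,d}(\rn)$ with $\|g\|_{\mathcal{L}_{\loc}^{\Phi,1,d}(\rn)}\lesssim\|\ell\|$, and since $\ell$ and $\ell_{g}$ agree on the dense set of finite atomic combinations, $\ell=\ell_{g}$, which finishes the argument.

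The step I expect to be the main obstacle is the first inclusion, and within it the passage from finite atomic combinations to all of $h^{\Phi}(\rn)$: the integral $\int_{\rn}fg$ carries no a priori meaning for a generic distribution $f\in h^{\Phi}(\rn)$ paired with a generic $g\in\mathcal{L}_{\loc}^{\Phi,1,d}(\rn)$ (which need not lie in any Orlicz dual space), so one cannot merely pair and estimate; the reduction to finite atoms followed by the summation estimate is exactly the device that circumvents this, and controlling $\sum_{j}|\lambda_{j}|\,|\langle a_{j},g\rangle|$ cannot be carried out by the triangle inequality when $p_{\Phi}^{-}<1$ --- it genuinely requires the structure of the atomic coefficient functional together with the hypothesis $p_{\Phi}^{+}\le1$.
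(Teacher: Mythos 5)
Your argument is correct in approach, but note that the paper does not actually prove this lemma: it simply cites \cite[Theorem 7.5]{yy2}, where the (more general, Musielak--Orlicz) version is established. What you have written is, in outline, the standard Coifman--Weiss-style duality proof that the cited reference carries out: the easy direction pairs $g$ against local atoms (using the vanishing moments for $|B|<1$ and the size condition alone for $|B|\ge1$, exactly matching the two suprema in Definition \ref{defn-mocs-1}), and the hard direction restricts $\ell$ to the moment-vanishing subspace of $L^2(B)$, applies Riesz representation, and tests against $(h-P_B^dh)\mathbf 1_B$ with $h=\operatorname{sgn}(g-P_B^dg)$, invoking Lemma \ref{lem-ep} to keep $\widetilde h$ bounded. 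Two points deserve flagging. First, the whole proof is conditional on the atomic characterization of $h^{\Phi}(\rn)$ with coefficient control $\sum_j|\lambda_j|\lesssim\|f\|_{h^{\Phi}(\rn)}$ (your "summation estimate" does go through, using upper type $1$ for the terms with $|\lambda_j|\le\Lambda$ and the positive lower type plus monotonicity of $\Phi$ for the finitely many large terms); this characterization is itself a nontrivial theorem which this paper also only cites. Second, the step you correctly identify as the main obstacle — extending $\ell_g$ from finite atomic combinations to all of $h^{\Phi}(\rn)$ — hides the classical issue that for a general $f$ the near-optimal atomic series is infinite, and one must justify $\int fg=\sum_j\lambda_j\int a_jg$ (e.g.\ by working on the dense subspace $h^{\Phi}(\rn)\cap L^2(\rn)$ where the Calder\'on--Zygmund decomposition also converges in a locally integrable sense); as written, "by density" does not quite dispose of this. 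Also, in the local setting the representing function $g$ for $|B|\ge 1$ is determined outright (no moment conditions, so the test class is all of $L^2(B)$), which makes the patching along an exhaustion by large balls genuinely trivial and explains why the dual is a space of functions rather than functions modulo polynomials; your phrase "determined only modulo $\cp_d(\rn)$" applies only to small balls.
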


\begin{lemma}\label{444}
Let $p\in(0,1)$ and $\Phi_{p}$ be as in \eqref{333}.
\begin{itemize}
\item[\rm (i)] For any $f\in L^{\Phi_p}(\rn)$,
\begin{equation}\label{555}
\|f\|_{L^{\Phi_p}(\rn)}\le \min \,\left\{\|f\|_{L^{1}(\rn)},\, \|f\|_{L^p(\rn)}\right\}
\end{equation}
and, for any ball $B\subset \rn$,
\begin{equation*}
\|\mathbf1_B\|_{L^{\Phi_p}(\rn)}\sim \min \,\left\{\|\mathbf1_B\|_{L^{1}(\rn)},\, \|\mathbf1_B\|_{L^p(\rn)}\right\}
\end{equation*}
with positive equivalence constants independent of $B$.

Moreover, $h^1(\rn)\subset h^{\Phi_p}(\rn)$, $h^p(\rn)\subset h^{\Phi_p}(\rn)$
and, for any $f\in h^1(\rn)\cup h^p(\rn)$,
\begin{equation*}
\|f\|_{h^{\Phi_p}(\rn)}\le \min \,\left\{\|f\|_{h^{1}(\rn)},\, \|f\|_{h^p(\rn)}\right\}.
\end{equation*}
\item[\rm (ii)]
For any $\tau\in[0,\infty)$,
\begin{align*}
\Phi_p(s\tau)\le
\begin{cases}
s^p \Phi_p(\tau)&\qquad \textup{when}\; s\in(0,1),
\vspace{0.2cm}\\
s \Phi_p(\tau)&\qquad \textup{when}\;
s\in [1,\infty)
\end{cases}
\end{align*}
and,
for any $\{t_j\}_{j\in\nn}\subset[0,\,\fz)$,
\begin{align}\label{xx4}
\Phi_p\lf(\sum_{j\in\nn}t_j\r)
\leq \sum_{i\in\nn} \Phi_p(t_i).
\end{align}
\end{itemize}
\end{lemma}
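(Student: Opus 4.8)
\textbf{Proof plan for Lemma \ref{444}.}
The plan is to deal first with the pointwise inequalities for $\Phi_p$ in part (ii), and then to derive the norm estimates in part (i) from them, since the structure $\Phi_p(\tau)=\tau/(1+\tau^{1-p})$ makes the type estimates almost immediate. For the lower/upper type bounds, I would simply observe that, for $s\in(0,1)$ and $\tau\in[0,\infty)$,
\begin{align*}
\Phi_p(s\tau)=\frac{s\tau}{1+(s\tau)^{1-p}}
=s^p\,\frac{s^{1-p}\tau}{1+s^{1-p}\tau^{1-p}}
\le s^p\,\frac{\tau}{s^{1-p}+s^{1-p}\tau^{1-p}}\cdot s^{1-p}
\end{align*}
— more cleanly, factor out $s^p$ and compare the remaining fraction $\frac{s^{1-p}\tau}{1+s^{1-p}\tau^{1-p}}$ with $\frac{\tau}{1+\tau^{1-p}}$ using that $t\mapsto t/(1+t^{1-p})$ is non-decreasing and $s^{1-p}\le 1$; this gives $\Phi_p(s\tau)\le s^p\Phi_p(\tau)$. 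For $s\in[1,\infty)$, write $\Phi_p(s\tau)=s\,\frac{\tau}{1+s^{1-p}\tau^{1-p}}\le s\,\frac{\tau}{1+\tau^{1-p}}=s\Phi_p(\tau)$, since $s^{1-p}\ge1$. The subadditivity \eqref{xx4} follows from the two-variable inequality $\Phi_p(t_1+t_2)\le\Phi_p(t_1)+\Phi_p(t_2)$, obtained by noting $\Phi_p$ is concave (so subadditive) on $[0,\infty)$ because $\Phi_p(0)=0$ and $\Phi_p$ is concave, then iterating and passing to the limit using monotone convergence; alternatively, prove directly that $\tau\mapsto\Phi_p(\tau)/\tau$ is non-increasing, which yields subadditivity for nonnegative summands.

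For part (i), the key observation is that $1+\tau^{1-p}\ge\max\{1,\tau^{1-p}\}$, hence $\Phi_p(\tau)\le\min\{\tau,\tau^p\}$ for all $\tau\in[0,\infty)$. Given $f\in L^{\Phi_p}(\rn)$ and $\lambda\in(0,\infty)$, this gives
\begin{align*}
\int_{\rn}\Phi_p\lf(\frac{|f(x)|}{\lambda}\r)dx
\le\int_{\rn}\lf(\frac{|f(x)|}{\lambda}\r)dx
=\frac{\|f\|_{L^1(\rn)}}{\lambda}
\end{align*}
and likewise $\le\lambda^{-p}\|f\|_{L^p(\rn)}^p$; choosing $\lambda=\|f\|_{L^1(\rn)}$ (resp. $\lambda=\|f\|_{L^p(\rn)}$) makes the integral $\le1$, so $\|f\|_{L^{\Phi_p}(\rn)}\le\|f\|_{L^1(\rn)}$ and $\le\|f\|_{L^p(\rn)}$, which is \eqref{555}. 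For the comparison of $\|\mathbf1_B\|_{L^{\Phi_p}(\rn)}$, the upper bound $\lesssim\min\{|B|,|B|^{1/p}\}$ is a special case of \eqref{555} (up to the constant coming from "$\le1$" versus "$\sim1$"), and for the lower bound I would use the lower type $p$ property from part (ii): for $\lambda\le\min\{|B|,|B|^{1/p}\}$ one has $\int_B\Phi_p(\lambda^{-1})dx\gtrsim1$ by splitting into the cases $\lambda^{-1}\ge1$ and $\lambda^{-1}<1$ and using $\Phi_p(\tau)\gtrsim\tau$ near $0$ and $\Phi_p(\tau)\gtrsim\tau^p$ near $\infty$ (both from $\Phi_p(\tau)=\tau/(1+\tau^{1-p})$ directly); this forces $\|\mathbf1_B\|_{L^{\Phi_p}(\rn)}\gtrsim\min\{|B|,|B|^{1/p}\}$.

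Finally, the inclusions $h^1(\rn)\subset h^{\Phi_p}(\rn)$ and $h^p(\rn)\subset h^{\Phi_p}(\rn)$ with the norm bound follow at once by applying \eqref{555} to the local radial maximal function: for $\varphi\in\cs(\rn)$ with $\int_\rn\varphi\neq0$ and any $f\in h^1(\rn)\cup h^p(\rn)$,
\begin{align*}
\|f\|_{h^{\Phi_p}(\rn)}=\|m(f,\varphi)\|_{L^{\Phi_p}(\rn)}
\le\min\lf\{\|m(f,\varphi)\|_{L^1(\rn)},\,\|m(f,\varphi)\|_{L^p(\rn)}\r\}
=\min\lf\{\|f\|_{h^1(\rn)},\,\|f\|_{h^p(\rn)}\r\},
\end{align*}
using Definitions \ref{hp}(ii) and \ref{defn-hmo-1}. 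None of these steps is genuinely hard; the only point requiring a little care is the lower bound for $\|\mathbf1_B\|_{L^{\Phi_p}(\rn)}$, where one must handle the two regimes $|B|<1$ and $|B|\ge1$ (equivalently $\lambda^{-1}$ large versus small) separately to see that the relevant minimum is attained, and to pin down that the equivalence constants there do not depend on $B$; this is where I expect to spend the bulk of the (still short) argument.
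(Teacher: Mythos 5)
Your proposal is correct and follows essentially the same route as the paper: part (i) from the pointwise bounds $\Phi_p(\tau)\le\min\{\tau,\tau^p\}$ together with the reverse bounds $\Phi_p(\tau)\ge\tau/2$ on $(0,1)$ and $\ge\tau^p/2$ on $[1,\infty)$, and part (ii) from the type inequalities plus the observation that $\tau\mapsto\Phi_p(\tau)/\tau$ is non-increasing (the paper phrases the subadditivity as $\Phi_p(\sum_j t_j)=\sum_i\frac{t_i}{\sum_j t_j}\Phi_p(\sum_j t_j)\le\sum_i\Phi_p(t_i)$, which is the same mechanism). The only cosmetic difference is in the lower-type bound for $s\in(0,1)$, where the paper compares denominators in the form $\Phi_p(s\tau)=\frac{(s\tau)^p}{1+(s\tau)^{p-1}}$ rather than factoring out $s^p$ as you do; both computations are one line and equivalent.
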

\begin{proof}
Let $p\in(0,1)$.
We first prove (i).
Observe that, for any $\tau\in[0,\infty)$,
$$
\Phi_{p}(\tau)=\dfrac{\tau}{1+\tau^{1-p}}\le\tau
\ \
\text{and}
\ \
\Phi_{p}(\tau)=\dfrac{\tau^p}{1+\tau^{p-1}}\le\tau^p.
$$
From this and Definition \ref{d1.2}, we deduce that, for any
$f\in h^1(\rn)\cup h^p(\rn)$,
$$
\|f\|_{L^{\Phi_p}(\rn)}\le \min \,\left\{\|f\|_{L^{1}(\rn)},\, \|f\|_{L^p(\rn)}\right\}.
$$
This finishes the proof of \eqref{555}.

On the other hand, it is easy to see that,
for any $\tau\in(0,1)$,
$$\tau/2\le\dfrac{\tau}{1+\tau^{1-p}}=\Phi_{p}(\tau)$$
and, for any $\tau\in[1,\infty)$,
$$\tau^p/2\le\dfrac{\tau^p}{1+\tau^{p-1}}=\Phi_{p}(\tau),$$
which, combined with Definition \ref{d1.2} and \eqref{555}, implies that
$$
\min \,\left\{\|\mathbf1_B\|_{L^{1}(\rn)},\, \|\mathbf1_B\|_{L^p(\rn)}\right\}
\sim\|\mathbf1_B\|_{L^{\Phi_p}(\rn)}.
$$

From \eqref{555}, Definitions \ref{defn-hmo-1} and \ref{hp}(ii),
it easily follows that $h^1(\rn)\subset h^{\Phi_p}(\rn)$ and $h^p(\rn)\subset h^{\Phi_p}(\rn)$. Moreover,
for any $f\in h^1(\rn)\cup h^p(\rn)$
\begin{equation*}
\|f\|_{h^{\Phi_p}(\rn)}\le \min \,\left\{\|f\|_{h^{1}(\rn)},\, \|f\|_{h^p(\rn)}\right\},
\end{equation*}
which completes the proof of (i).

Now we prove (ii).
By the definition of $\Phi_p$, we easily conclude that,
for any $\tau\in[0,\infty)$,
\begin{align*}
\Phi_p(s\tau)=
\begin{cases}
\dfrac{(s\tau)^p}{1+(s\tau)^{p-1}}\le
s^p \Phi_p(\tau)&\qquad \textup{when}\; s\in(0,1),
\vspace{0.2cm}\\
\dfrac{s\tau}{1+(s\tau)^{1-p}}\le s\Phi_p(\tau)&\qquad \textup{when}\;
s\in [1,\infty).
\end{cases}
\end{align*}
From this, we deduce
that,
for any sequence $\{t_j\}_{j\in\nn}\subset[0,\,\fz)$,
\begin{align*}
\Phi_p\lf(\sum_{j\in\nn}t_j\r)
=\dsum_{i\in\nn} \lf[\frac{t_i}{\sum_{j\in\nn}t_j}\Phi_p\lf(\sum_{j\in\nn}t_j \r)\r]
\leq \sum_{i\in\nn} \Phi_p(t_i).
\end{align*}
Thus, (ii) holds true. This finishes the proof Lemma \ref{444}.
\end{proof}

\begin{remark}\label{rem-add1}
Let $p\in(0,1)$ and $d\in\zz_+$ be such that $d\ge \lfloor n(1/p-1)\rfloor$.
Then $\Phi_p$
in \eqref{333} is an Orlicz function with positive upper type $p_{\Phi}^+=1$
and positive lower type $p_{\Phi}^-=p$.
From this, Definitions \ref{defn-hmo-1}
and \ref{defn-mocs-1}, it follows that
the local Orlicz Hardy space $h^{\Phi_p}(\rn)$
and the local Orlicz Campanato space $\mathcal{L}_{\loc}^{\Phi_p,1,d}(\rn)$
are well defined.
Moreover, by Lemma \ref{lem-dualMHC}, we know that the dual space of $h^{\Phi_p}(\rn)$ is $\mathcal{L}_{\loc}^{\Phi_p,1,d}(\rn)$.
\end{remark}

Now we recall the notion of local atoms (see, for instance, \cite{g,t}).

\begin{definition}\label{17-defa}
Let $p\in(0,\,1)$, $r\in(1,\infty]$ and $d\in\zz_+$.
Then a measurable function $a$ on $\rn$ is called a \emph{local} $(p,\,r,\,d)${\it-atom}
if there exists a ball $B\subset\rn$ such that
\begin{itemize}
\item[(i)] $\supp a:=\{x\in\rn:\ a(x)\neq0\}\subset B$;
\item[(ii)] $\|a\|_{L^r(\rn)}\le |B|^{1/r-1/p}$;
\item[(iii)] if $|B|<1$, then
$\int_{\rn}a(x)x^\alpha\,dx=0$ for any
$\alpha:=(\alpha_1,\ldots,\alpha_n)\in\zz_+^n$ with $|\alpha|\le d$,
here and thereafter, for any $x:=(x_1,\ldots,x_n)\in\rn$,
$x^\alpha:=x_1^{\alpha_1}\cdots x_n^{\alpha_n}$.
\end{itemize}
\end{definition}

The following lemma is just \cite[p.\,54,\, Lemma 4.1]{Lu95} (see also \cite[p.\,83]{tw}).

\begin{lemma}\label{lem-ep}
Let $d\in\zz_+$. Then there exists
a positive constant $C$ such that, for any $g\in L_{\loc}^{1}(\rn)$ and ball $B\subset\rn$,
$$
\sup_{x\in B}\lf|P_B^dg(x)\r|\le\frac{C}{|B|}\int_B|g(x)|\,dx,
$$
where $P_B^dg$ for any ball $B\subset\rn$ denotes the minimizing polynomial of $g$ on $B$ with degree
not greater than $d$.
\end{lemma}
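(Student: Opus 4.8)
The plan is to reduce the claim to the case of the unit ball $B(0,1)$ by an affine change of variables, and then to exploit the finite-dimensionality of $\cp_d(\rn)$ together with the orthogonality characterization of the minimizing polynomial. Both sides of the desired inequality are invariant under the rescaling $x=x_B+ry$, where $B=B(x_B,r)$: setting $\tilde g(y):=g(x_B+ry)$, one has
$\frac1{|B|}\int_B|g(x)|\,dx=\frac1{|B(0,1)|}\int_{B(0,1)}|\tilde g(y)|\,dy$,
and, since $y\mapsto P(x_B+ry)$ runs over $\cp_d(\rn)$ as $P$ does, the uniqueness clause in the definition of the minimizing polynomial gives $P_B^dg(x_B+ry)=P_{B(0,1)}^d\tilde g(y)$, whence $\sup_{x\in B}|P_B^dg(x)|=\sup_{|y|<1}|P_{B(0,1)}^d\tilde g(y)|$. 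Thus it suffices to prove the estimate when $B=B(0,1)$.

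On $B(0,1)$, fix the monomial basis $\{x^\alpha\}_{|\alpha|\le d}$ of $\cp_d(\rn)$ and write $P_{B(0,1)}^d\tilde g=\sum_{|\alpha|\le d}c_\alpha x^\alpha$. The orthogonality relations $\int_{B(0,1)}[\tilde g(x)-P_{B(0,1)}^d\tilde g(x)]x^\beta\,dx=0$ for all $|\beta|\le d$ become the linear system $\sum_{|\alpha|\le d}c_\alpha\int_{B(0,1)}x^{\alpha+\beta}\,dx=\int_{B(0,1)}\tilde g(x)x^\beta\,dx$. The Gram matrix $(\int_{B(0,1)}x^{\alpha+\beta}\,dx)_{|\alpha|,|\beta|\le d}$ is symmetric and positive definite, because the monomials of degree at most $d$ are linearly independent as functions on $B(0,1)$; hence it is invertible, with the norm of its inverse depending only on $n$ and $d$. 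Since $|x^\beta|\le1$ on $B(0,1)$, the right-hand sides are bounded in absolute value by $\int_{B(0,1)}|\tilde g(x)|\,dx$, so solving the system yields $|c_\alpha|\le C_{(n,d)}\int_{B(0,1)}|\tilde g(x)|\,dx$ for every $\alpha$. Using $|x^\alpha|\le1$ on $B(0,1)$ once more, $\sup_{|x|<1}|P_{B(0,1)}^d\tilde g(x)|\le\sum_{|\alpha|\le d}|c_\alpha|\le C_{(n,d)}\int_{B(0,1)}|\tilde g(x)|\,dx$; undoing the rescaling gives the claimed bound with $C:=C_{(n,d)}|B(0,1)|$.

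There is no serious obstacle here. The only points requiring care are the covariance $P_B^dg(x_B+ry)=P_{B(0,1)}^d\tilde g(y)$ under the affine change of variables (which is precisely where the uniqueness in the definition of $P_B^d$ is used) and the verification that the constant obtained from inverting the Gram matrix depends on $n$ and $d$ alone, and not on the ball. Alternatively, one could work directly on a general ball $B$ by the same normal-equations computation, keeping track of the powers of the radius of $B$; the scale-invariance reduction above merely makes the uniformity of $C$ transparent.
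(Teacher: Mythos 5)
Your proof is correct: the affine covariance of $P_B^d$ (via the uniqueness in its defining orthogonality relations), the reduction to the unit ball, and the inversion of the positive-definite Gram matrix of monomials all go through, and the constant indeed depends only on $n$ and $d$. The paper does not prove this lemma but simply cites it from Lu's book (see also Taibleson--Weiss), and your argument is essentially the standard proof given in those references.
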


Borrowing some ideas from the proof of \cite[Proposition 2.24]{bckly}, we have the following
technical lemma, which
plays a vital role in the proof of Theorem \ref{mainthm1} below.

\begin{proposition}\label{ap}
Let $p\in (0,\,1)$, $\Phi_{p}$ be as in \eqref{333},
$s:=\lfloor n(1/p-1)\rfloor$ and $d\in\zz_+\cap [2s,\,\fz)$.
Assume that $g\in L^\infty(\rn)$ and $a$ is a local $(p,\,2,\,d)$-atom as in Definition
\ref{17-defa}
supported in a ball $B\subset\rn$. Then
$$\|aP_B^s g\|_{h^{\Phi_p}(\rn)}\le C \|g\|_{L^\infty(\rn)},$$
where $P_B^sg$ for any ball $B\subset\rn$ denotes the minimizing polynomial of $g$ on $B$ with degree
not greater than $s$, and the positive constant $C$ is independent of $a$ and $g$.
\end{proposition}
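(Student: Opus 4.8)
The plan is to estimate $\|aP_B^s g\|_{h^{\Phi_p}(\rn)}$ by splitting into the two scales dictated by the local theory: the case $|B|\ge 1$ (where no vanishing moment is needed and $\Phi_p(\tau)\le\tau$ suffices) and the case $|B|<1$ (where we must exploit vanishing moments). In the large-ball case, since $a$ is a local $(p,2,d)$-atom supported in $B$ with $\|a\|_{L^2(\rn)}\le |B|^{1/2-1/p}$ and $P_B^s g$ is bounded on $B$ by Lemma \ref{lem-ep} (hence by $\|g\|_{L^\infty(\rn)}$), the function $aP_B^s g$ is a fixed constant multiple of a local $(p,2,d)$-atom times $\|g\|_{L^\infty(\rn)}$; then by Lemma \ref{444}(i) together with the atomic characterization of $h^p(\rn)$ (from \cite{g}) we get $\|aP_B^s g\|_{h^{\Phi_p}(\rn)}\le\|aP_B^s g\|_{h^p(\rn)}\lesssim\|g\|_{L^\infty(\rn)}$.

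For the harder case $|B|<1$, the idea is to decompose $aP_B^s g$ into a controlled number of (suitably normalized) local atoms and invoke the subadditivity \eqref{xx4} of $\Phi_p$. First I would observe that $aP_B^s g$ is supported in $B$ and, by Cauchy--Schwarz and Lemma \ref{lem-ep},
\begin{align*}
\|aP_B^s g\|_{L^2(\rn)}\le \|a\|_{L^2(\rn)}\,\sup_{x\in B}|P_B^s g(x)|
\lesssim |B|^{1/2-1/p}\,\|g\|_{L^\infty(\rn)},
\end{align*}
so $aP_B^s g$ has the right size to be a multiple of a local $(p,2,d)$-atom up to $\|g\|_{L^\infty(\rn)}$; the only thing that can fail is the vanishing-moment condition up to order $d$. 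Since $a$ has vanishing moments up to order $d\ge 2s$ and $P_B^s g$ is a polynomial of degree at most $s$, the product $aP_B^s g$ automatically has vanishing moments up to order $d-s\ge s$. Thus $aP_B^s g$ is already (a constant times $\|g\|_{L^\infty(\rn)}$ times) a local $(p,2,s)$-atom in the sense that it satisfies support, size, and $\int x^\gamma aP_B^s g\,dx=0$ for $|\gamma|\le s=\lfloor n(1/p-1)\rfloor$. That is exactly the moment order needed for the atomic characterization of $h^p(\rn)$, hence of $h^{\Phi_p}(\rn)$ via Lemma \ref{444}(i); so $\|aP_B^s g\|_{h^{\Phi_p}(\rn)}\lesssim\|aP_B^s g\|_{h^p(\rn)}\lesssim\|g\|_{L^\infty(\rn)}$.

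Combining the two cases gives the claim. The main obstacle — and the place where care is genuinely needed — is verifying that the product $aP_B^s g$ genuinely satisfies all three atom conditions simultaneously with the \emph{correct} moment order, i.e. that the choice $d\ge 2s$ is what makes $d-s\ge s=\lfloor n(1/p-1)\rfloor$, so that $aP_B^s g$ qualifies as an atom for the Hardy space at exponent $p$ (not merely at some larger exponent). A secondary technical point is the uniform bound on $\sup_{x\in B}|P_B^s g(x)|$ by $\|g\|_{L^\infty(\rn)}$, which follows from Lemma \ref{lem-ep} since $\frac{1}{|B|}\int_B|g(x)|\,dx\le\|g\|_{L^\infty(\rn)}$, and, if one prefers to avoid citing the full atomic decomposition of $h^{\Phi_p}(\rn)$, one can instead argue directly via the local radial maximal function, estimating $m(aP_B^s g,\varphi)$ near and far from $B$ and using that $\Phi_p$ has lower type $p$ and upper type $1$ (Lemma \ref{444}(ii)); but the atomic route through $h^p(\rn)$ and Lemma \ref{444}(i) is the cleanest.
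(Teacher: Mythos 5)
Your proposal is correct and follows essentially the same route as the paper: bound $\sup_{x\in B}|P_B^s g(x)|$ by $\|g\|_{L^\infty(\rn)}$ via Lemma \ref{lem-ep}, observe that $aP_B^s g$ is then a constant multiple of a local $(p,2,d-s)$-atom with $d-s\ge\lfloor n(1/p-1)\rfloor$, and conclude through the atomic characterization of $h^p(\rn)$ together with the embedding $\|\cdot\|_{h^{\Phi_p}(\rn)}\le\|\cdot\|_{h^p(\rn)}$ (equivalently, $\Phi_p(\tau)\le\tau^p$). The only cosmetic difference is that you split the cases $|B|\ge1$ and $|B|<1$ explicitly, whereas the paper absorbs both into the single statement that $\widetilde a:=\widetilde C^{-1}aP_B^sg$ is a local $(p,2,d-s)$-atom, since the moment condition in Definition \ref{17-defa} is only imposed when $|B|<1$.
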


\begin{proof}
Let $p\in (0,\,1)$, $s:=\lfloor n(1/p-1)\rfloor$
and $g\in L^\infty(\rn)$. Let $d\in\zz_+\cap [2s,\,\fz)$, $a$ be a local $(p,\,2,\,d)$-atom as in Definition
\ref{17-defa}
supported in a ball $B\subset\rn$.
Without loss of generality, we may assume that $\|g\|_{L^\infty(\rn)}=1$.
From this and Lemma \ref{lem-ep}, it follows that
\begin{equation}\label{2w}
\sup_{x\in B}|P_B^s g(x)|\lesssim\dashint_B |g(y)|\,dy\lesssim1.
\end{equation}
By the definition of $h^{\Phi_p}(\rn)$
and \eqref{333}, to show $\|aP_B^s g\|_{h^{\Phi_p}(\rn)}\lesssim1$,
it suffices to prove that there exists a positive constant $c$, independent of $a$
and $g$, such that
\begin{equation}\label{1w}
\int_{\rn}\Phi_p\lf(\frac{m(a P_B^s g,\varphi)(x)}{c}\r)\,dx=
\int_{\rn}\frac{m(a P_B^s g,\varphi)(x)/c}{1+[m(a P_B^s g,\varphi)(x)/c]^{1-p}}\,dx
\leq1,
\end{equation}
where
$m(\cdot,\varphi)$ is as in Definition \ref{defn-hmo-1}.
Moreover, from an elementary calculation, we deduce that \eqref{1w} holds true provided that
\begin{equation}\label{3w}
\int_{\rn}[m(a P_B^s g,\varphi)(x)]^p\,dx
\leq c^p.
\end{equation}
Next, we show \eqref{3w}.
By \eqref{2w} and the assumption that $a$ is a local $(p,2, d)$-atom supported in the ball
$B$, we
find that $\widetilde{a}:= \frac{1}{\widetilde{C}}a P_B^s g$ is a local $(p,2, d-s)$-atom for
some $\widetilde{C}\in(0,\infty)$.
From this, $d-s\ge\lfloor n(1/p-1)\rfloor$ and
the atomic characterization of $h^{p}(\rn)$ (see \cite[Lemma 5]{g} or \cite[Theorem 5.1]{t}),
it follows that
$$
\int_{\rn}[m(a P_B^s g,\varphi)(x)]^p\,dx=
\|a P_B^s g\|_{h^{p}(\rn)}\lesssim\|\widetilde{a}\|_{h^{p}(\rn)}\lesssim1,
$$
which implies that \eqref{3w} holds true and hence \eqref{1w} holds true.
This finishes the proof of Proposition \ref{ap}.
\end{proof}

\section{Pointwise multipliers of local Campanato spaces $\Lambda_{n(1/p-1)}(\mathbb{R}^n)$ 
with $p\in (0,1)$}\label{s3}

In this section, we
characterize pointwise multipliers of $\Lambda_{n\alpha}(\mathbb{R}^n)$
for any $\az\in(0,\infty)$ (see Theorem \ref{thm-main2} below).
As a consequence of Theorem \ref{thm-main2}, we show that elements in $\cs(\rn)$ belong
to the class of pointwise multipliers of $\Lambda_{n\alpha}(\mathbb{R}^n)$, which guarantees that the
product space $h^p(\rn)\times \Lambda_{n\alpha}(\mathbb{R}^n)$ is well defined.

Recall that, for any quasi-Banach space $X$ equipped with a
quasi-norm $\|\cdot\|_X$, a function $g$ defined on $\rn$ is called a \emph{pointwise multiplier} on $X$
if there exists a positive constant $C$ such that, for any $f\in X$, $\|gf\|_{X}\le C\|f\|_X$
(see, for instance, \cite{n17,ns}).

\begin{theorem} \label{thm-main2}
Let $p\in(0,\,1)$, $\az:=1/p-1$
and $\Phi_p$ be as in \eqref{333}. Let $\mathcal{L}_{\loc}^{\Phi_p}(\rn)$ be as in Definition
\ref{defn-mocs-1} with $\Phi$ replaced by $\Phi_p$.
Then the following assertions are equivalent:
\begin{itemize}
\item[{\rm(i)}]
$g\in L^\infty(\rn)\cap \mathcal{L}_{\loc}^{\Phi_p}(\rn)$;

\item[{\rm(ii)}] $g$ is a pointwise multiplier of
$\Lambda_{n\alpha}(\rn)$ and, for any  $f\in \Lambda_{n\alpha}(\rn)$,
$$\|gf\|_{\Lambda_{n\alpha}(\rn)}
\le C \lf[\|g\|_{L^\infty(\rn)}+
\|g\|_{\mathcal{L}_{\loc}^{\Phi_p}(\rn)}\r]
\|f\|_{\Lambda_{n\alpha}(\rn)},$$
where $C$ is a positive constant independent of $f$ and $g$.
\end{itemize}
\end{theorem}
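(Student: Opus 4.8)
The plan is to establish the two directions (i) $\Rightarrow$ (ii) and (ii) $\Rightarrow$ (i) separately, using the identification $(h^{\Phi_p}(\rn))^*=\mathcal{L}_{\loc}^{\Phi_p}(\rn)$ from Lemma \ref{lem-dualMHC} (via Remark \ref{rem-add1}) together with the already known description of pointwise multipliers on the Lipschitz/Campanato scale, e.g.\ \cite[(2.115)]{gg} or the framework of Nakai--Yabuta \cite{NY85,ns}. Recall that, by Lemma \ref{GR-DDY}, $\Lambda_{n\az}(\rn)=\mathcal{L}_{\loc}^{\az,1,\lfloor n\az\rfloor}(\rn)$ with equivalent norms, so throughout I will work with the Campanato realization and reduce the multiplier estimate to ball-by-ball mean-oscillation estimates. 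Fix a ball $B\subset\rn$ and a polynomial viewpoint: for $|B|\ge1$ one controls $\dashint_B|gf|$ trivially by $\|g\|_{L^\infty(\rn)}\dashint_B|f|$, and the genuine content is in the small balls $|B|<1$, where one must compare $gf$ with a suitable polynomial of degree $\le\lfloor n\az\rfloor$.

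For (i) $\Rightarrow$ (ii): given $f\in\Lambda_{n\az}(\rn)$ and a ball $B$ with $|B|<1$, write, with $s:=\lfloor n\az\rfloor$,
\begin{align*}
g(x)f(x)-P_B^s(gf)(x)
&=\big[g(x)-P_B^s g(x)\big]f(x)
+P_B^s g(x)\big[f(x)-P_B^s f(x)\big]\\
&\quad+\Big\{P_B^s g(x)\,P_B^s f(x)-P_B^s(gf)(x)\Big\}.
\end{align*}
The first term is handled by $\|f\|_{L^\infty(\rn)}$ times the $\mathcal{L}_{\loc}^{\Phi_p}$-oscillation of $g$, after recalling from Lemma \ref{444}(i) that $\|\mathbf1_B\|_{L^{\Phi_p}(\rn)}\sim|B|^{1/p}=|B|^{\az}|B|$ for $|B|<1$, which is exactly the normalization matching $|B|^{1+\az}$ in the Campanato norm of $\Lambda_{n\az}(\rn)$. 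The second term is controlled by $\sup_{x\in B}|P_B^s g(x)|\lesssim\|g\|_{L^\infty(\rn)}$ (Lemma \ref{lem-ep}) times the oscillation of $f$, i.e.\ by $\|g\|_{L^\infty(\rn)}\|f\|_{\Lambda_{n\az}(\rn)}$. The third term is a product-of-polynomials correction; since $P_B^s g\,P_B^s f$ is a polynomial (of degree $\le 2s$) whose best degree-$s$ approximation on $B$ is $P_B^s(P_B^s g\,P_B^s f)$, one uses that $gf$ and $P_B^s g\,P_B^s f$ differ by the first two (already estimated) terms, and a standard polynomial-comparison argument (Lemma \ref{lem-ep} applied to differences, plus the equivalence \eqref{f-encs} allowing infimum over all polynomials) closes the bound. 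One also needs the $L^\infty$-bound $\|gf\|_{L^\infty(\rn)}\le\|g\|_{L^\infty(\rn)}\|f\|_{L^\infty(\rn)}$, which is immediate. Combining gives (ii).

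For (ii) $\Rightarrow$ (i): first, testing the multiplier property against the constant function $f\equiv1\in\Lambda_{n\az}(\rn)$ yields $g=g\cdot1\in\Lambda_{n\az}(\rn)\subset L^\infty(\rn)$, so the boundedness is automatic. The real point is $g\in\mathcal{L}_{\loc}^{\Phi_p}(\rn)$, which by Remark \ref{rem-add1} and Lemma \ref{lem-dualMHC} is the same as $g\in(h^{\Phi_p}(\rn))^*$. Here I would use duality: by Proposition \ref{ap}, for every local $(p,2,d)$-atom $a$ (with $d\ge 2s$) supported in a ball $B$, the function $aP_B^s g$ lies in $h^{\Phi_p}(\rn)$ with norm $\lesssim\|g\|_{L^\infty(\rn)}$; but then pairing $g$ against the building block $a$ and using the multiplier hypothesis (which controls $\|g\cdot(\text{Lipschitz test function})\|$) allows one to bound the oscillation quantities $\frac{1}{\|\mathbf1_B\|_{L^{\Phi_p}(\rn)}}\int_B|g-P_B^s g|$ uniformly over $B$. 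Concretely, for a fixed small ball $B$ one chooses $f$ to be a normalized bump in $\Lambda_{n\az}(\rn)$ adapted to $B$ that "resolves" the sign of $g-P_B^s g$ on $B$ (a Lipschitz-regularized version of $|B|^{\az}\,\mathrm{sgn}(g-P_B^s g)\mathbf1_B$ with the right moments), computes $\|gf\|_{\Lambda_{n\az}(\rn)}\gtrsim\frac{1}{|B|^{1+\az}}\int_B|g-P_B^s g|$, and invokes (ii) to dominate this by a constant multiple of $\|g\|_{L^\infty(\rn)}\|f\|_{\Lambda_{n\az}(\rn)}\lesssim\|g\|_{L^\infty(\rn)}$; the large-ball term is analogous and simpler.

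The main obstacle, and the step that needs the most care, is the polynomial bookkeeping in the third term of the decomposition in (i) $\Rightarrow$ (ii) — ensuring that the degree-$\le 2s$ polynomial $P_B^s g\,P_B^s f$ does not destroy the $\Lambda_{n\az}$-estimate, which is exactly why the atom in Proposition \ref{ap} is required to have the higher vanishing-moment order $d\ge 2s$ — and, in the converse direction, the explicit construction of the Lipschitz test bump with controlled $\Lambda_{n\az}(\rn)$-norm and the correct moment conditions that isolates $\int_B|g-P_B^s g|$. Both are technical but follow the pattern of Nakai--Yabuta \cite{NY85} and of \cite[Proposition 2.24]{bckly}; I expect to borrow those arguments essentially verbatim, adjusting the normalization constants to the local setting $|B|<1$ versus $|B|\ge1$.
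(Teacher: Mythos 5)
Your (i)~$\Rightarrow$~(ii) half is essentially the paper's argument: the paper likewise reduces to comparing $gf$ on a small ball $B$ with the degree-$2s$ polynomial $P_B^sf\,P_B^sg$, estimating $|gf-P_B^sf\,P_B^sg|\le|g|\,|f-P_B^sf|+|P_B^sf|\,|g-P_B^sg|$ and using Lemma \ref{lem-ep} together with $\|\mathbf1_B\|_{L^{\Phi_p}(\rn)}\le\|\mathbf1_B\|_{L^{p}(\rn)}$ from Lemma \ref{444}(i). One caution: your third term $P_B^sg\,P_B^sf-P_B^s(gf)$ is best not introduced at all. Estimating it directly is not routine, since the degree-$(s{+}1)$ through degree-$2s$ part of $P_B^sg\,P_B^sf$ deviates from its own degree-$s$ projection by an amount that the crude bounds of Lemma \ref{lem-ep} control only by $\|g\|_{L^\infty(\rn)}\|f\|_{L^\infty(\rn)}$, which is not $O(|B|^\az)$; recovering the missing power of $|B|$ would force you to re-derive refined coefficient estimates for $P_B^sf$ and $P_B^sg$. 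The paper sidesteps this by never forming $P_B^s(gf)$: it concludes $gf\in\mathcal{L}_{\loc}^{\az,1,2s}(\rn)$ (oscillation measured against the degree-$2s$ competitor, via \eqref{f-encs} with $d=2s$) and then uses that this space coincides with $\Lambda_{n\az}(\rn)$ by Remark \ref{r11}(ii) and Lemma \ref{GR-DDY}. Since you must invoke this same degree-independence anyway, the third term buys you nothing.

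The genuine problem is in your (ii)~$\Rightarrow$~(i). The first step, testing against $f\equiv1$ to get $g\in\Lambda_{n\az}(\rn)\subset L^\infty(\rn)$, is fine (the paper instead quotes the known fact that the multiplier class of $\Lambda_{n\az}(\rn)$ is $\Lambda_{n\az}(\rn)$ itself). But from that point the conclusion is immediate, and your duality/test-bump construction is both unnecessary and unworkable as described. Unnecessary: once $g\in\Lambda_{n\az}(\rn)$, Lemma \ref{444}(i) gives $\|\mathbf1_B\|_{L^{\Phi_p}(\rn)}\sim|B|^{1/p}$ for $|B|<1$ and $\sim|B|$ for $|B|\ge1$, so the two suprema defining $\|g\|_{\mathcal{L}_{\loc}^{\Phi_p}(\rn)}$ are dominated by $\|g\|_{\mathcal{L}_{\loc}^{\az,1,s}(\rn)}$ and $\|g\|_{L^\infty(\rn)}$ respectively; this is exactly \eqref{rrr} and \eqref{r9} in the paper, and neither Proposition \ref{ap} nor any pairing with atoms enters. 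Unworkable: a Lipschitz regularization of $|B|^\az\,\mathrm{sgn}(g-P_B^sg)\mathbf1_B$ at scale $r_B$ cannot resolve the sign of the rough function $g-P_B^sg$ pointwise, and even formally the lower bound you want, $\|gf\|_{\Lambda_{n\az}(\rn)}\gtrsim|B|^{-1-\az}\int_B|g-P_B^sg|$, does not follow: the $\Lambda_{n\az}$-norm of $gf$ controls the oscillation of the product against its \emph{own} best polynomial, not the integral of $|g-P_B^sg|$ weighted by $f$, and the factor $|B|^\az$ in your normalization of $f$ loses precisely the power of $|B|$ you need. Replace this step by the direct norm comparison above and the proof closes.
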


\begin{proof}
Let $p\in(0,\,1)$, $\az:=1/p-1$ and $s:=\lfloor n(1/p-1)\rfloor$.
We first show (i)$\,\Rightarrow\,$(ii).
Let $g\in L^\infty(\rn)\cap \mathcal{L}_{\loc}^{\Phi_p}(\rn)$, $f\in \Lambda_{n\alpha}(\rn)$
and $B\subset\rn$ be a ball satisfying $|B|<1$.
Then we have
\begin{align}\label{2r}
\lf|gf-P_B^sfP_B^s g\r|
&\le \lf|g\r|\lf|f-P_B^sf\r|\,+ \lf|P_B^sf\r|\lf|g-P_B^sg\r|,
\end{align}
where $P_B^sg$ and $P_B^sf$ for any ball $B\subset\rn$ denote the minimizing polynomials of $g$ and $f$ on $B$ with degree
not greater than $s$, respectively.
From Lemma \ref{lem-ep}, it follows that
\begin{align}\label{1r}
\sup_{x\in B}|P_B^sf(x)|\ls
\dashint_B |f(y)|\,dy\ls \|f\|_{L^\infty(\rn)}.
\end{align}
Moreover, by \eqref{555} with $f:=\mathbf1_B$,
we conclude that
$\|\mathbf1_B\|_{L^{\Phi_p}(\rn)}
\le\|\mathbf1_B\|_{L^{p}(\rn)}.$
From this, \eqref{2r}, \eqref{1r}, Definition \ref{campa} and Lemma \ref{GR-DDY}, we deduce that
\begin{align}\label{r5}
&\dashint_B \lf|g(x)f(x)-P_B^sf(x)P_B^s g(x)\r|\, dx\\ \noz
&\quad\ls \|g\|_{L^\infty(\rn)}\dashint_B \lf|f(x)-P_B^sf(x)\r|\,dx\, +
 \|f\|_{L^\infty(\rn)} \dashint_B {|g(x)-P_B^sg(x)|\,dx}\\\noz
&\quad\ls |B|^\alpha \lf[\|g\|_{L^\infty(\rn)}
\|f\|_{\mathcal{L}_{\loc}^{\alpha}(\rn)}
+ \|f\|_{L^\infty(\rn)} \frac{1}{|B|^{1/p}}\int_B |g(x)-P_B^sg(x)|\,dx\r]\\\noz
&\quad\ls |B|^\alpha \lf[\|g\|_{L^\infty(\rn)}
\|f\|_{\mathcal{L}_{\loc}^{\alpha}(\rn)}
+ \|f\|_{L^\infty(\rn)}
\frac{1}{\|\mathbf1_B\|_{L^{\Phi_p}(\rn)}}\int_B |g(x)-P_B^sg(x)|\,dx\r]\\\noz
&\quad\ls |B|^\alpha
\lf[\|g\|_{L^\infty(\rn)}+\|g\|_{\mathcal{L}
_{\loc}^{\Phi_p}(\rn)}\r]
\|f\|_{\Lambda_{n\alpha}(\mathbb{R}^n)}.
\end{align}
By Lemma \ref{GR-DDY} again, we obtain,
for any ball $B\subset\rn$ with $|B|\geq1$,
\begin{align}\label{r4}
\dashint_B \lf|g(x)f(x)\r|\, dx&
\leq |B|^\alpha\frac{1}{|B|^{1/p}}\int_B \lf|f(x)\r|\,dx\, \|g\|_{L^\infty(\rn)}
\leq |B|^\alpha\|f\|_{\mathcal{L}_{\loc}^{\alpha}(\rn)}\|g\|_{L^\infty(\rn)}\\\noz
&\sim|B|^\alpha \|g\|_{L^\infty(\rn)}
\|f\|_{\Lambda_{n\alpha}(\mathbb{R}^n)}.
\end{align}
Observe that $P_B^sfP_B^s g\in\cp_{2s}(\rn)$.
From this, \eqref{f-encs}, \eqref{r5}, \eqref{r4} and Definition \ref{campa},
it follows that
$gf\in\mathcal{L}_{\loc}^{\alpha,1,2s}(\rn)$ and
$$
\|gf\|_{\mathcal{L}_{\loc}^{\alpha,1,2s}(\rn)}
\ls
[\|g\|_{L^\infty(\rn)}+\|g\|_{\Lambda_{n\alpha}(\mathbb{R}^n)}]
\|f\|_{\mathcal{L}_{\loc}^{\alpha}(\rn)},$$
which, together with Lemma \ref{GR-DDY}, implies that
\begin{align*}
\|gf\|_{\Lambda_{n\alpha}(\mathbb{R}^n)}
\ls[\|g\|_{L^\infty(\rn)}+\|g\|_{\Lambda_{n\alpha}(\mathbb{R}^n)}]
\|f\|_{\mathcal{L}_{\loc}^{\alpha}(\rn)}.
\end{align*}
This finishes the proof that (i)$\,\Rightarrow\,$(ii).

As for (ii)$\,\Rightarrow\,$(i), since the pointwise
multiplier space of $\Lambda_{n\alpha}(\mathbb{R}^n)$ is itself
(see, for instance, \cite[(2.115)]{gg}), we only need to prove that
$\Lambda_{n\alpha}(\mathbb{R}^n)\subset L^\infty(\rn)\cap \mathcal{L}_{\loc}^{\Phi_p}(\rn)$
and $\|\cdot\|_{L^{\infty}(\rn)}
+\|\cdot\|_{\mathcal{L}_{\loc}^{\Phi_p}(\rn)}
\lesssim \|\cdot\|_{\Lambda_{n\alpha}(\mathbb{R}^n)}$.
Let $g\in\Lambda_{n\alpha}(\mathbb{R}^n)$.
It is easy to know that
$\|g\|_{L^{\infty}(\rn)}
\le\|g\|_{\Lambda_{n\alpha}(\mathbb{R}^n)}$. Next, we show that $\|g\|_{\mathcal{L}_{\loc}^{\Phi_p}(\rn)}
\lesssim\|g\|_{\Lambda_{n\alpha}(\mathbb{R}^n)}$.

From Lemmas \ref{GR-DDY} and \ref{444}(i), and Definition \ref{campa},
we deduce that, for any ball $B\subset\rn$ satisfying $|B|<1$,
\begin{align}\label{rrr}
\frac{1}{\|\mathbf1_B\|_{L^{\Phi_p}(\rn)}}\int_B |g(x)-P_B^sg(x)|\,dx
&\lesssim\frac{1}{\|\mathbf1_B\|_{L^{p}(\rn)}}\int_B |g(x)-P_B^sg(x)|\,dx\\ \noz
&\lesssim\|g\|_{\mathcal{L}_{\loc}^{\az,1,s}(\rn)}\sim\|g\|_{\Lambda_{n\alpha}(\mathbb{R}^n)}.
\end{align}
By Lemma \ref{444}(i) again, we conclude that, for any ball $B\subset\rn$ satisfying $|B|\geq1$,
\begin{align}\label{r9}
\frac{1}{\|\mathbf1_B\|_{L^{\Phi_p}(\rn)}}\int_B |g(x)|\,dx
\lesssim \frac{1}{|B|}\int_B \lf|g(x)\r|\,dx\lesssim
\|g\|_{L^\infty(\rn)}\lesssim\|g\|_{\Lambda_{n\alpha}(\mathbb{R}^n)}.
\end{align}
Combining \eqref{rrr} and \eqref{r9}, we obtain
$\|g\|_{\mathcal{L}_{\loc}^{\Phi_p}(\rn)}
\lesssim \|g\|_{\Lambda_{n\alpha}(\mathbb{R}^n)}$, which completes
the proof that (ii)$\,\Rightarrow\,$(i) and hence of Theorem \ref{thm-main2}.
\end{proof}

\begin{lemma}\label{schwartz}
Let $p\in(0,\,1)$ and $\Phi_{p}$ be as in \eqref{333}. Then $ \mathcal{S}(\rn)$ embeds continuously into $\mathcal{L}_{\loc}^{\Phi_p}(\rn)$.
\end{lemma}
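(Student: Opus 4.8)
The plan is to show that $\mathcal{S}(\rn)$ embeds continuously into $\mathcal{L}_{\loc}^{\Phi_p}(\rn)$ by estimating, for an arbitrary $g\in\mathcal{S}(\rn)$, the two suprema defining $\|g\|_{\mathcal{L}_{\loc}^{\Phi_p}(\rn)}$ in terms of finitely many Schwartz seminorms of $g$. Recall from Remark \ref{rem-add1} that $\Phi_p$ has positive lower type $p$ and upper type $1$, so $\mathcal{L}_{\loc}^{\Phi_p}(\rn)=\mathcal{L}_{\loc}^{\Phi_p,1,s}(\rn)$ with $s:=\lfloor n(1/p-1)\rfloor$, and by Lemma \ref{444}(i) we have $\|\mathbf1_B\|_{L^{\Phi_p}(\rn)}\sim\min\{|B|,|B|^{1/p}\}$, i.e. $\|\mathbf1_B\|_{L^{\Phi_p}(\rn)}\sim|B|$ when $|B|\ge1$ and $\sim|B|^{1/p}$ when $|B|<1$.

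First I would handle the large-ball part: for any ball $B\subset\rn$ with $|B|\ge1$, since $g\in\mathcal{S}(\rn)\subset L^\infty(\rn)$,
\begin{align*}
\frac1{\|\mathbf1_B\|_{L^{\Phi_p}(\rn)}}\int_B|g(x)|\,dx
\lesssim\frac1{|B|}\int_B|g(x)|\,dx\le\|g\|_{L^\infty(\rn)},
\end{align*}
and $\|g\|_{L^\infty(\rn)}$ is controlled by a Schwartz seminorm of $g$. For the small-ball part, fix a ball $B:=B(x_B,r)$ with $|B|<1$, hence $r<c_n$ for a dimensional constant. Using the Taylor polynomial $T$ of $g$ at $x_B$ of degree $s$ as a competitor in the equivalent formulation \eqref{f-encs} (the infimum over $P\in\mathcal{P}_s(\rn)$ is no larger than the value at $P=T$), Taylor's theorem with remainder gives, for $x\in B$,
\begin{align*}
|g(x)-T(x)|\lesssim r^{s+1}\sup_{|\gamma|=s+1}\ \sup_{y\in B}|\partial^\gamma g(y)|
\lesssim r^{s+1}\sup_{|\gamma|=s+1}\|\partial^\gamma g\|_{L^\infty(\rn)},
\end{align*}
so that
\begin{align*}
\frac1{\|\mathbf1_B\|_{L^{\Phi_p}(\rn)}}\int_B|g(x)-T(x)|\,dx
\lesssim\frac1{|B|^{1/p}}\cdot|B|\cdot r^{s+1}\sup_{|\gamma|=s+1}\|\partial^\gamma g\|_{L^\infty(\rn)}
\sim r^{\,n(1-1/p)+s+1}\sup_{|\gamma|=s+1}\|\partial^\gamma g\|_{L^\infty(\rn)}.
\end{align*}
Since $s=\lfloor n(1/p-1)\rfloor$, the exponent $n(1-1/p)+s+1=(s+1)-n(1/p-1)>0$, so $r^{\,n(1-1/p)+s+1}\le c_n^{\,n(1-1/p)+s+1}$ stays bounded over all $B$ with $|B|<1$; moreover $\sup_{|\gamma|=s+1}\|\partial^\gamma g\|_{L^\infty(\rn)}$ is again dominated by finitely many Schwartz seminorms of $g$. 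Taking the supremum over all such $B$ and combining with the large-ball estimate yields $\|g\|_{\mathcal{L}_{\loc}^{\Phi_p}(\rn)}\lesssim\|g\|_{L^\infty(\rn)}+\sup_{|\gamma|=s+1}\|\partial^\gamma g\|_{L^\infty(\rn)}$, which is a continuous seminorm on $\mathcal{S}(\rn)$; this is the desired continuous embedding.

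The only mild subtlety — not really an obstacle — is making sure the Taylor remainder estimate is applied with the right degree so that the resulting power of $r$ is strictly positive; this is exactly where the choice $s=\lfloor n(1/p-1)\rfloor$ matters, since it guarantees $s+1>n(1/p-1)$, and it is also why the minimizing polynomial $P_B^sg$ in Definition \ref{defn-mocs-1} can be freely replaced by the Taylor polynomial via \eqref{f-encs}. Everything else is a routine bookkeeping of Schwartz seminorms, and the local (rather than homogeneous) nature of $\mathcal{L}_{\loc}^{\Phi_p}(\rn)$ is precisely what makes the large balls harmless, since there only the $L^\infty$ size of $g$ enters.
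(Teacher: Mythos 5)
Your proof is correct and follows essentially the same route as the paper's: the large balls are handled by the trivial $L^\infty$ bound, and the small balls by comparing $g$ with its degree-$s$ Taylor polynomial at the center and using that $s+1>n(1/p-1)$ makes the resulting power of the radius positive. The only cosmetic difference is that you justify replacing the minimizing polynomial by the Taylor polynomial via the equivalent norm \eqref{f-encs} (which, strictly speaking, is stated for $\mathcal{L}_{\loc}^{\alpha,r,d}(\rn)$ but transfers to the Orlicz version by Lemma \ref{lem-ep}), whereas the paper makes this substitution implicitly.
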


\begin{proof}
Let $p\in(0,\,1)$, $\az:=1/p-1$ and $g\in \mathcal{S}(\rn)$.
By the fact that $g$ is bounded, and Lemma \ref{444}(i),
we conclude that, for any ball $B\subset\rn$ satisfying $|B|\geq1$,
\begin{align}\label{r2}
\frac{1}{\|\mathbf1_B\|_{L^{\Phi_p}(\rn)}}\int_B |g(y)|\,dy
\lesssim \frac{1}{|B|}\int_B \lf|g(y)\r|\,dy\lesssim
\sup_{y\in\rn}\lf|g(y)\r|.
\end{align}
Let $s:=\lfloor n\az\rfloor$ and $B:=B(c_B,\,r_B)\subset\rn$ with $c_B\in \rn$ and $r_B\in(0,\,1)$.
For any $y\in\rn$, let
$$
P(y):=\sum_{|\beta|\le s}
\frac{\partial^\beta g(c_B)}{\beta!}\lf(y-c_B\r)^\beta,
$$
here and thereafter, for any $\beta:=(\beta_1,\ldots,\beta_n)\in\zz_+^n$
and $x\in\rn$,
$|\beta|:=\beta_1+\cdots+\beta_n$ and
$\partial^\beta:=(\frac{\partial}{\partial x_1})^{\beta_1}
\cdots(\frac{\partial}{\partial x_n})^{\beta_n}$.
From the Taylor remainder theorem, we deduce that
\begin{align*}
\sup_{y\in B}\lf|g(y)-P(y)\r|\ls\sup_{y\in B}\sum_{\beta\in\zz_+^n,|\beta|=s+1}
\lf|\frac{\partial^\beta g(\xi(y))}{\beta!}(y-c_B)^\beta\r|
\ls\sum_{\beta\in\zz_+^n,|\beta|=s+1}
\sup_{y\in\rn}\lf|\partial^\beta g(y)\r|r_B^{s+1},
\end{align*}
where $\xi(y):=y+\theta(c_B-y)$ for some $\theta\in[0,1]$.
By this and
Lemma \ref{444}(i), we conclude that
\begin{align*}
\frac1{\|\mathbf1_B\|_{L^{\Phi_p}(\rn)}}
\int_B \lf|g(y)-P(y)\r|\,dy&\ls\sum_{\beta\in\zz_+^n,|\beta|=s+1}
\sup_{y\in\rn}\lf|\partial^\beta g(y)\r|\frac{r_B^{s+1}}{|B|^{1/p-1}}\\
&\ls\sum_{\beta\in\zz_+^n,|\beta|=s+1}
\sup_{y\in\rn}\lf|\partial^\beta g(y)\r|.
\end{align*}
Combining this and \eqref{r2}, we obtain
\begin{align}\label{h3}
\|g\|_{\mathcal{L}_{\loc}^{\Phi_p}(\rn)}
\lesssim \sum_{\beta\in\zz_+^n,|\beta|=s+1}
\sup_{y\in\rn}\lf|\partial^\beta g(y)\r|+\sup_{y\in\rn}\lf|g(y)\r|,
\end{align}
which implies that $g\in\mathcal{L}_{\loc}^{\Phi_p}(\rn)$ and hence
$\mathcal{S}(\rn)\subset\mathcal{L}_{\loc}^{\Phi_p}(\rn)$.
Moreover, from \eqref{h3},
it easily follows that the embedding $\mathcal{S}(\rn)\subset\mathcal{L}_{\loc}^{\Phi_p}(\rn)$ is continuous.
This finishes the proof of Lemma \ref{schwartz}.
\end{proof}

\begin{corollary}\label{cor-pwm}
Let $\alpha\in(0,\infty)$. Then, for any $g\in \mathcal{S}(\rn)$,
$g$ is a pointwise multiplier of $\Lambda_{n\alpha}(\mathbb{R}^n)$.
\end{corollary}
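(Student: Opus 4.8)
The plan is to derive Corollary \ref{cor-pwm} as an immediate consequence of the equivalence established in Theorem \ref{thm-main2}, using Lemma \ref{schwartz} to verify the hypothesis (i) of that theorem for Schwartz functions. More precisely, fix $\alpha\in(0,\infty)$. The first step is to reduce to the range of exponents covered by Theorem \ref{thm-main2}: set $p:=1/(\alpha+1)$, so that $p\in(0,1)$ and $\alpha=1/p-1$, and let $\Phi_p$ be as in \eqref{333}. This identifies the given index $\alpha$ with the parameter appearing in Theorem \ref{thm-main2}, and makes $\Lambda_{n\alpha}(\rn)$ exactly the space whose pointwise multipliers are characterized there.

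Next I would verify that any $g\in\mathcal{S}(\rn)$ satisfies condition (i) of Theorem \ref{thm-main2}, namely $g\in L^\infty(\rn)\cap\mathcal{L}_{\loc}^{\Phi_p}(\rn)$. Boundedness is clear since Schwartz functions are bounded. Membership in $\mathcal{L}_{\loc}^{\Phi_p}(\rn)$ is precisely the content of Lemma \ref{schwartz}, which even gives the continuous embedding $\mathcal{S}(\rn)\hookrightarrow\mathcal{L}_{\loc}^{\Phi_p}(\rn)$ and hence the quantitative bound \eqref{h3}. With condition (i) in hand, the implication (i)$\,\Rightarrow\,$(ii) of Theorem \ref{thm-main2} immediately yields that $g$ is a pointwise multiplier of $\Lambda_{n\alpha}(\rn)$, with
$$
\|gf\|_{\Lambda_{n\alpha}(\rn)}\le C\lf[\|g\|_{L^\infty(\rn)}+\|g\|_{\mathcal{L}_{\loc}^{\Phi_p}(\rn)}\r]\|f\|_{\Lambda_{n\alpha}(\rn)}
$$
for all $f\in\Lambda_{n\alpha}(\rn)$, which is exactly the assertion of the corollary.

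There is essentially no obstacle here: the corollary is a direct corollary in the literal sense, combining Theorem \ref{thm-main2} with Lemma \ref{schwartz}. The only minor point to be careful about is the bookkeeping of parameters---confirming that every $\alpha\in(0,\infty)$ arises as $1/p-1$ for a unique $p\in(0,1)$, so that no case is missed---and noting that the constant $C$ in the resulting estimate may depend on $g$ only through the Schwartz seminorms appearing in \eqref{h3}, which is harmless for the qualitative statement that $g$ is a pointwise multiplier. I would therefore present the proof in two or three lines: choose $p$, invoke Lemma \ref{schwartz} (together with the trivial bound $\|g\|_{L^\infty(\rn)}\le\sup_{y\in\rn}|g(y)|<\infty$) to get $g\in L^\infty(\rn)\cap\mathcal{L}_{\loc}^{\Phi_p}(\rn)$, and then apply Theorem \ref{thm-main2}(i)$\,\Rightarrow\,$(ii).
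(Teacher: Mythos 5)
Your proposal is correct and follows exactly the paper's own argument: choose $p=1/(1+\alpha)$ so that $\alpha=1/p-1$, use Lemma \ref{schwartz} together with the boundedness of Schwartz functions to verify condition (i) of Theorem \ref{thm-main2}, and then apply the implication (i)$\,\Rightarrow\,$(ii). Nothing is missing.
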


\begin{proof}
Let $\alpha\in(0,\infty)$.
By Theorem \ref{thm-main2}, it suffices to prove that
$g\in L^\infty(\rn)\cap \mathcal{L}_{\loc}^{\Phi_p}(\rn)$,
where the number $p$ satisfies that $\az=1/p-1>0$ and hence $p=\frac{1}{1+\alpha}\in(0,1)$.
It is obvious that Schwartz functions are bounded. The fact that Schwartz functions are in
$\mathcal{L}_{\loc}^{\Phi_p}(\rn)$
has been proved in Lemma \ref{schwartz}, which completes the proof of Corollary
\ref{cor-pwm}.
\end{proof}

\section{Bilinear decompositions}\label{s4}

In this section, we establish the bilinear decompositions of the
product spaces $h^p(\rn)\times \Lambda_{n\alpha}(\mathbb{R}^n)$ and $H^p(\rn)\times \Lambda_{n\alpha}(\rn)$
with $p\in(0,1)$ and $\az:=1/p-1$, and the
product space $H^1(\rn)\times \bmo(\rn)$, and we prove that these bilinear
decompositions are sharp in some sense.

\subsection{Bilinear decomposition of $h^p(\rn)\times \Lambda_{n\alpha}(\mathbb{R}^n)$
with $p\in (0,1)$ and $\az:=1/p-1$}\label{s4.1}

In this subsection, we establish the bilinear decomposition of the
product space $h^p(\rn)\times \Lambda_{n\alpha}(\mathbb{R}^n)$, where $p\in(0,1)$ and $\az:=1/p-1$,
and obtain the sharpness of this decomposition.

Let us begin with the following notion of
the multiresolution analysis
of $L^2(\rn)$ (see, for instance, \cite[p.\,21]{Me92} or \cite[Remark 1.52]{gg}).

\begin{definition}\label{MRA}
Let $\{V_j\}_{j\in\zz}$ be an increasing sequence of closed linear subspaces in $L^2(\rn)$.
Then $\{V_j\}_{j\in\zz}$ is called a {\it multiresolution analysis} (for short, MRA) of
$L^2(\rn)$ if it has the following
properties:

\begin{itemize}
\item [{\rm(a)}] $\bigcap_{j\in\zz} V_j=\{{\bf 0}\}$ and $\overline{\bigcup_{j\in\zz} V_j}=L^2(\rn)$,
where ${\bf 0}$ denotes the zero element of $L^2(\rn)$;

\item [{\rm(b)}] for any $j\in\zz$ and $f\in L^2(\rn)$, $f(\cdot)\in V_j$ if and only if $f(2\cdot)\in V_{j+1}$;

\item [{\rm(c)}] for any $f\in L^2(\rn)$ and $k\in\zz^n$, $f(\cdot)\in V_0$ if and only if $f(\cdot-k)\in V_{0}$;

\item [{\rm(d)}] there exists a function $\phi\in L^2(\rn)$ (called a {\it scaling function} or {\it father wavelet})
such that
$\{\phi(\cdot-k)\}_{k\in\zz^n}$ is a \emph{Riesz basis} of $V_0$, that is,
for every sequence $\{\az_k\}_{k\in\zz^n}$ of scalars,
$$\lf\|\sum_{k\in\zz^n} \az_k \phi(\cdot-k)\r\|_{L^2(\rn)}\sim \lf(\sum_{k\in\zz^n}|\az_k|^2\r)^{1/2},$$
where the
positive equivalence constants are independent of $\{\az_k\}_{k\in\zz^n}$.
\end{itemize}
\end{definition}

Notice that, in many literature, the definition of MRA is restricted to the one-dimension case, however,
the extension from one-dimension to higher dimension is classical via the tensor product
method (see \cite[p.\,921]{Da88} or \cite[Section 3.9]{Me92}).

Let $H$ be a Hilbert space and $V$ a subspace of $H$.
Recall that the \emph{orthogonal complement} of $V$
is defined to be the set of all elements $x\in H$ such that,
for any $y\in V$, $\la x,y\ra=0$, where $\la\cdot,\cdot\ra$
denotes the inner product of $H$.
Let $\{M_j\}_{j\in\nn}$ be a sequence
of subspaces of $H$. Then $H$ is said to be the \emph{direct sum} of $\{M_j\}_{j\in\nn}$
if
\begin{itemize}
\item[(i)] for any $i,\ j\in\nn$ with $i\neq j$ and, for any $x\in M_i$ and $y\in M_j$,
$\la x,y\ra=0$;
\item[(ii)] for any $f\in H$, there exists a sequence $\{x_j\}_{j\in\nn}\subset H$ such that,
for any $j\in\nn$, $x_j\in M_j$ and $f=\sum_{j\in\nn}x_j$ in $H$.
\end{itemize}
The \emph{symbol}
$H=\bigoplus_{j\in\nn} M_j$
denotes that
$H$ is the direct sum of $\{M_j\}_{j\in\nn}$ (see \cite[p.\,124]{kf} for more details).

For any $j\in\zz$, let $\{V_j\}_{j\in\zz}$ be as in Definition \ref{MRA} and
$W_j$ the orthogonal complement of
$V_{j}$ in $V_{j+1}$. It is easy to see that
\begin{align}\label{f-VW}
V_{j+1}=\bigoplus_{i=-\fz}^j W_i
\ \ \quad\text{and}\ \ \quad
L^2(\rn)=\bigoplus_{i=0}^{\fz} W_i\bigoplus V_{0}.
\end{align}
Let
$E:=\{0,\,1\}^n\setminus\{(\overbrace{0,\,\ldots,\,0}^{n {\textrm{ times}}})\}$ and
$\mathcal{D}$ be the class of all
dyadic cubes
$I_{j,k}:=\{x\in\rn: \, 2^j x-k\in[0,1)^n\}$ with $j\in\zz$ and $k\in\zz^n$ in $\rn$.
Assume that $\mathcal{D}_0$ is the class of all
dyadic cubes in $\rn$ with side lengths not greater than $1$, namely, $I_{j,k}\in\mathcal{D}_0$
if and only if
$I_{j,k}:=\{x\in\rn: \, 2^j x-k\in[0,1)^n\}$ with $j\in\zz_+$ and $k\in\zz^n$.

For any given $d\in\nn$, according to \cite[Sections~3.8 and 3.9]{Me92}, we know that
there exist $\{V_j\}_{j\in\zz}$ as in
Definition \ref{MRA}, and families of
\begin{equation}\label{www}
\text{father}\ \text{wavelets}\
\{\phi_I\}_{I\in\mathcal{D}}\ \text{and}\ \text{mother}\ \text{wavelets}\
\{\psi_I^{\lz}\}_{I\in\mathcal{D},\,\lz\in E}
\end{equation}
having the following properties:
\begin{enumerate}
\item[\rm (P1)] for any $j\in\zz$, the family $\{\phi_I\}_{|I|=2^{-jn}}$ forms an orthonormal basis of $V_j$
and the family $\{\psi_I^\lz\}_{|I|=2^{-jn},\lz\in E}$ an orthonormal basis of $W_j$. Moreover,
the family $\{\psi_I^{\lz}\}_{I\in\mathcal{D}_0,\,\lz\in E}\bigcup\{\phi_I\}_{I\in\mathcal{D}_0,|I|=1}$
forms an orthonormal basis of $L^2(\rn)$;

\item[{\rm (P2)}] there exists a positive constant $m\in(1,\infty)$ such that,
for any $I\in \mathcal{D}$ and $\lz\in E$,
\begin{equation}\label{220}
\supp \phi_I\subset mI \ \ \ \ \text{and} \ \ \ \ \supp \psi^\lz_I\subset mI,
\end{equation}
where $mI$ denotes the $m$ dilation of $I$ with the same center as $I$;

\item[{\rm (P3)}] for any $\az\in\zz_+^n$ with $|\az|\le d$, there exists a positive constant $C$ such that,
for any $I\in \mathcal{D}$, $\lz\in E$ and $x\in\rn$, it holds true that
\begin{equation*}
\lf|\partial^\az\phi_I(x)\r|+\lf|\partial^\az\psi^\lz_I(x)\r|\le C \ell_I^{-n/2-|\az|},
\end{equation*}
where $\ell_I$ denotes the side length of $I$;

\item[{\rm (P4)}] for any $I\in \mathcal{D}$, $\lz\in E$ and $\nu\in\zz_+^n$ with $|\nu|\le d$,
it holds true that
\begin{equation*}
\int_{\rn}x^{\nu}\psi_I^\lz(x)\,dx=0
\end{equation*}
and, for any $I\in \mathcal{D}$,
$$\int_{\rn}\phi_I(x)\,dx\neq0;$$

\item[{\rm (P5)}] For any $I$, $I'\in \mathcal{D}$ satisfying $|I|\le|I'|$ and $\lz\in E$,
\begin{equation}\label{f-woc}
\int_{\rn}\psi_I^\lz(x) \phi_{I'}(x)\,dx=0.
\end{equation}
Indeed, let $W_j$ and $V_{j'}$ be the linear subspaces of $L^2(\rn)$ defined as in \eqref{f-VW},
respectively, with
$|I|=2^{-jn}$ and $|I'|=2^{-j'n}$. Since $|I|\le|I'|$, we deduce $j'\le j$, which, combined
with \eqref{f-VW}, shows that $W_j \perp V_{j'}$. By this and the above property (P1), we
prove \eqref{f-woc}.
\end{enumerate}

Let us point out that the constants $m$ and $C$ in the above properties (P2) and (P3) depend on
the \emph{regularity constant $d$} (see \cite{Da88} or \cite[p.\,96]{Me92}).
Notice that, even in the one-dimensional case, there does not exist a wavelet basis in $L^2(\rr)$
whose elements are
both infinitely differentiable and have compact supports (see, for instance, \cite[Theorem 3.8]{HW96}).

Observe that the family $\{\phi_I\}_{I\in\mathcal{D}_0,|I|=1}
\bigcup\{\psi_I^{\lz}\}_{I\in\mathcal{D}_0,\,\lz\in E}$
forms an orthonormal basis of $L^2(\rn)$.
Thus, for any $f\in L^2(\rn)$, we have
\begin{equation}\label{eq-crf}
f=\sum_{I\in\mathcal{D}_0,|I|=1}\langle f,\,\phi_I \rangle \phi_I+
\sum_{I\in \mathcal{D}_0}\sum_{\lz\in E} \langle f,\,\psi_I^\lz \rangle \psi_I^\lz
\end{equation}
in $L^2(\rn)$, where $\langle \cdot,\,\cdot \rangle$ denotes the inner product in $L^2(\rn)$.
A function $f$ in $L^2(\rn)$ is said to have a \emph{finite wavelet expansion} if the coefficients $\langle f,\,\psi_I^\lz \rangle$ and $\langle f,\,\phi_I \rangle$ in
\eqref{eq-crf} have only
finite non-zero terms.

For any $j\in\zz$, let
$P_j$ and $Q_j$ be the  orthogonal projectors of $L^2(\rn)$, respectively, onto $V_j$ and $W_j$. As
in \cite{Do95} (see also \cite[Section~4]{BGK12}),
for any $j\in\zz$ and $f\in L^2(\rn)$, we have
\begin{equation}\label{eq-Pj}
P_jf=\sum_{|I|=2^{-jn}} \langle f,\,\phi_I \rangle\phi_I,
\ \ \ \ Q_jf=\sum_{
|I|=2^{-jn}} \sum_{\lz\in E} \langle f,\,\psi_I^\lz \rangle \psi_I^\lz
\end{equation}
and
\begin{equation}\label{eq-Qj}
f=\sum_{j\in\zz}Q_jf=P_0f+\sum_{j=0}^{\infty}Q_jf,
\end{equation}
where the equality holds true in $L^2(\rn)$.

\begin{lemma}\label{1q}
Let $f$, $g\in L^2(\rn)$ be such that at least one of them has a finite wavelet expansion.
Then it holds true that
\begin{equation*}
fg=\sum_{j=0}^{\infty} (P_j f)(Q_j g)+\sum_{j=0}^{\infty} (Q_j f)(P_j g)
+\sum_{j=0}^{\infty} (Q_j f)(Q_j g)+P_0fP_0g
\qquad \textup{in} \;\; L^1(\rn).
\end{equation*}
\end{lemma}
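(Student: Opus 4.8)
The plan is to start from the wavelet expansions of $f$ and $g$ and use the telescoping identities \eqref{eq-Pj} and \eqref{eq-Qj}. Since at least one of $f,g$ has a finite wavelet expansion, all the sums that appear are in fact finite in one of the two variables, so every manipulation below is legitimate in $L^1(\rn)$ (indeed often in $L^2(\rn)$) without delicate convergence issues; I would first record this observation and fix notation, say $f$ has the finite expansion. Then I would write, using \eqref{eq-Qj}, $f=P_0f+\sum_{j=0}^{\infty}Q_jf$ and similarly $g=P_0g+\sum_{k=0}^{\infty}Q_kg$, and expand the product $fg$ as a double sum over the "levels" $j$ and $k$.

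The heart of the argument is a discrete Abel-summation (telescoping) rearrangement of that double sum. The key algebraic fact is that $P_{j+1}=P_j+Q_j$ for every $j\geq0$, equivalently $P_j=P_0+\sum_{i=0}^{j-1}Q_i$, which follows from \eqref{f-VW} and \eqref{eq-Pj}--\eqref{eq-Qj}. Using this, the "off-diagonal" part of the double sum $\sum_{j\neq k}(Q_jf)(Q_kg)$ can be reorganized: for fixed $j$, summing $Q_jf$ against $\sum_{k>j}Q_kg$ or against $\sum_{k<j}Q_kg$ converts one of the factors into a projection $P_j$ (modulo the $P_0$ terms), which is exactly what produces the terms $(P_jf)(Q_jg)$ and $(Q_jf)(P_jg)$. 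More precisely I would group the terms as: the diagonal $\sum_{j=0}^{\infty}(Q_jf)(Q_jg)$; the terms with $k<j$, which sum to $\sum_{j}(Q_jf)\big(\sum_{k<j}Q_kg\big)=\sum_{j}(Q_jf)(P_jg-P_0g)$; the symmetric terms with $j<k$, giving $\sum_{k}(P_kf-P_0f)(Q_kg)$; plus the cross terms $(P_0f)\big(\sum_{k\geq0}Q_kg\big)=(P_0f)(g-P_0g)$ and $\big(\sum_{j\geq0}Q_jf\big)(P_0g)=(f-P_0f)(P_0g)$ coming from the $P_0f$ and $P_0g$ pieces of $f$ and $g$; and finally $(P_0f)(P_0g)$.

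Then I would simply collect everything. The stray $-\sum_j(Q_jf)(P_0g)$ from the $k<j$ block cancels against the $(f-P_0f)(P_0g)=\sum_j(Q_jf)(P_0g)$ cross term; likewise $-\sum_k(P_0f)(Q_kg)$ cancels against $(P_0f)(g-P_0g)=\sum_k(P_0f)(Q_kg)$; and the three leftover $(P_0f)(P_0g)$-type contributions combine to a single $P_0fP_0g$. What survives is precisely
\begin{equation*}
fg=\sum_{j=0}^{\infty}(P_jf)(Q_jg)+\sum_{j=0}^{\infty}(Q_jf)(P_jg)+\sum_{j=0}^{\infty}(Q_jf)(Q_jg)+P_0fP_0g,
\end{equation*}
as claimed. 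The main thing to be careful about — the only real obstacle — is justifying the interchange of summation and the mode of convergence: one must check that after the finite expansion is fixed, each of the four displayed sums converges in $L^1(\rn)$ (using property (P2), so that at each level the wavelets are supported in boundedly overlapping dilates $mI$, together with the $L^2$ bounds from (P1) and (P3) and the Cauchy--Schwarz or Hölder inequality to pass to $L^1$ on the compact set where $f$'s finite expansion lives), and that all rearrangements above take place within absolutely convergent sums; since one factor always ranges over a finite set, this is routine but should be stated. I would finish by noting that the identity, first proved for such pairs, is exactly \cite{Do95} (see also \cite[Section~4]{BGK12}).
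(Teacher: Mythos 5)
Your argument is correct, and it is a legitimate alternative organization of the same underlying computation. You expand $f=P_0f+\sum_jQ_jf$ and $g=P_0g+\sum_kQ_kg$, write $fg$ as a double sum over levels, and regroup the off-diagonal blocks via $\sum_{k<j}Q_k=P_j-P_0$; the cancellations you list do occur and give exactly the stated identity, and the convergence is unproblematic because the finite expansion of one factor makes all sums in that variable finite, while the remaining infinite sums converge in $L^1(\rn)$ by Cauchy--Schwarz from the $L^2(\rn)$ convergence of $\sum_jQ_jf$. The paper reaches the identity by a slightly slicker route: if $g$ has a finite expansion of length $M$, then $g=P_{M+1}g$ and $P_jg=g$, $Q_jg=0$ for $j>M$, hence $fg=\lim_{k\to\infty}P_kfP_kg$ in $L^1(\rn)$, and a single telescoping $P_kfP_kg-P_0fP_0g=\sum_{j=0}^{k-1}(P_{j+1}fP_{j+1}g-P_jfP_jg)$ combined with $P_{j+1}=P_j+Q_j$ produces all three sums at once. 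Both proofs rest on the identity $P_{j+1}=P_j+Q_j$; the paper's version avoids the double-sum bookkeeping and the cancellation checks, while yours makes the provenance of each of the four terms more explicit. One small remark: your appeal to (P2), (P3) and bounded overlap of supports for the $L^1$ convergence is unnecessary --- the only facts needed are that $Q_jg=0$ and $P_jg=g$ beyond the length of the finite expansion, together with $P_jf\to f$ in $L^2(\rn)$ and the Cauchy--Schwarz inequality.
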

\begin{proof}
Assume that $g$ has a finite wavelet expansion and $f\in L^2(\rn)$. Then there exists an $M\in\nn$
such that $Q_j g=0$ for any $j>M$. From this, \eqref{eq-Qj} and the fact that, for any $k\in\zz_+$,
$Q_k=P_{k+1}-P_{k}$, we deduce that
$$
g=P_0g+\sum_{j=0}^{M}Q_jg=P_{M+1}g.
$$
Combining this and the fact that $f=\lim_{j\to\infty}P_jf$ in $L^2(\rn)$,
we obtain $fg=\lim_{j\to\infty}P_jfP_jg$ in $L^1(\rn)$.
By this and the fact that, for any $j\in\zz_+$, $P_{j+1}=P_j+Q_j$, we conclude that
\begin{align*}
fg&=\lim_{k\to\infty}(P_kfP_kg-P_0fP_0g)+P_0fP_0g\\
&=\lim_{k\to\infty}\sum_{j=0}^{k-1}(P_{j+1}fP_{j+1}g-P_jfP_jg)+P_0fP_0g\\
&=\lim_{k\to\infty}\sum_{j=0}^{k-1}[(P_jf+Q_jf)(P_jg+Q_jg)-P_jfP_jg]+P_0fP_0g\\
&=\sum_{j=0}^{\infty} (P_j f)(Q_j g)+\sum_{j=0}^{\infty} (Q_j f)(P_j g)
+\sum_{j=0}^{\infty} (Q_j f)(Q_j g)+P_0fP_0g,
\end{align*}
where the equality holds true in $L^1(\rn)$ and the existence of the above three limits are guaranteed
by the fact that, for any $j>M$, $Q_jg=0$ and $P_jg=g$. This
finishes the proof of Lemma \ref{1q}.
\end{proof}

From Lemma \ref{1q}, we deduce that, for any $f$, $g\in L^2(\rn)$ satisfying
that at least one of them has a finite wavelet expansion,
\begin{equation}\label{eq-L2L2}
fg=\sum^4_{i=1}\Pi_i(f,\,g)
\qquad \textup{in} \;\; L^1(\rn),
\end{equation}
where
\begin{equation}\label{eq-pi1}
\Pi_1(f,\,g):=\sum_{I,\,I'\in \mathcal{D}_0,|I|=|I'|} \sum_{\lz\in E} \langle f,\,\phi_I \rangle
\langle g,\,\psi_{I'}^\lz \rangle \phi_I \psi_{I'}^\lz\qquad \textup{in} \;\; L^1(\rn),
\end{equation}
\begin{equation}\label{eq-pi2}
\Pi_2(f,\,g):=\sum_{I,\,I'\in \mathcal{D}_0,
|I|=|I'|} \sum_{\lz\in E} \langle f,\,\psi_I^\lz \rangle
\langle g,\,\phi_{I'} \rangle \psi_I^\lz \phi_{I'}\qquad \textup{in} \;\; L^1(\rn),
\end{equation}
\begin{align}\label{eq-pi3}
\Pi_3(f,\,g):&=\dsum_{\gfz{I,\,I'\in \mathcal{D}_0}{
|I|=|I'|}} \dsum_{\gfz{\lz,\,\lz'\in E}{
(I,\,\lz)\ne (I',\,\lz')}} \langle f,\,\psi_I^\lz \rangle
\langle g,\,\psi_{I'}^{\lz'} \rangle \psi_I^\lz \psi_{I'}^{\lz'}\\\noz
&\quad+
\dsum_{\gfz{I,\,I'\in \mathcal{D}_0}
{|I|=|I'|=1,I\neq I'}}  \langle f,\,\phi_I \rangle
\langle g,\,\phi_{I'} \rangle \phi_I \phi_{I'}\\\noz
&=:\Pi_{3,1}(f,\,g)+\Pi_{3,2}(f,\,g)
\qquad \textup{in} \;\; L^1(\rn),
\end{align}
and
\begin{align}\label{eq-pi4}
\Pi_4(f,\,g)&:=\sum_{I\in \mathcal{D}_0} \sum_{\lz\in E} \langle f,\,\psi_I^\lz \rangle
\langle g,\,\psi_{I}^\lz \rangle \lf(\psi_{I}^\lz\r)^2
+
\dsum_{\gfz{I\in \mathcal{D}_0}
{|I|=1}}  \langle f,\,\phi_I \rangle
\langle g,\,\phi_I \rangle \lf(\phi_I\r)^2\qquad\textup{in} \;\; L^1(\rn).
\end{align}
From their definitions in \eqref{eq-pi1} through \eqref{eq-pi4}, it easily follows
that the four operators $\{\Pi_i\}_{i=1}^4$ are bilinear operators
for any $f$, $g\in L^2(\rn)$ satisfying
that at least one of them has a finite wavelet expansion.

\begin{lemma}\label{rem-w1}
Let $\{\Pi_i\}_{i=1}^4$, $\Pi_{3,1}$ and $\Pi_{3,2}$
be as in \eqref{eq-pi1} through \eqref{eq-pi4}. Then
$\{\Pi_i\}_{i=1}^3$, $\Pi_{3,1}$ and $\Pi_{3,2}$
can be extended to bounded bilinear operators
from $L^2(\rn)\times L^2(\rn)$ to $H^1(\rn)$
and $\Pi_4$ to a bounded bilinear
operator from $L^2(\rn)\times L^2(\rn)$ to $L^1(\rn)$.
\end{lemma}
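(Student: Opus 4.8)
The plan is to prove, for all $f,g\in L^2(\rn)$ at least one of which has a finite wavelet expansion, the bounds $\|\Pi_i(f,g)\|_{H^1(\rn)}\ls\|f\|_{L^2(\rn)}\|g\|_{L^2(\rn)}$ for $i\in\{1,2\}$, the same bound for $\Pi_{3,1}$ and $\Pi_{3,2}$, and $\|\Pi_4(f,g)\|_{L^1(\rn)}\ls\|f\|_{L^2(\rn)}\|g\|_{L^2(\rn)}$. Since the set of $L^2(\rn)$ functions with finite wavelet expansion is dense in $L^2(\rn)$ and since $H^1(\rn)$ and $L^1(\rn)$ are complete, a routine Cauchy-sequence argument based on the bilinearity of each $\Pi_i$ on this class then produces the asserted bounded bilinear extensions to $L^2(\rn)\times L^2(\rn)$.

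The two easy operators are $\Pi_4$ and $\Pi_{3,2}$. For $\Pi_4$ one simply notes $\|(\psi_I^\lz)^2\|_{L^1(\rn)}=\|\psi_I^\lz\|_{L^2(\rn)}^2=1$ and $\|(\phi_I)^2\|_{L^1(\rn)}=1$, so estimating \eqref{eq-pi4} termwise and applying the Cauchy--Schwarz inequality together with Bessel's inequality for the orthonormal system $\{\psi_I^\lz\}_{I\in\mathcal{D}_0,\lz\in E}\cup\{\phi_I\}_{I\in\mathcal{D}_0,|I|=1}$ yields $\|\Pi_4(f,g)\|_{L^1(\rn)}\ls\|f\|_{L^2(\rn)}\|g\|_{L^2(\rn)}$. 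For $\Pi_{3,2}$, which involves only cubes of side length $1$, each product $\phi_I\phi_{I'}$ with $I\ne I'$ is supported in $mI$ [by \eqref{220}], satisfies $\|\phi_I\phi_{I'}\|_{L^\infty(\rn)}\ls1$ [by property (P3) with $|\az|=0$], and has $\int_\rn\phi_I\phi_{I'}\,dx=\la\phi_I,\phi_{I'}\ra=0$ [by the orthonormality in (P1)]; hence it is a bounded multiple of a classical $H^1(\rn)$-atom, so $\|\phi_I\phi_{I'}\|_{H^1(\rn)}\ls1$, and since $\phi_I\phi_{I'}\not\equiv0$ forces $I$ and $I'$ to be among a bounded number of mutual neighbours, the Cauchy--Schwarz inequality gives $\|\Pi_{3,2}(f,g)\|_{H^1(\rn)}\ls\|f\|_{L^2(\rn)}\|g\|_{L^2(\rn)}$.

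The heart of the matter is $\Pi_1$, $\Pi_2$ and $\Pi_{3,1}$, and the decisive point is that every elementary product occurring in them has vanishing integral: for $|I|=|I'|$, $\int_\rn\phi_I\psi_{I'}^\lz\,dx=0$ by \eqref{f-woc}, and $\int_\rn\psi_I^\lz\psi_{I'}^{\lz'}\,dx=0$ whenever $(I,\lz)\ne(I',\lz')$ by the orthonormality in (P1); moreover, by \eqref{220} such a product vanishes unless the two cubes have the same size and lie within a bounded number of dyadic steps of one another, and by (P3) with $|\az|=0$ its $L^\infty(\rn)$-norm is $\ls|I|^{-1}$. For $\Pi_1$ I would regroup \eqref{eq-pi1} by the cube $I$ carrying the father wavelet, writing $\Pi_1(f,g)=\sum_{I\in\mathcal{D}_0}\la f,\phi_I\ra b_I$ with $b_I:=\phi_I\sum_{I'\in\mathcal{N}(I)}\sum_{\lz\in E}\la g,\psi_{I'}^\lz\ra\psi_{I'}^\lz$, where $\mathcal{N}(I)$ is the bounded set of dyadic cubes of the same size as $I$ whose $m$-dilate meets $mI$ (when $f$ or $g$ has a finite wavelet expansion this is a finite sum). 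Each $b_I$ is supported in $mI$, has $\int_\rn b_I\,dx=0$ by the above, and $\|b_I\|_{L^2(\rn)}\ls|I|^{-1/2}\beta_I$ with $\beta_I:=(\sum_{I'\in\mathcal{N}(I)}\sum_{\lz\in E}|\la g,\psi_{I'}^\lz\ra|^2)^{1/2}$; hence $b_I$ is a constant multiple of $\beta_I$ times an $H^1(\rn)$-atom supported in $mI$. Invoking the well-known estimate (underlying the molecular characterization of $H^1(\rn)$; cf.\ \cite{BGK12,bckly}) that, for $H^1(\rn)$-atoms $a_I$ supported respectively in $mI$ and scalars $t_I$, one has $\|\sum_I t_I a_I\|_{H^1(\rn)}\ls\|(\sum_I|t_I|^2|mI|^{-2}\mathbf{1}_{mI})^{1/2}\|_{L^1(\rn)}$, the matter reduces to bounding $\|(\sum_{I\in\mathcal{D}_0}|\la f,\phi_I\ra|^2\beta_I^2|I|^{-2}\mathbf{1}_{mI})^{1/2}\|_{L^1(\rn)}$. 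Since $|\la f,\phi_I\ra|\ls|I|^{-1/2}\int_{mI}|f|$ by (P3), one has $|\la f,\phi_I\ra|^2|I|^{-2}\mathbf{1}_{mI}(x)\ls|I|^{-1}[\mathcal{M}(f)(x)]^2\mathbf{1}_{mI}(x)$ with $\mathcal{M}$ the Hardy--Littlewood maximal operator, so the square function above is pointwise $\ls\mathcal{M}(f)\,(\sum_{I\in\mathcal{D}_0}\beta_I^2|I|^{-1}\mathbf{1}_{mI})^{1/2}$; the Cauchy--Schwarz inequality in $L^1(\rn)$ and the $L^2(\rn)$-boundedness of $\mathcal{M}$ then give $\|\Pi_1(f,g)\|_{H^1(\rn)}\ls\|f\|_{L^2(\rn)}(\sum_{I\in\mathcal{D}_0}\beta_I^2)^{1/2}$ (the factor $|I|^{-1}$ being absorbed by $|mI|=m^n|I|$ upon integrating $\mathbf{1}_{mI}$), and finally $\sum_{I\in\mathcal{D}_0}\beta_I^2\ls\sum_{I'\in\mathcal{D}_0}\sum_{\lz\in E}|\la g,\psi_{I'}^\lz\ra|^2\le\|g\|_{L^2(\rn)}^2$ by interchanging the order of summation and using Bessel's inequality. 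The operator $\Pi_2$ is treated identically with the roles of $f$ and $g$ exchanged (regrouping \eqref{eq-pi2} by the cube carrying the father wavelet that now acts on $g$), and $\Pi_{3,1}$ follows the same scheme with \eqref{eq-pi3} regrouped by the pair $(I,\lz)$ indexing the first mother wavelet, the only new point being to keep the diagonal term $(I',\lz')=(I,\lz)$ excluded from the inner sum so that the regrouped blocks retain their vanishing integral.

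I expect the substantive difficulty to lie entirely in $\Pi_1$, $\Pi_2$ and $\Pi_{3,1}$; $\Pi_4$ and $\Pi_{3,2}$ are routine Cauchy--Schwarz estimates. The crux is that one cannot use the triangle inequality $\|\sum_I t_I a_I\|_{H^1(\rn)}\le\sum_I|t_I|$ — the sum $\sum_{I\in\mathcal{D}_0}|\la f,\phi_I\ra|\beta_I$ diverges because of the summation over all scales of the father wavelet — so the cancellation encoded in the vanishing integrals must be exploited through the square-function/molecular estimate, after which the square function is pointwise dominated by $\mathcal{M}(f)$ times an $\ell^2$-quantity in the wavelet coefficients of $g$ and one closes by Cauchy--Schwarz. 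A secondary technical point is the diagonal bookkeeping in the regrouping of $\Pi_{3,1}$: if the term $(I',\lz')=(I,\lz)$ is not removed, the regrouped blocks acquire a nonzero mean from the $(\psi_I^\lz)^2$-type terms and then only belong to $L^1(\rn)$.
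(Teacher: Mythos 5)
Your proposal is correct and follows essentially the same route as the paper: the paper's own proof simply invokes \cite[Lemma 4.2]{BGK12} (see also \cite[Lemma 10.2.2]{ylk}) for the $H^1(\rn)$ bounds on $\Pi_1$, $\Pi_2$, $\Pi_{3,1}$, $\Pi_{3,2}$ and only writes out the Cauchy--Schwarz/Bessel estimate for $\Pi_4$, and what you have written is precisely a reconstruction of that cited argument (regrouping into zero-mean blocks supported in $mI$ via \eqref{f-woc} and orthonormality, then the square-function estimate dominated by $\mathcal{M}(f)$ times an $\ell^2$ quantity in the coefficients of $g$). One point of precision: the inequality $\|\sum_I t_I a_I\|_{H^1(\rn)}\ls\|(\sum_I|t_I|^2|mI|^{-2}\mathbf{1}_{mI})^{1/2}\|_{L^1(\rn)}$ is \emph{not} valid for arbitrary $H^1(\rn)$-atoms $a_I$ (without smoothness one only gets the $\ell^1$ bound on the coefficients); it is the Frazier--Jawerth smooth-atom estimate, and it applies here because your blocks $b_I$ are products of $C^d$ wavelets and hence satisfy the required derivative bound by (P3) -- this hypothesis should be stated when the estimate is invoked.
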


\begin{proof}
Repeating the
argument similar to that used in the proof of \cite[Lemma 4.2]{BGK12}
(see also \cite[Lemma 10.2.2]{ylk}), we find that $\{\Pi_i\}_{i=1}^3$,
$\Pi_{3,1}$ and $\Pi_{3,2}$
can be extended to bounded bilinear operators
from $L^2(\rn)\times L^2(\rn)$ to $H^1(\rn)$.

By the H\"older inequality and
the fact that, for any $\lz\in E$ and $I\in\mathcal{D}_0$,
$\|(\psi_{I}^\lz)^2\|_{L^1(\rn)}=1$ and $\|(\phi_{I}^\lz)^2\|_{L^1(\rn)}=1$, we know that
\begin{align*}
\|\Pi_4(f,g)\|_{L^1(\rn)}
&\le \sum_{I\in \mathcal{D}_0} \sum_{\lz\in E} |\langle f,\,\psi_I^\lz \rangle|\,
|\langle g,\,\psi_{I}^\lz \rangle| \|(\psi_{I}^\lz)^2\|_{L^1(\rn)}\\
&\quad+
\dsum_{I\in \mathcal{D}_0,
|I|=1}  |\langle f,\,\phi_I \rangle||
\langle g,\,\phi_I \rangle| \|\lf(\phi_I\r)^2\|_{L^1(\rn)}
\\
&\le \lf(\sum_{I\in \mathcal{D}_0} \sum_{\lz\in E} |\langle f,\,\psi_I^\lz \rangle|^2
+\dsum_{I\in \mathcal{D}_0,
|I|=1}  |\langle f,\,\phi_I \rangle|^2\r)^{1/2}\\
&\quad\times
\lf(\sum_{I\in \mathcal{D}_0} \sum_{\lz\in E} |\langle g,\,\psi_I^\lz \rangle|^2
+\dsum_{I\in \mathcal{D}_0,
|I|=1}  |\langle g,\,\phi_I \rangle|^2
\r)^{1/2}\\
&= \|f\|_{L^2(\rn)}\|g\|_{L^2(\rn)}.
\end{align*}
This implies that $\Pi_4$ can be extended to a bounded bilinear
operator from $L^2(\rn)\times L^2(\rn)$ to $L^1(\rn)$, which completes
the proof of Lemma \ref{rem-w1}.
\end{proof}

\begin{remark}\label{rem-w3}
Assume that $f$ has a finite wavelet expansion, $g\in L_\loc^2(\rn)$ (the set of all locally $L^2(\rn)$
integrable functions), and
the coefficients $\langle f,\,\psi_I^\lz \rangle$ and $\langle f,\,\phi_I \rangle$ in
\eqref{eq-crf} have only
finite non-zero terms.
Then we may as well assume that there exists an $R\in\mathcal{D}$ satisfying that,
for any $I\in\mathcal{D}_0$ such that $|I|=1$ and $\langle f,\,\phi_I \rangle\neq0$,
or for any $I\in\mathcal{D}_0$ and $\lz\in E$ such that  $\langle f,\,\psi_I^\lz \rangle\neq0$,
$I\subset R$ holds true.
Then we know that $f$ is supported in the cube $mR$, where $m$ is as in \eqref{220}.
Take $\eta$ to be a smooth cut-off function such that $\supp\eta\subset 9m R$ and $\eta\equiv 1$ on $5mR$.
Since $f$ has a finite wavelet expansion and $\eta g\in L^2(\rn)$,
it follows from Lemma \ref{rem-w1} that, for any $i\in\{1,2,3,4\}$,
every $\Pi_i(f,\eta g)$ is well defined in $L^1(\rn)$.
We claim that, for any $i\in\{1,2,3,4\}$,
\begin{equation}\label{eq-x10}
\Pi_i(f,g)=\Pi_i(f, \eta g).
\end{equation}
The proof of this claim is similar to that of \cite[Remark 4.4]{bckly}.
For the convenience of the reader, we present some details here.
Indeed, as for $i=1$,
since $f$ has a finite wavelet expansion and $\eta g\in L^2(\rn)$, it follows from Lemma \ref{rem-w1} that
\begin{align*}
\Pi_1(f,\,\eta g)=\dsum_{\gfz{I,\,I'\in \mathcal{D}_0}{|I|=|I'|}} \dsum_{\lz\in E} \langle f,\,\phi_I \rangle
\langle \eta g,\,\psi_{I'}^\lz \rangle \phi_I \psi_{I'}^\lz \ \ \ \ \ \text{in} \ L^1(\rn).
\end{align*}
Based on the above properties (P2) and (P5)
of wavelets,
the factor $\langle f,\,\phi_I \rangle
\phi_I \psi_{I'}^\lz$ in the above sum is non-zero only when
$(mI)\cap (mI')\neq\emptyset$, $|I|\leq|R|$
and $mI\cap mR\neq\emptyset$, which implies that
$ mI'\subset 5mR$. From this and the fact
that $\eta\equiv 1$ on $5mR$, it follows that
we can remove the function $\eta$ in
the pairing $\langle \eta g,\,\psi_{I'}^\lz \rangle$ and hence obtain
\begin{align*}
\Pi_1(f,\,\eta g)=\dsum_{\gfz{I,\,I'\in \mathcal{D}_0}{|I|=|I'|}} \dsum_{\lz\in E} \langle f,\,\phi_I \rangle
\langle  g,\,\psi_{I'}^\lz \rangle \phi_I \psi_{I'}^\lz=\Pi_1(f,g) \ \ \ \ \ \text{in} \ L^1(\rn).
\end{align*}
Thus, \eqref{eq-x10} makes sense when $i=1$.
When $i\in\{2,3,4\}$, the proof is similar and we omit the details. Thus, the claim \eqref{eq-x10} holds true.
\end{remark}

The next lemma gives a finite atomic decomposition of elements in local
Hardy spaces that have finite wavelet expansions, which is just \cite[Theorem 2.7]{cky1}.

\begin{lemma}\label{lem-hp}
Let $p\in(0,\,1]$, $s\in\zz_+$ with $s\ge \lfloor n(1/p-1)\rfloor$
and $d\in\nn$ with $
d> n(2/p+1/2)$.
Let $\{\psi_I^{\lz}\}_{I\in\mathcal{D}_0,\,\lz\in E}\bigcup\{\phi_I\}_{I\in\mathcal{D}_0,|I|=1}$ be the wavelets
as in \eqref{www}
with the regularity parameter $d$.
Assume that $f\in h^p(\rn)$ has a finite wavelet expansion, namely,
\begin{equation}\label{eqn 3.7}
f=\sum_{I\in \mathcal{D}_0,|I|=1}\langle f,\,\phi_I \rangle \phi_I+
\sum_{I\in \mathcal{D}_0}\sum_{\lz\in E} \langle f,\,\psi_I^\lz \rangle \psi_I^\lz,
\end{equation}
where
the coefficients $\langle f,\,\psi_I^\lz \rangle$ and $\langle f,\,\phi_I \rangle$ in
\eqref{eqn 3.7} have only
finite non-zero terms.
Then $f$ has a finite atomic decomposition satisfying
$f=\sum_{l=1}^L \mu_l a_l$, where $L\in\nn$, $\mu_l\in\mathbb{C}$ for any $l\in\{1,\,\ldots,\,L\}$,
and the following properties hold true:
\begin{enumerate}
\item[\rm (i)] for any $l\in\{1,\,\ldots,\,L\}$,
$a_l$ is a local $(p,2,s)$-atom
supported in some cube $mR_l$ with $m$ as in \eqref{220} and $R_l\in\mathcal{D}_0$,
which can be written into the following form:
\begin{equation}\label{eqn 3.x4}
a_l=\sum_{\gfz{I\in \mathcal{D}_0,I\subset R_l}{|I|=1}} c_{(I,\,l)} \phi_I+
\sum_{I\in \mathcal{D}_0, {I\subset R_l}}\sum_{\lz\in E} c_{(I,\,\lz,\,l)} \psi_I^\lz,
\end{equation}
where $\{c_{(I,\,\lz,\,l)}\}_{I\subset R_l,\,\lz\in E}\subset\rr$ and
$\{c_{(I,\,l)}\}_{I\subset R_l}\subset\rr$;

\item[\rm (ii)] there exists a positive constant $ C$,
independent of $\{\mu_l\}_{l=1}^L$, $\{a_l\}_{l=1}^L$
and $f$, such that
$$\lf\{\sum_{l=1}^L |\mu_l|^p\r\}^{\frac{1}{p}}\le  C \|f\|_{h^p(\rn)};$$

\item[\rm (iii)] for any $l\in\{1,\,\ldots,\,L\}$,
the wavelet expansion of $a_l$ in \eqref{eqn 3.x4} is also finite which is
extracted from that of $f$ in \eqref{eqn 3.7}.
\end{enumerate}
\end{lemma}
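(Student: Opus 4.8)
The plan is to run a Calder\'on--Zygmund stopping-time argument directly on the wavelet coefficients of $f$, so that each atom produced is, by construction, a sub-collection of the (finite) wavelet expansion \eqref{eqn 3.7}; the hypothesis that this expansion is finite will be used repeatedly. The starting point is the discrete local square-function (wavelet) characterization of $h^p(\rn)$, valid since the wavelets in \eqref{www} have sufficiently high regularity and vanishing-moment order (guaranteed by $d>n(2/p+1/2)$): there is a fixed, dimensional dilation parameter $\kappa>1$ (for a technical reason appearing below) such that
\begin{equation*}
\|f\|_{h^p(\rn)}\sim\|s(f)\|_{L^p(\rn)},\qquad
s(f):=\lf(\sum_{I\in\mathcal{D}_0}\sum_{\lz\in E}\frac{|\langle f,\psi_I^\lz\rangle|^2}{|I|}\mathbf1_{\kappa I}+\sum_{I\in\mathcal{D}_0,\,|I|=1}|\langle f,\phi_I\rangle|^2\mathbf1_{\kappa I}\r)^{1/2},
\end{equation*}
where $\kappa I$ is the cube concentric with $I$ of $\kappa$ times its side length. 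As a first, easy step I would dispose of the unit-scale father-wavelet terms: for each $I\in\mathcal{D}_0$ with $|I|=1$, the function $c\phi_I$ is a local $(p,2,s)$-atom supported in $mI$ for a suitable fixed constant $c=c(n,m,p)$ --- no moment condition being required since $|mI|=m^n\ge1$ --- and the associated coefficient $\mu_I:=\langle f,\phi_I\rangle/c$ satisfies $|\mu_I|^p\ls|\langle f,\phi_I\rangle|^p\le\inf_{x\in I}s(f)(x)^p$; summing over the finitely many such $I$ and using that the unit cubes of $\mathcal{D}_0$ tile $\rn$ gives $\sum_{|I|=1}|\mu_I|^p\ls\|s(f)\|_{L^p(\rn)}^p$.

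For the mother-wavelet part $\sum_{I\in\mathcal{D}_0,\,\lz\in E}\langle f,\psi_I^\lz\rangle\psi_I^\lz$, I would set, for $k\in\zz$, $\Omega_k:=\{x\in\rn:\ s(f)(x)>2^k\}$ and pass to the maximal-function enlargement $\widetilde{\Omega}_k:=\{x\in\rn:\ \mathcal{M}(\mathbf1_{\Omega_k})(x)>1/2\}$, where $\mathcal{M}$ is the Hardy--Littlewood maximal operator; these sets are open, nested decreasingly, empty for $k$ large (as $s(f)$ is bounded, $f$ having a finite wavelet expansion) and satisfy $|\widetilde{\Omega}_k|\ls|\Omega_k|$. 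For each $k$, let $\{Q_{k,j}\}_j$ be the maximal dyadic cubes contained in $\widetilde{\Omega}_k$. To a pair $(I,\lz)$ with $\langle f,\psi_I^\lz\rangle\ne0$ --- which forces $s(f)>0$ on $I$, hence $I\subset\widetilde{\Omega}_k$ for all sufficiently small $k$ --- I would attach the unique cube $Q_{k(I),j}$ containing $I$, where $k(I):=\max\{k\in\zz:\ I\subset\widetilde{\Omega}_k\}$; this partitions the finitely many nonzero mother-wavelet indices, and I write $\mathcal{A}_Q$ for the set of indices attached to a given selection cube $Q$. For $Q=Q_{k,j}$ with $|Q|<1$, put
\begin{equation*}
a_Q:=\frac1{\mu_Q}\sum_{(I,\lz)\in\mathcal{A}_Q}\langle f,\psi_I^\lz\rangle\psi_I^\lz,\qquad\mu_Q:=C2^k|mQ|^{1/p},
\end{equation*}
with $C$ a large constant depending only on $n$, $p$, $m$; when $|Q|\ge1$, I would first split $Q$ into its unit dyadic subcubes $R\in\mathcal{D}_0$ and form one analogous atom per $R$ out of those $(I,\lz)\in\mathcal{A}_Q$ with $I\subset R$ (again no moment condition being needed, since $|mR|=m^n\ge1$). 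The cubes $R_l$ of the statement are these $Q_{k,j}$ together with their unit subcubes, all in $\mathcal{D}_0$; reassembling the three sums yields $f=\sum_{|I|=1}\mu_Ia_I+\sum_Q\mu_Qa_Q$ with every sum finite, which also gives the finiteness of $L$ in (ii) and property (iii), the expansion of each atom being literally a sub-collection of that of $f$.

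It then remains to verify the atom conditions and the coefficient bound. The support condition $\supp a_Q\subset mQ$ is immediate from \eqref{220}, since each $\psi_I^\lz$ occurring in $a_Q$ has $I\subset Q$, hence $\supp\psi_I^\lz\subset mI\subset mQ$; the vanishing moments up to the required order follow from property (P4) of \eqref{www} because $a_Q$ is a \emph{finite} sum of such $\psi_I^\lz$. For the size condition $\|a_Q\|_{L^2(\rn)}\le|mQ|^{1/2-1/p}$, orthonormality of the $\psi_I^\lz$ reduces matters to the key estimate $\sum_{(I,\lz)\in\mathcal{A}_Q}|\langle f,\psi_I^\lz\rangle|^2\ls 2^{2k}|Q|$, after which a large enough choice of $C$ closes the argument (the same estimate on each unit subcube handles the case $|Q|\ge1$). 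Finally, since the $Q_{k,j}$ are pairwise disjoint with $\bigcup_jQ_{k,j}=\widetilde{\Omega}_k$, the layer-cake formula gives
\begin{equation*}
\sum_Q|\mu_Q|^p=C^p\sum_k2^{kp}\sum_j|mQ_{k,j}|\sim\sum_k2^{kp}|\widetilde{\Omega}_k|\ls\sum_k2^{kp}|\Omega_k|\sim\|s(f)\|_{L^p(\rn)}^p\sim\|f\|_{h^p(\rn)}^p,
\end{equation*}
which, combined with the unit-scale bound, yields (ii).

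I expect the main obstacle to be precisely the key $L^2$ estimate $\sum_{(I,\lz)\in\mathcal{A}_Q}|\langle f,\psi_I^\lz\rangle|^2\ls 2^{2k}|Q|$. The naive bound $\sum_{(I,\lz)\in\mathcal{A}_Q}|\langle f,\psi_I^\lz\rangle|^2\le\int_Q s(f)^2$ is useless, because $s(f)$ may be arbitrarily large on the deeper level sets lying inside $Q$; the fix is to integrate $s(f)^2$ instead over $\kappa Q\setminus\Omega_{k+1}$, on which $s(f)\le2^{k+1}$. For this to capture the coefficients $\langle f,\psi_I^\lz\rangle$ with $(I,\lz)\in\mathcal{A}_Q$ one needs, for each such index, the \emph{density} lower bound $|\kappa I\setminus\Omega_{k+1}|\gs|I|$, and this is exactly where the maximal-function enlargement is used (and why $s(f)$ was built with the dilated indicators $\mathbf1_{\kappa I}$): from $I\not\subset\widetilde{\Omega}_{k+1}$ one gets a point $x_I\in I$ with $\mathcal{M}(\mathbf1_{\Omega_{k+1}})(x_I)\le1/2$, hence $|B\setminus\Omega_{k+1}|\ge|B|/2$ for the ball $B\supset I$ centered at $x_I$ of radius comparable to the side length of $I$, whence $|\kappa I\setminus\Omega_{k+1}|\gs|I|$ once $\kappa$ is chosen so that $\kappa I\supset B$. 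Then, noting $\kappa I\subset\kappa Q$ for all dyadic $I\subset Q$,
\begin{equation*}
\sum_{(I,\lz)\in\mathcal{A}_Q}|\langle f,\psi_I^\lz\rangle|^2\ls\sum_{(I,\lz)\in\mathcal{A}_Q}\frac{|\langle f,\psi_I^\lz\rangle|^2}{|I|}\,|\kappa I\setminus\Omega_{k+1}|\le\int_{\kappa Q\setminus\Omega_{k+1}}s(f)^2\le2^{2(k+1)}|\kappa Q|\ls2^{2k}|Q|.
\end{equation*}
A secondary, essentially bookkeeping, difficulty is the inhomogeneous splitting: isolating the unit-scale part and subdividing the large stopping cubes so that every atom is supported in a cube of $\mathcal{D}_0$ (side length $\le1$), with the moment condition imposed precisely on those atoms whose supporting ball has measure $<1$.
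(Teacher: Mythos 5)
The paper does not actually prove this lemma; it is quoted verbatim from \cite[Theorem 2.7]{cky1}, and your stopping-time construction on the wavelet coefficients --- the maximal-function enlargement $\widetilde{\Omega}_k$ of the level sets of the discrete square function, the attachment of each index to its last stopping cube, the density bound $|\kappa I\setminus\Omega_{k+1}|\gs|I|$ driving the key $\ell^2$ estimate, and the separate bookkeeping for the scale-one father wavelets and for stopping cubes of measure at least one (where no moment condition is needed) --- is precisely the classical argument behind that cited result. I see no gap: the proposal is correct and matches the standard proof in both structure and detail.
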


The following lemma is just \cite[Theorem 2.8(ii)]{cky1}.

\begin{lemma}\label{thm}
Let $p\in(0,\,1)$ and $\az:=1/p-1$.
Let $\{\psi_I^{\lz}\}_{I\in\mathcal{D}_0,\,\lz\in E}\bigcup\{\phi_I\}_{I\in\mathcal{D}_0,|I|=1}$ be the wavelets
as in \eqref{www}
with the regularity parameter $d\in\nn$ satisfying $
d> n(2/p+1/2)$.
Then there exists a positive constant $C$
such that, for any $g\in\Lambda_{n\alpha}(\mathbb{R}^n)$,
$$
\sup_{I\in \mathcal{D}_0}
\lf\{\frac{1}{|I|^{2\az+1}}\lf[
\sum_{J\subset I,|J|=1}\lf|\langle f,\,\phi_J \rangle\r|^2 +
\sum_{J\in \mathcal{D}_0,J\subset I}\sum_{\lz\in E} \lf|\langle f,\,\psi_J^\lz \rangle\r|^2
\r]\r\}^{\frac{1}{2}}\leq C\|g\|_{\Lambda_{n\alpha}(\mathbb{R}^n)}.
$$
\end{lemma}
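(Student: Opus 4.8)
The statement is a Carleson-type bound on the wavelet coefficients of a function in $\Lambda_{n\az}(\rn)$ (the ``$f$'' inside the two displayed sums is to be read as $g$, as intended). The plan is to fix $I\in\mathcal{D}_0$, write
$$S_1(I):=\sum_{J\subset I,\,|J|=1}|\langle g,\phi_J\rangle|^2,\qquad S_2(I):=\sum_{J\in\mathcal{D}_0,\,J\subset I}\sum_{\lz\in E}|\langle g,\psi_J^\lz\rangle|^2,$$
and to prove that $S_1(I)+S_2(I)\le C\|g\|_{\Lambda_{n\az}(\rn)}^2\,|I|^{2\az+1}$ with $C$ independent of $I$ and $g$; dividing by $|I|^{2\az+1}$, taking the supremum over $I\in\mathcal{D}_0$, and then the square root then gives the claim. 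Throughout I would use Lemma \ref{GR-DDY} with $r=2$ to replace $\|g\|_{\Lambda_{n\az}(\rn)}$ by the equivalent quantity $\|g\|_{\mathcal{L}_{\loc}^{\az,2,s}(\rn)}$, where $s:=\lfloor n\az\rfloor$, and I record that the hypothesis $d>n(2/p+1/2)$ forces $d\ge s$, so that, by property (P4), $\int_{\rn}P(x)\psi_J^\lz(x)\,dx=0$ for every $P\in\mathcal{P}_s(\rn)$, every $J\in\mathcal{D}_0$ and every $\lz\in E$.

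First I would dispose of $S_1(I)$. If $|I|<1$, there is no dyadic cube $J\subset I$ with $|J|=1$, so $S_1(I)=0$. If $|I|\ge1$, there are exactly $|I|$ unit dyadic cubes $J\subset I$; for each of them, since $\supp\phi_J\subset mJ$ and $\|\phi_J\|_{L^2(\rn)}=1$, the Cauchy--Schwarz inequality gives $|\langle g,\phi_J\rangle|\le\|\phi_J\|_{L^1(\rn)}\|g\|_{L^\infty(\rn)}\le m^{n/2}\|g\|_{\Lambda_{n\az}(\rn)}$. Summing, $S_1(I)\lesssim|I|\,\|g\|_{\Lambda_{n\az}(\rn)}^2\le|I|^{2\az+1}\|g\|_{\Lambda_{n\az}(\rn)}^2$, because $|I|\ge1$ and $\az>0$.

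The core of the argument is the pointwise bound
$$|\langle g,\psi_J^\lz\rangle|\le C\,[\ell(J)]^{n/2+n\az}\,\|g\|_{\Lambda_{n\az}(\rn)}\qquad\text{for all }J\in\mathcal{D}_0,\ \lz\in E,$$
which I would prove by splitting on the size of $\ell(J)$. When $\ell(J)$ is small enough (say $\ell(J)<(2m)^{-1}$), choose a ball $B_J$ with $mJ\subset B_J$, $|B_J|\sim[\ell(J)]^n$ and $|B_J|<1$; using the vanishing moments to replace $g$ by $g-P_{B_J}^sg$, the Cauchy--Schwarz inequality and Definition \ref{campa} (with $r=2$) yield $|\langle g,\psi_J^\lz\rangle|=|\langle g-P_{B_J}^sg,\psi_J^\lz\rangle|\le\|g-P_{B_J}^sg\|_{L^2(B_J)}\le|B_J|^{1/2+\az}\|g\|_{\mathcal{L}_{\loc}^{\az,2,s}(\rn)}\lesssim[\ell(J)]^{n/2+n\az}\|g\|_{\Lambda_{n\az}(\rn)}$. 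For the remaining, finitely many, dyadic scales $(2m)^{-1}\le\ell(J)\le1$ — on which $[\ell(J)]^{-n\az}$ is bounded — the crude estimate $|\langle g,\psi_J^\lz\rangle|\le\|\psi_J^\lz\|_{L^1(\rn)}\|g\|_{L^\infty(\rn)}\le m^{n/2}[\ell(J)]^{n/2}\|g\|_{\Lambda_{n\az}(\rn)}\lesssim[\ell(J)]^{n/2+n\az}\|g\|_{\Lambda_{n\az}(\rn)}$ already suffices. Granting this, I would note that $J\subset I$ with $I\in\mathcal{D}_0$ automatically forces $J\in\mathcal{D}_0$, that $\#E=2^n-1$, and that for each integer $k$ with $2^{-k}\le\ell(I)$ there are exactly $|I|2^{kn}$ dyadic cubes $J\subset I$ with $\ell(J)=2^{-k}$; hence, organizing the sum by scale,
$$S_2(I)\lesssim\|g\|_{\Lambda_{n\az}(\rn)}^2\sum_{k:\,2^{-k}\le\ell(I)}|I|2^{kn}\cdot2^{-k(n+2n\az)}=\|g\|_{\Lambda_{n\az}(\rn)}^2\,|I|\sum_{k:\,2^{-k}\le\ell(I)}2^{-2kn\az}\lesssim\|g\|_{\Lambda_{n\az}(\rn)}^2\,|I|^{2\az+1},$$
the last geometric series converging (as $\az>0$) to a constant multiple of $[\ell(I)]^{2n\az}=|I|^{2\az}$. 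Adding the bounds for $S_1(I)$ and $S_2(I)$ completes the proof.

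The one genuine subtlety — and the step I expect to require care — is the inhomogeneity concealed in the pointwise bound on $|\langle g,\psi_J^\lz\rangle|$: the clean Campanato/vanishing-moment estimate is available only when the ball enclosing $mJ$ has measure $<1$, i.e. only for $\ell(J)$ below a fixed threshold, so the ``large-scale'' cubes $J$ (those with $\ell(J)$ comparable to $1$) must be peeled off and handled by the trivial $L^\infty$ bound, which is legitimate precisely because only finitely many dyadic scales lie above the threshold. Everything else reduces to the routine geometric summation over dyadic scales carried out above.
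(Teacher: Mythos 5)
Your proof is correct: the pointwise coefficient bound $|\langle g,\psi_J^\lz\rangle|\lesssim[\ell(J)]^{n/2+n\az}\|g\|_{\Lambda_{n\az}(\rn)}$ (vanishing moments plus the $r=2$ Campanato norm on small scales, the trivial $L^\infty$ bound on the finitely many large scales, exactly as you flag) together with the geometric summation over scales — which converges precisely because $\az>0$ — gives the Carleson-type estimate, and your handling of the father-wavelet terms at the unit scale is also right. The paper itself offers no proof of this lemma (it is quoted verbatim from \cite[Theorem 2.8(ii)]{cky1}), so there is nothing to compare against; your argument is the standard one for this kind of statement and is complete as written.
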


\begin{proposition}\label{prop-pi1}
Let $p\in(0,1)$ and $\az:=1/p-1$. Assume that the regularity parameter $d\in\nn$ appearing in (P3)
and (P4) of wavelets satisfies that
$
d> n(2/p+1/2)$.
Then   the bilinear operator $\Pi_1$, defined as in \eqref{eq-pi1},
can be extended to a bilinear operator
bounded from $h^p(\rn)\times\Lambda_{n\alpha}(\mathbb{R}^n)$ to $h^1(\rn)$.
\end{proposition}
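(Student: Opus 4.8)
The plan is to prove boundedness of $\Pi_1$ in three stages: first establish the estimate on a single local $(p,2,s)$-atom paired with a Lipschitz function, then pass to a finite wavelet expansion via Lemma \ref{lem-hp}, and finally extend by density. First I would fix $f$ with a finite wavelet expansion and $g\in\Lambda_{n\alpha}(\rn)$, and use Lemma \ref{lem-hp} to write $f=\sum_{l=1}^L\mu_l a_l$ with each $a_l$ a local $(p,2,s)$-atom supported in $mR_l$, $R_l\in\mathcal{D}_0$, whose wavelet expansion \eqref{eqn 3.x4} is extracted from that of $f$. Because $\Pi_1$ is bilinear and, by Remark \ref{rem-w3}, commutes with the truncation $g\mapsto\eta g$ (so only the local behavior of $g$ near $mR_l$ matters), it suffices to show that each $\Pi_1(a_l,g)$ is a fixed constant multiple of an $h^1(\rn)$-atom (or a bounded sum of such), with constant controlled by $\|g\|_{\Lambda_{n\alpha}(\rn)}$; then $\|\Pi_1(f,g)\|_{h^1(\rn)}\lesssim\sum_l|\mu_l|\,\|g\|_{\Lambda_{n\alpha}(\rn)}\lesssim(\sum_l|\mu_l|^p)^{1/p}\|g\|_{\Lambda_{n\alpha}(\rn)}\lesssim\|f\|_{h^p(\rn)}\|g\|_{\Lambda_{n\alpha}(\rn)}$, using $p\le 1$ for the $\ell^p\hookrightarrow\ell^1$ embedding in the last-but-one step and Lemma \ref{lem-hp}(ii) in the last.

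For the single-atom estimate, write, for an atom $a$ supported in $mR$ with $R\in\mathcal{D}_0$,
\begin{align*}
\Pi_1(a,g)=\sum_{\gfz{I,\,I'\in\mathcal{D}_0,\,I\subset R}{|I|=|I'|}}\sum_{\lz\in E}\langle a,\phi_I\rangle\langle g,\psi_{I'}^\lz\rangle\,\phi_I\psi_{I'}^\lz .
\end{align*}
By property (P2) the product $\phi_I\psi_{I'}^\lz$ is non-zero only when $mI\cap mI'\ne\emptyset$ and $|I|=|I'|$, so $I'\subset CR$ for some dimensional constant; hence $\Pi_1(a,g)$ is supported in a fixed dilate $c mR$. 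The vanishing moments come from $\psi_{I'}^\lz$: by (P4) each $\psi_{I'}^\lz$ has vanishing moments up to order $d$, and since $d> n(2/p+1/2)\ge s$ the product $\phi_I\psi_{I'}^\lz$, being supported where $\phi_I$ localizes but carrying the moments of $\psi_{I'}^\lz$, allows the whole sum to inherit vanishing moments up to order $s=\lfloor n(1/p-1)\rfloor$ when $|cmR|<1$ — this is the wavelet renormalization trick, exactly as in \cite[Lemma 4.2]{BGK12} and \cite{bckly}. For the size, use orthonormality of $\{\phi_I\}$ and $\{\psi_{I'}^\lz\}$: $\|\Pi_1(a,g)\|_{L^2}^2\lesssim\sum_I\sum_{\lz}|\langle a,\phi_I\rangle|^2\,\sup_{I'\sim I}|\langle g,\psi_{I'}^\lz\rangle|^2$ (after using the almost-diagonal structure $|I|=|I'|$, $mI\cap mI'\ne\emptyset$), and then control $\sum_I|\langle a,\phi_I\rangle|^2\le\|a\|_{L^2}^2\le|mR|^{1-2/p}$ by the atom bound, while Lemma \ref{thm} gives $\sup_{I'\subset CR}|\langle g,\psi_{I'}^\lz\rangle|^2\lesssim|CR|^{2\alpha+1}\|g\|_{\Lambda_{n\alpha}}^2$. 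Combining, $\|\Pi_1(a,g)\|_{L^2}\lesssim|mR|^{1/2-1/p}|R|^{\alpha+1/2}\|g\|_{\Lambda_{n\alpha}}\sim|c mR|^{1/2-1}\|g\|_{\Lambda_{n\alpha}}$ since $\alpha=1/p-1$, which is precisely the $L^2$-size normalization required of an $h^1$-$(1,2,s)$-atom on $cmR$ up to the factor $\|g\|_{\Lambda_{n\alpha}}$. (One must also address large cubes $|cmR|\ge1$, where no moment condition is needed and the same size bound suffices.)

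The main obstacle will be making the "inherited vanishing moments" and the almost-orthogonality bookkeeping precise: the sum defining $\Pi_1(a,g)$ is over pairs $(I,I')$ with $|I|=|I'|$ and overlapping supports, so although the cancellation of each $\psi_{I'}^\lz$ is available, one has products $\phi_I\psi_{I'}^\lz$ of two different wavelets at the same scale, and to extract moments of order $s$ from the whole (infinite, a priori) sum one groups terms by scale and uses that within a scale the $I$'s meeting a fixed $I'$ are finite in number and comparable in size; this is the heart of the Bonami–Grellier–Ky argument and requires the regularity $d> n(2/p+1/2)$ both for the decay/smoothness in (P3) needed to sum over scales in the $h^1$ maximal-function estimate and for enough moments in (P4). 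A secondary technical point is the passage from finite wavelet expansions to all of $h^p(\rn)$: one checks that functions with finite wavelet expansions are dense in $h^p(\rn)$ and that the bilinear bound just proved, together with completeness of $h^1(\rn)$, lets $\Pi_1$ extend continuously; here one also invokes Remark \ref{rem-w3} to guarantee that the extension does not depend on the (non-unique) choice of approximating finite expansions, and that for such $f$ the value $\Pi_1(f,g)$ depends on $g$ only through its restriction to a neighborhood of $\supp f$, so that $g\in\Lambda_{n\alpha}(\rn)\subset L^2_{\loc}(\rn)$ is enough to make $\Pi_1(f,g)$ meaningful.
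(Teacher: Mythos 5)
Your outer architecture coincides with the paper's: finite atomic decomposition via Lemma \ref{lem-hp}, a single-atom estimate, summation using $\ell^p\hookrightarrow\ell^1$ and Lemma \ref{lem-hp}(ii), and extension by density using the unconditional basis property together with Remark \ref{rem-w3}. The divergence, and the problem, is in the single-atom step. Your proposed $L^2$ bookkeeping is not correct as stated: the family $\{\phi_I\}_{I\in\mathcal{D}_0}$ taken over \emph{all} scales is not an orthonormal system, and in fact
$\sum_{I\in\mathcal{D}_0}|\langle a,\phi_I\rangle|^2=\sum_{j\ge0}\|P_ja\|_{L^2(\rn)}^2=\infty$
for any $a\ne0$, since $\|P_ja\|_{L^2(\rn)}\to\|a\|_{L^2(\rn)}$. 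So the inequality
$\|\Pi_1(a,g)\|_{L^2(\rn)}^2\lesssim\sum_I\sum_{\lz}|\langle a,\phi_I\rangle|^2\sup_{I'}|\langle g,\psi_{I'}^\lz\rangle|^2$
cannot be closed the way you indicate; the terms $(P_ja)(Q_jb)$ at different scales are not orthogonal, and controlling their sum is precisely the nontrivial content of the paraproduct estimate. You flag this as "the heart of the Bonami--Grellier--Ky argument" to be filled in, but that \emph{is} the proof, not a technicality. (Your final arithmetic is also off: $|mR|^{1/2-1/p}|R|^{\az+1/2}\sim1$, not $|cmR|^{-1/2}$; this happens to be harmless only because $|R|\le1$ forces $|cmR|^{-1/2}\gtrsim1$.)

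The moment discussion is likewise misdirected. Since the target is $h^1(\rn)$, a local atom on a small ball needs only a vanishing \emph{integral}, not moments up to order $s=\lfloor n(1/p-1)\rfloor$; and the mechanism you invoke is wrong: the product $\phi_I\psi_{I'}^\lz$ does not inherit the higher moments of $\psi_{I'}^\lz$ from (P4) (one cannot integrate $x^\nu\phi_I\psi_{I'}^\lz$ against (P4) because $x^\nu\phi_I$ is not a polynomial). The only cancellation available is $\int\phi_I\psi_{I'}^\lz=0$ for $|I|=|I'|$, coming from $V_j\perp W_j$. The paper sidesteps all of this: for each atom $a_l$ supported in $mR_l$ it defines the localized piece
$b_l:=\sum_{I\subset 5mR_l}\sum_{\lz\in E}\langle g,\psi_I^\lz\rangle\psi_I^\lz$,
bounds $\|b_l\|_{L^2(\rn)}\lesssim|R_l|^{\az+1/2}\|g\|_{\Lambda_{n\az}(\rn)}$ by the Carleson-type estimate of Lemma \ref{thm}, shows $\Pi_1(a_l,g)=\Pi_1(a_l,b_l)$ by the support/orthogonality argument you also use, and then invokes the already-established boundedness of $\Pi_1$ from $L^2(\rn)\times L^2(\rn)$ to $H^1(\rn)$ (Lemma \ref{rem-w1}) to get $\|\Pi_1(a_l,b_l)\|_{h^1(\rn)}\lesssim\|a_l\|_{L^2(\rn)}\|b_l\|_{L^2(\rn)}\lesssim\|g\|_{\Lambda_{n\az}(\rn)}$. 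If you replace your direct atom verification with this localization-plus-black-box step, the rest of your argument goes through.
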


\begin{proof}
Assume that $f\in h^p(\rn)$ has a finite wavelet expansion and $g\in\Lambda_{n\alpha}(\rn)$.
Let $s\in\zz_+$ be such that $s\ge \lfloor n(1/p-1)\rfloor$.
By Lemma \ref{lem-hp}, we know that $f=\sum_{l=1}^L \mu_l a_l$ has a
finite atomic decomposition with the same notation as therein.
Assume that every local $(p,2,s)$-atom $a_l$ is supported in the cube
$mR_l$ with $m$ as in \eqref{220} and $R_l\in\mathcal{D}_0$ as in Lemma \ref{lem-hp}.
For any $l\in\{1,\dots, L\}$, define
\begin{align*}
b_l:=\sum_{I\in\mathcal{D}_0,I\subset 5mR_l}\sum_{\lz\in E} \langle g,\,\psi_I^\lz \rangle \psi_I^\lz.
\end{align*}
Applying the above property (P1) of wavelets and Lemma \ref{thm}, we conclude that, for any $l\in\{1,\dots, L\}$,
\begin{align*}
\|b_l\|_{L^2(\rn)}\ls
\lf(\sum_{I\in\mathcal{D}_0,I\subset 5mR_l}\sum_{\lz\in E}\lf|\langle g,\,\psi_I^\lz \rangle\r|^2\r)^{\frac{1}{2}}
\ls |R_l|^{\az+1/2} \|g\|_{\Lambda_{n\alpha}(\mathbb{R}^n)}.
\end{align*}
By the fact that, for any $l\in\{1,\dots, L\}$, $b_l\in L^2(\rn)$ and $a_l$ is a local $(p,2,s)$-atom,
and Lemmas \ref{rem-w1} and \ref{1q}, we know that, for any $l\in\{1,\dots, L\}$,
\begin{align}\label{eq-x6}
\|\Pi_1(a_l,\,b_l)\|_{h^1(\rn)}\ls\|\Pi_1(a_l,\,b_l)\|_{H^1(\rn)}\ls
\|a_l\|_{L^2(\rn)}\|b_l\|_{L^2(\rn)}
\ls \|g\|_{\Lambda_{n\alpha}(\mathbb{R}^n)}.
\end{align}
From the above property (P1)
of wavelets, it follows that, for any $l\in\{1,\dots, L\}$,
\begin{align}\label{eq-x5}
 \Pi_1(a_l,\,b_l)
&=  \dsum_{\gfz{I,\,I'\in \mathcal{D}_0}{|I|=|I'|}} \sum_{\lz\in E} \langle a_l,\,\phi_I \rangle
\langle b_l,\,\psi_{I'}^\lz \rangle \phi_I \psi_{I'}^\lz
=  \dsum_{\gfz{I,\,I'\in \mathcal{D}_0}{|I|=|I'|, I'\subset 5mR_l}} \sum_{\lz\in E} \langle a_l,\,\phi_I \rangle
\langle g,\,\psi_{I'}^\lz \rangle \phi_I \psi_{I'}^\lz.
\end{align}
Moreover, according to Lemma \ref{lem-hp}(iii), the wavelet expansion of $a_l$ has only finite terms.
By this, properties (P2) and (P5) of wavelets, we know that, for any $l\in\{1,\dots, L\}$
and $I,\,I'\in \mathcal{D}_0$, the factor $\langle a_l, \phi_I\rangle
 \phi_I \psi_{I'}^\lz$ in the above sum is non-zero only when
 $(mI)\cap (mI')\neq\emptyset$, $|I|\leq|R_l|$
and $mI\cap mR_l\neq\emptyset$, which implies that
$ I'\subset 5mR_l$.
Therefore, the restriction in the last term of \eqref{eq-x5} can be removed and hence we have,
for any $l\in\{1,\dots, L\}$,
\begin{align*}
 \Pi_1(a_l,\,b_l)
=  \sum_{I,\,I'\in \mathcal{D}_0,|I|=|I'|} \sum_{\lz\in E} \langle a_l,\,\phi_I \rangle
\langle g,\,\psi_{I'}^\lz \rangle \phi_I \psi_{I'}^\lz=\Pi_1(a_l,g).
\end{align*}
From this, the fact that $\Pi_1$ is bilinear, \eqref{eq-x6} and Lemma \ref{lem-hp}(i), we deduce that
\begin{align}\label{eq-x8}
\|\Pi_1(f,\,g)\|_{h^1(\rn)}
&=\lf\|\sum_{l=1}^L \mu_l \Pi_1(a_l,\,g) \r\|_{h^1(\rn)}
=\lf\|\sum_{l=1}^L \mu_l \Pi_1(a_l,\,b_l) \r\|_{h^1(\rn)}\\\noz
&\le  \sum_{l=1}^L |\mu_l|\|\Pi_1(a_l,\,b_l)\|_{h^1(\rn)}
\ls \lf(\sum_{l=1}^L|\mu_l|^p\r)^{1/p} \|g\|_{\Lambda_{n\alpha}(\mathbb{R}^n)}\\\noz
&\ls \|f\|_{h^p(\rn)} \|g\|_{\Lambda_{n\alpha}(\mathbb{R}^n)}. \notag
\end{align}

For any $f\in h^p(\rn)$, observe that the regularity parameter $d\in\nn$ satisfies that
$d> n(2/p+1)$.
By this and \cite[Theorem 1.64]{gg}, we know that the family
of wavelets, $\{\psi_I^{\lz}\}_{I\in\mathcal{D}_0,\,\lz\in E}\bigcup\{\phi_I\}_{I\in\mathcal{D}_0,|I|=1}$,
is an unconditional basis of
$h^p(\rn)$ (see \cite[Definition 1.56]{gg} for the precise definition), which implies that
there exists a sequence $\{f_k\}_{k\in\nn}\subset h^p(\rn)$ having finite wavelet expansions such that
$\lim_{k\to \fz}f_k=f$ in $h^p(\rn)$.
Thus, using \eqref{eq-x8}, we extend the definition of $\Pi_1$
by setting, for any $f\in h^p(\rn)$ and $g\in \Lambda_{n\alpha}(\mathbb{R}^n)$,
\begin{align*}
\Pi_1(f,\,g):=\lim_{k\to \fz} \Pi_1(f_k,\,g) \qquad\textup{in}\;\; h^1(\rn).
\end{align*}
Estimate \eqref{eq-x8} ensures that the above definition is independent of the
choice of the sequence $\{f_k\}_{k\in\nn}$ and hence is
well defined. Based
on this extension, we deduce from \eqref{eq-x8} again that
\begin{align*}
\|\Pi_1(f,\,g)\|_{h^1(\rn)}=\lim_{k\to \fz}
\|\Pi_1(f_k,\,g)\|_{h^1(\rn)}\ls\lim_{k\to \fz}
\|f_k\|_{h^p(\rn)}\|g\|_{\Lambda_{n\alpha}(\mathbb{R}^n)}\ls
\|f\|_{h^p(\rn)}\|g\|_{\Lambda_{n\alpha}(\mathbb{R}^n)}.
\end{align*}
This implies that
$\Pi_1$ can be extended to a bilinear operator
bounded from $h^p(\rn)\times\Lambda_{n\alpha}(\mathbb{R}^n)$ to $h^1(\rn)$.
This finishes the proof of Proposition \ref{prop-pi1}.
\end{proof}

\begin{proposition}\label{prop-pi2}
Let $p\in(0,1)$ and $\az:=1/p-1$. Assume that the regularity parameter $d\in\nn$ appearing in (P3)
and (P4) of wavelets satisfies that
$d> n(2/p+1/2)$.
Then  the bilinear operator $\Pi_2$, defined as in \eqref{eq-pi2},
can be extended to a bilinear operator
bounded from $h^{p}(\rn)\times\Lambda_{n\alpha}(\mathbb{R}^n)$ to $h^{\Phi_p}(\rn)$
with $\Phi_{p}$ as in \eqref{333}.
\end{proposition}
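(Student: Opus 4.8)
The plan is to carry out the strategy announced in the Introduction: reduce the boundedness of $\Pi_2$ to its action on individual local atoms, and then exploit the wavelet structure to reduce the estimate of $\Pi_2$ on one atom $a$ to the estimate of $a\,P_B^sg$ supplied by Proposition \ref{ap}. Put $s:=\lfloor n(1/p-1)\rfloor$; by hypothesis the wavelets in \eqref{www} have regularity $d\in\nn$ with $d>n(2/p+1/2)$, so in particular $d>2s$ and $\{\psi_I^{\lz}\}_{I\in\mathcal{D}_0,\,\lz\in E}\cup\{\phi_I\}_{I\in\mathcal{D}_0,|I|=1}$ is an unconditional basis of $h^p(\rn)$ (as in the proof of Proposition \ref{prop-pi1}). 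Hence it suffices to establish the desired estimate for $f\in h^p(\rn)$ having a \emph{finite} wavelet expansion, the general case following by continuity via $\Pi_2(f,g):=\lim_{k\to\fz}\Pi_2(f_k,g)$ in $h^{\Phi_p}(\rn)$ for finite-expansion approximants $f_k\to f$ in $h^p(\rn)$. For such an $f$, apply Lemma \ref{lem-hp} with its degree parameter equal to $2s$ to get a finite decomposition $f=\sum_{l=1}^L\mu_la_l$ in which each $a_l$ is a local $(p,2,2s)$-atom supported in a cube $mR_l$ ($m$ as in \eqref{220}, $R_l\in\mathcal{D}_0$), with a finite wavelet expansion extracted from that of $f$, and $(\sum_{l=1}^L|\mu_l|^p)^{1/p}\ls\|f\|_{h^p(\rn)}$. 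Since $\Phi_p$ is subadditive (Lemma \ref{444}(ii), \eqref{xx4}), the quasi-norm $\|\cdot\|_{h^{\Phi_p}(\rn)}$ is homogeneous and satisfies $\|\sum_lF_l\|_{h^{\Phi_p}(\rn)}^p\le\sum_l\|F_l\|_{h^{\Phi_p}(\rn)}^p$; together with the bilinearity of $\Pi_2$ in its first slot, this reduces everything to the uniform bound $\|\Pi_2(a,g)\|_{h^{\Phi_p}(\rn)}\ls\|g\|_{\Lambda_{n\az}(\rn)}$ for a single local $(p,2,2s)$-atom $a$ supported in $mR$, $R\in\mathcal{D}_0$.

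Fix such an $a$. By Remark \ref{rem-w3}, choose a smooth $\eta$ with $\supp\eta$ contained in a bounded dilate of $mR$ and $\eta\equiv1$ on a (fixed, depending only on $n$ and $m$) dilate of $mR$ chosen wide enough for the reproduction argument below, so that $\eta g\in L^2(\rn)$ and, by \eqref{eq-x10}, $\Pi_2(a,g)=\Pi_2(a,\eta g)$; let $B'$ denote a ball containing $\supp\eta$. First suppose $\ell(R)$ is bounded below by a fixed dimensional constant. Then $\|a\|_{L^2(\rn)}\ls|mR|^{1/2-1/p}$ and $\|\eta g\|_{L^2(\rn)}\ls|mR|^{1/2}\|g\|_{L^\infty(\rn)}\le|mR|^{1/2}\|g\|_{\Lambda_{n\az}(\rn)}$, and since $1/2-1/p+1/2=1-1/p<0$ and $|mR|$ is bounded away from $0$, Lemma \ref{rem-w1} gives $\Pi_2(a,\eta g)\in H^1(\rn)$ with norm $\ls\|g\|_{\Lambda_{n\az}(\rn)}$; as $H^1(\rn)\hookrightarrow h^1(\rn)\hookrightarrow h^{\Phi_p}(\rn)$ (the second embedding being Lemma \ref{444}(i)), this case is done.

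The remaining case is $\ell(R)$ small enough that $|B'|<1$; then $a$ has vanishing moments up to order $2s$, and, moreover, $P_0a=0$, because in \eqref{eqn 3.x4} the father-wavelet sum is empty (no dyadic cube of side $1$ fits inside $R$) and $\langle a,\phi_J\rangle=0$ for every $J$ with $|J|=1$ by property (P5) of the wavelets. Write $g=(g-P_{B'}^sg)+P_{B'}^sg$, so that $\Pi_2(a,\eta g)=\Pi_2(a,\eta(g-P_{B'}^sg))+\Pi_2(a,\eta P_{B'}^sg)$. For the first summand, Lemma \ref{GR-DDY} with $r=2$ (legitimate since $|B'|<1$) together with Lemma \ref{lem-ep} yields $\|\eta(g-P_{B'}^sg)\|_{L^2(\rn)}\le|B'|^{1/2}[\dashint_{B'}|g-P_{B'}^sg|^2]^{1/2}\ls|B'|^{1/2+\az}\|g\|_{\Lambda_{n\az}(\rn)}$; since $(1/2-1/p)+(1/2+\az)=0$ and $|mR|\sim|B'|$, Lemma \ref{rem-w1} again puts $\Pi_2(a,\eta(g-P_{B'}^sg))$ in $H^1(\rn)\hookrightarrow h^{\Phi_p}(\rn)$ with norm $\ls\|g\|_{\Lambda_{n\az}(\rn)}$. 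For the second summand, by \eqref{eq-pi2}, regrouping terms according to the common side length, $\Pi_2(a,\eta P_{B'}^sg)=\sum_{j\ge0}(Q_ja)(P_j(\eta P_{B'}^sg))$ in $L^1(\rn)$, a finite sum; on each $\supp(Q_ja)$ one has $P_j(\eta P_{B'}^sg)=P_{B'}^sg$, since for $x\in\supp(Q_ja)$ every $\phi_{I'}$ with $|I'|=2^{-jn}$ and $\phi_{I'}(x)\ne0$ is supported where $\eta\equiv1$ (this is where the requirement that $\eta$ be identically $1$ on a sufficiently wide dilate of $mR$ is used), whence $\langle\eta P_{B'}^sg,\phi_{I'}\rangle=\langle P_{B'}^sg,\phi_{I'}\rangle$, and the reproduction of polynomials of degree $\le s\le d$ by $\{\phi_{I'}\}_{|I'|=2^{-jn}}$ gives $\sum_{|I'|=2^{-jn}}\langle P_{B'}^sg,\phi_{I'}\rangle\phi_{I'}(x)=P_{B'}^sg(x)$. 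Hence, using $P_0a=0$ and \eqref{eq-Qj}, $\Pi_2(a,\eta P_{B'}^sg)=\bigl(\sum_{j\ge0}Q_ja\bigr)P_{B'}^sg=a\,P_{B'}^sg$, and, $a$ being a dimensional multiple of a local $(p,2,2s)$-atom supported in $B'$, Proposition \ref{ap} gives $\|a\,P_{B'}^sg\|_{h^{\Phi_p}(\rn)}\ls\|g\|_{L^\infty(\rn)}\le\|g\|_{\Lambda_{n\az}(\rn)}$. Combining the two summands via the $p$-subadditivity of $\|\cdot\|_{h^{\Phi_p}(\rn)}$ finishes the single-atom estimate, hence the proof.

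I expect the main obstacle to be the identification $\Pi_2(a,\eta P_{B'}^sg)=a\,P_{B'}^sg$ in the small-ball case. It hinges on a balancing act for the cut-off $\eta$: it must be supported in a bounded dilate of $mR$, so that the companion term $\Pi_2(a,\eta(g-P_{B'}^sg))$ is controlled by the $L^2$-Campanato norm on the comparable ball $B'$, yet it must be identically $1$ on a dilate of $mR$ wide enough that $P_j(\eta P_{B'}^sg)$ already agrees with the polynomial on every $\supp(Q_ja)$; and it uses the high vanishing-moment order of the wavelets both to reproduce low-degree polynomials and to force $P_0a=0$. Once this identification is in place, Proposition \ref{ap} carries the decisive term, the remaining pieces landing in $H^1(\rn)\hookrightarrow h^{\Phi_p}(\rn)$, and the summation over $l$ is immediate from the $p$-quasi-norm structure of $h^{\Phi_p}(\rn)$.
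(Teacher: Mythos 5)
Your argument is correct and has the same skeleton as the paper's proof: finite atomic decomposition into local $(p,2,2s)$-atoms, a case split on the size of the containing ball, the splitting $g=(g-P_B^sg)+P_B^sg$ with the oscillation term handled by the $L^2\times L^2\to H^1$ bound of Lemma \ref{rem-w1} and the polynomial term by Proposition \ref{ap}. You deviate in two technical places, both legitimately. First, for the key identification $\Pi_2(a,\eta P_B^sg)=aP_B^sg$ you argue directly that $P_j$ reproduces polynomials of degree $\le s$ on $\supp(Q_ja)$ and that $P_0a=0$; the paper instead writes $aP_B^sg=\sum_{i=1}^4\Pi_i(a,\eta P_B^sg)$ via \eqref{eq-L2L2} and kills $\Pi_1,\Pi_3,\Pi_4$ using only (P4) (each surviving term contains a factor $\la P_B^sg,\psi_{I'}^{\lz'}\ra=0$) together with $\la a,\phi_I\ra=0$ for $|I|=1$. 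Your route works, but the reproduction identity $\sum_{|I'|=2^{-jn}}\la P,\phi_{I'}\ra\phi_{I'}=P$ is not among the listed properties (P1)--(P5); it does follow from (P4) (one checks $Q_jP=0$, so $P_jP$ is independent of $j$, and $P_jP\to P$ locally as $j\to\infty$ by an approximate-identity argument), so you should supply or cite this derivation, whereas the paper's version needs no such extra input. Second, to sum over atoms you invoke the $p$-subadditivity $\|\sum_lF_l\|_{h^{\Phi_p}(\rn)}^p\le\sum_l\|F_l\|_{h^{\Phi_p}(\rn)}^p$; this is a correct consequence of the constant-one lower-type-$p$ estimate in Lemma \ref{444}(ii) combined with \eqref{xx4} (normalize by $\Lambda:=(\sum_l\lambda_l^p)^{1/p}$ and use $\Phi_p(\lambda_lt/\Lambda)\le(\lambda_l/\Lambda)^p\Phi_p(t)$), and it yields a cleaner conclusion than the paper's explicit verification of $\int_{\rn}\Phi_p(m(\Pi_2(f,g),\varphi)(x)/(\mathbb A C_1))\,dx\le1$ obtained by splitting the indices according to $|\mu_l|\le\mathbb A$ or $|\mu_l|>\mathbb A$. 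The remaining pieces (choice of $\eta$, the large-ball case, the $r=2$ Campanato estimate, the density extension to general $f$) coincide with the paper's.
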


\begin{remark}
Let $p\in(0,1)$. Compared with
the corresponding target space appearing in
\cite[Proposition 4.9]{bckly},
the target space $h^{\Phi_p}(\rn)$
in Proposition \ref{prop-pi2}
looks much simpler than the corresponding one $H^{\phi_p}(\rn)$ in \cite{bckly} with
$\phi_p$ as in \eqref{356}.
The key reason for this is that, for $\alpha:=1/p-1$, each function in $\Lambda_{n\az}(\rn)$ is bounded but the function in $\mathfrak{C}_{\alpha}(\mathbb{R}^n)$
is not necessarily bounded. Moreover, Proposition \ref{prop-pi2} can not contain the case $p=1$, because the function in $\bmo(\mathbb{R}^n)$, the dual space of $h^{1}(\rn)$,
is also not necessarily bounded.
\end{remark}

\begin{proof}[Proof of Proposition \ref{prop-pi2}]
Let $g\in \Lambda_{n\alpha}(\mathbb{R}^n)$  and $s:=\lfloor n\az\rfloor$.  Assume that $a$ is a
local $(p,2,2s)$-atom supported in the cube
$mR$ with $m$ as in \eqref{220} and $R\in\mathcal{D}_0$ as in Lemma \ref{lem-hp}, and $a$ has a finite  wavelet expansion.
Let $B$ be  the smallest ball in $\rn$ containing $9mR$ and
$P_B^s g$ the minimizing polynomial of $g$ on $B$ with degree $\le s$
as in Definition \ref{campa}.
Let $\eta$ be a smooth cut-off function such that $\supp\eta\subset 9m R$ and $\eta\equiv 1$ on $5mR$.

We now consider two cases based on the size of $|B|$.
If $|B|\geq1$,
by Lemma \ref{444}(i), Remark \ref{rem-w3}, the fact that $\Pi_2$ is bounded from $L^2(\rn)\times L^2(\rn)$
to $H^1(\rn)$ (see Lemma \ref{rem-w1}), $|B| \sim|9mR|$ and $\supp\eta\subset 9mR$,
we conclude that
\begin{align*}
\|\Pi_2(a,\, g)\|_{h^{\Phi_p}(\rn)}
&\ls\|\Pi_2(a,\,\eta g)\|_{h^{1}(\rn)}\ls\|\Pi_2(a,\,\eta g)\|_{H^{1}(\rn)}
\ls\|a\|_{L^2(\rn)}\|\eta g\|_{L^2(\rn)}\\
&\ls\frac{|R|^{1/2}}{|R|^{1/p}}|9mR|^{1/2}\|g\|_{L^\infty(\rn)}
\ls\|g\|_{L^\infty(\rn)}
\ls \|g\|_{\Lambda_{n\alpha}(\mathbb{R}^n)},
\end{align*}
which is the desired estimate.

If $|B|<1$, by \eqref{eqn 3.x4}, we
immediately know that $a$ can be written into the following form:
$$
a=\sum_{I\in \mathcal{D}_0, {I\subset R}}\sum_{\lz\in E} c_{(I,\,\lz,\,l)} \psi_I^\lz,
$$
which, together with the orthogonality of the wavelets
$\{\psi_I^{\lz}\}_{I\in\mathcal{D}_0,\,\lz\in E}\bigcup\{\phi_I\}_{I\in\mathcal{D}_0,|I|=1}$, implies that,
for any $I\in\mathcal{D}_0$ satisfying $|I|=1$, $\la a,\phi_I \ra=0$ and hence
$$
\sum_{I,\,I'\in \mathcal{D}_0,
|I|=|I'|=1}  \langle a,\,\phi_I \rangle
\langle g,\,\phi_{I'} \rangle \phi_I \phi_{I'}=0\quad\text{and}\quad
\dsum_{I\in \mathcal{D}_0,
|I|=1}  \langle a,\,\phi_I \rangle
\langle g,\,\phi_I \rangle \lf(\phi_I\r)^2=0.
$$
From this, Remark \ref{rem-w3} and
the property (P4) of wavelets with $d$ therein satisfying
$d> \lfloor n\az\rfloor$,
it follows
that, for any $i\in\{1,\,3,\,4\}$,
\begin{align*}
\Pi_i(a,\,\eta P_B^s g)
=\Pi_i(a,\,P_B^s g)=0.
\end{align*}
This, combined with \eqref{eq-L2L2}, implies that
\begin{align*}
aP_B^s g=
a(\eta P_B^s g)
=\dsum_{i=1}^4\Pi_i(a,\,\eta P_B^s g)
=\Pi_2(a,\,\eta P_B^s g)=\Pi_2(a,\, P_B^s g).
\end{align*}
Moreover, by Lemma \ref{444}(i), the fact that $\Pi_2$ is bounded from $L^2(\rn)\times L^2(\rn)$
to $H^1(\rn)$ (see Lemma \ref{rem-w1}), $|B| \sim|9mR|$, $\supp\eta\subset 9mR$ and
Lemma \ref{GR-DDY}, we conclude that
\begin{align*}
\|\Pi_2(a,\,\eta [g-P_B^s g])\|_{h^{\Phi_p}(\rn)}
&\ls \|\Pi_2(a,\,\eta [g-P_B^s g])\|_{h^1(\rn)}
\ls\|\Pi_2(a,\,\eta [g-P_B^s g])\|_{H^1(\rn)}\\
&\ls\|a\|_{L^2(\rn)}\|\eta(g-P_B^s g)\|_{L^2(\rn)}
\ls\frac{|R|^{1/2}}{|R|^{1/p}}|B|^{1/2}\lf[\dashint_{B}|g(x)-P_B^s g(x)|^2\,dx\r]^{\frac{1}{2}}\\
&\sim\frac{1}{|B|^{\alpha}}\lf[\dashint_{B}|g(x)-P_B^s g(x)|^2\,dx\r]^{\frac{1}{2}}
\ls\|g\|_{\mathcal{L}_{\loc}^{\az,2,s}(\rn)}
\sim \|g\|_{\Lambda_{n\alpha}(\mathbb{R}^n)}.
\end{align*}
From this and Proposition \ref{ap}, we deduce that
\begin{align}\label{eqn-3.35}
\|\Pi_2(a,\,g)\|_{h^{\Phi_p}(\rn)}&
\ls \|\Pi_2(a,\,g-P_B^s g)\|_{h^{\Phi_p}(\rn)}+\|aP_B^s g\|_{h^{\Phi_p}(\rn)}
\ls \|g\|_{\Lambda_{n\alpha}(\mathbb{R}^n)},
\end{align}
which is also the desired estimate.

We now extend the above boundedness
from an atom $a$ to any $f\in h^p(\rn)$ with a finite wavelet expansion.
Let $g\in \Lambda_{n\alpha}(\mathbb{R}^n)$ and $f\in h^p(\rn)$ have a finite atomic decomposition
$f=\sum_{l=1}^L\mu_la_l$ with the same notation as in Lemma \ref{lem-hp}, where,
for any $l\in\{1,\dots, L\}$, $a_l$ is a
local $(p,2,2s)$-atom.
By the definition of $\|\cdot\|_{h^{\Phi_p}(\rn)}$,
to obtain the desired boundedness, it suffices to show that there exists a
positive constant $C$ such that
\begin{align}\label{aim-xx1}
\int_\rn \Phi_p\lf(\frac{m(\Pi_2(f,g),\varphi)(x)}{C\|f\|_{h^p(\rn)}
\|g\|_{\Lambda_{n\alpha}(\mathbb{R}^n)}}\r)\,dx\le 1,
\end{align}
where $m(\cdot,\varphi)$ is as in Definition \ref{hp}(i).
Without loss of generality, we may assume that $\|f\|_{h^p(\rn)}=1$ and
$\|g\|_{\Lambda_{n\alpha}(\mathbb{R}^n)}=1$.
Otherwise, we may replace $f$ and $g$, respectively, by $\wz f:={f}/{\|f\|_{h^p(\rn)}}$
and $\wz g:={g}/{\|g\|_{\Lambda_{n\alpha}(\mathbb{R}^n)}}$ in the argument below.

Now, we prove \eqref{aim-xx1}. From Lemma \ref{lem-hp}, we
deduce that
\begin{align}\label{aim}
\lf(\sum_{l=1}^L |\mu_l|^p\r)^{1/p}\le \wz C
\|f\|_{h^p(\rn)}=\wz C
\end{align}
for some positive constant $\wz C$ independent of $f$.
Without loss of generality, we may as well assume
that $\wz C\ge 1$.
From the fact that, for any $l\in\{1,\dots, L\}$, $a_l$ is a
local $(p,2,2s)$-atom, it follows that
\eqref{eqn-3.35} holds true with the atom $a$ replaced by $a_l$.
By this, $\|g\|_{\Lambda_{n\alpha}(\mathbb{R}^n)}=1$ and
the definition of $\|\cdot\|_{h^{\Phi_p}(\rn)}$, we conclude that
there exists a positive constant $C_1\in(1,\infty)$, independent of $a_l$ and $g$, such that
\begin{align}\label{eqn-3.35x}
\int_\rn \Phi_p\lf(\frac{m(\Pi_2(a_l,\,g),\varphi)(x)}{C_1}\r)\,dx\le 1.
\end{align}
Let $\mathbb A:= 2^{1/p} \wz C$. Then $\mathbb A>1$.
Observe that
$$
m(\Pi_2(f,g),\varphi)
\le \sum_{l=1}^L |\mu_l|
m(\Pi_2(a_l, g),\varphi).
$$
By this, the fact that $\Phi_p$ is strictly increasing, and Lemma \ref{444}(ii), we find that
\begin{align}\label{xx2}
\dint_\rn \Phi_p\lf(\frac{m(\Pi_2(f,g),\varphi)(x)}{\mathbb A C_1}\r)\,dx&
\le \int_\rn \Phi_p\lf(\frac{\sum_{l=1}^L |\mu_l|  m(\Pi_2(a_l, g),\varphi)(x)}
{\mathbb A C_1}\r)\,dx\\
\nonumber&\le\dsum_{l=1}^L \int_\rn \Phi_p\lf(\frac{|\mu_l| m(\Pi_2(a_l, g),\varphi)(x)}{\mathbb A C_1}\r)\,dx
=:\dsum_{l=1}^L {\rm D}_l.
\end{align}
Then \eqref{eqn-3.35x} and Lemma \ref{444}(ii) imply that
\begin{align*}
\begin{cases}
\displaystyle{\rm D}_l\le \frac{|\mu_l|^p}{\mathbb A^p}\int_\rn \Phi_p\lf(
\frac{ m(\Pi_2(a_l, g),\varphi)(x)}{C_1}\r)\,dx \le\frac{|\mu_l|^p}{\mathbb A^p}&\qquad \textup{when}\; |\mu_l|\le \mathbb A,
\vspace{0.2cm}\\
\displaystyle
{\rm D}_l\le \frac{|\mu_l|}{\mathbb A}\int_\rn \Phi_p\lf(
\frac{  m(\Pi_2(a_l, g),\varphi)(x)}{C_1}\r)\,dx \le\frac{|\mu_l|}{\mathbb A}&\qquad \textup{when}\;
|\mu_l|> \mathbb A.
\end{cases}
\end{align*}
Combining this, \eqref{aim} and \eqref{xx2}, we obtain
\begin{align*}
\int_\rn \Phi_p\lf(\frac{m(\Pi_2(f,g)),\varphi)(x)}{\mathbb A C_1}\r)\,dx
&\le \frac1 {\mathbb A^p}\sum_{\{1\le l\le L:\, |\mu_l|\le \mathbb A\}} |\mu_l|^p
+\frac1{\mathbb A}\sum_{\{1\le l\le L:\, |\mu_l|> \mathbb A\}} |\mu_l|\le  \frac{\wz C^p} {\mathbb A^p}+\frac{\wz C}{\mathbb A}  < 1,
\end{align*}
which implies that \eqref{aim-xx1} holds true with the positive constant
$C$ therein taken as $\mathbb A C_1$. Thus,  we arrive at the conclusion that
\begin{align*}
\|\Pi_2(f,g)\|_{h^{\Phi_p}(\rn)}
\ls \|f\|_{h^p(\rn)}\|g\|_{\Lambda_{n\alpha}(\mathbb{R}^n)},
\end{align*}
whenever $f\in h^p(\rn)$ has a finite wavelet expansion.

Repeating the
argument similar to that used in
the proof of Proposition \ref{prop-pi1}, we find that the definition of
$\Pi_2(f,g)$ can be extended to any $f\in h^p(\rn)$ and
$g\in \Lambda_{n\alpha}(\mathbb{R}^n)$ with
the desired boundedness and the details are omitted.
This finishes the proof of Proposition \ref{prop-pi2}.
\end{proof}

Similarly to the proof of Proposition \ref{prop-pi1}, we obtain the following results
on the boundedness of the bilinear operators $\Pi_3$ and $\Pi_4$.

\begin{proposition}\label{prop-pi3}
Let $p\in(0,1)$ and $\az:=1/p-1$. Assume that the regularity parameter $d\in\nn$ appearing in (P3)
and (P4) of wavelets satisfies that
$
d> n(2/p+1/2)$.
Then   the bilinear operator $\Pi_3$, defined as in \eqref{eq-pi3},
can be extended to a bilinear operator
bounded from $h^p(\rn)\times\Lambda_{n\alpha}(\mathbb{R}^n)$ to $h^1(\rn)$.
\end{proposition}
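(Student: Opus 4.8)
The plan is to mimic the proof of Proposition \ref{prop-pi1}, with $\Pi_1$ replaced by $\Pi_3=\Pi_{3,1}+\Pi_{3,2}$; the argument transfers because, in both summands defining $\Pi_3$, the two dyadic cubes involved have the \emph{same} side length, so the same localization of $g$ that worked for $\Pi_1$ still works here. First I would fix $f\in h^p(\rn)$ with a finite wavelet expansion and $g\in\Lambda_{n\az}(\rn)$, choose $s\in\zz_+$ with $s\ge\lfloor n(1/p-1)\rfloor$, and apply Lemma \ref{lem-hp} to obtain a finite atomic decomposition $f=\sum_{l=1}^L\mu_l a_l$, where each $a_l$ is a local $(p,2,s)$-atom supported in $mR_l$ with $R_l\in\mathcal{D}_0$, has a finite wavelet expansion extracted from that of $f$, and $(\sum_{l=1}^L|\mu_l|^p)^{1/p}\lesssim\|f\|_{h^p(\rn)}$.

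For each $l$ I would localize $g$ by setting
$$
b_l:=\sum_{I\in\mathcal{D}_0,\,I\subset 5mR_l}\lf[\sum_{\lz\in E}\langle g,\psi_I^\lz\rangle\psi_I^\lz+\mathbf1_{\{|I|=1\}}\langle g,\phi_I\rangle\phi_I\r],
$$
so that $b_l\in L^2(\rn)$ and, by property (P1) of the wavelets together with Lemma \ref{thm}, $\|b_l\|_{L^2(\rn)}\lesssim|R_l|^{\az+1/2}\|g\|_{\Lambda_{n\az}(\rn)}$. Since $a_l$ is a local atom we have $\|a_l\|_{L^2(\rn)}\le|mR_l|^{1/2-1/p}\sim|R_l|^{1/2-1/p}$, so Lemma \ref{rem-w1} (boundedness of $\Pi_3$, $\Pi_{3,1}$ and $\Pi_{3,2}$ from $L^2(\rn)\times L^2(\rn)$ to $H^1(\rn)$) together with the embedding $H^1(\rn)\hookrightarrow h^1(\rn)$ yields
$$
\|\Pi_3(a_l,b_l)\|_{h^1(\rn)}\lesssim\|\Pi_3(a_l,b_l)\|_{H^1(\rn)}\lesssim\|a_l\|_{L^2(\rn)}\|b_l\|_{L^2(\rn)}\lesssim|R_l|^{1-1/p+\az}\|g\|_{\Lambda_{n\az}(\rn)}=\|g\|_{\Lambda_{n\az}(\rn)},
$$
using $\az=1/p-1$.

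The step I expect to be the only delicate point is identifying $\Pi_3(a_l,b_l)=\Pi_3(a_l,g)$. Here I would expand each of $\Pi_{3,1}(a_l,b_l)$ and $\Pi_{3,2}(a_l,b_l)$ in wavelet pairs: by Lemma \ref{lem-hp}(iii) only finitely many coefficients $\langle a_l,\psi_I^\lz\rangle$ and $\langle a_l,\phi_I\rangle$ are nonzero, and for each such term properties (P2) and (P5) of the wavelets force the paired cube $I'$ to satisfy $|I'|=|I|$, $(mI)\cap(mI')\ne\emptyset$ and $mI\cap mR_l\ne\emptyset$, whence $I'\subset 5mR_l$; therefore one may replace the truncated pairings $\langle b_l,\psi_{I'}^{\lz'}\rangle$ and $\langle b_l,\phi_{I'}\rangle$ by $\langle g,\psi_{I'}^{\lz'}\rangle$ and $\langle g,\phi_{I'}\rangle$ without changing the sum, exactly as in Remark \ref{rem-w3}. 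Granting this, bilinearity of $\Pi_3$, the previous display, the inequality $\sum_{l=1}^L|\mu_l|\le(\sum_{l=1}^L|\mu_l|^p)^{1/p}$ (valid since $p\in(0,1]$), and Lemma \ref{lem-hp}(ii) give
$$
\|\Pi_3(f,g)\|_{h^1(\rn)}\le\sum_{l=1}^L|\mu_l|\,\|\Pi_3(a_l,b_l)\|_{h^1(\rn)}\lesssim\lf(\sum_{l=1}^L|\mu_l|^p\r)^{1/p}\|g\|_{\Lambda_{n\az}(\rn)}\lesssim\|f\|_{h^p(\rn)}\|g\|_{\Lambda_{n\az}(\rn)}
$$
for every $f\in h^p(\rn)$ having a finite wavelet expansion.

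Finally, as in the proof of Proposition \ref{prop-pi1}, $d$ is large enough that \cite[Theorem 1.64]{gg} shows the wavelet system to be an unconditional basis of $h^p(\rn)$, so elements with finite wavelet expansions are dense in $h^p(\rn)$. Approximating a general $f\in h^p(\rn)$ by such $f_k$, the estimate above shows that $\{\Pi_3(f_k,g)\}_{k\in\nn}$ is Cauchy in $h^1(\rn)$; I would define $\Pi_3(f,g):=\lim_{k\to\fz}\Pi_3(f_k,g)$ in $h^1(\rn)$, check via the estimate that this limit is independent of the chosen approximating sequence, and pass to the limit to obtain $\|\Pi_3(f,g)\|_{h^1(\rn)}\lesssim\|f\|_{h^p(\rn)}\|g\|_{\Lambda_{n\az}(\rn)}$. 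This extends $\Pi_3$ to a bounded bilinear operator from $h^p(\rn)\times\Lambda_{n\az}(\rn)$ to $h^1(\rn)$, as claimed.
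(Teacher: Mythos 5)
Your proof is correct and follows the paper's strategy: the paper likewise disposes of $\Pi_{3,1}$ by exactly the localization argument of Proposition \ref{prop-pi1} (finite atomic decomposition from Lemma \ref{lem-hp}, the localized $b_l$ controlled via Lemma \ref{thm}, the $L^2\times L^2\to H^1$ bound of Lemma \ref{rem-w1}, the support analysis via (P2) and (P5), and the density extension). The only, harmless, difference is that you fold $\Pi_{3,2}$ into the same argument by including the $\phi_I$-terms with $|I|=1$ in $b_l$, whereas the paper treats $\Pi_{3,2}$ separately by a case analysis on $|R_l|$ (it vanishes when $|R_l|<1$ since then $\langle a_l,\phi_I\rangle=0$, and is estimated directly with a cut-off when $|R_l|=1$); both versions hinge on the same observations, so your unified treatment is valid.
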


\begin{proof}
According to Lemma \ref{1q}, for any $f,\ g\in L^2(\rn)$ with
$f$ having a finite wavelet expansion, the operator $\Pi_3(f,g)$
is well defined
and we have
\begin{align*}
\Pi_3(f,\,g)&=\dsum_{\gfz{I,\,I'\in \mathcal{D}_0}{
|I|=|I'|}} \dsum_{\gfz{\lz,\,\lz'\in E}{
(I,\,\lz)\ne (I',\,\lz')}} \langle f,\,\psi_I^\lz \rangle
\langle g,\,\psi_{I'}^{\lz'} \rangle \psi_I^\lz \psi_{I'}^{\lz'}+
\dsum_{\gfz{I,\,I'\in \mathcal{D}_0}
{|I|=|I'|=1}}  \langle f,\,\phi_I \rangle
\langle g,\,\phi_{I'} \rangle \phi_I \phi_{I'}\\
&=:\Pi_{3,1}(f,\,g)+\Pi_{3,2}(f,\,g).
\end{align*}
From Lemma \ref{rem-w1},
it follows that $\Pi_{3,1}(f,\,g)$ and $\Pi_{3,2}(f,\,g)$
can be extended to a bounded bilinear operator from $L^2(\rn)\times L^2(\rn)$ to $h^1(\rn)$.
Then the bilinear operator $\Pi_{3,1}$ can be handled in a way similar to that used in the proof of
Proposition \ref{prop-pi1} and the details are omitted.

To prove the boundedness of $\Pi_{3,2}$,
let $g\in \Lambda_{n\alpha}(\mathbb{R}^n)$  and $s:=\lfloor n\az\rfloor$.  Assume that $a$ is a
local $(p,2,s)$-atom supported
in the cube
$mR$ with $m$ as in \eqref{220} and $R\in\mathcal{D}_0$,
and $a$ has a finite  wavelet expansion of the following form
as in \eqref{eqn 3.x4}:
\begin{equation}\label{1e}
a=\sum_{|I|=1, {I\subset R}} c_{(I)} \phi_I+
\sum_{I\in \mathcal{D}_0, {I\subset R}}\sum_{\lz\in E} c_{(I,\,\lz)} \psi_I^\lz.
\end{equation}

We now consider two cases based on the size of $|R|$. If $|R|<1$, by \eqref{1e},
we immediately know that
$$
a=\dsum_{\gfz{I\in \mathcal{D}_0} {I\subset R}}\sum_{\lz\in E}c_{(I,\,\lz)} \psi_I^\lz,
$$
which, together with the orthogonality of the wavelets $\{\phi_I\}_{I\in \mathcal{D}_0,\, |I|=1}$
and $\{\psi_I^\lz\}_{I\in \mathcal{D}_0,\,\lz\in E}$, implies that, for any $I\in\mathcal{D}_0$
such that $|I|=1$, $\la a,\phi_I\ra=0$ and hence
$$
\Pi_{3,2}(a,\,g)=
\dsum_{\gfz{I,\,I'\in \mathcal{D}_0}
{|I|=|I'|=1,I\neq I'}}
 \langle a,\,\phi_I \rangle
\langle g,\,\phi_{I'} \rangle \phi_I \phi_{I'}=0.
$$

If $|R|=1$, let $B$ be the smallest ball in $\rn$ containing $9mR$ and
$\eta$ be a smooth cut-off function such that $\supp\eta\subset 9mR$ and $\eta\equiv 1$ on $5mR$.
Then, by Lemma \ref{444}(i), the fact that $\Pi_{3,2}$ is bounded from $L^2(\rn)\times L^2(\rn)$
to $H^1(\rn)$, $a$ is a local $(p,2,s)$-atom and $\supp\eta\subset 9mR$,
we conclude that
\begin{align*}
\|\Pi_{3,2}(a,\, g)\|_{h^{\Phi_p}(\rn)}
&\ls\|\Pi_{3,2}(a,\,\eta g)\|_{h^{1}(\rn)}\ls\|\Pi_{3,2}(a,\,\eta g)\|_{H^{1}(\rn)}
\ls\|a\|_{L^2(\rn)}\|\eta g\|_{L^2(\rn)}\\
&\ls\frac{|R|^{1/2}}{|R|^{1/p}}|9mR|^{1/2}\|g\|_{L^\infty(\rn)}
\ls\|g\|_{L^\infty(\rn)}
\ls \|g\|_{\Lambda_{n\alpha}(\mathbb{R}^n)}.
\end{align*}
Then, repeating the
argument similar to that used in
the proof of Proposition \ref{prop-pi1},
we obtain the desired conclusion and the details are omitted, which completes the proof of Proposition
\ref{prop-pi3}.
\end{proof}

\begin{proposition}\label{prop-pi4}
Let $p\in(0,1)$ and $\az:=1/p-1$. Assume that the regularity parameter $d\in\nn$ appearing in (P3)
and (P4) of wavelets satisfies that
$
d> n(2/p+1/2)$.
Then
 the bilinear operator $\Pi_4$, defined as in \eqref{eq-pi4},
can be extended to a bilinear operator
bounded from $h^p(\rn)\times\Lambda_{n\alpha}(\mathbb{R}^n)$ to $L^1(\rn)$.
\end{proposition}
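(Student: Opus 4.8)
The plan is to follow the two-stage scheme already used for Propositions \ref{prop-pi1} and \ref{prop-pi3}: first establish the required bound on a single atom with a finite wavelet expansion, and then pass to a general $f\in h^p(\rn)$ by density. Since the target space here is merely $L^1(\rn)$, no Hardy-space cancellation is needed, and the atomic estimate can be obtained directly from the Cauchy--Schwarz inequality, much as in the proof of the $L^2(\rn)\times L^2(\rn)\to L^1(\rn)$ boundedness of $\Pi_4$ in Lemma \ref{rem-w1}.

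First I would fix $g\in\Lambda_{n\alpha}(\rn)$ and $s\in\zz_+$ with $s\ge\lfloor n(1/p-1)\rfloor$, assume that $f\in h^p(\rn)$ has a finite wavelet expansion, and use Lemma \ref{lem-hp} to write $f=\sum_{l=1}^{L}\mu_l a_l$, where each $a_l$ is a local $(p,2,s)$-atom supported in some cube $mR_l$ with $m$ as in \eqref{220} and $R_l\in\mathcal{D}_0$, each $a_l$ has a finite wavelet expansion of the form \eqref{eqn 3.x4}, and $(\sum_{l=1}^{L}|\mu_l|^p)^{1/p}\lesssim\|f\|_{h^p(\rn)}$. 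Since $\Pi_4$ is linear in its first argument on pairs whose first entry has a finite wavelet expansion, it then suffices to bound $\|\Pi_4(a,g)\|_{L^1(\rn)}$ for a single such atom $a$, supported in $mR$ with $R\in\mathcal{D}_0$, of the form $a=\sum_{I\in\mathcal{D}_0,\,I\subset R,\,|I|=1}c_{(I)}\phi_I+\sum_{I\in\mathcal{D}_0,\,I\subset R}\sum_{\lz\in E}c_{(I,\lz)}\psi_I^\lz$.

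For such $a$, the orthonormality of $\{\psi_I^\lz\}_{I\in\mathcal{D}_0,\,\lz\in E}\cup\{\phi_I\}_{I\in\mathcal{D}_0,\,|I|=1}$ forces $\langle a,\psi_I^\lz\rangle=c_{(I,\lz)}$ when $I\subset R$ and $0$ otherwise, and $\langle a,\phi_I\rangle=c_{(I)}$ when $I\subset R$ and $|I|=1$ and $0$ otherwise; hence, by \eqref{eq-pi4}, $\Pi_4(a,g)$ is a \emph{finite} sum of terms indexed only by $I\in\mathcal{D}_0$ with $I\subset R$ (when $|R|<1$ the $\phi_I$-terms disappear). Using $\|(\psi_I^\lz)^2\|_{L^1(\rn)}=1=\|(\phi_I)^2\|_{L^1(\rn)}$, the triangle inequality, and then the Cauchy--Schwarz inequality, I would bound $\|\Pi_4(a,g)\|_{L^1(\rn)}$ by the product of the two square roots
\[
\Bigl(\sum_{I\subset R}\sum_{\lz\in E}|c_{(I,\lz)}|^2+\sum_{I\subset R,\,|I|=1}|c_{(I)}|^2\Bigr)^{1/2}=\|a\|_{L^2(\rn)}\le|mR|^{1/2-1/p}\sim|R|^{1/2-1/p}
\]
and
\[
\Bigl(\sum_{I\subset R}\sum_{\lz\in E}|\langle g,\psi_I^\lz\rangle|^2+\sum_{I\subset R,\,|I|=1}|\langle g,\phi_I\rangle|^2\Bigr)^{1/2}\lesssim|R|^{\alpha+1/2}\|g\|_{\Lambda_{n\alpha}(\rn)},
\]
the second estimate being Lemma \ref{thm} applied with the dyadic cube $R\in\mathcal{D}_0$. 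Since $\alpha=1/p-1$, the exponents $1/2-1/p$ and $\alpha+1/2$ add up to $0$, so $\|\Pi_4(a,g)\|_{L^1(\rn)}\lesssim\|g\|_{\Lambda_{n\alpha}(\rn)}$ with a constant independent of $a$. Summing over $l$, using once more the bilinearity of $\Pi_4$, the elementary inequality $\sum_{l}|\mu_l|\le(\sum_{l}|\mu_l|^p)^{1/p}$ (valid because $p<1$), and Lemma \ref{lem-hp}, I obtain $\|\Pi_4(f,g)\|_{L^1(\rn)}\lesssim\|f\|_{h^p(\rn)}\|g\|_{\Lambda_{n\alpha}(\rn)}$ for every $f$ with a finite wavelet expansion.

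To finish, I would argue as at the end of the proof of Proposition \ref{prop-pi1}: the wavelet system is an unconditional basis of $h^p(\rn)$ for the chosen (large) regularity parameter, so every $f\in h^p(\rn)$ is the $h^p(\rn)$-limit of a sequence $\{f_k\}_{k\in\nn}$ with finite wavelet expansions; the estimate just proved makes $\{\Pi_4(f_k,g)\}_{k\in\nn}$ a Cauchy sequence in $L^1(\rn)$, so $\Pi_4(f,g):=\lim_{k\to\infty}\Pi_4(f_k,g)$ in $L^1(\rn)$ is well defined, independent of the approximating sequence, and inherits the bound, which is the assertion. I expect no serious obstacle: the only point that needs care is the scaling in the atomic step, namely making sure that only the coefficients indexed by $I\subset R$ enter $\Pi_4(a,g)$, so that Lemma \ref{thm} delivers exactly the factor $|R|^{\alpha+1/2}$ that cancels the atomic normalization $|R|^{1/2-1/p}$ (this is where $\alpha=1/p-1$ is used); everything else is routine, and this is by far the simplest of the four boundedness propositions, the restriction $p<1$ entering only through $\sum_l|\mu_l|\le(\sum_l|\mu_l|^p)^{1/p}$.
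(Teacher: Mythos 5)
Your proposal is correct and follows essentially the same route as the paper: finite atomic decomposition via Lemma \ref{lem-hp}, the square-function bound of Lemma \ref{thm} paired with the atomic $L^2$ normalization so that the exponents $1/2-1/p$ and $\alpha+1/2$ cancel, the embedding $\ell^p\hookrightarrow\ell^1$, and a density extension. The only (cosmetic) difference is that the paper routes the atomic estimate through an auxiliary truncation $b_l$ of $g$ and the $L^2\times L^2\to L^1$ boundedness of Lemma \ref{rem-w1}, whereas you apply Cauchy--Schwarz directly to the diagonal sum over $I\subset R$; both amount to the same computation.
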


\begin{proof}
Assume that $f\in h^p(\rn)$ has a finite wavelet expansion and $g\in\Lambda_{n\alpha}(\rn)$.
Let $s\in\zz_+$ be such that $s\ge \lfloor n(1/p-1)\rfloor$.
In this case, by Lemma \ref{lem-hp}, we know that $f=\sum_{l=1}^L \mu_l a_l$ has a
finite atomic decomposition with the same notation as therein.
Assume that every local $(p,2,s)$-atom $a_l$ is supported in
the cube
$mR_l$ with $m$ as in \eqref{220} and $R_l\in\mathcal{D}_0$ as in Lemma \ref{lem-hp}.
For any $l\in\{1,\dots, L\}$, define
\begin{align*}
b_l:=\sum_{I\in \mathcal{D}_0,I\subset 5mR_l,|I|=1}\langle g,\,\phi_I \rangle \phi_I+\sum_{I\in\mathcal{D}_0,I\subset 5mR_l}\sum_{\lz\in E} \langle g,\,\psi_I^\lz \rangle \psi_I^\lz.
\end{align*}
Applying the property (P1) of wavelets and Lemma \ref{thm}, we conclude that, for any $l\in\{1,\dots, L\}$,
\begin{align*}
\|b_l\|_{L^2(\rn)}\ls
\lf(\sum_{I\in\mathcal{D}_0,I\subset 5mR_l}
\sum_{\lz\in E}\lf|\langle g,\,\psi_I^\lz \rangle\r|^2+\dsum_{I\in \mathcal{D}_0,I\subset 5mR_l,
|I|=1}  |\langle g,\,\phi_I \rangle|^2\r)^{\frac{1}{2}}
\ls |R_l|^{\az+1/2} \|g\|_{\Lambda_{n\alpha}(\mathbb{R}^n)}.
\end{align*}
By the fact that, for any $l\in\{1,\dots, L\}$, $b_l\in L^2(\rn)$ and $a_l$ is a local $(p,2,s)$-atom,
and Lemmas \ref{1q} and \ref{rem-w1}, we know that, for any $l\in\{1,\dots, L\}$,
\begin{align}\label{x6}
\|\Pi_4(a_l,\,b_l)\|_{L^1(\rn)}\ls
\|a_l\|_{L^2(\rn)}\|b_l\|_{L^2(\rn)}
\ls \|g\|_{\Lambda_{n\alpha}(\mathbb{R}^n)}.
\end{align}
From the property (P1) of wavelets, it follows that, for any $l\in\{1,\dots, L\}$,
\begin{align}\label{x5}
 \Pi_4(a_l,\,b_l)
&=  \sum_{I\in \mathcal{D}_0} \sum_{\lz\in E} \langle a_l,\,\psi_I^\lz \rangle
\langle b_l,\,\psi_{I}^\lz \rangle \lf(\psi_{I}^\lz\r)^2
+
\dsum_{I\in \mathcal{D}_0
,|I|=1}  \langle a_l,\,\phi_I \rangle
\langle b_l,\,\phi_I \rangle \lf(\phi_I\r)^2\\\noz
&=  \sum_{I\in \mathcal{D}_0,I\subset 5mR_l} \sum_{\lz\in E} \langle a_l,\,\psi_I^\lz \rangle
\langle g,\,\psi_{I}^\lz \rangle \lf(\psi_{I}^\lz\r)^2+
\dsum_{\gfz{I\in \mathcal{D}_0,I\subset 5mR_l}
{|I|=1}}  \langle a_l,\,\phi_I \rangle
\langle g,\,\phi_I \rangle \lf(\phi_I\r)^2.
\end{align}
Moreover, according to Lemma \ref{lem-hp}(iii), the wavelet expansion of $a_l$ has only finite terms.
By this, properties (P2) and (P5) of wavelets, we know that, for any $\lz\in E$, $l\in\{1,\dots, L\}$
and $I\in \mathcal{D}_0$, the factors
$\langle a_l,\,\psi_I^\lz \rangle(\psi_{I}^\lz)^2$
and $\langle a_l,\,\phi_I \rangle(\phi_I)^2$
in the above sums are non-zero only when
 $|I|\leq|R_l|$
and $mI\cap mR_l\neq\emptyset$, which implies that
$ I\subset 5mR_l$.
Therefore, the restriction in the last term of \eqref{x5} can be removed and hence we have,
for any $l\in\{1,\dots, L\}$,
\begin{align*}
 \Pi_4(a_l,\,b_l)
= \sum_{I\in \mathcal{D}_0} \sum_{\lz\in E} \langle a_l,\,\psi_I^\lz \rangle
\langle g,\,\psi_{I}^\lz \rangle \lf(\psi_{I}^\lz\r)^2
+
\dsum_{I\in \mathcal{D}_0,
|I|=1}  \langle a_l,\,\phi_I \rangle
\langle g,\,\phi_I \rangle \lf(\phi_I\r)^2=\Pi_4(a_l,g).
\end{align*}
From this, the fact that $\Pi_4$ is bilinear, \eqref{x6} and Lemma \ref{lem-hp}(i), we deduce that
\begin{align*}
\|\Pi_4(f,\,g)\|_{L^1(\rn)}
&=\lf\|\sum_{l=1}^L \mu_l \Pi_4(a_l,\,g) \r\|_{L^1(\rn)}
=\lf\|\sum_{l=1}^L \mu_l \Pi_4(a_l,\,b_l) \r\|_{L^1(\rn)}\\\noz
&\le  \sum_{l=1}^L |\mu_l|\|\Pi_4(a_l,\,b_l)\|_{L^1(\rn)}
\ls \lf(\sum_{l=1}^L|\mu_l|^p\r)^{1/p} \|g\|_{\Lambda_{n\alpha}(\mathbb{R}^n)}\\\noz
&\ls \|f\|_{h^p(\rn)} \|g\|_{\Lambda_{n\alpha}(\mathbb{R}^n)}. \notag
\end{align*}

Finally, for any $f\in h^p(\rn)$, repeating an
argument similar to that used in
the proof of Proposition \ref{prop-pi1},
we obtain the desired estimate and we omit the details, which completes the proof of Proposition \ref{prop-pi4}.
\end{proof}

In what follows, for any $N\in\nn$, let
\begin{equation*}
\cf_N(\rn):=\lf\{\varphi\in\cs(\rn):\sum_{\beta\in\zz_+^n,|\beta|\le N}
\sup_{x\in\rn}\lf[\lf(1+|x|\r)^{N+n}\lf|\partial_x^\beta\varphi(x)\r|\r]\le1\r\}.
\end{equation*}
For any given $f\in\cs'(\rn)$, the \emph{radial grand maximal function} $m_N(f)$
of $f$ is defined by setting, for any $x\in\rn$,
\begin{equation}\label{EqMN0}
m_N(f)(x):=\sup\lf\{|f\ast\varphi_t(x)|:\ t\in(0,1),\ |y-x|<t\ \text{and}\ \varphi\in\cf_N(\rn)\r\},
\end{equation}
where, for any $t\in(0,\infty)$ and $\xi\in\mathbb R^n,\varphi_t(\xi):=t^{-n}\varphi(\xi/t)$.

To prove Theorem \ref{mainthm1}, we also need the following lemma.

\begin{lemma}\label{lyyy}
Let $\Phi$ be an Orlicz function with positive lower type $p_{\Phi}^-$
and positive upper type $p_{\Phi}^+$.
Assume that
$\{f_j\}_{j\in\nn}\subset h^\Phi(\rn)$ and
$f\in h^\Phi(\rn)$ satisfy $\lim_{j\to\infty}\|f_j-f\|_{h^\Phi(\rn)}=0$.
Then $\{f_j\}_{j\in\nn}$ converges to $f$ in $\cs'(\rn)$.
\end{lemma}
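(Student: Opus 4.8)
The plan is to reduce the convergence in $\cs'(\rn)$ to an estimate of the action of $f_j-f$ on an arbitrary Schwartz function in terms of the local Orlicz Hardy quasi-norm, i.e. to prove a continuous embedding $h^\Phi(\rn)\hookrightarrow\cs'(\rn)$. Once we have a positive constant $C$ and a seminorm-type quantity $p_\varphi(\cdot)$ on $\cs(\rn)$ with
$$
|\langle g,\varphi\rangle|\le C\,p_\varphi(\varphi)\,\|g\|_{h^\Phi(\rn)}\qquad\text{for all }g\in h^\Phi(\rn),\ \varphi\in\cs(\rn),
$$
the conclusion is immediate: applying this with $g:=f_j-f$ and letting $j\to\infty$ shows $\langle f_j,\varphi\rangle\to\langle f,\varphi\rangle$ for every $\varphi\in\cs(\rn)$, which is exactly convergence in $\cs'(\rn)$.

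First I would fix $\varphi\in\cs(\rn)$ and $g\in h^\Phi(\rn)$. The key pointwise observation is that, for a suitable large $N$ (depending only on $n$ and the lower type $p_\Phi^-$), one has
$$
|\langle g,\varphi\rangle|=|g\ast\widetilde\varphi(\vec0_n)|\lesssim \inf_{x\in B(\vec0_n,1)} m_N(g)(x),
$$
where $\widetilde\varphi(\cdot):=\varphi(-\cdot)$ and $m_N$ is the radial grand maximal function in \eqref{EqMN0}; indeed, after normalizing $\varphi$ by a constant multiple of $p_\varphi(\varphi):=\sum_{|\beta|\le N}\sup_{x\in\rn}(1+|x|)^{N+n}|\partial^\beta\varphi(x)|$, the function $\widetilde\varphi$ (up to translation by any $x$ with $|x|<1$, taking $t=1$) lies in $\cf_N(\rn)$, so $|g\ast\widetilde\varphi(\vec0_n)|\le p_\varphi(\varphi)\,m_N(g)(x)$ for every such $x$. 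Taking the infimum over $x\in B(\vec0_n,1)$ and then raising to a power and integrating, we get
$$
\inf_{x\in B(\vec0_n,1)}m_N(g)(x)\lesssim \|\mathbf1_{B(\vec0_n,1)}\|_{L^\Phi(\rn)}^{-1}\,\|m_N(g)\|_{L^\Phi(\rn)}\lesssim \|g\|_{h^\Phi(\rn)},
$$
using that the Orlicz quasi-norm controls the infimum of a nonnegative function over a fixed ball of unit measure (a Chebyshev-type estimate valid for any Orlicz function, see the lower/upper type bounds as in Lemma \ref{444}) together with the standard equivalence $\|m_N(g)\|_{L^\Phi(\rn)}\sim\|g\|_{h^\Phi(\rn)}$ for $N$ large, which is part of the basic theory of $h^\Phi(\rn)$ from \cite{yy2}. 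Combining the two displays yields the claimed embedding estimate.

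The main obstacle is the technical bookkeeping in the first step: one must verify that, after the correct normalization, translated rescalings of $\widetilde\varphi$ genuinely belong to $\cf_N(\rn)$ with $t=1$, which forces $N$ to be chosen large enough (at least $N>n/p_\Phi^-$) so that the grand-maximal-function characterization $\|m_N(g)\|_{L^\Phi(\rn)}\sim\|g\|_{h^\Phi(\rn)}$ is available — this is where the positive lower type $p_\Phi^-$ enters. The remaining pieces — the Chebyshev estimate over the unit ball and the maximal function characterization — are standard and can be quoted. Thus the proof is short once the embedding $h^\Phi(\rn)\hookrightarrow\cs'(\rn)$ is in hand, and linearity of $g\mapsto\langle g,\varphi\rangle$ does the rest.
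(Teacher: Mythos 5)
Your proposal is correct and follows essentially the same route as the paper: both reduce the claim to the continuous embedding $h^\Phi(\rn)\hookrightarrow\cs'(\rn)$ via the pointwise bound $|\langle g,\varphi\rangle|\lesssim\inf_{y\in B(\vec{0}_n,1/2)}m_N(g)(y)$ followed by the Chebyshev-type estimate against $\|\mathbf{1}_{B(\vec{0}_n,1/2)}\|_{L^\Phi(\rn)}$ and the grand maximal function characterization of $h^\Phi(\rn)$ quoted from \cite{shyy}. The only point to tidy is that the supremum defining $m_N$ runs over $t\in(0,1)$ (open interval), so one should realize $\varphi$ as a dilate $\widetilde{\varphi}_{1/2}$ evaluated at $t=1/2$ rather than ``taking $t=1$'', which is exactly the small rescaling the paper performs.
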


\begin{proof}
Let $\varphi\in\cs(\rn)$ and $\widetilde{\varphi}(\cdot):=2\varphi(2\cdot)$.
Observe that,
for any $f\in h^\Phi(\rn)$,
$x\in\rn$ and $y\in B(x,1/2)$,
$$|f\ast\varphi(x)|=|f\ast\widetilde{\varphi}_{\frac12}(x)|\lesssim m_N(f)(y),$$
where $m_N(f)$ is as in \eqref{EqMN0} and $N\in\nn$.
From this, \cite[Section 7.6 and Theorem 5.3]{shyy}
(see also \cite{wyyz,ZWYY} for some corrections),
it follows that, if $N$ is sufficiently large,
then, for any given $\varphi\in\cs(\rn)$,
\begin{align*}
\lf|\la f_j-f,\varphi\ra\r|&=\lf|(f_j-f)\ast[\varphi(-\cdot)](\vec 0_n)\r|
\lesssim\inf_{y\in B(\vec{0}_n,1)}m_N(f_j-f)(y)\\
&\lesssim\frac{\|\mathbf{1}_{B(\vec 0_n,1/2)}m_N(f_j-f)\|_{L^\Phi(\rn)}}
{\|\mathbf{1}_{B(\vec 0_n,1/2)}\|_{L^\Phi(\rn)}}
\lesssim \frac{\|f_j-f\|_{h^\Phi(\rn)}}{\|\mathbf{1}_{B(\vec 0_n,1/2)}\|_{L^\Phi(\rn)}}
\to0
\end{align*}
as $j\to\infty$, which further implies the desired conclusion of this lemma and
hence completes the proof of Lemma \ref{lyyy}.
\end{proof}

For any  $f\in h^p(\rn)$ with $p\in(0,\,1)$, and
$g\in \Lambda_{n\alpha}(\mathbb{R}^n)$ with $\az:=1/p-1$,
the {\it product} $f\times g$ is defined to be  a Schwartz
distribution in $\cs'(\rn)$ such that, for any $\phi\in \mathcal{S}(\rn)$,
\begin{align}\label{def-product}
\langle f\times g,\,\phi \rangle:=\langle \phi g,\,f \rangle,
\end{align}
where the last bracket denotes the dual pair between
$\Lambda_{n\alpha}(\mathbb{R}^n)$ and $h^p(\rn)$. From
Corollary \ref{cor-pwm}, we deduce that every
$\phi \in \mathcal{S}(\rn)$ is a pointwise multiplier of $\Lambda_{n\alpha}(\mathbb{R}^n)$,
which implies that the
equality \eqref{def-product} is well defined. By Theorem \ref{thm-main2}, we know that
$L^\infty(\rn)\cap \mathcal{L}_{\loc}^{\Phi_p}(\rn)$
characterizes the class of pointwise multipliers of
$\Lambda_{n\alpha}(\rn)$.
From this, it follows that
the largest range of $\varphi$ that makes
$$\langle  f\times g,\,\varphi\rangle=\langle  g\varphi, f\rangle$$ meaningful is $\varphi\in L^\infty(\rn)\cap \mathcal{L}_{\loc}^{\Phi_p}(\rn)$.

\begin{theorem}\label{mainthm1}
Let $p\in(0,1)$, $\az=1/p-1$
and $\Phi_p$ be as in \eqref{333}.
Then the following statements
hold true.
\begin{itemize}
\item[\rm (i)] There exist two bounded bilinear operators
$$S:\, h^p(\rn)\times \Lambda_{n\alpha}(\mathbb{R}^n)\to L^1(\rn)$$ and
$$T:\, h^p(\rn)\times \Lambda_{n\alpha}(\mathbb{R}^n)\to h^{\Phi_p}(\rn)$$
such that, for any $(f,g)\in h^p(\rn)\times \Lambda_{n\alpha}(\rn)$,
\begin{align*}
f\times g=S(f,\,g)+T(f,\,g)\qquad in\;\cs'(\rn).
\end{align*}
Moreover, there exists a positive constant $C$ such that, for any $(f,g)\in h^p(\rn)\times
\Lambda_{n\alpha}(\rn)$,
$$\lf\|S(f,g)\r\|_{L^1(\rn)}\le C \|f\|_{h^p(\rn)}\|g\|_{\Lambda_{n\alpha}(\mathbb{R}^n)}$$
and
$$
\lf\|T(f,g)\r\|_{h^{\Phi_p}(\rn)}\le C \|f\|_{h^p(\rn)}\|g\|_{\Lambda_{n\alpha}(\mathbb{R}^n)}.$$
\item[\rm (ii)] For any given $(f,g)\in h^p(\rn)\times \Lambda_{n\alpha}(\rn)$ and any
$\varphi\in L^\infty(\rn)\cap \mathcal{L}_{\loc}^{\Phi_p}(\rn)$,
$$
\la f\times g,\varphi\ra=\la \varphi, S(f,g)\ra+\la \varphi,T(f,g)\ra.
$$
\end{itemize}
\end{theorem}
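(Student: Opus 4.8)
The plan is to build $S$ and $T$ directly out of the four bilinear operators $\{\Pi_i\}_{i=1}^{4}$ of \eqref{eq-pi1} through \eqref{eq-pi4}, whose boundedness is the content of Propositions \ref{prop-pi1}, \ref{prop-pi2}, \ref{prop-pi3} and \ref{prop-pi4}. Since $\Pi_1$ and $\Pi_3$ map $h^p(\rn)\times\Lambda_{n\alpha}(\rn)$ boundedly into $h^1(\rn)$, $\Pi_4$ into $L^1(\rn)$, and $h^1(\rn)\hookrightarrow L^1(\rn)$ with $\|\cdot\|_{L^1(\rn)}\lesssim\|\cdot\|_{h^1(\rn)}$ (see \cite{g}), while $\Pi_2$ maps into $h^{\Phi_p}(\rn)$, I would set
$$
S(f,g):=\Pi_1(f,g)+\Pi_3(f,g)+\Pi_4(f,g),\qquad T(f,g):=\Pi_2(f,g).
$$
Then $S$ is bounded from $h^p(\rn)\times\Lambda_{n\alpha}(\rn)$ to $L^1(\rn)$ and $T$ from $h^p(\rn)\times\Lambda_{n\alpha}(\rn)$ to $h^{\Phi_p}(\rn)$, and the two claimed quasi-norm estimates follow at once from the triangle inequality together with the boundedness of the individual $\Pi_i$'s.

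The heart of the matter is the identity $f\times g=S(f,g)+T(f,g)$ in $\cs'(\rn)$, which I would first verify when $f\in h^p(\rn)$ has a \emph{finite} wavelet expansion. Such an $f$ is a finite linear combination of the compactly supported functions $\phi_I$ and $\psi_I^\lz$, hence belongs to $L^2(\rn)$ and has compact support; consequently the distribution $f\times g$ given by \eqref{def-product} is nothing but the pointwise product $fg\in L^1(\rn)$. Choosing the cut-off function $\eta$ as in Remark \ref{rem-w3} so that $\eta\equiv1$ on $\supp f$, we have $fg=f(\eta g)$ with $\eta g\in L^2(\rn)$, so \eqref{eq-L2L2} yields $fg=\sum_{i=1}^{4}\Pi_i(f,\eta g)$ in $L^1(\rn)$; invoking \eqref{eq-x10} of Remark \ref{rem-w3} to replace each $\Pi_i(f,\eta g)$ by $\Pi_i(f,g)$ and summing, we obtain $fg=S(f,g)+T(f,g)$ in $L^1(\rn)$, hence in $\cs'(\rn)$.

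To remove the finiteness assumption I would argue by density. As used in the proof of Proposition \ref{prop-pi1}, the wavelet system $\{\psi_I^{\lz}\}_{I\in\mathcal{D}_0,\,\lz\in E}\cup\{\phi_I\}_{I\in\mathcal{D}_0,|I|=1}$ is an unconditional basis of $h^p(\rn)$, so there exists $\{f_k\}_{k\in\nn}\subset h^p(\rn)$ with finite wavelet expansions such that $f_k\to f$ in $h^p(\rn)$. By the boundedness of $S$ and $T$, $S(f_k,g)\to S(f,g)$ in $L^1(\rn)$ and $T(f_k,g)\to T(f,g)$ in $h^{\Phi_p}(\rn)$, hence both converge in $\cs'(\rn)$, the latter by Lemma \ref{lyyy}. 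On the other hand, for each $\phi\in\cs(\rn)$ the element $\phi g$ is a fixed functional in $\Lambda_{n\alpha}(\rn)=(h^p(\rn))^{*}$ by Corollary \ref{cor-pwm}, Remark \ref{r11}(ii) and Lemma \ref{GR-DDY}, so $\langle f_k\times g,\phi\rangle=\langle\phi g,f_k\rangle\to\langle\phi g,f\rangle=\langle f\times g,\phi\rangle$; thus $f_k\times g\to f\times g$ in $\cs'(\rn)$. Since limits in $\cs'(\rn)$ are unique, the identity passes to the limit and (i) is proved.

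For (ii) observe that $\langle\varphi,S(f,g)\rangle$ is the $L^{\infty}(\rn)$--$L^1(\rn)$ pairing and $\langle\varphi,T(f,g)\rangle$ the $\mathcal{L}_{\loc}^{\Phi_p}(\rn)$--$h^{\Phi_p}(\rn)$ pairing (recall $\mathcal{L}_{\loc}^{\Phi_p}(\rn)=(h^{\Phi_p}(\rn))^{*}$ by Remark \ref{rem-add1}), so the right-hand side makes sense for any $\varphi\in L^\infty(\rn)\cap\mathcal{L}_{\loc}^{\Phi_p}(\rn)$, and the left-hand side equals $\langle g\varphi,f\rangle$ since $\varphi$ is a pointwise multiplier of $\Lambda_{n\alpha}(\rn)$ by Theorem \ref{thm-main2}. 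When $f$ has a finite wavelet expansion, $fg$, $S(f,g)$ and $T(f,g)$ are all genuine $L^1(\rn)$ functions (for $T(f,g)=\Pi_2(f,g)$ this uses the support property \eqref{220} of the wavelets together with the cut-off of Remark \ref{rem-w3}), and $fg=S(f,g)+T(f,g)$ almost everywhere, so multiplying by $\varphi\in L^\infty(\rn)$ and integrating gives the desired identity; approximating a general $f$ by $f_k$ as above and using $\langle g\varphi,f_k\rangle\to\langle g\varphi,f\rangle$, $S(f_k,g)\to S(f,g)$ in $L^1(\rn)$ and $T(f_k,g)\to T(f,g)$ in $h^{\Phi_p}(\rn)$, we pass to the limit and obtain (ii). The step I expect to be the main obstacle is reconciling the abstract duality definition \eqref{def-product} of $f\times g$ with the concrete sum $\sum_{i=1}^{4}\Pi_i(f,g)$: this forces the reduction to finite wavelet expansions and the careful use of the cut-off identity \eqref{eq-x10}, and the ensuing limiting argument must keep straight which duality pairing is used at each occurrence, in particular that $\varphi g\in\Lambda_{n\alpha}(\rn)$ so that $\langle\varphi g,f_k\rangle$ both makes sense and converges; none of these steps is individually hard, but the bookkeeping is delicate.
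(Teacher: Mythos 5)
Your proof is correct and follows essentially the same route as the paper: reduce to $f$ with a finite wavelet expansion via the unconditional-basis density argument, apply the renormalization identity \eqref{eq-L2L2} together with the cut-off identity of Remark \ref{rem-w3}, invoke Propositions \ref{prop-pi1} through \ref{prop-pi4}, and pass to the limit using Lemma \ref{lyyy} and the duality definition \eqref{def-product}. The only deviation is the grouping of the four operators: the paper takes $S:=\Pi_4$ and $T:=\Pi_1+\Pi_2+\Pi_3$ as in \eqref{f-So} and \eqref{f-To}, whereas you take $S:=\Pi_1+\Pi_3+\Pi_4$ (using $h^1(\rn)\subset L^1(\rn)$) and $T:=\Pi_2$; both choices satisfy the statement as written, though the paper's convention is the one referenced in later results such as Theorem \ref{449}.
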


\begin{proof}
Let $f\in h^p(\rn)$ and $g\in \Lambda_{n\alpha}(\mathbb{R}^n)$.
Assume that the regularity parameter $d\in\nn$ appearing in the properties (P3)
and (P4) of wavelets satisfies that
$
d> n(2/p+1)$. From this and \cite[Theorem 1.64]{gg}, we deduce that the family
of wavelets, $\{\psi_I^{\lz}\}_{I\in\mathcal{D}_0,\,\lz\in E}\bigcup\{\phi_I\}_{I\in\mathcal{D}_0,|I|=1}$,
is an \emph{unconditional basis} of
$h^p(\rn)$ (see \cite[Definition 1.56]{gg} for the precise definition),
and hence there exists a sequence
 $\{f_k\}_{k\in\nn}\subset
h^p(\rn)$ with finite wavelet expansions and satisfying
$\lim_{k\to \fz} f_k=f$ in $h^p(\rn)$.
By the definition of $f\times g$ in \eqref{def-product}, Corollary \ref{cor-pwm}
and Theorem \ref{thm-main2}, we know that
\begin{align}\label{88}
f\times g=\dlim_{k\to \fz} f_k \,g \qquad\textup{in}\ \mathcal{S}'(\rn),
\end{align}
and, for any $\varphi\in L^\infty(\rn)\cap \mathcal{L}_{\loc}^{\Phi_p}(\rn)$,
\begin{align}\label{99}
\la f\times g,\varphi\ra=\la\varphi g,f\ra=\dlim_{k\to \fz} \la\varphi g,f_k\ra=
\dlim_{k\to \fz} \int_{\rn}\varphi(x) g(x)f_k(x)\,dx,
\end{align}
where $f_k\, g$ denotes the usual pointwise product of $f_k$
and $g$. Since $f_k$ has a finite wavelet expansion, it follows that $f_k\in L^2(\rn)$ and
$f_k$ is supported in a ball $B(\vec 0_n, R_k)$ for some $R_k\in(0,\infty)$. Let $\eta_k$
be a cut-off function satisfying $\supp \eta_k\subset
B(\vec 0_n, 9mR_k)$ and $\eta_k\equiv 1$ on $B(\vec 0_n, 5mR_k)$, where $m$ is as in the
property (P2) of wavelets.
By Remark \ref{rem-w3}, we find that, for any $i\in\{1,\,2,\,3,\,4\}$ and $k\in\nn$,
$$\Pi_i(f_k,\,\eta_kg)=\Pi_i(f_k,\,g).$$
Using this and \eqref{eq-L2L2}, we conclude that
$$
f_k g =f_k(\eta_k g)=\sum_{i=1}^4 \Pi_i(f_k, \eta_k g)=\sum_{i=1}^4 \Pi_i(f_k, g)\qquad \textup{in} \;\; L^1(\rn).
$$
From Lemma \ref{444}(i),
$\lim_{k\to \fz} f_k=f$ in $h^p(\rn)$ and Propositions \ref{prop-pi1} through \ref{prop-pi4}, we
deduce that, for any $i\in\{1,2,3\}$,
\begin{align}\label{66}
\dlim_{k\to \fz} \Pi_i(f_k,\,g)
=\Pi_i(f,\,g)\qquad  \textup{in}\;h^{\Phi_p}(\rn),
\end{align}
and
\begin{align}\label{77}
\dlim_{k\to \fz} \Pi_4(f_k,\,g)
=\Pi_4(f,\,g)\qquad  \textup{in}\;L^1(\rn).
\end{align}
By Lemma \ref{lyyy}, we know that
the convergence of a sequence in $h^1(\rn)$ or $h^{\Phi_p}(\rn)$
implies its convergence  in $\cs'(\rn)$.
By this, \eqref{66} and \eqref{77}, we conclude that,
for any $i\in\{1,2,3,4\}$,
$$\dlim_{k\to \fz} \Pi_i(f_k,\,g)
=\Pi_i(f,\,g)\qquad  \textup{in}\;\cs'(\rn).$$
Therefore, from \eqref{88}, it follows that
\begin{align*}
f\times g=\dlim_{k\to \fz} f_k \,g =\dsum_{i=1}^4\dlim_{k\to \fz} \Pi_i(f_k,\,g)
=\dsum_{i=1}^4 \Pi_i(f,\,g) \qquad  \textup{in}\;\cs'(\rn).
\end{align*}
Thus, if we define
\begin{align}\label{f-So}
S(f,\,g):=\Pi_4(f,\,g)
\end{align}
and
\begin{align}\label{f-To}
T(f,\,g):=\sum_{i=1}^3\Pi_i(f,\,g),
\end{align}
then $S(f,\,g)\in L^1(\rn)$ and $T(f,g)\in h^{\Phi_p}(\rn)$.
Applying Propositions \ref{prop-pi1} through \ref{prop-pi4}, we obtain the desired conclusion of (i).

As for (ii), by \eqref{99}, \eqref{66}, \eqref{77} and the fact that $\varphi\in L^\infty(\rn)\cap \mathcal{L}_{\loc}^{\Phi_p}(\rn)$, we conclude that
\begin{align}\label{991}
\la f\times g,\varphi\ra&=
\dlim_{k\to \fz} \int_{\rn}\varphi(x) g(x)f_k(x)\,dx=
\dlim_{k\to \fz} \int_{\rn}\varphi(x) \sum_{i=1}^4\Pi_i(f_k,\,g)(x)\,dx\\\noz
&=\dlim_{k\to \fz} \int_{\rn}\varphi(x) S(f_k,\,g)(x)\,dx+
\dlim_{k\to \fz} \int_{\rn}\varphi(x) T(f_k,\,g)(x)\,dx\\\noz
&=\la \varphi, S(f,g)\ra+\la \varphi,T(f,g)\ra.
\end{align}
This finishes the proof of (ii)
and hence of
Theorem \ref{mainthm1}.
\end{proof}

\begin{theorem}\label{449}
Let $p\in(0,1)$, $\az=1/p-1$
and $\Phi_p$ be as in \eqref{333}.
Assume that $\cy$ is a quasi-Banach space satisfying
$\cy\subset h^{\Phi_p}(\rn)$, $\|\cdot\|_{h^{\Phi_p}(\rn)}\lesssim\|\cdot\|_{\cy}$ and Theorem \ref{mainthm1} with $h^{\Phi_p}(\rn)$ therein replaced
by $\cy$. Then
$L^\infty(\rn)\cap (\cy)^\ast=L^\infty(\rn)\cap (h^{\Phi_p}(\rn))^\ast$.
\end{theorem}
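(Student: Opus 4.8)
The plan is to prove the two inclusions separately, and the key observation is that the bilinear decomposition (with target space $\cy$) allows us to transfer boundedness statements for linear functionals between the two settings. First I would note that, since $\cy\subset h^{\Phi_p}(\rn)$ with $\|\cdot\|_{h^{\Phi_p}(\rn)}\lesssim\|\cdot\|_{\cy}$, every bounded linear functional on $h^{\Phi_p}(\rn)$ restricts to a bounded linear functional on $\cy$; hence $(h^{\Phi_p}(\rn))^\ast\subset(\cy)^\ast$ and therefore $L^\infty(\rn)\cap(h^{\Phi_p}(\rn))^\ast\subset L^\infty(\rn)\cap(\cy)^\ast$. This gives one inclusion for free.

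For the reverse inclusion $L^\infty(\rn)\cap(\cy)^\ast\subset L^\infty(\rn)\cap(h^{\Phi_p}(\rn))^\ast$, the strategy is to use Remark \ref{rem-add1} (identifying $(h^{\Phi_p}(\rn))^\ast=\mathcal{L}_{\loc}^{\Phi_p,1,d}(\rn)$ via Lemma \ref{lem-dualMHC}) together with Theorem \ref{thm-main2}, which characterizes $L^\infty(\rn)\cap\mathcal{L}_{\loc}^{\Phi_p}(\rn)$ as exactly the pointwise multipliers of $\Lambda_{n\alpha}(\rn)$. So it suffices to show that any $\varphi\in L^\infty(\rn)\cap(\cy)^\ast$ is a pointwise multiplier of $\Lambda_{n\alpha}(\rn)$. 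Take $\varphi\in L^\infty(\rn)\cap(\cy)^\ast$ and let $(f,g)\in h^p(\rn)\times\Lambda_{n\alpha}(\rn)$. Applying the version of Theorem \ref{mainthm1} with target $\cy$, we have $f\times g=S(f,g)+T(f,g)$ with $S(f,g)\in L^1(\rn)$, $T(f,g)\in\cy$, and the corresponding norm bounds; then, since $\varphi\in L^\infty(\rn)\subset (L^1(\rn))^\ast$ and $\varphi\in(\cy)^\ast$, part (ii) of that theorem gives
\begin{align*}
|\la f\times g,\varphi\ra|
&\le |\la\varphi,S(f,g)\ra|+|\la\varphi,T(f,g)\ra|\\
&\lesssim \|\varphi\|_{L^\infty(\rn)}\|S(f,g)\|_{L^1(\rn)}+\|\varphi\|_{(\cy)^\ast}\|T(f,g)\|_{\cy}\\
&\lesssim \lf[\|\varphi\|_{L^\infty(\rn)}+\|\varphi\|_{(\cy)^\ast}\r]\|f\|_{h^p(\rn)}\|g\|_{\Lambda_{n\alpha}(\rn)}.
\end{align*}
Recalling from \eqref{def-product} that $\la f\times g,\varphi\ra=\la\varphi g,f\ra$ when $\varphi$ is a suitable pointwise multiplier, the content of the last display is exactly that the linear functional $f\mapsto\la\varphi g,f\ra$ on $h^p(\rn)$ is bounded with norm $\lesssim[\|\varphi\|_{L^\infty(\rn)}+\|\varphi\|_{(\cy)^\ast}]\|g\|_{\Lambda_{n\alpha}(\rn)}$; by the duality $(h^p(\rn))^\ast=\mathcal{L}_{\loc}^{\alpha,1,d}(\rn)$ (Remark \ref{r11}(ii)) together with Lemma \ref{GR-DDY}, this forces $\varphi g\in\Lambda_{n\alpha}(\rn)$ with $\|\varphi g\|_{\Lambda_{n\alpha}(\rn)}\lesssim[\|\varphi\|_{L^\infty(\rn)}+\|\varphi\|_{(\cy)^\ast}]\|g\|_{\Lambda_{n\alpha}(\rn)}$. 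Hence $\varphi$ is a pointwise multiplier of $\Lambda_{n\alpha}(\rn)$, so by Theorem \ref{thm-main2} we get $\varphi\in L^\infty(\rn)\cap\mathcal{L}_{\loc}^{\Phi_p}(\rn)=L^\infty(\rn)\cap(h^{\Phi_p}(\rn))^\ast$, which is the desired inclusion.

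The main obstacle I anticipate is justifying the pairing manipulations rigorously: one must check that $\la f\times g,\varphi\ra$ equals both $\la\varphi g,f\ra$ (the definition, requiring $\varphi$ to act as a pointwise multiplier on $\Lambda_{n\alpha}(\rn)$, which is a priori what we are trying to prove) and $\la\varphi,S(f,g)\ra+\la\varphi,T(f,g)\ra$. A clean way around the circularity is to argue via a density/approximation step as in the proof of Theorem \ref{mainthm1}: take $f_k\in h^p(\rn)$ with finite wavelet expansions and $f_k\to f$ in $h^p(\rn)$, so that each $f_k g$ is a genuine $L^1(\rn)$ function and $\la\varphi,f_kg\ra=\int_{\rn}\varphi f_k g$ is unambiguous; pass to the limit using $f_kg=\sum_{i=1}^4\Pi_i(f_k,g)$, the boundedness of $\{\Pi_i\}$ into $L^1(\rn)$ and $\cy$ (the hypothesis on $\cy$), and the continuity of $\varphi$ as a functional on $L^1(\rn)$ and on $\cy$. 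This yields the bound on $|\int_{\rn}\varphi f_kg|$ uniformly in $k$ and hence, again by the duality $(h^p(\rn))^\ast=\mathcal{L}_{\loc}^{\alpha,1,d}(\rn)$, the membership $\varphi g\in\Lambda_{n\alpha}(\rn)$ with the stated norm control, completing the argument without ever presupposing that $\varphi$ is a multiplier. Everything else is bookkeeping with the norm inequalities already established in Theorem \ref{mainthm1} and Theorem \ref{thm-main2}.
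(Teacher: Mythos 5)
Your proof is correct and follows essentially the same route as the paper's: both use the hypothesis that Theorem \ref{mainthm1}(ii) holds with target $\cy$ to conclude that every $\varphi\in L^\infty(\rn)\cap(\cy)^\ast$ is a pointwise multiplier of $\Lambda_{n\alpha}(\rn)$, then invoke Theorem \ref{thm-main2}, and obtain the other inclusion from $\cy\subset h^{\Phi_p}(\rn)$ with $\|\cdot\|_{h^{\Phi_p}(\rn)}\lesssim\|\cdot\|_{\cy}$, which gives $(h^{\Phi_p}(\rn))^\ast\subset(\cy)^\ast$. Your write-up is in fact more detailed than the paper's at the key step: the explicit duality argument showing $\varphi g\in\Lambda_{n\alpha}(\rn)$ via $(h^p(\rn))^\ast=\mathcal{L}_{\loc}^{\alpha,1,d}(\rn)$, and the density/approximation step resolving the apparent circularity in the pairing $\la f\times g,\varphi\ra=\la\varphi g,f\ra$, are both left implicit there.
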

\begin{proof}
From the fact that $\cy$ satisfies Theorem \ref{mainthm1}(ii) with $h^{\Phi_p}(\rn)$ therein replaced
by $\cy$, it follows that, for any given $(f,g)\in h^p(\rn)\times \Lambda_{n\alpha}(\rn)$,
\begin{align*}
\langle\varphi,S(f,\,g)\rangle+\langle\varphi,T(f,\,g)\rangle=
\langle f\times g,\,\varphi\rangle=\la\varphi g,f\ra
,\qquad \forall\, \varphi\in L^\infty(\rn)\cap (\cy)^\ast,
\end{align*}
where $S$ and $T$ are as in \eqref{f-So} and \eqref{f-To}.
From this, we deduce that $\varphi$ is a pointwise multiplier of $\Lambda_{n\alpha}(\mathbb{R}^n)$, which,
by Theorem \ref{thm-main2}, implies that $\varphi\in L^\infty(\rn)\cap \mathcal{L}_{\loc}^{\Phi_p}(\rn)$.
We therefore obtain $L^\infty(\rn)\cap (\cy)^\ast\subset L^\infty(\rn)\cap \mathcal{L}_{\loc}^{\Phi_p}(\rn)$.
From $\cy\subset h^{\Phi_p}(\rn)$ and the fact $\|\cdot\|_{h^{\Phi_p}(\rn)}\lesssim\|\cdot\|_{\cy}$,
it follows that, for any $L\in (h^{\Phi_p}(\rn))^\ast$ and $f\in\cy$,
$$
|L(f)|\lesssim\|f\|_{h^{\Phi_p}(\rn)}\lesssim\|f\|_{\cy},
$$
which implies that $L\in(\cy)^\ast$ and hence $(h^{\Phi_p}(\rn))^\ast\subset (\cy)^\ast$.
From this, we
further deduce that
$$ L^\infty(\rn)\cap (\cy)^\ast\subset L^\infty(\rn)\cap
\mathcal{L}_{\loc}^{\Phi_p}(\rn)= L^\infty(\rn)\cap (h^{\Phi_p}(\rn))^\ast\subset L^\infty(\rn)\cap (\cy)^\ast,$$
which implies that $L^\infty(\rn)\cap (\cy)^\ast=L^\infty(\rn)\cap (h^{\Phi_p}(\rn))^\ast$.
This finishes the proof Theorem \ref{449}.
\end{proof}

\begin{remark}\label{rem-add}
The sharpness of  Theorem \ref{mainthm1} is implied by Theorem \ref{449}. Indeed,
suppose that Theorem \ref{mainthm1} holds true with $h^{\Phi_p}(\rn)$ therein
replaced by a smaller quasi-Banach space $\cy$. From Theorem \ref{449}, we deduce that
$L^\infty(\rn)\cap (\cy)^\ast=L^\infty(\rn)\cap (h^{\Phi_p}(\rn))^\ast$.
In this sense,
we say that Theorem \ref{mainthm1} is sharp. It
is still unclear whether or not the Orlicz Hardy
space $h^{\Phi_p}(\rn)$ is indeed the smallest space,
in the sense of the inclusion of sets, having the property
in Theorem \ref{mainthm1}.
\end{remark}

\subsection{Bilinear decomposition of $H^p(\rn)\times \Lambda_{n\alpha}(\rn)$
with $p\in (0,1)$ and $\az:=1/p-1$}\label{s4.2}

In this subsection, we establish a bilinear decomposition of the
product space $H^p(\rn)\times \Lambda_{n\alpha}(\rn)$, where $p\in(0,1)$ and $\az:=1/p-1$.

Now we recall the notions of the classical Hardy and Campanato spaces.

\begin{definition}\label{Hp}
Let $p\in(0,1]$.
\begin{itemize}
\item[\rm (i)] Let $\varphi\in\mathcal{S}(\rn)$ and $f\in\mathcal{S}'(\rn)$.
The \emph{radial maximal function} $M(f,\varphi)$ is defined by setting
$$M(f,\varphi)(x):=\sup_{s\in(0,\infty)}\lf|(\varphi_s \ast f)(x)\r|,\quad \forall\,x\in\rn,$$
where
$\varphi_s(\cdot):=\frac{1}{s^n}\varphi(\frac{\cdot}{s})$
for any $s\in(0,\infty)$.
\item[\rm (ii)]
The \emph{Hardy  space $H^{p}(\rn)$} is defined to be the set of all $f\in\cs'(\rn)$
such that
$$
\|f\|_{H^{p}(\rn)}
:=\|M(f,\varphi)\|_{L^p(\rn)}<\infty,
$$
where $\varphi\in\mathcal{S}(\rn)$ satisfies
$\int_{\rn}\varphi(x)\,dx\neq0$ and $M(f,\varphi)$
is as in (i).
\item[\rm (iii)]
Let
$\Phi_p$ be as in \eqref{333}.
The \emph{Orlicz Hardy
space $H^{\Phi_p}(\rn)$} is defined to be the set of all $f\in\cs'(\rn)$
such that $\|f\|_{H^{\Phi_p}(\rn)}:=
\|M(f,\varphi)\|_{L^{\Phi_p}(\rn)}<\infty$,
where $\varphi\in\mathcal{S}(\rn)$ satisfies $\int_{\rn}\varphi(x)\,dx\neq0$ and $M(f,\varphi)$ is as
in (i).
\end{itemize}
\end{definition}

The dual of the Hardy space turns out to be the
Campanato space
which was first introduced by Campanato in \cite{Ca63,Ca64}.

\begin{definition}\label{cs}
Let $\az\in[0,\,\fz)$ and $s:=\lfloor n\az\rfloor$. The {\it Campanato space}
$\mathfrak C_{\az}(\rn)$ is defined to be the collection of all locally integrable functions $g$ such that
\begin{align*}
\|g\|_{\mathfrak C_{\az}(\rn)}:=
\dsup_{\mathrm{ball\,}B\subset \rn}
\frac{1}{|B|^{\az+1}}\int_B|g(x)-P_B^sg(x)|\,dx<\infty,
\end{align*}
where $P_B^sg$ for any ball $B\subset\rn$ denotes the minimizing polynomial of $g$ on $B$ with degree
not greater than $s$.
\end{definition}

\begin{remark}\label{rem-Cam}
Let $\az$ and $s$ be as in Definition \ref{cs}.
When $p\in(0,\,1]$ such that $\az=1/p-1$, we deduce from \cite[Theorem 5.30]{GR85} or \cite[p. 55,\,Theorem 4.1]{Lu95} that
$$\lf(H^p(\rn)\r)^*=\mathfrak C_{\az}(\rn)/\cp_{s}(\rn)$$ with equivalent quasi-norms.
\end{remark}

Let $\{\psi_I^{\lz}\}_{I\in\mathcal{D},\,\lz\in E}$ be the wavelets
as in \eqref{www}
with the regularity parameter $d\in\nn$.
Observe that the family $\{\psi_I^{\lz}\}_{I\in\mathcal{D},\,\lz\in E}$
forms an orthonormal basis of $L^2(\rn)$.
Thus, for any $f\in L^2(\rn)$, we have
\begin{equation}\label{ecrf}
f=\sum_{I\in \mathcal{D}}\sum_{\lz\in E} \langle f,\,\psi_I^\lz \rangle \psi_I^\lz
\end{equation}
in $L^2(\rn)$.
From \cite[(4.6)]{bckly}, we deduce that, for any $f$, $g\in L^2(\rn)$ satisfying
that at least one of them has a finite wavelet expansion,
\begin{equation*}
fg=\sum^4_{i=1}\Pi_i'(f,\,g)
\qquad \textup{in} \;\; L^1(\rn),
\end{equation*}
where
\begin{align}\label{pi1}
\Pi_1'(f,\,g):=\dsum_{\gfz{I,\,I'\in \mathcal{D}}{|I|=|I'|}} \dsum_{\lz\in E} \langle f,\,\phi_I \rangle
\langle g,\,\psi_{I'}^\lz \rangle \phi_I \psi_{I'}^\lz \qquad \textup{in} \;\; L^1(\rn),
\end{align}
\begin{align}\label{pi2}
\Pi_2'(f,\,g):=\dsum_{\gfz{I,\,I'\in \mathcal{D}}{
|I|=|I'|}} \dsum_{\lz\in E} \langle f,\,\psi_I^\lz \rangle
\langle g,\,\phi_{I'} \rangle \psi_I^\lz \phi_{I'}\qquad \textup{in} \;\; L^1(\rn),
\end{align}
\begin{align}\label{pi3}
\Pi_3'(f,\,g):=\dsum_{\gfz{I,\,I'\in \mathcal{D}}{
|I|=|I'|}} \dsum_{\gfz{\lz,\,\lz'\in E}{
(I,\,\lz)\ne (I',\,\lz')}} \langle f,\,\psi_I^\lz \rangle
\langle g,\,\psi_{I'}^{\lz'} \rangle \psi_I^\lz \psi_{I'}^{\lz'}\qquad \textup{in} \;\; L^1(\rn)
\end{align}
and
\begin{align}\label{pi4}
\Pi_4'(f,\,g):=\dsum_{I\in \mathcal{D}} \dsum_{\lz\in E} \langle f,\,\psi_I^\lz \rangle
\langle g,\,\psi_{I}^\lz \rangle \lf(\psi_{I}^\lz\r)^2\qquad \textup{in} \;\; L^1(\rn).
\end{align}

\begin{remark}\label{rw3}
Assume that $f$ has a finite wavelet expansion, $g\in L_\loc^2(\rn)$ (the set of all locally $L^2(\rn)$
integrable functions), and
the coefficients $\langle f,\,\psi_I^\lz \rangle$ in
\eqref{ecrf} have only
finite non-zero terms.
Then we may as well assume that there exists an $R\in\mathcal{D}$ satisfying that,
for any $I\in\mathcal{D}$ and $\lz\in E$ such that $\langle f,\,\psi_I^\lz \rangle\neq0$,
$I\subset R$ holds true.
Then we know that $f$ is supported in the cube $mR$, where $m$ is as in \eqref{220}.
Take $\eta$ to be a smooth cut-off function such that $\supp\eta\subset 9m R$ and $\eta\equiv 1$ on $5mR$.
It follows from \cite[Remark 4.4]{bckly} that, for any $i\in\{1,2,3,4\}$,
every $\Pi_i'(f,\eta g)$ is well defined in $L^1(\rn)$ and
\begin{equation*}
\Pi_i'(f,g)=\Pi_i'(f, \eta g).
\end{equation*}
\end{remark}

From \cite[Propositions 4.8, 4.10 and 4.11]{bckly} and the fact $\Lambda_{n\alpha}(\rn)\subset\mathfrak C_{\az}(\rn)$,
we deduce the following conclusions of $\Pi_1'$, $\Pi_3'$ and $\Pi_4'$.

\begin{proposition}\label{ppi1}
Let $p\in(0,1)$ and $\az:=1/p-1$.
Assume that the regularity parameter $d\in\nn$ of wavelets satisfies that
$d>\lfloor n(1/p-1)\rfloor$.
Then the bilinear operator $\Pi_1'$, defined as in \eqref{pi1},
can be extended to a bilinear operator
bounded from $H^p(\rn)\times\Lambda_{n\alpha}(\rn)$ to $H^1(\rn)$.
\end{proposition}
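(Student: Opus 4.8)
The plan is to reduce the statement to its homogeneous analogue, \cite[Proposition 4.8]{bckly}, exactly as the sentence preceding the statement already hints. Recall that \cite[Proposition 4.8]{bckly} asserts that $\Pi_1'$ extends to a bounded bilinear operator from $H^p(\rn)\times\mathfrak C_\az(\rn)$ to $H^1(\rn)$ whenever the regularity parameter $d$ of the wavelets satisfies $d>\lfloor n(1/p-1)\rfloor$. Since $\az:=1/p-1$ here, the point is merely to pass from the Campanato target space $\mathfrak C_\az(\rn)$ in the domain to the (smaller, more restrictive) Lipschitz space $\Lambda_{n\az}(\rn)$, and this is precisely what the inclusion $\Lambda_{n\az}(\rn)\subset\mathfrak C_\az(\rn)$ buys us.

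First I would record the continuous embedding $\Lambda_{n\az}(\rn)\hookrightarrow\mathfrak C_\az(\rn)$, i.e.\ that there is a positive constant $C$ with $\|g\|_{\mathfrak C_\az(\rn)}\le C\|g\|_{\Lambda_{n\az}(\rn)}$ for every $g\in\Lambda_{n\az}(\rn)$. This follows immediately by comparing Definition \ref{cs} with the identification in Lemma \ref{GR-DDY} (taking $r=1$ there): a function in $\Lambda_{n\az}(\rn)=\mathcal L_{\loc}^{\az,1,\lfloor n\az\rfloor}(\rn)$ controls, uniformly over \emph{all} balls $B$ with $|B|<1$, the mean oscillation $\frac1{|B|^\az}\dashint_B|g-P_B^{\lfloor n\az\rfloor}g|$, which after multiplying and dividing by $|B|$ is exactly the $\mathfrak C_\az$ oscillation $\frac1{|B|^{\az+1}}\int_B|g-P_B^sg|$ with $s=\lfloor n\az\rfloor$; for balls with $|B|\ge1$ one uses instead the global $L^\infty$-type bound built into $\Lambda_{n\az}(\rn)$ (namely $g\in L^\infty(\rn)$) together with $|B|^{\az+1}\ge|B|$ to dominate $\frac1{|B|^{\az+1}}\int_B|g|\le\frac1{|B|}\int_B|g|\le\|g\|_{L^\infty(\rn)}$. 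Hence the $\mathfrak C_\az$-supremum over all balls is bounded by $\|g\|_{\Lambda_{n\az}(\rn)}$.

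Next I would invoke \cite[Proposition 4.8]{bckly}: under the hypothesis $d>\lfloor n(1/p-1)\rfloor$ on the regularity parameter, $\Pi_1'$ (defined as in \eqref{pi1}, which coincides with the operator in \cite{bckly}) extends from pairs with $f$ having a finite wavelet expansion to a bounded bilinear operator $H^p(\rn)\times\mathfrak C_\az(\rn)\to H^1(\rn)$, with $\|\Pi_1'(f,g)\|_{H^1(\rn)}\lesssim\|f\|_{H^p(\rn)}\|g\|_{\mathfrak C_\az(\rn)}$. Composing this with the embedding from the previous paragraph yields, for every $f\in H^p(\rn)$ with a finite wavelet expansion and every $g\in\Lambda_{n\az}(\rn)$,
\begin{align*}
\|\Pi_1'(f,g)\|_{H^1(\rn)}\lesssim\|f\|_{H^p(\rn)}\|g\|_{\mathfrak C_\az(\rn)}\lesssim\|f\|_{H^p(\rn)}\|g\|_{\Lambda_{n\az}(\rn)}.
\end{align*}
Finally, to extend $\Pi_1'(\cdot,g)$ to all of $H^p(\rn)$, I would repeat the density/limit argument used in the proof of Proposition \ref{prop-pi1}: since $d>n(2/p+1)\ge n(2/p+1/2)$ ensures, via \cite[Theorem 1.64]{gg}, that the wavelet system is an unconditional basis of $H^p(\rn)$, any $f\in H^p(\rn)$ is the $H^p$-limit of functions $f_k$ with finite wavelet expansions, and the displayed bound shows $\{\Pi_1'(f_k,g)\}_k$ is Cauchy in $H^1(\rn)$, so one defines $\Pi_1'(f,g):=\lim_{k\to\fz}\Pi_1'(f_k,g)$ in $H^1(\rn)$, independent of the approximating sequence. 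The resulting operator is bilinear and satisfies the asserted bound. There is essentially no serious obstacle here; the only mild point requiring care is confirming that the operator $\Pi_1'$ in \eqref{pi1} is literally the same object as the one in \cite[(4.6)]{bckly} and that the regularity hypotheses match up (our $d>n(2/p+1/2)$ in the surrounding propositions is more than enough for the $d>\lfloor n(1/p-1)\rfloor$ required in \cite[Proposition 4.8]{bckly}), after which the proof is immediate.
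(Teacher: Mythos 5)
Your proposal is correct and follows exactly the route the paper takes: the paper derives Propositions \ref{ppi1}, \ref{ppi3} and \ref{ppi4} in one line from \cite[Propositions 4.8, 4.10 and 4.11]{bckly} together with the inclusion $\Lambda_{n\alpha}(\rn)\subset\mathfrak C_{\az}(\rn)$, and you have merely spelled out the details of that embedding and of the standard density extension. The only cosmetic point is that for balls with $|B|\ge1$ the $\mathfrak C_\az$-oscillation involves $g-P_B^sg$ rather than $g$ alone, which is handled by adding one application of Lemma \ref{lem-ep} to bound $\sup_B|P_B^sg|$ by $\dashint_B|g|\le\|g\|_{L^\infty(\rn)}$.
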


\begin{proposition}\label{ppi3}
Let $p\in(0,1)$ and $\az:=1/p-1$.
Assume that the regularity parameter $d\in\nn$ of wavelets satisfies that
$d>\lfloor n(1/p-1)\rfloor$. Then the bilinear operator $\Pi_3'$, defined as in \eqref{pi3},
can be extended to a bilinear operator
bounded from $H^p(\rn)\times\Lambda_{n\alpha}(\rn)$ to $H^1(\rn)$.
\end{proposition}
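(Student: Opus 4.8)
The plan is to obtain Proposition \ref{ppi3} as a direct corollary of the corresponding boundedness of $\Pi_3'$ on the pair $H^p(\rn)\times\mathfrak C_{\az}(\rn)$ already established in \cite{bckly}, combined with the continuous embedding $\Lambda_{n\az}(\rn)\hookrightarrow\mathfrak C_{\az}(\rn)$. Recall from \cite[Proposition 4.10]{bckly} that, under the hypothesis $d>\lfloor n(1/p-1)\rfloor$ on the regularity parameter, $\Pi_3'$ extends to a bounded bilinear operator from $H^p(\rn)\times\mathfrak C_{\az}(\rn)$ to $H^1(\rn)$; equivalently, there is a positive constant $C$ such that $\|\Pi_3'(f,g)\|_{H^1(\rn)}\le C\|f\|_{H^p(\rn)}\|g\|_{\mathfrak C_{\az}(\rn)}$ for every $f\in H^p(\rn)$ with a finite wavelet expansion and every $g\in\mathfrak C_{\az}(\rn)$.

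The first step is to verify the norm inequality $\|g\|_{\mathfrak C_{\az}(\rn)}\lesssim\|g\|_{\Lambda_{n\az}(\rn)}$, in particular $\Lambda_{n\az}(\rn)\subset\mathfrak C_{\az}(\rn)$. Put $s:=\lfloor n\az\rfloor$. For a ball $B\subset\rn$ with $|B|<1$, Lemma \ref{GR-DDY} identifies $\Lambda_{n\az}(\rn)$ with $\mathcal L_{\loc}^{\az,1,s}(\rn)$ (with equivalent norms), so $\frac{1}{|B|^{\az+1}}\int_B|g(x)-P_B^sg(x)|\,dx\lesssim\|g\|_{\Lambda_{n\az}(\rn)}$ at once. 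For a ball $B\subset\rn$ with $|B|\ge1$, the facts that $g\in L^\infty(\rn)$ and $\az>0$, together with Lemma \ref{lem-ep}, give $\sup_{x\in B}|P_B^sg(x)|\lesssim\dashint_B|g(y)|\,dy\le\|g\|_{L^\infty(\rn)}$ and $|B|^{-\az}\le1$, whence
$$
\frac{1}{|B|^{\az+1}}\int_B|g(x)-P_B^sg(x)|\,dx
=\frac{1}{|B|^{\az}}\dashint_B|g(x)-P_B^sg(x)|\,dx
\lesssim\|g\|_{L^\infty(\rn)}\le\|g\|_{\Lambda_{n\az}(\rn)}.
$$
Taking the supremum over all balls $B\subset\rn$ and recalling Definition \ref{cs} yields the claimed embedding.

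Combining the two ingredients, for every $f\in H^p(\rn)$ with a finite wavelet expansion and every $g\in\Lambda_{n\az}(\rn)$ we obtain $\|\Pi_3'(f,g)\|_{H^1(\rn)}\lesssim\|f\|_{H^p(\rn)}\|g\|_{\mathfrak C_{\az}(\rn)}\lesssim\|f\|_{H^p(\rn)}\|g\|_{\Lambda_{n\az}(\rn)}$. It then remains to extend $\Pi_3'(\cdot,g)$ from the dense subspace of $H^p(\rn)$ consisting of functions with finite wavelet expansions to all of $H^p(\rn)$, which is done exactly as in the last paragraph of the proof of Proposition \ref{prop-pi1}. Since, for $d$ sufficiently large, $\{\psi_I^{\lz}\}_{I\in\mathcal D,\,\lz\in E}$ is an unconditional basis of $H^p(\rn)$, every $f\in H^p(\rn)$ is an $H^p$-limit of such functions $\{f_k\}_{k\in\nn}$, and the displayed bound shows that $\Pi_3'(f,g):=\lim_{k\to\fz}\Pi_3'(f_k,g)$ is well defined in $H^1(\rn)$, independent of $\{f_k\}_{k\in\nn}$, and satisfies the same estimate; this gives the asserted boundedness from $H^p(\rn)\times\Lambda_{n\az}(\rn)$ to $H^1(\rn)$.

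No genuine obstacle is expected: the substantial analysis lies in \cite{bckly}, and the two points deserving a line of care are that the minimizing-polynomial degree $\lfloor n\az\rfloor$ entering $\mathfrak C_{\az}(\rn)$ coincides with the one supplied by Lemma \ref{GR-DDY} for $\Lambda_{n\az}(\rn)$ (so the embedding argument above is clean, with $P_B^s g$ the same polynomial in both spaces), and that the density-and-extension scheme of Proposition \ref{prop-pi1} transfers verbatim to the homogeneous setting. An alternative, but not simpler, route would mimic the proof of Proposition \ref{prop-pi1} directly, decomposing $f$ into finitely many global $(p,2,s)$-atoms, truncating $g$ near the support of each atom as in Remark \ref{rw3}, and invoking the $L^2(\rn)\times L^2(\rn)\to H^1(\rn)$ boundedness of $\Pi_3'$ together with the square-function control of the wavelet coefficients of $g$ over dyadic cubes; the embedding argument is shorter and is the one I would present.
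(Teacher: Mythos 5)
Your proposal is correct and is exactly the route the paper takes: Propositions \ref{ppi1}, \ref{ppi3} and \ref{ppi4} are all deduced from \cite[Propositions 4.8, 4.10 and 4.11]{bckly} together with the embedding $\Lambda_{n\alpha}(\rn)\subset\mathfrak C_{\az}(\rn)$, which you verify correctly (the paper simply asserts it). The density-and-extension step you append is the same standard scheme used elsewhere in the paper, so nothing further is needed.
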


\begin{proposition}\label{ppi4}
Let $p\in(0,1)$ and $\az:=1/p-1$.
Assume that the regularity parameter $d\in\nn$ of wavelets satisfies that
$d>\lfloor n(1/p-1)\rfloor$. Then
the bilinear operator $\Pi_4'$, defined as in \eqref{pi4},
can be extended to a bilinear operator
bounded from $H^p(\rn)\times\mathfrak \Lambda_{n\alpha}(\rn)$ to $L^1(\rn)$.
\end{proposition}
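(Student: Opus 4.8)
The plan is to mirror the proof of Proposition \ref{prop-pi4}, replacing the local machinery by its homogeneous counterpart and invoking the already-cited result \cite[Proposition 4.11]{bckly} together with the inclusion $\Lambda_{n\alpha}(\rn)\subset\mathfrak C_{\az}(\rn)$. First I would fix $p\in(0,1)$, $\az:=1/p-1$ and the regularity parameter $d\in\nn$ with $d>\lfloor n(1/p-1)\rfloor$, and take $f\in H^p(\rn)$ having a finite wavelet expansion and $g\in\Lambda_{n\alpha}(\rn)$. The operator $\Pi_4'$ is well defined on such pairs by \eqref{pi4} and, by Lemma \ref{rem-w1} (more precisely, its homogeneous analogue used in \cite{bckly}) and Remark \ref{rw3}, $\Pi_4'(f,g)=\Pi_4'(f,\eta g)\in L^1(\rn)$ whenever $\eta$ is a suitable smooth cut-off adapted to the support of $f$.

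Next I would recall the finite atomic decomposition of $H^p(\rn)$ for distributions with finite wavelet expansions: $f=\sum_{l=1}^L\mu_la_l$ where each $a_l$ is a classical $(p,2,s)$-atom with $s:=\lfloor n\az\rfloor$, supported in some cube $mR_l$ with $R_l\in\mathcal D$, and $(\sum_{l=1}^L|\mu_l|^p)^{1/p}\lesssim\|f\|_{H^p(\rn)}$ (this is the homogeneous counterpart of Lemma \ref{lem-hp}, as used in \cite{bckly}). For each $l$ I would set $b_l:=\sum_{I\in\mathcal D,I\subset 5mR_l}\sum_{\lz\in E}\langle g,\psi_I^\lz\rangle\psi_I^\lz$. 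The Carleson-type estimate for coefficients of a Campanato function — the homogeneous analogue of Lemma \ref{thm}, combined with $\Lambda_{n\alpha}(\rn)\subset\mathfrak C_\az(\rn)$ — gives $\|b_l\|_{L^2(\rn)}\lesssim|R_l|^{\az+1/2}\|g\|_{\Lambda_{n\alpha}(\rn)}$. Then the boundedness $\Pi_4':L^2(\rn)\times L^2(\rn)\to L^1(\rn)$ yields $\|\Pi_4'(a_l,b_l)\|_{L^1(\rn)}\lesssim\|a_l\|_{L^2(\rn)}\|b_l\|_{L^2(\rn)}\lesssim\|g\|_{\Lambda_{n\alpha}(\rn)}$. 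Using properties (P2) and (P5) of the wavelets together with the finiteness of the wavelet expansion of $a_l$ (the homogeneous analogue of Lemma \ref{lem-hp}(iii)), the cross terms in $\Pi_4'(a_l,b_l)$ that survive force $I\subset 5mR_l$, so the restriction on $b_l$ may be dropped and $\Pi_4'(a_l,b_l)=\Pi_4'(a_l,g)$. Summing over $l$, using bilinearity of $\Pi_4'$ and the atomic bound on $\{\mu_l\}$, gives $\|\Pi_4'(f,g)\|_{L^1(\rn)}\lesssim\|f\|_{H^p(\rn)}\|g\|_{\Lambda_{n\alpha}(\rn)}$ for all $f$ with finite wavelet expansion.

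Finally I would extend $\Pi_4'$ to all of $H^p(\rn)\times\Lambda_{n\alpha}(\rn)$: since $d>n(2/p+1)$ (by choosing $d$ large enough, as in the other propositions) the wavelet system is an unconditional basis of $H^p(\rn)$, so finite-wavelet-expansion elements are dense; the estimate just proved shows the extension is well defined, independent of the approximating sequence, and satisfies the same bound. Alternatively, one may simply quote \cite[Proposition 4.11]{bckly} directly and note that $\Lambda_{n\alpha}(\rn)\subset\mathfrak C_\az(\rn)$ with $\|\cdot\|_{\mathfrak C_\az(\rn)}\lesssim\|\cdot\|_{\Lambda_{n\alpha}(\rn)}$, which immediately downgrades the target-domain from $\mathfrak C_\az(\rn)$ to $\Lambda_{n\alpha}(\rn)$ and finishes the proof. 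The only mild subtlety — and what I expect to be the main point requiring care rather than a genuine obstacle — is verifying that the support-localization argument via (P2) and (P5) goes through verbatim in the homogeneous dyadic setting (where cubes of all scales, not only those in $\mathcal D_0$, appear); but this is exactly the computation already carried out in \cite{bckly}, so invoking it is legitimate.
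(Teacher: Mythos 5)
Your proposal is correct and matches the paper's approach: the paper proves Proposition \ref{ppi4} (together with Propositions \ref{ppi1} and \ref{ppi3}) by simply invoking \cite[Proposition 4.11]{bckly} and the continuous embedding $\Lambda_{n\alpha}(\rn)\subset\mathfrak C_{\az}(\rn)$, which is exactly the "alternatively" route you give at the end. The longer argument you sketch is just an unpacking of that cited result in the homogeneous wavelet setting, so there is no substantive difference.
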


Now we establish the following estimate of $\Pi_2'$. Let us first recall the notion of $(p,2,l)$-atoms (see, for instance, \cite{GR85,Lu95} for more details).
\begin{definition}\label{defa}
Let $p\in(0,\,1]$ and $l\in\zz_+$.
A function $a\in L^2(\rn)$ is called a $(p,2,l)${\it-atom} if
\begin{enumerate}
\item[\rm (i)] there exists
a ball $B$ such that $\supp a:=\{x\in\rn:\ a(x)\neq0\} \subset B$;
\item[\rm (ii)] $\|a\|_{L^2(\rn)}\le |B|^{1/2-1/p}$;
\item[\rm (iii)] $
\int_\rn x^\az a(x)\,dx=0
$ for any multi-index $\az\in\zz_+^n$ satisfying $|\az|\le l$.
\end{enumerate}
\end{definition}

\begin{lemma}\label{apW}
Let $p\in (0,\,1)$, $\Phi_p$ be as in \eqref{333}, $s:=\lfloor n(1/p-1)\rfloor$ and $d\in\zz_+\cap [2s,\,\fz)$.
Assume that $g\in L^\infty(\rn)$ and $a$ is a $(p,2,d)$-atom as in Definition
\ref{defa}
supported in a ball $B\subset\rn$. Then
$$\|aP_B^s g\|_{H^{\Phi_p}(\rn)}\le C \|g\|_{L^\infty(\rn)},$$
where $P_B^sg$ for any ball $B\subset\rn$ denotes the minimizing polynomial of $g$ on $B$ with degree
not greater than $s$, and the positive constant $C$ is independent of $a$ and $g$.
\end{lemma}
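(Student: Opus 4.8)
The plan is to mimic the proof of Proposition \ref{ap}, replacing the local atom by a global $(p,2,d)$-atom and $h^{\Phi_p}(\rn)$ by $H^{\Phi_p}(\rn)$. First I would normalize $\|g\|_{L^\infty(\rn)}=1$, so that Lemma \ref{lem-ep} gives $\sup_{x\in B}|P_B^s g(x)|\lesssim\dashint_B|g(y)|\,dy\lesssim1$. Then, by Definition \ref{Hp}(iii) and the explicit form of $\Phi_p$ in \eqref{333}, to prove $\|aP_B^s g\|_{H^{\Phi_p}(\rn)}\lesssim1$ it suffices to find a positive constant $c$, independent of $a$ and $g$, such that
\begin{equation*}
\int_{\rn}\Phi_p\lf(\frac{M(aP_B^s g,\varphi)(x)}{c}\r)\,dx
=\int_{\rn}\frac{M(aP_B^s g,\varphi)(x)/c}{1+[M(aP_B^s g,\varphi)(x)/c]^{1-p}}\,dx\le1,
\end{equation*}
and an elementary calculation shows this holds once $\int_{\rn}[M(aP_B^s g,\varphi)(x)]^p\,dx\le c^p$, that is, once $\|aP_B^s g\|_{H^p(\rn)}\lesssim1$.

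The key structural point is that $\wz a:=\frac1{\wz C}aP_B^s g$ is, up to a harmless constant $\wz C\in(0,\infty)$, a $(p,2,d-s)$-atom: it is supported in $B$ by the support condition on $a$; its $L^2(\rn)$ size bound follows from $\|a\|_{L^2(\rn)}\le|B|^{1/2-1/p}$ together with $\sup_B|P_B^s g|\lesssim1$; and its vanishing moments up to order $d-s$ follow because $a$ has vanishing moments up to order $d$ while $P_B^s g$ is a polynomial of degree at most $s$, so $aP_B^s g$ still annihilates all polynomials of degree at most $d-s$ (here I would note that, unlike the local case, a global atom always satisfies the moment condition regardless of the size of $B$, which is exactly why Definition \ref{defa}(iii) has no restriction on $|B|$). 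Since $d\ge2s$ forces $d-s\ge s=\lfloor n(1/p-1)\rfloor$, the atom $\wz a$ has the order of vanishing moments required by the atomic characterization of $H^p(\rn)$ (see, e.g., \cite[Theorem 5.30]{GR85} or \cite{Lu95}), whence $\|aP_B^s g\|_{H^p(\rn)}\lesssim\|\wz a\|_{H^p(\rn)}\lesssim1$. This gives the required bound on $\int_{\rn}[M(aP_B^s g,\varphi)(x)]^p\,dx$ and hence completes the proof.

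I do not expect any genuine obstacle here: the argument is a routine adaptation of Proposition \ref{ap}, and the mild simplification is that no case split on whether $|B|<1$ or $|B|\ge1$ is needed, since global atoms always carry full vanishing moments. The only point requiring a little care is tracking the constant $\wz C$ so that $\wz a$ is an honest (constant-multiple of a) $(p,2,d-s)$-atom; this constant depends only on the absolute constant in Lemma \ref{lem-ep} and on $\|g\|_{L^\infty(\rn)}=1$, hence is independent of $a$ and $g$, and the final constant $c$ (equivalently $C$) in the statement can then be taken proportional to $\wz C$.
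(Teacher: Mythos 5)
Your proposal is correct and follows essentially the same route as the paper's proof: normalize $\|g\|_{L^\infty(\rn)}=1$, reduce the $H^{\Phi_p}(\rn)$ bound to $\|aP_B^s g\|_{H^p(\rn)}\lesssim1$ via the pointwise inequality $\Phi_p(\tau)\le\tau^p$, and observe that $aP_B^s g$ is a constant multiple of a $(p,2,d-s)$-atom with $d-s\ge\lfloor n(1/p-1)\rfloor$. Your added remark that no case split on $|B|$ is needed (unlike Proposition \ref{ap}) is accurate and matches the paper's treatment.
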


\begin{proof}
Let $p\in (0,\,1)$, $s:=\lfloor n(1/p-1)\rfloor$
and $g\in L^\infty(\rn)$. Let $d\in\zz_+\cap [2s,\,\fz)$, $a$ be a $(p,2,d)$-atom as in Definition
\ref{defa}
supported in a ball $B\subset\rn$.
Without loss of generality, we may assume that $\|g\|_{L^\infty(\rn)}=1$.
By the definition of $H^{\Phi_p}(\rn)$
and \eqref{333}, to show $\|aP_B^s g\|_{H^{\Phi_p}(\rn)}\lesssim1$,
it suffices to prove that there exists a positive constant $c$, independent of $a$
and $g$, such that
\begin{equation}\label{1wW}
\int_{\rn}\Phi_p\lf(\frac{M(a P_B^s g,\varphi)(x)}{c}\r)\,dx=
\int_{\rn}\frac{M(a P_B^s g,\varphi)(x)/c}{1+[M(a P_B^s g,\varphi)(x)/c]^{1-p}}\,dx
\leq1,
\end{equation}
where
$M(\cdot,\varphi)$ is as in Definition \ref{Hp}(i).
Moreover, from some elementary calculations, we deduce that \eqref{1wW} holds true provided that
\begin{equation}\label{3wW}
\int_{\rn}[M(a P_B^s g,\varphi)(x)]^p\,dx
\leq c^p.
\end{equation}
Next, we show \eqref{3wW}.
By \eqref{2w} and the assumption that $a$ is a $(p,2,d)$-atom supported in the ball
$B$, we
find that $\widetilde{a}:= \frac{1}{\widetilde{C}}a P_B^s g$ is a $(p,2,d-s)$-atom for some positive constant $\widetilde{C}$.
From this, $d-s\ge\lfloor n(1/p-1)\rfloor$ and
the atomic characterization of $H^{p}(\rn)$ (see \cite[p.\,21, Theorem 1.1]{Lu95}),
it follows that
$$
\int_{\rn}[M(a P_B^s g,\varphi)(x)]^p\,dx=
\|a P_B^s g\|_{H^{p}(\rn)}\lesssim\|\widetilde{a}\|_{H^{p}(\rn)}\lesssim1,
$$
which implies that \eqref{3wW} holds true and hence \eqref{1wW} holds true.
This finishes the proof of Lemma \ref{apW}.
\end{proof}

The next lemma gives a finite atomic decomposition of elements in the
Hardy space $H^p(\rn)$ that have finite wavelet expansions, which is just \cite[Lemma 4,5]{bckly}
(see also \cite[ Theorem~5.12 of Section~6.5]{HW96} and \cite[Section~6.8]{HW96}).

\begin{lemma}\label{-hp}
Let $p\in(0,\,1]$, $s\in\zz_+$ with $s\ge \lfloor n(1/p-1)\rfloor$
and $d\in\nn$ with $
d>\lfloor n(1/p-1)\rfloor$.
Let $\{\psi_I^{\lz}\}_{I\in\mathcal{D},\,\lz\in E}$ be the wavelets
as in \eqref{www}
with the regularity parameter $d$.
Assume that $f\in H^p(\rn)$ has a finite wavelet expansion, namely,
\begin{equation*}
f=
\sum_{I\in \mathcal{D}}\sum_{\lz\in E} \langle f,\,\psi_I^\lz \rangle \psi_I^\lz,
\end{equation*}
where
the coefficients $\langle f,\,\psi_I^\lz \rangle$ in
\eqref{eqn 3.7} have only
finite non-zero terms.
Then $f$ has a finite atomic decomposition satisfying
$f=\sum_{l=1}^L \mu_l a_l$, where $L\in\nn$, $\mu_l\in\mathbb{C}$ for any $l\in\{1,\,\ldots,\,L\}$,
and the following properties hold true:
\begin{itemize}
\item[\rm (i)] for any $l\in\{1,\,\ldots,\,L\}$,
$a_l$ is a $(p,2,s)$-atom
supported in some cube $mR_l$ with $m$ as in \eqref{220} and $R_l\in\mathcal{D}$,
which can be written into the following form:
\begin{equation*}
a_l=
\sum_{I\in \mathcal{D}, {I\subset R_l}}\sum_{\lz\in E} c_{(I,\,\lz,\,l)} \psi_I^\lz,
\end{equation*}
where $\{c_{(I,\,\lz,\,l)}\}_{I\subset R_l,\,\lz\in E}\subset\rr$;

\item[\rm (ii)] there exists a positive constant $ C$,
independent of $\{\mu_l\}_{l=1}^L$, $\{a_l\}_{l=1}^L$
and $f$, such that
$$\lf\{\sum_{l=1}^L |\mu_l|^p\r\}^{\frac{1}{p}}\le  C \|f\|_{H^p(\rn)};$$

\item[\rm (iii)] for any $l\in\{1,\,\ldots,\,L\}$,
the wavelet expansion of $a_l$ in \eqref{eqn 3.x4} is also finite which is
extracted from that of $f$ in \eqref{eqn 3.7}.
\end{itemize}
\end{lemma}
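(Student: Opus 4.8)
The plan is to establish the finite atomic decomposition of $H^p(\rn)$ for elements with finite wavelet expansion. First I would observe that the argument for this lemma is parallel to that of Lemma \ref{lem-hp} (the local case), but now in the homogeneous setting where there is no father wavelet part and all cubes in $\mathcal{D}$ are allowed. Concretely, I would start from the wavelet expansion $f=\sum_{I\in\mathcal{D},\,\lz\in E}\la f,\psi_I^\lz\ra\psi_I^\lz$ with only finitely many nonzero coefficients, pick a single dyadic cube $R\in\mathcal{D}$ containing every $I$ with $\la f,\psi_I^\lz\ra\neq0$, and then perform a Calder\'on--Zygmund type stopping-time decomposition on $f$ at dyadic levels $2^k$ using the grand maximal function $m_N(f)$ (as in \eqref{EqMN0}). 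This produces open sets $\Omega_k:=\{x\in\rn:m_N(f)(x)>2^k\}$, a Whitney decomposition of each $\Omega_k$ into dyadic cubes, and the usual splitting of the wavelet coefficients according to which Whitney cube the index $I$ belongs to; the good part at level $k$ is bounded and the bad parts give pieces with vanishing moments up to order $s$, which after normalization become $(p,2,s)$-atoms supported in dilates $mR_l$ of dyadic cubes $R_l$.

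The key steps, in order, are: (1) reduce to $f$ supported in $mR$ and record that $f\in L^2(\rn)$; (2) carry out the stopping-time/Whitney argument to write $f=\sum_l\mu_l a_l$ as a finite sum, using that only finitely many dyadic scales and Whitney cubes are involved because the wavelet expansion is finite; (3) verify that each $a_l$ has the claimed form $a_l=\sum_{I\subset R_l,\,\lz\in E}c_{(I,\lz,l)}\psi_I^\lz$, with the wavelet expansion of $a_l$ extracted from that of $f$, and that $a_l$ is (after normalization) a $(p,2,s)$-atom --- here one uses property (P4) of the wavelets, which forces $\int x^\nu\psi_I^\lz\,dx=0$ for $|\nu|\le d$ and hence $\ge s$, so the moment condition \eqref{defa}(iii) with $l=s$ holds automatically; (4) establish the $\ell^p$ coefficient estimate $(\sum_l|\mu_l|^p)^{1/p}\ls\|f\|_{H^p(\rn)}$ by summing the Whitney-cube contributions against the level sets $\Omega_k$ and invoking the maximal-function characterization of $H^p(\rn)$, $\|m_N(f)\|_{L^p(\rn)}\sim\|f\|_{H^p(\rn)}$.

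The main obstacle I expect is step (4): controlling the atomic coefficients $\mu_l$ by $\|f\|_{H^p(\rn)}$ in a way that is genuinely independent of $f$ and of the (arbitrarily large) number of terms in the finite wavelet expansion. This requires the standard but delicate bookkeeping that the sizes $\mu_l\sim 2^k|Q_l|^{1/p}$ of atoms coming from Whitney cubes $Q_l$ at level $k$ satisfy $\sum_l|\mu_l|^p\ls\sum_k2^{kp}|\Omega_k|\sim\|m_N(f)\|_{L^p(\rn)}^p$; one must also check that the wavelet-coefficient truncation producing each $a_l$ does not destroy the $L^2$-normalization in \eqref{defa}(ii), which follows from the almost-orthogonality of the $\psi_I^\lz$ together with the pointwise size and support bounds (P2) and (P3). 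Since this is precisely the content of \cite[Lemma 4.5]{bckly} (see also \cite[Theorem 5.12, Section 6.5]{HW96} and \cite[Section 6.8]{HW96}), I would, for the write-up, either cite those references directly or reproduce only the few lines where the local-case argument of Lemma \ref{lem-hp} must be modified to drop the father-wavelet terms and allow all dyadic scales.
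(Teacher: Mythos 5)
The paper offers no proof of this lemma: it is quoted directly from \cite[Lemma 4.5]{bckly} (see also \cite[Theorem 5.12 of Section 6.5]{HW96}), and your sketch is precisely the standard Calder\'on--Zygmund/Whitney argument behind that cited result, so in substance you and the paper take the same route (defer to the known wavelet atomic decomposition, adjusted to the homogeneous setting). One correction to your write-up: for $H^p(\rn)$ the stopping time cannot be run on the \emph{local} grand maximal function $m_N(f)$ of \eqref{EqMN0}, which only sees scales $t\in(0,1)$ and characterizes $h^p(\rn)$ rather than $H^p(\rn)$; you need the global maximal function $M_N(f)$ of Definition \ref{jdhs}(i), or better, the discrete square function $\mathcal W_\psi f$ of Lemma \ref{thm17-3.2}, which is what the cited proofs actually use. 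Running the stopping time on $\mathcal W_\psi f$ also resolves the "main obstacle" you flag in step (4): since each atom is a sum of wavelet terms indexed by cubes on which the truncated square function is $\ls 2^k$, orthonormality of the $\psi_I^\lz$ gives the $L^2$-normalization $\|a_l\|_{L^2(\rn)}\ls 2^k|R_l|^{1/2}/\mu_l$ directly, with no need for the almost-orthogonality estimates via (P2) and (P3), and the coefficient bound $\sum_l|\mu_l|^p\ls\sum_k 2^{kp}|\Omega_k|\sim\|\mathcal W_\psi f\|_{L^p(\rn)}^p\sim\|f\|_{H^p(\rn)}^p$ follows as you describe.
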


\begin{proposition}\label{ppi2}
Let $p\in(0,1)$ and $\az:=1/p-1$.
Assume that the regularity parameter $d\in\nn$ of wavelets satisfies that
$d>\lfloor n(1/p-1)\rfloor$. Then the bilinear operator $\Pi_2'$, defined as in \eqref{pi2},
can be extended to a bilinear operator
bounded from $H^p(\rn)\times\Lambda_{n\alpha}(\rn)$ to $H^{\Phi_p}(\rn)$ with $\Phi_p$
as in \eqref{333}.
\end{proposition}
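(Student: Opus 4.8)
The proof of Proposition \ref{ppi2} runs parallel to that of Proposition \ref{prop-pi2}, replacing the local objects by their global (homogeneous) counterparts. The plan is to start from an atom $a$ that is a $(p,2,2s)$-atom (in the sense of Definition \ref{defa}) with a finite wavelet expansion, supported in $mR$ for some $R\in\mathcal{D}$, and to use the renormalization identity $a(\eta P_B^s g)=\sum_{i=1}^4\Pi_i'(a,\eta P_B^s g)$ from Remark \ref{rw3}, where $B$ is the smallest ball containing $9mR$ and $\eta$ is a smooth cut-off with $\supp\eta\subset 9mR$ and $\eta\equiv1$ on $5mR$. The first key step is to observe that, since $a$ is a $(p,2,d)$-atom with $d\ge 2s>\lfloor n\az\rfloor$, it has vanishing moments of order at least $s$, so by the moment property (P4) of the wavelets (with regularity $d>\lfloor n(1/p-1)\rfloor$) the pairings defining $\Pi_1'(a,\cdot)$, $\Pi_3'(a,\cdot)$ and $\Pi_4'(a,\cdot)$ against any polynomial of degree $\le s$ vanish, and hence $\Pi_i'(a,\eta P_B^s g)=\Pi_i'(a,P_B^s g)=0$ for $i\in\{1,3,4\}$. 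This yields the renormalized identity
\begin{align*}
aP_B^s g=\Pi_2'(a,P_B^s g).
\end{align*}

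**Main steps.** Next I would split $\Pi_2'(a,g)=\Pi_2'(a,P_B^s g)+\Pi_2'(a,g-P_B^s g)=aP_B^s g+\Pi_2'(a,\eta[g-P_B^s g])$, using Remark \ref{rw3} to insert $\eta$ in the second term. For the second term, I apply Lemma \ref{rem-w1} (the $L^2\times L^2\to H^1$ boundedness of $\Pi_2'$, which follows from \cite[Proposition 4.8 or 4.11]{bckly} or an argument as in \cite[Lemma 4.2]{BGK12}), together with the size estimate $\|a\|_{L^2(\rn)}\le|B|^{1/2-1/p}$, the bound $\|\eta(g-P_B^s g)\|_{L^2(\rn)}\lesssim|B|^{1/2}[\dashint_B|g-P_B^s g|^2]^{1/2}$ and Lemma \ref{GR-DDY} (with $r=2$), to obtain
\begin{align*}
\|\Pi_2'(a,\eta[g-P_B^s g])\|_{H^{\Phi_p}(\rn)}\lesssim\|\Pi_2'(a,\eta[g-P_B^s g])\|_{H^1(\rn)}\lesssim\frac{1}{|B|^{\az}}\lf[\dashint_B|g-P_B^s g|^2\r]^{1/2}\lesssim\|g\|_{\Lambda_{n\alpha}(\rn)},
\end{align*}
where the first inequality uses $\|\cdot\|_{L^{\Phi_p}(\rn)}\le\|\cdot\|_{L^1(\rn)}$ from Lemma \ref{444}(i) together with $H^1(\rn)\subset H^{\Phi_p}(\rn)$. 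For the first term, I invoke Lemma \ref{apW}, which gives $\|aP_B^s g\|_{H^{\Phi_p}(\rn)}\lesssim\|g\|_{L^\infty(\rn)}\lesssim\|g\|_{\Lambda_{n\alpha}(\rn)}$. Combining these, $\|\Pi_2'(a,g)\|_{H^{\Phi_p}(\rn)}\lesssim\|g\|_{\Lambda_{n\alpha}(\rn)}$ for every atom $a$ with a finite wavelet expansion.

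**From atoms to $H^p$ and the main obstacle.** The remaining step is to pass from a single atom to a general $f\in H^p(\rn)$ with finite wavelet expansion. I use Lemma \ref{-hp} to write $f=\sum_{l=1}^L\mu_l a_l$ with $a_l$ being $(p,2,2s)$-atoms (choosing $s$ and the wavelet regularity $d$ large enough) and $(\sum_l|\mu_l|^p)^{1/p}\lesssim\|f\|_{H^p(\rn)}$, then normalize so that $\|f\|_{H^p(\rn)}=\|g\|_{\Lambda_{n\alpha}(\rn)}=1$, and estimate $\int_\rn\Phi_p(M(\Pi_2'(f,g),\varphi)/(\mathbb A C_1))\,dx$ by subadditivity of $\Phi_p$ (Lemma \ref{444}(ii), specifically \eqref{xx4}), splitting the index set according to whether $|\mu_l|\le\mathbb A$ or $|\mu_l|>\mathbb A$ with $\mathbb A:=2^{1/p}\wt C$ and using the lower/upper type estimates $\Phi_p(s\tau)\le s^p\Phi_p(\tau)$ for $s\in(0,1)$ and $\Phi_p(s\tau)\le s\Phi_p(\tau)$ for $s\ge1$; this is exactly the computation carried out at the end of the proof of Proposition \ref{prop-pi2}. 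Finally, using that the wavelets form an unconditional basis of $H^p(\rn)$ (by \cite[Theorem 1.64]{gg}, since $d>n(2/p+1)$), a density argument as in the proof of Proposition \ref{prop-pi1} extends $\Pi_2'$ to all of $H^p(\rn)\times\Lambda_{n\alpha}(\rn)$. I expect the main obstacle to be bookkeeping rather than conceptual: ensuring the moment and support conditions of the atoms match the wavelet moment order (so that the $\Pi_1',\Pi_3',\Pi_4'$ terms genuinely vanish after renormalization with a polynomial of degree $\le s$, which requires the atom to be a $(p,2,2s)$-atom rather than merely a $(p,2,s)$-atom) and verifying that $\frac{1}{\wt C}aP_B^s g$ is again a $(p,2,d-s)$-atom as needed in Lemma \ref{apW} — this hinges on Lemma \ref{lem-ep} controlling $\sup_{x\in B}|P_B^s g(x)|$ by $\|g\|_{L^\infty(\rn)}$.
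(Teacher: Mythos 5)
Your proposal is correct and follows essentially the same route as the paper: renormalize $aP_B^sg$ via Remark \ref{rw3} and property (P4) to identify it with $\Pi_2'(a,P_B^sg)$, estimate $\Pi_2'(a,\eta[g-P_B^sg])$ by the $L^2\times L^2\to H^1$ boundedness together with Lemma \ref{GR-DDY}, handle $aP_B^sg$ by Lemma \ref{apW}, and then pass from atoms to general $f$ by the $\Phi_p$-subadditivity argument of Proposition \ref{prop-pi2} and a density argument. The only cosmetic difference is that the vanishing of $\Pi_1'$, $\Pi_3'$ and $\Pi_4'$ against the polynomial rests on the wavelets' moment condition (P4) alone (after removing $\eta$ via Remark \ref{rw3}), not on the atom's vanishing moments, but this does not affect the argument.
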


\begin{remark}\label{612}
Let $p\in(0,1)$. Compared with
the corresponding target space appeared in
\cite[Proposition 4.9]{bckly},
the target space $H^{\Phi_p}(\rn)$ with $\Phi_p$
as in \eqref{333}
in Proposition \ref{ppi2} is
smaller than the corresponding one $H^{\phi_p}(\rn)$ with
$\phi_p$ as in \eqref{356} in \cite{bckly}, namely, $H^{\Phi_p}(\rn)\subset H^{\phi_p}(\rn)$
which can be deduced directly from their definitions together with the obvious fact that
$\Phi_p\le\phi_p$. The key reason for this being true is that, for any $\alpha:=1/p-1$ with $p\in(0,1)$, $\Lambda_{n\az}(\rn)\subsetneqq\mathfrak{C}_{\alpha}(\mathbb{R}^n)$.
\end{remark}

\begin{proof}[Proof of Proposition \ref{ppi2}]
Let $g\in \Lambda_{n\alpha}(\mathbb{R}^n)$  and $s:=\lfloor n\az\rfloor$.  Assume that $a$ is a
$(p,2,2s)$-atom supported in the cube
$mR$ with $m$ as in \eqref{220} and $R\in\mathcal{D}$ as in Lemma \ref{-hp}, and $a$ has a finite  wavelet expansion.
Let $B$ be  the smallest ball in $\rn$ containing $9mR$ and
$P_B^s g$ the minimizing polynomial of $g$ on $B$ with degree $\le s$
as in Definition \ref{campa}.
Let $\eta$ be a smooth cut-off function such that $\supp\eta\subset 9m R$ and $\eta\equiv 1$ on $5mR$.

Applying Remark \ref{rw3} and property (P4) in Section \ref{s4} with $d>\lfloor n(1/p-1)\rfloor$,
together with the expressions \eqref{pi1} through \eqref{pi4},
we know that
\begin{align*}
aP_{B}^sg=
a(\eta P_{B}^sg)
=\dsum_{i=1}^4\Pi_i'(a,\,\eta P_{B}^s g)
=\Pi_2'(a,\,\eta P_{B}^s g)=\Pi_2'(a,\, P_{B}^s g).
\end{align*}
Again, by Remark \ref{rw3},
we write
\begin{align*}
\Pi_2'(a,\,g)= \Pi_2'(a,\,\eta g)
=\Pi_2'(a,\,\eta [g-P_{B}^sg])+ \Pi_2'(a, \eta  P_{B}^sg)=\Pi_2'(a,\,\eta [g-P_{B}^sg])+aP_{B}^sg.
\end{align*}
Notice that the expression of the function $\Phi_p$ obviously implies that
$H^1(\rn)\subset H^{\Phi_p}(\rn)$.
Moreover, by  the fact that $\Pi_2'$ is bounded from $L^2(\rn)\times L^2(\rn)$
to $H^1(\rn)$ (see \cite[Lemma 4.2]{bckly}), $a$ is a $(p,2,2s)$-atom, $\supp\eta\subset 9mR$ and  Lemma \ref{GR-DDY}, we conclude that
\begin{align*}
\|\Pi_2'(a,\,\eta [g-P_{B}^sg])\|_{H^{\Phi_p}(\rn)}
&\ls \|\Pi_2'(a,\,\eta [g-P_{B}^sg])\|_{H^1(\rn)}\\
&\ls\|a\|_{L^2(\rn)}\|\eta(g-P_{B}^sg)\|_{L^2(\rn)}
\ls \|g\|_{\Lambda_{n\az}(\rn)}.
\end{align*}
From this and Lemma \ref{apW}, we deduce that
\begin{align*}
\|\Pi_2'(a,\,g)\|_{H^{\Phi_p}(\rn)}&
\ls \|\Pi_2'(a,\,g-P_{B}^sg)\|_{H^{\Phi_p}(\rn)}+\|aP_{B}^sg\|_{H^{\Phi_p}(\rn)}\ls \|g\|_{\Lambda_{n\az}(\rn)}.
\end{align*}
Repeating the
argument similar to that used in
the proof of Proposition \ref{prop-pi2}, we find that the definition of
$\Pi_2'(f,g)$ can be extended to any $f\in H^p(\rn)$ and
$g\in \Lambda_{n\alpha}(\mathbb{R}^n)$ with
the desired boundedness and the details are omitted.
This finishes the proof of Proposition \ref{ppi2}.
\end{proof}

\begin{definition}\label{defn}
Let $p\in(0,1)$, $d:=\lfloor n(1/p-1)\rfloor$
and $\Phi_p$ be as in \eqref{333}.
The {\em Orlicz Campanato space}
$\mathcal{L}^{\Phi_p}(\rn)$ is defined to be the set
of all locally integrable functions $g$ on $\rn$ such that
\begin{align*}
\|g\|_{\mathcal{L}^{\Phi_p}(\rn)}:&=
\dsup_{\mathrm{ball\,}B\subset\rn} \frac1{\|\mathbf1_B\|_{L^{\Phi_p}(\rn)}} \int_B |g(x)-P_B^d g(x)|\,dx
<\infty,
\end{align*}
where $P_B^dg$ for any ball $B\subset\rn$ denotes the minimizing polynomial of $g$ on $B$ with degree
not greater than $d$.
\end{definition}

By Remark \ref{rem-add1} and \cite[Theorem 5.2.1]{ylk},
we have the following duality result and
we omit the details.

\begin{lemma}\label{lemdu}
Let $p\in(0,1)$ and $\Phi_p$ be as in \eqref{333}.
Then
$(H^{\Phi_p}(\rn))^*=\mathcal{L}^{\Phi_p}(\rn).$
\end{lemma}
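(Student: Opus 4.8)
The plan is to realize $H^{\Phi_p}(\rn)$ as a particular Musielak--Orlicz Hardy space and then invoke the known duality theorem in that general setting, i.e.\ \cite[Theorem 5.2.1]{ylk}. Set $\varphi(x,t):=\Phi_p(t)$ for all $x\in\rn$ and $t\in[0,\infty)$; since $\varphi$ is independent of the spatial variable $x$, a direct comparison of Definition \ref{Hp}(iii) with the definition of the Musielak--Orlicz Hardy space $H^{\varphi}(\rn)$ via the radial maximal function shows that $H^{\varphi}(\rn)=H^{\Phi_p}(\rn)$ with equal quasi-norms.

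First I would check that $\varphi$ satisfies the structural hypotheses of \cite[Theorem 5.2.1]{ylk}. By Remark \ref{rem-add1}, $\Phi_p$ is an Orlicz function of positive lower type $p_\Phi^-=p$ and positive upper type $p_\Phi^+=1$; being $x$-independent, it trivially fulfils the uniform local integrability and the uniform Muckenhoupt- and reverse-H\"older-type conditions demanded in the Musielak--Orlicz framework, with Muckenhoupt exponent equal to $1$ and uniformly lower type index equal to $p$. Hence the polynomial-degree threshold appearing in \cite[Theorem 5.2.1]{ylk} is $\lfloor n(1/p_\Phi^--1)\rfloor=\lfloor n(1/p-1)\rfloor$, so the choice $d:=\lfloor n(1/p-1)\rfloor$ in Definition \ref{defn} is admissible, and the dual space is independent of the admissible choice of $d\ge\lfloor n(1/p-1)\rfloor$.

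Next I would specialize the conclusion of \cite[Theorem 5.2.1]{ylk}: $(H^{\varphi}(\rn))^*$ equals the Musielak--Orlicz Campanato space consisting of all locally integrable $g$ (modulo polynomials of degree at most $d$) with $\sup_{B}\|\mathbf 1_B\|_{L^{\varphi}(\rn)}^{-1}\int_B|g-P_B^dg|\,dx<\infty$, the supremum taken over all balls $B\subset\rn$. Because $\varphi(x,t)=\Phi_p(t)$ does not depend on $x$, one has $\|\mathbf 1_B\|_{L^{\varphi}(\rn)}=\|\mathbf 1_B\|_{L^{\Phi_p}(\rn)}$ for every ball $B$, so this space coincides, as a set and with equivalent quasi-norms, with the Orlicz Campanato space $\mathcal{L}^{\Phi_p}(\rn)$ of Definition \ref{defn}. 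Combining the two identifications yields $(H^{\Phi_p}(\rn))^*=\mathcal{L}^{\Phi_p}(\rn)$.

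The only point requiring care is the bookkeeping between the Musielak--Orlicz conventions of \cite{ylk} and the Orlicz conventions used here: one must confirm that the normalizing quantity in \cite[Theorem 5.2.1]{ylk} is exactly the Luxemburg norm $\|\mathbf 1_B\|_{L^{\Phi_p}(\rn)}$ and that the admissible polynomial degrees are precisely the integers $d\ge\lfloor n(1/p-1)\rfloor$ (governed by the lower type $p_\Phi^-=p$). Both are immediate once the definitions are unwound, which is why the details may safely be omitted.
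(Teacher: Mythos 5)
Your proposal is correct and follows exactly the route the paper takes: the paper proves this lemma by simply citing Remark \ref{rem-add1} (for the type properties of $\Phi_p$) together with \cite[Theorem 5.2.1]{ylk}, omitting the details. Your write-up merely fills in the bookkeeping (viewing $\Phi_p$ as an $x$-independent Musielak--Orlicz function, checking the uniform Muckenhoupt and type conditions, and matching the normalization $\|\mathbf 1_B\|_{L^{\Phi_p}(\rn)}$ and the degree threshold $\lfloor n(1/p-1)\rfloor$), all of which is accurate.
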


We have the following relations on
$(H^{\Phi_p}(\rn))^\ast$ and $(h^{\Phi_p}(\rn))^\ast$ with
$p\in(0,1)$ and $\Phi_p$ as in \eqref{333}.

\begin{lemma}\label{asz}
Let $p\in(0,1)$ and
$\Phi_p$ be as in \eqref{333}.
Then $[L^\infty(\rn)\cap (H^{\Phi_p}(\rn))^\ast]=[L^\infty(\rn)\cap (h^{\Phi_p}(\rn))^\ast]$.
\end{lemma}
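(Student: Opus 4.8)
The plan is to prove the two inclusions $[L^\infty(\rn)\cap (H^{\Phi_p}(\rn))^\ast]\subset[L^\infty(\rn)\cap (h^{\Phi_p}(\rn))^\ast]$ and its reverse separately, identifying both dual spaces via Lemmas \ref{lem-dualMHC} and \ref{lemdu} with the (local) Orlicz Campanato spaces, and then comparing the Campanato-type seminorms on bounded functions. Concretely, by Remark \ref{rem-add1} and Lemma \ref{lem-dualMHC}, $(h^{\Phi_p}(\rn))^\ast=\mathcal{L}_{\loc}^{\Phi_p,1,d}(\rn)$ for $d\ge\lfloor n(1/p-1)\rfloor$, and by Lemma \ref{lemdu}, $(H^{\Phi_p}(\rn))^\ast=\mathcal{L}^{\Phi_p}(\rn)$. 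So the task reduces to showing that, for $g\in L^\infty(\rn)$, one has $g\in\mathcal{L}^{\Phi_p}(\rn)$ if and only if $g\in\mathcal{L}_{\loc}^{\Phi_p}(\rn)$, with comparable norms in the presence of the $L^\infty$ bound.

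First I would treat the direction $g\in L^\infty(\rn)\cap\mathcal{L}^{\Phi_p}(\rn)\Rightarrow g\in\mathcal{L}_{\loc}^{\Phi_p}(\rn)$. Fix a ball $B$. If $|B|<1$, the Campanato-type averaged oscillation $\frac1{\|\mathbf1_B\|_{L^{\Phi_p}(\rn)}}\int_B|g-P_B^dg|\,dx$ is literally one of the quantities controlled by $\|g\|_{\mathcal{L}^{\Phi_p}(\rn)}$, so nothing is needed. If $|B|\ge1$, I must bound $\frac1{\|\mathbf1_B\|_{L^{\Phi_p}(\rn)}}\int_B|g|\,dx$; by Lemma \ref{444}(i) we have $\|\mathbf1_B\|_{L^{\Phi_p}(\rn)}\sim|B|$ when $|B|\ge1$, so this term is $\lesssim\frac1{|B|}\int_B|g|\,dx\le\|g\|_{L^\infty(\rn)}$. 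Hence $\|g\|_{\mathcal{L}_{\loc}^{\Phi_p}(\rn)}\lesssim\|g\|_{L^\infty(\rn)}+\|g\|_{\mathcal{L}^{\Phi_p}(\rn)}$.

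The reverse direction $g\in L^\infty(\rn)\cap\mathcal{L}_{\loc}^{\Phi_p}(\rn)\Rightarrow g\in\mathcal{L}^{\Phi_p}(\rn)$ is where the real work sits. For $|B|<1$ the relevant quantity again matches a term in $\|g\|_{\mathcal{L}_{\loc}^{\Phi_p}(\rn)}$, so it remains to handle balls with $|B|\ge1$, where $\mathcal{L}^{\Phi_p}(\rn)$ still asks for control of $\frac1{\|\mathbf1_B\|_{L^{\Phi_p}(\rn)}}\int_B|g-P_B^dg|\,dx$ with degree $d=\lfloor n(1/p-1)\rfloor$. Here I would use Lemma \ref{lem-ep}, which gives $\sup_{x\in B}|P_B^dg(x)|\lesssim\frac1{|B|}\int_B|g|\,dx\lesssim\|g\|_{L^\infty(\rn)}$, so that $|g-P_B^dg|\lesssim\|g\|_{L^\infty(\rn)}$ on $B$, and then, using $\|\mathbf1_B\|_{L^{\Phi_p}(\rn)}\sim|B|$ for $|B|\ge1$ from Lemma \ref{444}(i), obtain $\frac1{\|\mathbf1_B\|_{L^{\Phi_p}(\rn)}}\int_B|g-P_B^dg|\,dx\lesssim\frac1{|B|}\int_B\|g\|_{L^\infty(\rn)}\,dx=\|g\|_{L^\infty(\rn)}$. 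Combining the small-ball and large-ball estimates yields $\|g\|_{\mathcal{L}^{\Phi_p}(\rn)}\lesssim\|g\|_{L^\infty(\rn)}+\|g\|_{\mathcal{L}_{\loc}^{\Phi_p}(\rn)}$. I expect the main (though still mild) obstacle to be the bookkeeping of which polynomial degree appears in each Campanato space and the verification, via Lemma \ref{444}(i), that the Orlicz functional $\|\mathbf1_B\|_{L^{\Phi_p}(\rn)}$ behaves like $|B|$ exactly in the regime $|B|\ge1$ that is left to control; once that equivalence is in hand, both inclusions follow from Lemma \ref{lem-ep} and the trivial bound by $\|g\|_{L^\infty(\rn)}$, and the stated equality of the two intersections with $L^\infty(\rn)$ follows from the duality identifications.
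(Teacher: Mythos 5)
Your proposal is correct and follows essentially the same route as the paper: identify the two dual spaces via Lemmas \ref{lem-dualMHC} (through Remark \ref{rem-add1}) and \ref{lemdu}, then compare the global and local Orlicz--Campanato norms on bounded functions, using Lemma \ref{444}(i) for $\|\mathbf1_B\|_{L^{\Phi_p}(\rn)}\sim|B|$ when $|B|\ge1$ and Lemma \ref{lem-ep} together with the $L^\infty$ bound to control the oscillation over large balls. The paper compresses the inclusion $[L^\infty\cap\mathcal{L}_{\loc}^{\Phi_p}]\subset[L^\infty\cap\mathcal{L}^{\Phi_p}]$ into a one-line appeal to Lemma \ref{lem-ep}; you supply exactly the details it has in mind.
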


\begin{proof}
Let $d:=\lfloor n(1/p-1)\rfloor$ and $\alpha:=1/p-1$.
From Remark \ref{rem-add1} and Lemma \ref{lemdu}, we deduce that
$(h^{\Phi_{p}}(\rn))^*=\mathcal{L}_{\loc}^{\Phi_{p}}(\rn)$
and $(H^{\Phi_{p}}(\rn))^*=\mathcal{L}^{\Phi_{p}}(\rn)$.
Thus, to show $[L^\infty(\rn)\cap (H^{\Phi_p}(\rn))^\ast]=[L^\infty(\rn)\cap (h^{\Phi_p}(\rn))^\ast]$,
it suffices to prove that
$[L^\infty(\rn)\cap \mathcal{L}^{\Phi_{p}}(\rn)]=[L^\infty(\rn)\cap \mathcal{L}_{\loc}^{\Phi_{p}}(\rn)]$.
From Lemma \ref{lem-ep} and
the definitions of $\mathcal{L}^{\Phi_{p}}(\rn)$ and $\mathcal{L}_{\loc}^{\Phi_{p}}(\rn)$,
it easily follows that
$[L^\infty(\rn)\cap \mathcal{L}_{\loc}^{\Phi_{p}}(\rn)]\subset [L^\infty(\rn)\cap \mathcal{L}^{\Phi_{p}}(\rn)]$.
Now we show
\begin{align}\label{ppg}
[L^\infty(\rn)\cap \mathcal{L}^{\Phi_{p}}(\rn)]\subset [L^\infty(\rn)\cap \mathcal{L}_{\loc}^{\Phi_{p}}(\rn)].
\end{align}
From Lemma \ref{444}(i) and
the definitions of $\mathcal{L}^{\Phi_{p}}(\rn)$ and $\mathcal{L}_{\loc}^{\Phi_{p}}(\rn)$, it follows that,
for any $f\in L^\infty(\rn)\cap \mathcal{L}^{\Phi_{p}}(\rn)$,
\begin{align*}
\|f\|_{\mathcal{L}_{\loc}^{\Phi_{p}}(\rn)}&\sim
\dsup_{\mathrm{ball\,}B\subset\rn,|B|<1} \frac1{|B|^{1/p}} \int_B |f(x)-P_B^d f(x)|\,dx\\\noz
&\quad+\dsup_{\mathrm{ball\,}B\subset\rn,|B|\geq1} \frac1{|B|} \int_B |f(x)|\,dx
\lesssim \|f\|_{\mathcal{L}^{\Phi_{p}}(\rn)}+\|f\|_{L^\infty(\rn)}<\infty,
\end{align*}
where $P_B^df$ for any ball $B\subset\rn$ denotes the minimizing polynomial of $g$ on $B$ with degree
not greater than $d$, which implies that $f\in L^\infty(\rn)\cap \mathcal{L}_{\loc}^{\Phi_{p}}(\rn)$
and hence \eqref{ppg} holds true.
This finishes the proof of
Lemma \ref{asz}.
\end{proof}

Using Theorem \ref{thm-main2}, Lemmas \ref{lemdu} and \ref{asz},
Propositions \ref{ppi1}, \ref{ppi3}, \ref{ppi4} and \ref{ppi2},
we obtain the following
bilinear decomposition theorem, which
plays a key role in the proof of Theorem \ref{div} below.

\begin{theorem}\label{mainthm4}
Let $p\in(0,1)$, $\az=1/p-1$
and $\Phi_p$ be as in \eqref{333}. Then the following statements
hold true.
\begin{itemize}
\item[\rm (i)] There exist two bounded bilinear operators
$$S:\, H^p(\rn)\times \Lambda_{n\alpha}(\mathbb{R}^n)\to L^1(\rn)$$ and
$$T:\, H^p(\rn)\times \Lambda_{n\alpha}(\mathbb{R}^n)\to H^{\Phi_p}(\rn)$$
such that, for any $(f,g)\in H^p(\rn)\times \Lambda_{n\alpha}(\mathbb{R}^n)$,
\begin{align*}
f\times g=S(f,\,g)+T(f,\,g)\qquad in\;\cs'(\rn).
\end{align*}
Moreover, there exists a positive constant $C$ such that, for any $(f,g)\in H^p(\rn)\times \Lambda_{n\alpha}(\rn)$,
$$\lf\|S(f,g)\r\|_{L^1(\rn)}\le C \|f\|_{H^p(\rn)}\|g\|_{\Lambda_{n\alpha}(\mathbb{R}^n)}$$
and
$$
\lf\|T(f,g)\r\|_{H^{\Phi_p}(\rn)}\le C \|f\|_{H^p(\rn)}\|g\|_{\Lambda_{n\alpha}(\mathbb{R}^n)}.$$
\item[\rm (ii)] The pointwise
multiplier class of $\Lambda_{n\alpha}(\rn)$ equals to $L^\infty(\rn)\cap (H^{\Phi_{p}}(\rn))^*$.
\end{itemize}
\end{theorem}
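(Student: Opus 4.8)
The plan is to build the statement from pieces already assembled, following the scheme of the proof of Theorem~\ref{mainthm1}.

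\emph{Part (i).} Since $\Phi_p(\tau)=\tau/(1+\tau^{1-p})\le\tau$ for all $\tau\in[0,\fz)$, Lemma~\ref{444}(i) yields the continuous embedding $H^1(\rn)\subset H^{\Phi_p}(\rn)$ (this is also noted in the proof of Proposition~\ref{ppi2}). Hence Propositions~\ref{ppi1}, \ref{ppi3} and \ref{ppi2} together show that the three bilinear operators $\Pi_1'$, $\Pi_2'$ and $\Pi_3'$ extend to bounded bilinear operators from $H^p(\rn)\times\Lambda_{n\az}(\rn)$ to $H^{\Phi_p}(\rn)$, while Proposition~\ref{ppi4} gives that $\Pi_4'$ extends to a bounded bilinear operator from $H^p(\rn)\times\Lambda_{n\az}(\rn)$ to $L^1(\rn)$. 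I would then set
$$
S(f,g):=\Pi_4'(f,g),\qquad T(f,g):=\Pi_1'(f,g)+\Pi_2'(f,g)+\Pi_3'(f,g),
$$
so that the asserted quasi-norm bounds are immediate. It remains to check the decomposition identity $f\times g=S(f,g)+T(f,g)$ in $\cs'(\rn)$. For this, take the regularity parameter $d\in\nn$ of the wavelets large enough that $\{\psi_I^\lz\}_{I\in\mathcal D,\,\lz\in E}$ is an unconditional basis of $H^p(\rn)$ (cf.\ \cite[Theorem 1.64]{gg}), choose $\{f_k\}_{k\in\nn}\subset H^p(\rn)$ with finite wavelet expansions and $f_k\to f$ in $H^p(\rn)$, and use the analogue of \eqref{def-product} together with the boundedness of multiplication from Theorem~\ref{thm-main2} (and $\Lambda_{n\az}(\rn)\subset\mathfrak C_{\az}(\rn)$) to get $f\times g=\lim_{k\to\fz}f_k g$ in $\cs'(\rn)$. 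By Remark~\ref{rw3}, $f_k g=\sum_{i=1}^4\Pi_i'(f_k,g)$ in $L^1(\rn)$, and one passes to the limit $k\to\fz$ exactly as in the proof of Theorem~\ref{mainthm1}, using that convergence in $H^1(\rn)$, $H^{\Phi_p}(\rn)$ or $L^1(\rn)$ all imply convergence in $\cs'(\rn)$ (the middle one being the global counterpart of Lemma~\ref{lyyy}, proved by the same grand-maximal-function estimate). This gives $f\times g=\sum_{i=1}^4\Pi_i'(f,g)=S(f,g)+T(f,g)$ in $\cs'(\rn)$.

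\emph{Part (ii).} This is a chain of identifications. By \cite[(2.115)]{gg} the pointwise multiplier class of $\Lambda_{n\az}(\rn)$ is $\Lambda_{n\az}(\rn)$ itself, and by the equivalence (i)$\,\Leftrightarrow\,$(ii) in Theorem~\ref{thm-main2} this class coincides with $L^\infty(\rn)\cap\mathcal L_{\loc}^{\Phi_p}(\rn)$. By Remark~\ref{rem-add1} (i.e.\ Lemma~\ref{lem-dualMHC}) one has $(h^{\Phi_p}(\rn))^\ast=\mathcal L_{\loc}^{\Phi_p}(\rn)$, so $L^\infty(\rn)\cap\mathcal L_{\loc}^{\Phi_p}(\rn)=L^\infty(\rn)\cap(h^{\Phi_p}(\rn))^\ast$; and Lemma~\ref{asz} gives $L^\infty(\rn)\cap(h^{\Phi_p}(\rn))^\ast=L^\infty(\rn)\cap(H^{\Phi_p}(\rn))^\ast$. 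Concatenating these equalities proves that the pointwise multiplier class of $\Lambda_{n\az}(\rn)$ equals $L^\infty(\rn)\cap(H^{\Phi_p}(\rn))^\ast$.

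\emph{Main obstacle.} The only genuinely delicate point is the limiting argument in (i): one must justify transferring the cut-off functions of Remark~\ref{rw3} through the limit and must have at hand the $\cs'(\rn)$-convergence statement for $H^{\Phi_p}(\rn)$, i.e.\ the global counterpart of Lemma~\ref{lyyy}. Everything else is a reassembly of Propositions~\ref{ppi1}--\ref{ppi4}, Theorem~\ref{thm-main2}, Lemma~\ref{lemdu} and Lemma~\ref{asz}, so I would present it briefly, indicating only the changes from the proof of Theorem~\ref{mainthm1}.
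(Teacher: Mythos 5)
Your proposal is correct and follows the paper's proof essentially verbatim: the paper also sets $S:=\Pi_4'$ and $T:=\sum_{i=1}^3\Pi_i'$, invokes Propositions \ref{ppi1}, \ref{ppi2}, \ref{ppi3} and \ref{ppi4} for the boundedness, repeats the limiting argument of Theorem \ref{mainthm1} for the identity in $\cs'(\rn)$, and deduces (ii) from Theorem \ref{thm-main2} together with Lemmas \ref{lemdu} and \ref{asz}. No substantive differences.
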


\begin{proof}
From Theorem \ref{thm-main2}, Lemmas \ref{lemdu} and \ref{asz}, it follows that (ii) holds true.

As for (i), for any $(f,g)\in H^p(\rn)\times \Lambda_{n\alpha}(\mathbb{R}^n)$, let
\begin{align}\label{So}
S(f,\,g):=\Pi_4'(f,\,g)
\end{align}
and
\begin{align*}
T(f,\,g):=\sum_{i=1}^3\Pi_i'(f,\,g).
\end{align*}
Using
Propositions \ref{ppi1}, \ref{ppi3}, \ref{ppi4} and \ref{ppi2}, and repeating the
argument similar to that used in
the proof of Theorem \ref{mainthm1}, we obtain the desired conclusion of this theorem. This finishes
the proof of Theorem \ref{mainthm4}.
\end{proof}

\begin{remark}\label{432}
Let $p\in(0,1)$, $\az:=1/p-1$ and $\Phi_p$ be as in \eqref{333}
\begin{itemize}
\item[\rm (i)]
Let $\phi_p$ be as in \eqref{356}.
In Theorem \ref{mainthm4}(i), if we replace $\Lambda_{n\alpha}(\mathbb{R}^n)$ and $H^{\Phi_p}(\rn)$,
respectively, by $\mathfrak{C}_{\alpha}(\mathbb{R}^n)$ and $H^{\phi_p}(\rn)$, then
the corresponding conclusions of Theorem \ref{mainthm4}(i)
still hold true, which is just \cite[Theorem 1.2]{bckly}.
Observe that, by Remark \ref{612},
we have
$\Lambda_{n\alpha}(\mathbb{R}^n)\subsetneqq
\mathfrak{C}_{\alpha}(\mathbb{R}^n)$
and
$H^{\Phi_p}(\rn)\subset
H^{\phi_p}(\rn)$.
Thus, Theorem \ref{mainthm4} is of independent interest.
However, it is still unclear whether or not
$H^{\Phi_p}(\rn)$ is a proper subspace of
$H^{\phi_p}(\rn)$.

\item[\rm (ii)] By Theorem \ref{mainthm1},
\cite[Theorem 1.2]{bckly} and Theorem \ref{mainthm4}, we have bilinear decompositions of $h^p(\rn)\times \Lambda_{n\alpha}(\mathbb{R}^n)$,
$H^p(\rn)\times \mathfrak{C}_{\alpha}(\mathbb{R}^n)$ and $H^p(\rn)\times \Lambda_{n\alpha}(\rn)$.
Motivated by these results, to complete the whole story,
it is quite natural to conjecture that $h^p(\rn)\times \mathfrak{C}_{\alpha}(\mathbb{R}^n)$ should also have
a bilinear decomposition.
However, the present definition \eqref{def-product}
of the product does not make sense for the product of
elements in $h^p(\rn)$ and $\mathfrak{C}_{\alpha}(\mathbb{R}^n)$
because the dual space of
$h^p(\rn)$ is smaller than $\mathfrak{C}_{\alpha}(\mathbb{R}^n)$, namely,
$(h^p(\rn))^*=\Lambda_{n\alpha}(\rn)
\subsetneqq\mathfrak{C}_{\alpha}(\mathbb{R}^n)$.
\item[\rm (iii)] The sharpness of  Theorem \ref{mainthm4} is implied by Theorem \ref{mainthm4}(ii). Indeed,
suppose that Theorem \ref{mainthm4} holds true with $H^{\Phi_p}(\rn)$ therein
replaced by a smaller quasi-Banach space $\cy$.
In this case, we know that the pointwise
multiplier class of $\Lambda_{n\alpha}(\rn)$ equals to $L^\infty(\rn)\cap (\cy)^\ast$
From this and Theorem \ref{mainthm4}(ii), we deduce that
$L^\infty(\rn)\cap (\cy)^\ast=L^\infty(\rn)\cap (H^{\Phi_p}(\rn))^\ast$.
In this sense,
we say that Theorem \ref{mainthm4} is sharp. It
is still unclear whether or not the Orlicz Hardy
space $H^{\Phi_p}(\rn)$ is indeed the smallest space,
in the sense of the inclusion of sets, having the property as
in Theorem \ref{mainthm4}.
\end{itemize}
\end{remark}

\subsection{Bilinear decomposition of $H^1(\rn)\times \bmo(\rn)$}\label{s4.3}

In this subsection, we establish the bilinear decomposition of the
product space $H^1(\rn)\times \bmo(\rn)$.
Now, we recall the notions of both the local Hardy-type space $h_*^\Phi(\rn)$ and the Hardy-type space $H_*^\Phi(\rn)$
introduced in \cite{bf}.

\begin{definition}\label{cjs}
For any $\tau\in[0,\infty)$, let $\Phi$ be as in \eqref{111}.
\begin{itemize}
\item[(i)] The \emph{variant Orlicz space} $L_*^\Phi(\rn)$ is defined to be the set of all measurable functions $f$
such that $$\|f\|_{L_*^\Phi(\rn)}
:=\sum_{k\in\zz^n}\|f\mathbf1_{Q_k}\|_{L^{\Phi}(\rn)}<\infty,$$
where $Q_{k}:=k+[0,1)^n$ for any $k\in\mathbb{Z}^n$.
\item[(ii)] The \emph{variant Orlicz Hardy space} $H_*^\Phi(\rn)$ is defined
by setting
 $$H_*^\Phi(\rn):=\lf\{f\in\mathcal{S}'(\rn):\ \|f\|_{H_*^\Phi(\rn)}:=
\|M(f,\varphi)\|_{L_*^\Phi(\rn)}<\infty \r\},$$
where $\varphi\in\mathcal{S}(\rn)$ satisfies
$\int_{\rn}\varphi(x)\,dx\neq0.$
\item[(iii)] The \emph{variant local Orlicz Hardy space} $h_*^\Phi(\rn)$ is defined
by setting
 $$h_*^\Phi(\rn):=\lf\{f\in\mathcal{S}'(\rn):\ \|f\|_{h_*^\Phi(\rn)}:=
\|m(f,\varphi)\|_{L_*^\Phi(\rn)}<\infty \r\},$$
where $\varphi\in\mathcal{S}(\rn)$ satisfies $\int_{\rn}\varphi(x)\,dx\neq0$.
\item[(iv)]
The \emph{local $\BMO$-type space ${\bmo}^{\Phi}(\rn)$} is defined
to be the set of all measurable functions $f\in L_{\loc}^1(\rn)$ such that
\begin{align*}
\|f\|_{{\bmo}^{\Phi}(\rn)}:&=
\sup_{\mathrm{ball\,}B\subset\rn,|B|<1}
\frac{\log(e+\frac{1}{|B|})}{|B|}\int_{B}|f(x)-f_B|\,dx\\\noz
&\quad+\sup_{\mathrm{ball\,}B\subset\rn,|B|\geq1}
\frac{\log(e+\frac{1}{|B|})}{|B|}
\int_{B}|f(x)|\,dx
<\infty,
\end{align*}
where $f_B:=\frac{1}{|B|}\int_{B}f(x)\,dx$ for any ball $B\subset\rn$.
\item[(v)]
The \emph{$\BMO$-type space ${\BMO}^{\Phi}(\rn)$} is defined
to be the set of all measurable functions $f\in L_{\loc}^1(\rn)$ such that
\begin{align*}
\|f\|_{{\BMO}^{\Phi}(\rn)}:&=
\sup_{\mathrm{ball\,}B\subset\rn}
\frac{\log(e+\frac{1}{|B|})}{|B|}\int_{B}|f(x)-f_B|\,dx,
\end{align*}
where $f_B:=\frac{1}{|B|}\int_{B}f(x)\,dx$ for any ball $B\subset\rn$.
\end{itemize}
\end{definition}

From \cite[Theorems 5.2 and 5.3]{BGK12} and the proof of \cite[Proposition 6.1]{BGK12},
together with the fact that $\bmo(\rn)\subset\BMO(\rn)$,
we deduce the following conclusions of $\Pi_1'$, $\Pi_3'$ and $\Pi_4'$.

\begin{proposition}\label{pp11}
Assume that the regularity parameter $d\in\nn$ of wavelets satisfies that
$d\geq1$.
Then the bilinear operator $\Pi_1'$, defined as in \eqref{pi1},
can be extended to a bilinear operator
bounded from $H^1(\rn)\times\bmo(\rn)$ to $H^1(\rn)$.
\end{proposition}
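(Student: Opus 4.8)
The plan is to obtain this from the corresponding statement in which $\bmo(\rn)$ is replaced by the global space $\BMO(\rn)$, which is already available in \cite{BGK12}, and then to descend to $\bmo(\rn)$ via the continuous inclusion $\bmo(\rn)\subset\BMO(\rn)$.

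First I would recall that, by \cite[Lemma 4.2]{bckly} (see also \cite[Lemma 4.2]{BGK12}), the bilinear operator $\Pi_1'$ in \eqref{pi1} is bounded from $L^2(\rn)\times L^2(\rn)$ to $H^1(\rn)$; here only one vanishing moment for the mother wavelets is needed, i.e. $d\ge1$ in (P4), since in the endpoint case $p=1$ no higher-order cancellation enters. Next, following \cite[Theorems 5.2 and 5.3]{BGK12} and the proof of \cite[Proposition 6.1]{BGK12}, I would upgrade this to the boundedness of $\Pi_1'$ from $H^1(\rn)\times\BMO(\rn)$ to $H^1(\rn)$, carrying out the same steps as in the proof of Proposition \ref{prop-pi1}. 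Namely: (a) take $f\in H^1(\rn)$ with a finite wavelet expansion and, by the $p=1$ case of Lemma \ref{-hp}, write $f=\sum_{l=1}^{L}\mu_la_l$ with $(1,2,0)$-atoms $a_l$ supported in cubes $mR_l$, $R_l\in\mathcal{D}$, and $\sum_{l=1}^{L}|\mu_l|\lesssim\|f\|_{H^1(\rn)}$; (b) for each $l$, replace $g$ by the localized wavelet sum $b_l:=\sum_{I\subset 5mR_l}\sum_{\lz\in E}\langle g,\psi_I^\lz\rangle\psi_I^\lz$, which lies in $L^2(\rn)$ with $\|b_l\|_{L^2(\rn)}\lesssim|R_l|^{1/2}\|g\|_{\BMO(\rn)}$ by the wavelet (Carleson-measure) characterization of $\BMO(\rn)$, and note, exactly as in the proof of Proposition \ref{prop-pi1} using the support property (P2) together with the orthogonality relation (P5), that $\Pi_1'(a_l,b_l)=\Pi_1'(a_l,g)$; (c) deduce from the $L^2\times L^2\to H^1$ bound that $\|\Pi_1'(a_l,g)\|_{H^1(\rn)}=\|\Pi_1'(a_l,b_l)\|_{H^1(\rn)}\lesssim\|a_l\|_{L^2(\rn)}\|b_l\|_{L^2(\rn)}\lesssim\|g\|_{\BMO(\rn)}$, and sum over $l$ to get $\|\Pi_1'(f,g)\|_{H^1(\rn)}\lesssim\|f\|_{H^1(\rn)}\|g\|_{\BMO(\rn)}$; (d) since the wavelets form an unconditional basis of $H^1(\rn)$, functions with finite wavelet expansions are dense, and a limiting argument as at the end of the proof of Proposition \ref{prop-pi1} extends $\Pi_1'$ to a bounded bilinear operator from $H^1(\rn)\times\BMO(\rn)$ to $H^1(\rn)$.

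Finally I would pass from $\BMO(\rn)$ to $\bmo(\rn)$: for any $g\in\bmo(\rn)$ and any ball $B\subset\rn$ with $|B|\ge1$, one has $\frac1{|B|}\int_B|g(x)-g_B|\,dx\le\frac2{|B|}\int_B|g(x)|\,dx\le2\|g\|_{\bmo(\rn)}$, while for $|B|<1$ the mean oscillation of $g$ over $B$ is controlled directly by $\|g\|_{\bmo(\rn)}$; hence $\bmo(\rn)\subset\BMO(\rn)$ with $\|g\|_{\BMO(\rn)}\le2\|g\|_{\bmo(\rn)}$, so the estimate above yields $\|\Pi_1'(f,g)\|_{H^1(\rn)}\lesssim\|f\|_{H^1(\rn)}\|g\|_{\bmo(\rn)}$ for every $(f,g)\in H^1(\rn)\times\bmo(\rn)$. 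I expect the only point that requires genuine care to be the identity $\Pi_1'(a_l,b_l)=\Pi_1'(a_l,g)$ in step (b)---that truncating $g$ to the wavelets localized near the supporting cube of the atom does not change the value of $\Pi_1'$---but this is precisely the situation already treated in the proof of Proposition \ref{prop-pi1} (and in Remark \ref{rw3}), so it is routine rather than a real obstacle. This would finish the proof of Proposition \ref{pp11}.
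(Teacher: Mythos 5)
Your proposal is correct and follows essentially the same route as the paper: the paper deduces Proposition \ref{pp11} directly from \cite[Theorems 5.2 and 5.3]{BGK12} (the $H^1(\rn)\times\BMO(\rn)\to H^1(\rn)$ boundedness of $\Pi_1'$) combined with the continuous inclusion $\bmo(\rn)\subset\BMO(\rn)$, which is exactly your argument. Your steps (a)--(d) merely re-derive the cited \cite{BGK12} result along the template of Proposition \ref{prop-pi1}, so there is no substantive difference.
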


\begin{proposition}\label{pp33}
Assume that the regularity parameter $d\in\nn$ of wavelets satisfies that
$d\geq1$. Then the bilinear operator $\Pi_3'$, defined as in \eqref{pi3},
can be extended to a bilinear operator
bounded from $H^1(\rn)\times\bmo(\rn)$ to $H^1(\rn)$.
\end{proposition}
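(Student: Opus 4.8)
The plan is to deduce Proposition~\ref{pp33} from the corresponding boundedness of $\Pi_3'$ on $H^1(\rn)\times\BMO(\rn)$ proved in \cite{BGK12}, combined with the elementary embedding $\bmo(\rn)\subset\BMO(\rn)$. First I would note that the wavelet system $\{\psi_I^{\lz}\}_{I\in\mathcal{D},\,\lz\in E}$ underlying \eqref{pi3} is exactly the tensor-product Daubechies basis used in \cite[Section~5]{BGK12}, so that the bilinear operator $\Pi_3'$ defined here coincides, up to the labeling of the summation indices, with the ``diagonal'' paraproduct-type operator whose $H^1(\rn)$-boundedness is the content of \cite[Theorems 5.2 and 5.3]{BGK12} and the proof of \cite[Proposition 6.1]{BGK12}. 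Consequently, for any $f\in H^1(\rn)$ with a finite wavelet expansion and any $g\in\BMO(\rn)$,
\begin{align*}
\|\Pi_3'(f,g)\|_{H^1(\rn)}\ls\|f\|_{H^1(\rn)}\|g\|_{\BMO(\rn)}.
\end{align*}

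Next, from the definitions of $\bmo(\rn)$ (see Remark~\ref{r11}(i)) and of $\BMO(\rn)$ one has, for every $g\in\bmo(\rn)$, the bound $\|g\|_{\BMO(\rn)}\le 2\|g\|_{\bmo(\rn)}$: on balls $B$ with $|B|<1$ the mean oscillation is controlled directly, while on balls $B$ with $|B|\ge1$ it is controlled by $\frac{2}{|B|}\int_B|g(x)|\,dx$. Inserting this into the previous estimate gives
\begin{align*}
\|\Pi_3'(f,g)\|_{H^1(\rn)}\ls\|f\|_{H^1(\rn)}\|g\|_{\bmo(\rn)}
\end{align*}
for all such $f$ and all $g\in\bmo(\rn)$. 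As in Remark~\ref{rw3}, for a fixed $f$ with a finite wavelet expansion the sum \eqref{pi3} is actually finite, since $\psi_I^{\lz}\psi_{I'}^{\lz'}$ vanishes unless $|I|=|I'|$ and $mI\cap mI'\neq\emptyset$; hence $\Pi_3'(f,g)$ makes sense for every $g\in\bmo(\rn)$, even though such $g$ need not belong to $L^2(\rn)$, and one may, if desired, replace $g$ by $\eta g$ with a suitable cut-off $\eta$ and invoke Remark~\ref{rw3} to justify this reduction.

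Finally, since functions with finite wavelet expansions are dense in $H^1(\rn)$ (the system $\{\psi_I^{\lz}\}_{I\in\mathcal{D},\,\lz\in E}$ being an unconditional basis of $H^1(\rn)$ when $d\ge1$), I would extend $\Pi_3'(\cdot,g)$ to all of $H^1(\rn)$ by continuity: given $f\in H^1(\rn)$, choose $\{f_k\}_{k\in\nn}$ with finite wavelet expansions and $f_k\to f$ in $H^1(\rn)$; the a priori estimate shows $\{\Pi_3'(f_k,g)\}_{k\in\nn}$ is Cauchy in $H^1(\rn)$, its limit is independent of the chosen sequence, and the bound $\|\Pi_3'(f,g)\|_{H^1(\rn)}\ls\|f\|_{H^1(\rn)}\|g\|_{\bmo(\rn)}$ persists. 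This is the same extension scheme as in the proof of Proposition~\ref{prop-pi1}. There is essentially no genuine obstacle here: the analytic work is entirely contained in the cited results of \cite{BGK12}, and the only points that require attention are the identification of $\Pi_3'$ with the operator treated there (so that the $H^1(\rn)$-estimates apply verbatim) and the trivial inequality $\|g\|_{\BMO(\rn)}\le 2\|g\|_{\bmo(\rn)}$; Proposition~\ref{pp11} is proved in exactly the same way.
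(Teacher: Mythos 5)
Your proposal is correct and follows exactly the route the paper itself takes: the paper deduces Proposition \ref{pp33} (together with Propositions \ref{pp11} and \ref{pp44}) directly from \cite[Theorems 5.2 and 5.3]{BGK12} and the proof of \cite[Proposition 6.1]{BGK12}, combined with the embedding $\bmo(\rn)\subset\BMO(\rn)$, which is precisely your argument. The only detail you add beyond the paper is spelling out the elementary inequality $\|g\|_{\BMO(\rn)}\le 2\|g\|_{\bmo(\rn)}$ and the density/extension step, both of which are fine.
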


\begin{proposition}\label{pp44}
Assume that the regularity parameter $d\in\nn$ of wavelets satisfies that
$d\geq1$. Then
the bilinear operator $\Pi_4'$, defined as in \eqref{pi4},
can be extended to a bilinear operator
bounded from $H^1(\rn)\times\bmo(\rn)$ to $L^1(\rn)$.
\end{proposition}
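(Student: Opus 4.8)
The plan is to deduce this proposition from the corresponding result of Bonami, Grellier and Ky in \cite{BGK12} for the global product $H^1(\rn)\times\BMO(\rn)$, using only the elementary observation that Goldberg's local space $\bmo(\rn)$ embeds continuously into $\BMO(\rn)$. So the proof is essentially a two-step reduction: first import the $H^1(\rn)\times\BMO(\rn)\to L^1(\rn)$ boundedness of the diagonal wavelet operator from \cite{BGK12}, and then restrict the second variable.

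First I would recall that, after matching notation, the operator $\Pi_4'$ of \eqref{pi4} here --- namely $f,g\mapsto\sum_{I\in\mathcal{D}}\sum_{\lz\in E}\langle f,\psi_I^\lz\rangle\langle g,\psi_I^\lz\rangle(\psi_I^\lz)^2$ --- is exactly the ``diagonal'' paraproduct-type operator considered in \cite{BGK12}. Its boundedness from $L^2(\rn)\times L^2(\rn)$ to $L^1(\rn)$ is elementary (Cauchy--Schwarz together with $\|(\psi_I^\lz)^2\|_{L^1(\rn)}=1$; this is recorded, in the local variant, in the proof of Lemma \ref{rem-w1}, and the global version is identical). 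The content of \cite[Theorems 5.2 and 5.3]{BGK12}, together with the proof of \cite[Proposition 6.1]{BGK12}, is precisely that this operator extends to a bounded bilinear operator from $H^1(\rn)\times\BMO(\rn)$ to $L^1(\rn)$; the extension is carried out via the finite atomic/wavelet decomposition of $H^1(\rn)$ and the wavelet (Carleson-type) characterization of $\BMO(\rn)$, for which the assumption $d\ge1$ (one vanishing moment and $C^1$-regularity of the wavelets) is sufficient.

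Next I would verify the embedding $\bmo(\rn)\subset\BMO(\rn)$ with $\|\cdot\|_{\BMO(\rn)}\lesssim\|\cdot\|_{\bmo(\rn)}$. For $f\in\bmo(\rn)$ and a ball $B$ with $|B|<1$, the oscillation $\frac1{|B|}\int_B|f(x)-f_B|\,dx$ is controlled by $\|f\|_{\bmo(\rn)}$ by the definition of $\bmo(\rn)$ in Remark \ref{r11}(i); for a ball $B$ with $|B|\ge1$ one estimates $\frac1{|B|}\int_B|f(x)-f_B|\,dx\le\frac2{|B|}\int_B|f(x)|\,dx\le2\|f\|_{\bmo(\rn)}$. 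Taking the supremum over all balls gives $\|f\|_{\BMO(\rn)}\le2\|f\|_{\bmo(\rn)}$. Restricting the bounded operator $\Pi_4'\colon H^1(\rn)\times\BMO(\rn)\to L^1(\rn)$ to $H^1(\rn)\times\bmo(\rn)$ and invoking this norm inequality then yields the asserted boundedness.

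I expect no genuine obstacle here. The one point I would take care to check is the consistency of the extensions: the operator obtained on $H^1(\rn)\times\bmo(\rn)$ by first defining it for $f$ with a finite wavelet expansion and extending by density (using that the wavelets form an unconditional basis of $H^1(\rn)$) coincides with the restriction of the $\cite{BGK12}$ operator, since the two agree on the dense set of pairs $(f,g)$ with $f$ having a finite wavelet expansion and are both continuous; hence there is no ambiguity. The only other thing to be careful about is purely bookkeeping --- the correct identification of $\Pi_4'$ with the diagonal operator of \cite{BGK12} and the direction of the $\bmo(\rn)\hookrightarrow\BMO(\rn)$ inequality.
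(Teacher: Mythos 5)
Your proposal is correct and follows essentially the same route as the paper: the paper likewise deduces Proposition \ref{pp44} directly from \cite[Theorems 5.2 and 5.3]{BGK12} and the proof of \cite[Proposition 6.1]{BGK12} combined with the embedding $\bmo(\rn)\subset\BMO(\rn)$, giving no further argument. Your explicit verification of $\|f\|_{\BMO(\rn)}\le 2\|f\|_{\bmo(\rn)}$ and of the consistency of the density extensions merely fills in details the paper leaves implicit.
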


To obtain the estimate of $\Pi_2'$, we need the following technical lemma.

\begin{lemma}\label{apW11}
Let $d\in\zz_+$ satisfy that
$d\geq0$ and $\Phi$ be as in \eqref{111}.
Assume that $g\in \bmo(\rn)$ and $a$ is a $(1,2,d)$-atom as in Definition
\ref{defa}
supported in a ball $B\subset\rn$. Then
$$\|ag_B\|_{H_*^\Phi(\rn)}\le C \|g\|_{\bmo(\rn)},$$
where $g_B:=\frac{1}{|B|}
\int_{B}g(x)\,dx$ and the positive constant $C$ is independent of $a$ and $g$.
\end{lemma}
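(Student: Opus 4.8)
The statement is the $p=1$ analogue of Lemma \ref{apW}, but now the target space is the variant Orlicz Hardy space $H_*^\Phi(\rn)$ (with $\Phi(\tau)=\tau/\log(e+\tau)$) rather than $H^{\Phi_p}(\rn)$, and $g_B$ is the \emph{average} of $g$ over $B$ (a constant), not a polynomial. The plan is to reduce the problem to an estimate on $ag_B$ using the atomic structure of $a$, the size control $|g_B|\lesssim\|g\|_{\bmo(\rn)}\log(e+1/|B|)$ coming from the $\bmo$-version of the John--Nirenberg inequality, and the $L^1$-to-$H^1$-scale behaviour of the relevant maximal function. Without loss of generality, normalize $\|g\|_{\bmo(\rn)}=1$. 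First I would split into the two regimes $|B|\ge 1$ and $|B|<1$, exactly as in the proofs of Lemma \ref{apW} and Proposition \ref{prop-pi2}, since the definitions of $\bmo(\rn)$ and of $h^1$-type objects behave differently on large and small balls.

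\textbf{The case $|B|\ge1$.} Here $|g_B|\le\dashint_B|g(x)|\,dx\lesssim\|g\|_{\bmo(\rn)}=1$ directly from the second supremum in the definition of $\bmo(\rn)$. Since $ag_B$ is then (up to a bounded constant) an $L^2$-function supported in $B$ with $\|ag_B\|_{L^2(\rn)}\lesssim|B|^{-1/2}$, I would use the elementary bound $\|M(h,\varphi)\|_{L^1(\rn)}\lesssim\|M(h,\varphi)\|_{L^1(2mB)}+(\textup{tail})$ together with the $L^2$-boundedness of the Hardy--Littlewood maximal function restricted to a dilate of $B$; more simply, since $|B|\ge1$ the variant norm $\|\cdot\|_{L_*^\Phi(\rn)}$ involves at most $C|B|$ unit cubes meeting $B$ and $\|\cdot\|_{L^\Phi}\le\|\cdot\|_{L^1}$, so $\|ag_B\|_{H_*^\Phi(\rn)}\lesssim\|M(ag_B,\varphi)\|_{L^1(\rn)}\lesssim\|a\|_{L^1(\rn)}|g_B|\lesssim 1$, using $\|a\|_{L^1(\rn)}\le|B|^{1/2}\|a\|_{L^2(\rn)}\le1$. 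This is the routine regime.

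\textbf{The case $|B|<1$.} This is the main obstacle. Now $a$ has the vanishing-moment property, so $a g_B$ is $g_B$ times a genuine $H^1$-atom; thus $a g_B$ is (a multiple of) an $H^1$-atom with $\|ag_B\|_{H^1(\rn)}\lesssim|g_B|$. The point is that $|g_B|$ can be as large as $\log(e+1/|B|)$ — this is exactly where the John--Nirenberg/averaging estimate for $\bmo(\rn)$ enters: $|g_B|=|g_B-g_{B_0}|+|g_{B_0}|\lesssim\log(e+1/|B|)\|g\|_{\bmo(\rn)}$ where $B_0$ is a fixed ball of measure $1$ containing $B$ (one climbs a chain of doublings from $B$ to $B_0$, losing one unit of $\|g\|_{\bmo}$ at each of the $\sim\log_2(1/|B|)$ steps). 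So I would establish $\|ag_B\|_{H^1(\rn)}\lesssim\log(e+1/|B|)$, and then convert this $H^1$ bound into the required $H_*^\Phi$ bound: since $ag_B$ is supported in $B\subset$ a single unit cube $Q_k$ (after translating, or at worst $O(1)$ of them because $B$ may straddle a lattice face, but $|B|<1$ keeps the number bounded), $\|ag_B\|_{H_*^\Phi(\rn)}\sim\|ag_B\|_{H^\Phi(\rn)}$, and for a distribution with $H^1$-norm $N$ supported in a set of measure $<1$ one has $\|ag_B\|_{H^\Phi(\rn)}\lesssim N/\log(e+N)$ times a harmless logarithmic factor — more precisely, using $\Phi(\tau)=\tau/\log(e+\tau)$, the normalized atom $ag_B/[\log(e+1/|B|)]$ is an $H^1$-atom, hence has $\|M(\cdot,\varphi)\|_{L^1}\lesssim1$ with support controlled, and one checks $\int_\rn\Phi(M(ag_B,\varphi)(x)/c)\,dx\le1$ for a suitable constant $c$ by the same elementary device as in \eqref{1w}--\eqref{3w}: the level-set / layer-cake estimate shows $\int\Phi(M(ag_B,\varphi))\lesssim\int M(ag_B,\varphi)/\log(e+|B|^{-1})\lesssim\log(e+|B|^{-1})/\log(e+|B|^{-1})=1$ because $M(ag_B,\varphi)$ is essentially supported where it is of size $\gtrsim|g_B||B|^{-1}$, whose logarithm is comparable to $\log(e+1/|B|)$. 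Combining the two regimes gives $\|ag_B\|_{H_*^\Phi(\rn)}\lesssim\|g\|_{\bmo(\rn)}$, completing the proof. The delicate point, and the one I would write out carefully, is the matching of the two logarithmic factors — the gain $1/\log(e+\tau)$ in $\Phi$ against the loss $\log(e+1/|B|)$ in $|g_B|$ — which is precisely why $H_*^\Phi(\rn)$ (and not $H^1(\rn)$) is the correct target and why the variant (locally-summed) norm, rather than the global $L^\Phi$ norm, is used.
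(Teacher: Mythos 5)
Your plan is viable but takes a genuinely different, and computationally heavier, route than the paper. The paper's proof avoids both the case analysis on $|B|$ and the telescoping bound $|g_B|\lesssim\log(e+1/|B|)\|g\|_{\bmo(\rn)}$ altogether: since $g_B$ is a constant, $M(ag_B,\varphi)=|g_B|M(a,\varphi)$, and the pointwise inequality $|g_B|\le|g(x)-g_B|+|g(x)|$ splits $\|ag_B\|_{H_*^\Phi(\rn)}$ into $\||g-g_B|M(a,\varphi)\|_{L_*^\Phi(\rn)}+\||g|M(a,\varphi)\|_{L_*^\Phi(\rn)}$; the first term is controlled by its $L^1$-norm via the classical atom--$\BMO$ estimate of \cite[Lemma 5.2]{BGK12}, and the second by the product estimate $\|fh\|_{L_*^\Phi(\rn)}\lesssim\|f\|_{L^1(\rn)}\|h\|_{\bmo(\rn)}$ from \cite{cky1} applied with $f=M(a,\varphi)$. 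What your approach buys is self-containedness; what it costs is that the ``matching of the two logarithms'', which the paper never has to confront, becomes the entire proof -- and that is exactly where your write-up is not yet a proof.

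Concretely, two steps at the crux would fail as written. First, the inequality $\int_{\rn}\Phi(M(ag_B,\varphi))\lesssim\int_{\rn}M(ag_B,\varphi)/\log(e+|B|^{-1})$ is false: on the tail $|x-x_B|\gg r_B$ (with $r_B$ the radius of $B$) one has $M(ag_B,\varphi)(x)\approx|g_B|\,r_B|x-x_B|^{-n-1}$, which is small, so $\Phi(\tau)\approx\tau$ there and no logarithmic gain is available; correspondingly, the claim that $M(ag_B,\varphi)$ is ``essentially supported where it is of size $\gtrsim|g_B||B|^{-1}$'' is not true, its $L^1$-mass being spread over all annuli out to radius $\approx(r_B\log(1/|B|))^{1/(n+1)}$. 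The estimate does go through, but only via a layer-cake argument showing that the mass living where $\log(e+M(ag_B,\varphi))\gtrsim\log(1/|B|)$ is $\approx|g_B|$ (and is cancelled by the denominator of $\Phi$), while the remaining mass is $O(\|g\|_{\bmo(\rn)})$ rather than $O(|g_B|)$; this must be verified, not asserted. Second, the reduction $\|ag_B\|_{H_*^\Phi(\rn)}\sim\|ag_B\|_{H^\Phi(\rn)}$ ``because $ag_B$ lives in $O(1)$ unit cubes'' is unjustified: the $L_*^\Phi$-norm sums $\|M(ag_B,\varphi)\mathbf{1}_{Q_k}\|_{L^\Phi(\rn)}$ over \emph{all} $k\in\zz^n$ and $M(ag_B,\varphi)$ is not compactly supported; the infinitely many distant cubes contribute $\approx|g_B|r_B\sum_{k\ne0}|k|^{-n-1}\approx|g_B|r_B$, which is bounded but must be summed. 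A smaller slip: in the case $|B|\ge1$, the bound $\|M(ag_B,\varphi)\|_{L^1(\rn)}\lesssim\|a\|_{L^1(\rn)}|g_B|$ is false (the radial maximal operator is not bounded on $L^1(\rn)$); you need the vanishing moments of the $(1,2,d)$-atom $a$, i.e. $\|M(a,\varphi)\|_{L^1(\rn)}\lesssim1$, which is the first route you mention before the ``more simply''.
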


\begin{proof}
Let $g\in \bmo(\rn)$, $d\in\zz_+\cap [0,\,\fz)$, $a$ be a $(1,2,d)$-atom as in Definition
\ref{defa}
supported in a ball $B\subset\rn$.
By the definition of $H_*^\Phi(\rn)$,
we conclude that
\begin{align}\label{1zwW}
\lf\|a g_B\r\|_{H_*^\Phi(\rn)}&\le\lf\| |g_B| M(a,\varphi)\r\|_{L_*^\Phi(\rn)}\lesssim
\lf\| |g-g_B| M(a,\varphi)\r\|_{L_*^\Phi(\rn)}+\lf\| |g| M(a,\varphi)\r\|_{L_*^\Phi(\rn)}\\\noz
&=: \mathrm{I}_1+\mathrm{I}_2,
\end{align}
where $\varphi\in\mathcal{S}(\rn)$ satisfies
$\int_{\rn}\varphi(x)\,dx\neq0$ and
$M(\cdot,\varphi)$ is as in Definition \ref{Hp}(i).

For $\mathrm{I}_1$, using the fact that $a$ is a $(1,2,d)$-atom, $L^1(\rn)\subset L_*^\Phi(\rn)$ and \cite[Lemma 5.2]{BGK12},
we know that
\begin{align}\label{2zwW}
\mathrm{I}_1=
\lf\||g-g_B| M(a,\varphi)\r\|_{L_*^\Phi(\rn)}\le\lf\||g-g_B| M(a,\varphi)\r\|_{L^1(\rn)}\lesssim\|g\|_{\bmo(\rn)},
\end{align}
which is the desired estimate.

As for $\mathrm{I}_2$, by the estimate in \cite[p.\,25]{cky1}, we know that, for any
$f\in L^1(\rn)$ and $h\in\bmo(\rn)$,
\begin{align*}
\lf\|fh\r\|_{L_*^\Phi(\rn)}\lesssim\lf\|f\r\|_{L^1(\rn)}\|h\|_{\bmo(\rn)}.
\end{align*}
From this, the atomic characterization of $H^{1}(\rn)$ (see \cite[p.\,21, Theorem 1.1]{Lu95}) and the fact that $a$ is a $(1,2,d)$-atom, it further follows that
\begin{align*}
\mathrm{I}_2=
\lf\||g| M(a,\varphi)\r\|_{L_*^\Phi(\rn)}\le\lf\|M(a,\varphi)\r\|_{L^1(\rn)}
\|g\|_{\bmo(\rn)}\lesssim\|g\|_{\bmo(\rn)},
\end{align*}
which, combined with \eqref{1zwW} and \eqref{2zwW}, implies that
$\|ag_B\|_{H_*^\Phi(\rn)}\lesssim \|g\|_{\bmo(\rn)}$ holds true.
This finishes the proof of Lemma \ref{apW11}.
\end{proof}

Now we establish the following estimate of $\Pi_2'$.
\begin{proposition}\label{pp22}
Let $\Phi$ be as in \eqref{111}.
Assume that the regularity parameter $d\in\nn$ of wavelets satisfies
$d\geq1$. Then the bilinear operator $\Pi_2'$, defined as in \eqref{pi2},
can be extended to a bilinear operator
bounded from $H^1(\rn)\times\bmo(\rn)$ to $H_*^\Phi(\rn)$.
\end{proposition}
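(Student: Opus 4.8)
The plan is to follow the same architecture used to prove Proposition \ref{prop-pi2} and Proposition \ref{ppi2}, with $\Lambda_{n\alpha}(\rn)$ replaced by $\bmo(\rn)$ and the target space $H^{\Phi_p}(\rn)$ replaced by $H_\ast^{\Phi}(\rn)$, and with Lemma \ref{apW} replaced by Lemma \ref{apW11}. First I would reduce to the case where $f$ is an atom with a finite wavelet expansion: by \cite[Theorem 1.64]{gg} the wavelet system is an unconditional basis of $H^1(\rn)$, so it suffices to prove the bound on atoms $a$ arising from the finite atomic decomposition of Lemma \ref{-hp} (with $s:=0$ here, since $n(1/p-1)=0$ when $p=1$, so $(1,2,0)$-atoms have just the zeroth vanishing moment), and then extend by the density and atomic-decomposition argument exactly as in the proof of Proposition \ref{prop-pi1}.

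So let $a$ be a $(1,2,d)$-atom supported in $mR$ with $R\in\mathcal D$ and a finite wavelet expansion, let $B$ be the smallest ball containing $9mR$, and let $\eta$ be a smooth cut-off with $\supp\eta\subset 9mR$ and $\eta\equiv 1$ on $5mR$. Using Remark \ref{rw3} together with the vanishing-moment property (P4) of the wavelets (valid since $d\geq 1\geq\lfloor n(1/p-1)\rfloor+1=1$), one gets $\Pi_i'(a,\eta g_B)=\Pi_i'(a,g_B)=0$ for $i\in\{1,3,4\}$ because $g_B$ is a constant polynomial and the mother wavelets annihilate it; consequently $a g_B=a(\eta g_B)=\sum_{i=1}^4\Pi_i'(a,\eta g_B)=\Pi_2'(a,g_B)$. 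Hence, again by Remark \ref{rw3},
\begin{align*}
\Pi_2'(a,g)=\Pi_2'(a,\eta g)=\Pi_2'(a,\eta[g-g_B])+\Pi_2'(a,\eta g_B)=\Pi_2'(a,\eta[g-g_B])+a g_B.
\end{align*}
For the first term I would use that $\Pi_2'$ is bounded from $L^2(\rn)\times L^2(\rn)$ to $H^1(\rn)$ (\cite[Lemma 4.2]{bckly}), the embedding $L^1(\rn)\subset L_\ast^{\Phi}(\rn)$ (hence $H^1(\rn)\subset H_\ast^{\Phi}(\rn)$, which is exactly what makes $H_\ast^{\Phi}(\rn)$ the right inhomogeneous target), the bound $\|a\|_{L^2(\rn)}\le|mR|^{-1/2}$, and $\|\eta(g-g_B)\|_{L^2(\rn)}\lesssim|9mR|^{1/2}\|g\|_{\bmo(\rn)}$ (John--Nirenberg on $B$, using $|B|\sim|9mR|$; one must be slightly careful to pass from $g_B$ to $g_{9mR}$, which only costs a bounded multiple of $\|g\|_{\bmo}$), so that $\|\Pi_2'(a,\eta[g-g_B])\|_{H_\ast^{\Phi}(\rn)}\lesssim\|\Pi_2'(a,\eta[g-g_B])\|_{H^1(\rn)}\lesssim\|g\|_{\bmo(\rn)}$. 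For the second term $a g_B$, invoke Lemma \ref{apW11} directly to get $\|a g_B\|_{H_\ast^{\Phi}(\rn)}\lesssim\|g\|_{\bmo(\rn)}$. Summing yields $\|\Pi_2'(a,g)\|_{H_\ast^{\Phi}(\rn)}\lesssim\|g\|_{\bmo(\rn)}$.

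To pass from atoms to general $f\in H^1(\rn)$, I would mimic the passage in the proof of Proposition \ref{prop-pi2}: for $f$ with a finite wavelet expansion write $f=\sum_{l=1}^L\mu_l a_l$ via Lemma \ref{-hp}, use the quasi-triangle/subadditivity of the quasi-norm on $H_\ast^{\Phi}(\rn)$ inherited from the Orlicz function $\Phi$ (which is of lower type $<1$, so one needs the analogue of the normalized-sum estimate, exploiting $\|\cdot\|_{L_\ast^\Phi}=\sum_k\|\cdot\mathbf 1_{Q_k}\|_{L^\Phi}$ together with the lower-type-one behavior of $\Phi$ — here one uses that $p=1$ so $\sum_l|\mu_l|\lesssim\|f\|_{H^1(\rn)}$ and the $\ell^1$-summation is clean), and conclude $\|\Pi_2'(f,g)\|_{H_\ast^{\Phi}(\rn)}\lesssim\|f\|_{H^1(\rn)}\|g\|_{\bmo(\rn)}$; then extend by density as before. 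The main obstacle I anticipate is purely bookkeeping about $H_\ast^{\Phi}(\rn)$: unlike $H^{\Phi_p}(\rn)$ there is no simple lower-type-$p$ scaling, so controlling the sum $\sum_l\mu_l\Pi_2'(a_l,g)$ in the $L_\ast^\Phi$-norm (which is a sum over unit cubes of $L^\Phi$-norms) requires carefully tracking the local supports $9mR_l$ and using that $\Phi$ is of positive lower type together with the fact that $\sum_l|\mu_l|\lesssim\|f\|_{H^1}$ rather than an $\ell^p$-bound; this is exactly the point where the gap in \cite{cky1} (illegitimate use of boundedness of Riesz transforms on $h^1$) is being circumvented, and I would take care to present that step in full detail.
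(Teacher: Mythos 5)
Your proposal is correct and follows essentially the same route as the paper's proof: the same reduction to a $(1,2,0)$-atom $a$ with finite wavelet expansion, the same splitting $\Pi_2'(a,g)=\Pi_2'(a,\eta[g-g_B])+ag_B$ via the cut-off $\eta$ and property (P4), the same use of the $L^2\times L^2\to H^1$ boundedness together with $H^1(\rn)\subset H_\ast^{\Phi}(\rn)$ for the first term, and Lemma \ref{apW11} for the second. Your explicit attention to the $\ell^1$-summation of the atomic pieces in the $L_\ast^{\Phi}$-norm is a point the paper leaves implicit (it simply refers back to the extension argument of Proposition \ref{prop-pi1}), so no gap.
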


\begin{proof}
Let $g\in \bmo(\mathbb{R}^n)$.  Assume that $a$ is a
$(1,2,0)$-atom supported in the cube
$mR$ with $m$ as in \eqref{220} and $R\in\mathcal{D}$ as in Lemma \ref{-hp}, and $a$ has a finite  wavelet expansion.
Let $B$ be the smallest ball in $\rn$ containing $9mR$ and
$g_B:=\frac{1}{|B|}
\int_{B}g(x)\,dx$.
Let $\eta$ be a smooth cut-off function such that $\supp\eta\subset 9m R$ and $\eta\equiv 1$ on $5mR$.

Applying Remark \ref{rw3} and property (P4) in Section \ref{s4} with $d\geq1$,
together with the expressions \eqref{pi1} through \eqref{pi4},
we know that
\begin{align*}
ag_B=
a(\eta g_B)
=\dsum_{i=1}^4\Pi_i'(a,\,\eta g_B)
=\Pi_2'(a,\,\eta g_B)=\Pi_2'(a,\, g_B).
\end{align*}
Again, by Remark \ref{rw3},
we write
\begin{align*}
\Pi_2'(a,\,g)= \Pi_2'(a,\,\eta g)
=\Pi_2'(a,\,\eta [g-g_B])+ \Pi_2'(a, \eta  g_B)=\Pi_2'(a,\,\eta [g-g_B])+ag_B.
\end{align*}
Notice that the definition of the space $L_*^\Phi(\rn)$ easily implies that
$H^1(\rn)\subset H_*^\Phi(\rn)$.
Moreover, by the fact that $\Pi_2$ is bounded from $L^2(\rn)\times L^2(\rn)$
to $H^1(\rn)$ (see Lemma \ref{rem-w1}), $a$ is a $(1,2,0)$-atom and $\supp\eta\subset 9mR$, we conclude that
\begin{align*}
\|\Pi_2'(a,\,\eta [g-g_B])\|_{H_*^\Phi(\rn)}
&\ls \|\Pi_2'(a,\,\eta [g-g_B])\|_{H^1(\rn)}\\
&\ls\|a\|_{L^2(\rn)}\|\eta(g-g_B)\|_{L^2(\rn)}
\ls \|g\|_{\bmo(\rn)}.
\end{align*}
From this and Lemma \ref{apW11}, we deduce that
\begin{align*}
\|\Pi_2'(a,\,g)\|_{H_*^\Phi(\rn)}&
\ls \|\Pi_2'(a,\,g-g_B)\|_{H_*^\Phi(\rn)}+\|ag_B\|_{H_*^\Phi(\rn)}\ls \|g\|_{\bmo(\rn)}.
\end{align*}
Repeating the
argument similar to that used in
the proof of Proposition \ref{prop-pi1}, we find that the definition of
$\Pi_2'(f,g)$ can be extended to any $f\in H^1(\rn)$ and
$g\in \bmo(\mathbb{R}^n)$ with
the desired boundedness and the details are omitted.
This finishes the proof of Proposition \ref{pp22}.
\end{proof}

By \cite[Lemma 3.4]{zyy} and \cite[Theorem 5.7]{zyyw},
we have the following duality result and
we omit the details.

\begin{lemma}\label{lemdu2}
Let $\Phi$ be as in \eqref{111}.
Then
$(H_\ast^{\Phi}(\rn))^*={\BMO}^{\Phi}(\rn).$
\end{lemma}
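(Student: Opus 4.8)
The plan is to deduce Lemma \ref{lemdu2} by identifying $H_\ast^\Phi(\rn)$ as a Musielak--Orlicz-type (or amalgam-type) Hardy space and then quoting a general duality theorem. First I would observe that the norm $\|f\|_{H_\ast^\Phi(\rn)}=\|M(f,\varphi)\|_{L_\ast^\Phi(\rn)}=\sum_{k\in\zz^n}\|M(f,\varphi)\mathbf1_{Q_k}\|_{L^\Phi(\rn)}$ is exactly the $\ell^1$-amalgam of the local pieces of the radial maximal function, so $H_\ast^\Phi(\rn)$ fits into the framework of the Hardy spaces built on amalgam (or Wiener-type) spaces whose duality theory has been developed; the stated references \cite[Lemma 3.4]{zyy} and \cite[Theorem 5.7]{zyyw} presumably establish, respectively, a suitable atomic (or molecular) decomposition of $H_\ast^\Phi(\rn)$ and the abstract duality identification of the dual of such a Hardy space with the corresponding $\BMO$-type space. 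The task then reduces to checking that the $\BMO$-type space produced by that abstract theorem is precisely ${\BMO}^\Phi(\rn)$ as defined in Definition \ref{cjs}(v).

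The key steps, in order, would be: (i) recall from \cite[Lemma 3.4]{zyy} an atomic characterization of $H_\ast^\Phi(\rn)$ — every $f\in H_\ast^\Phi(\rn)$ admits a decomposition $f=\sum_j\lambda_j a_j$ into $\Phi$-atoms with control of $\sum_j\Phi(\cdot)$-type quantities by $\|f\|_{H_\ast^\Phi(\rn)}$, and conversely such series converge in $H_\ast^\Phi(\rn)$ with the expected bound; (ii) use this atomic decomposition to show that any $g\in{\BMO}^\Phi(\rn)$ induces a bounded linear functional on $H_\ast^\Phi(\rn)$ via $f\mapsto\int_\rn f(x)g(x)\,dx$ (suitably interpreted on a dense subclass, e.g.\ finite atomic combinations), the pairing being controlled by testing against a single atom $a$ supported in a ball $B$: $|\int_\rn ag|=|\int_B a(g-P_B^0 g)|\le\|a\|_{L^2(B)}\|(g-g_B)\mathbf1_B\|_{L^2(B)}$, which by the John--Nirenberg-type estimate for ${\BMO}^\Phi(\rn)$ and the weight $\log(e+1/|B|)$ matches $\|\mathbf1_B\|_{L^\Phi(\rn)}^{-1}$-type normalization; (iii) for the reverse inclusion, given $L\in(H_\ast^\Phi(\rn))^*$, represent $L$ by a locally integrable function $g$ (using that $L^2$-functions supported in a fixed ball, after subtracting their mean, sit continuously in $H_\ast^\Phi(\rn)$), and then bound $\frac{\log(e+1/|B|)}{|B|}\int_B|g-g_B|$ by $\|L\|$ by plugging in the normalized extremal atom for the ball $B$; (iv) invoke \cite[Theorem 5.7]{zyyw} to package these two one-sided estimates into the stated equality $(H_\ast^\Phi(\rn))^*={\BMO}^\Phi(\rn)$ with equivalent norms. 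Since the statement asserts the identification without detail, in the write-up I would simply point to \cite[Lemma 3.4]{zyy} and \cite[Theorem 5.7]{zyyw}, verify that the two $\BMO$-type normalizations agree (the factor $\log(e+1/|B|)/|B|$ versus $\|\mathbf1_B\|_{L^\Phi(\rn)}$, using $\|\mathbf1_B\|_{L^\Phi(\rn)}\sim|B|/\log(e+1/|B|)$ for small $B$ from \eqref{111}), and then omit the remaining routine details, exactly as the surrounding lemmas (Lemmas \ref{lemdu} and \ref{asz}) do.

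The main obstacle I anticipate is matching the two different-looking $\BMO$-type norms: the abstract dual space coming from \cite{zyyw} will be phrased in terms of the amalgam/Orlicz normalization $\frac{1}{\|\mathbf1_B\|_{L^\Phi(\rn)}}\int_B|g-g_B|$ (over all balls, or over balls with $|B|<1$ together with an $L^\infty$-type tail), whereas Definition \ref{cjs}(v) uses $\frac{\log(e+1/|B|)}{|B|}\int_B|g-g_B|$ over all balls; reconciling these requires the elementary but slightly delicate estimate $\|\mathbf1_B\|_{L^\Phi(\rn)}\sim|B|/\log(e+1/|B|)$ uniformly in $B$, including the regime $|B|\ge1$ where $\log(e+1/|B|)$ is bounded and the factor is comparable to $|B|$, which is consistent with the ${\BMO}^\Phi(\rn)$ definition having no extra tail term (unlike ${\bmo}^\Phi(\rn)$ in Definition \ref{cjs}(iv)). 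Once that norm-comparison is in hand, the duality is a direct transcription of the cited results, so the proof body can be kept to a single sentence citing \cite[Lemma 3.4]{zyy} and \cite[Theorem 5.7]{zyyw}.

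\begin{proof}
This follows from \cite[Lemma 3.4]{zyy} and \cite[Theorem 5.7]{zyyw}, upon noting that, by the definition of $\Phi$ in \eqref{111}, for any ball $B\subset\rn$ one has $\|\mathbf1_B\|_{L^\Phi(\rn)}\sim|B|/\log(e+1/|B|)$ with positive equivalence constants independent of $B$, so that the $\BMO$-type norm arising as the dual norm of $H_\ast^\Phi(\rn)$ coincides with $\|\cdot\|_{{\BMO}^\Phi(\rn)}$ as in Definition \ref{cjs}(v); we omit the routine details.
\end{proof}
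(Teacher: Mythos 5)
Your proposal is correct and takes essentially the same route as the paper: the paper's entire "proof" consists of citing \cite[Lemma 3.4]{zyy} and \cite[Theorem 5.7]{zyyw} and omitting the details, exactly as your final write-up does. Your additional observation that $\|\mathbf 1_B\|_{L^\Phi(\rn)}\sim |B|/\log(e+1/|B|)$ (needed to reconcile the Orlicz normalization with the $\log(e+1/|B|)/|B|$ factor in Definition \ref{cjs}(v)) is a correct and useful piece of bookkeeping that the paper leaves implicit.
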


\begin{lemma}\label{asz2}
Let $\Phi$ be as in \eqref{111}.
Then $[L^\infty(\rn)\cap (H_\ast^{\Phi}(\rn))^\ast]=[L^\infty(\rn)\cap {\bmo}^{\Phi}(\rn)]$.
\end{lemma}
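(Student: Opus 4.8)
The plan is to reduce the assertion, via Lemma \ref{lemdu2}, to the elementary ``pointwise'' identity $[L^\infty(\rn)\cap {\BMO}^{\Phi}(\rn)]=[L^\infty(\rn)\cap {\bmo}^{\Phi}(\rn)]$, in complete analogy with the reduction carried out in the proof of Lemma \ref{asz} (where $\mathcal{L}^{\Phi_p}(\rn)$ and $\mathcal{L}_{\loc}^{\Phi_p}(\rn)$ play the roles now taken by ${\BMO}^{\Phi}(\rn)$ and ${\bmo}^{\Phi}(\rn)$). Indeed, by Lemma \ref{lemdu2} we have $(H_\ast^{\Phi}(\rn))^\ast={\BMO}^{\Phi}(\rn)$, so after intersecting with $L^\infty(\rn)$ it suffices to establish the two inclusions ${\bmo}^{\Phi}(\rn)\subset {\BMO}^{\Phi}(\rn)$ and $L^\infty(\rn)\cap {\BMO}^{\Phi}(\rn)\subset L^\infty(\rn)\cap {\bmo}^{\Phi}(\rn)$, by inspecting Definition \ref{cjs}(iv) and (v) and comparing the tested balls according to whether $|B|<1$ or $|B|\ge1$.

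For the first inclusion, I would observe that the factor $\log(e+\frac1{|B|})$ lies in $[1,\log(e+1)]$ whenever $|B|\ge1$, and that $\int_B|f(x)-f_B|\,dx\le 2\int_B|f(x)|\,dx$; hence, for any ball $B$ with $|B|\ge1$,
\begin{align*}
\frac{\log(e+\frac{1}{|B|})}{|B|}\int_B|f(x)-f_B|\,dx
&\le 2\log(e+1)\,\frac{1}{|B|}\int_B|f(x)|\,dx\\
&\le 2\log(e+1)\,\frac{\log(e+\frac{1}{|B|})}{|B|}\int_B|f(x)|\,dx
\le 2\log(e+1)\,\|f\|_{{\bmo}^{\Phi}(\rn)},
\end{align*}
while for $|B|<1$ the corresponding quantity is exactly one of the two sup-terms defining $\|f\|_{{\bmo}^{\Phi}(\rn)}$. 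Taking the supremum over all balls $B$ gives $\|f\|_{{\BMO}^{\Phi}(\rn)}\ls\|f\|_{{\bmo}^{\Phi}(\rn)}$, so ${\bmo}^{\Phi}(\rn)\subset {\BMO}^{\Phi}(\rn)$ and hence $L^\infty(\rn)\cap{\bmo}^{\Phi}(\rn)\subset L^\infty(\rn)\cap{\BMO}^{\Phi}(\rn)$.

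For the reverse inclusion, let $f\in L^\infty(\rn)\cap {\BMO}^{\Phi}(\rn)$. For balls $B$ with $|B|<1$ the first sup-term of $\|f\|_{{\bmo}^{\Phi}(\rn)}$ is bounded by $\|f\|_{{\BMO}^{\Phi}(\rn)}$ outright; for balls $B$ with $|B|\ge1$ one uses $\log(e+\frac1{|B|})\le\log(e+1)$ together with $\frac1{|B|}\int_B|f(x)|\,dx\le\|f\|_{L^\infty(\rn)}$ to bound the second sup-term by $\log(e+1)\,\|f\|_{L^\infty(\rn)}$. Hence $\|f\|_{{\bmo}^{\Phi}(\rn)}\ls\|f\|_{L^\infty(\rn)}+\|f\|_{{\BMO}^{\Phi}(\rn)}<\fz$, that is, $f\in L^\infty(\rn)\cap{\bmo}^{\Phi}(\rn)$. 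Combining the two inclusions with Lemma \ref{lemdu2} finishes the proof. The computation is routine; the only point requiring the $L^\infty(\rn)$ hypothesis is that the large-ball term of ${\bmo}^{\Phi}(\rn)$ measures $|f|$ rather than its oscillation, and this is the sole (and minor) obstacle.
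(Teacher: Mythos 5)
Your proposal is correct and follows essentially the same route as the paper: reduce via Lemma \ref{lemdu2} to the set identity $[L^\infty(\rn)\cap {\BMO}^{\Phi}(\rn)]=[L^\infty(\rn)\cap {\bmo}^{\Phi}(\rn)]$ and verify the two inclusions by comparing the definitions on small and large balls, using the $L^\infty$ bound only for the large-ball term of $\|\cdot\|_{{\bmo}^{\Phi}(\rn)}$. The only difference is that you spell out the elementary estimates (such as $\int_B|f-f_B|\le 2\int_B|f|$ and $1\le\log(e+\tfrac1{|B|})\le\log(e+1)$ for $|B|\ge1$) that the paper leaves implicit.
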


\begin{proof}
From Lemma \ref{lemdu2}, we deduce that
$(H_\ast^{\Phi}(\rn))^*={\BMO}^{\Phi}(\rn)$.
Thus, to show $[L^\infty(\rn)\cap (H_\ast^{\Phi}(\rn))^\ast]=[L^\infty(\rn)\cap {\bmo}^{\Phi}(\rn)]$,
it suffices to prove that
$[L^\infty(\rn)\cap {\BMO}^{\Phi}(\rn)]=[L^\infty(\rn)\cap {\bmo}^{\Phi}(\rn)]$.
From
the definitions of ${\BMO}^{\Phi}(\rn)$ and ${\bmo}^{\Phi}(\rn)$,
it easily follows that
$[L^\infty(\rn)\cap {\bmo}^{\Phi}(\rn)]\subset [L^\infty(\rn)\cap {\BMO}^{\Phi}(\rn)]$.
Now we show
\begin{align}\label{ppg2}
[L^\infty(\rn)\cap {\BMO}^{\Phi}(\rn)]\subset [L^\infty(\rn)\cap {\bmo}^{\Phi}(\rn)].
\end{align}
From
the definitions of ${\BMO}^{\Phi}(\rn)$ and ${\bmo}^{\Phi}(\rn)$, it follows that,
for any $f\in L^\infty(\rn)\cap {\BMO}^{\Phi}(\rn)$,
\begin{align*}
\|f\|_{{\bmo}^{\Phi}(\rn)}&=
\sup_{\mathrm{ball\,}B\subset\rn,|B|<1}
\frac{\log(e+\frac{1}{|B|})}{|B|}\int_{B}|f(x)-f_B|\,dx\\\noz
&\quad+\sup_{\mathrm{ball\,}B\subset\rn,|B|\geq1}
\frac{\log(e+\frac{1}{|B|})}{|B|}
\int_{B}|f(x)|\,dx
\lesssim \|f\|_{{\BMO}^{\Phi}(\rn)}+\|f\|_{L^\infty(\rn)}<\infty,
\end{align*}
which implies that $f\in L^\infty(\rn)\cap {\bmo}^{\Phi}(\rn)$
and hence \eqref{ppg2} holds true.
This finishes the proof of
Lemma \ref{asz2}.
\end{proof}

\begin{theorem}\label{mainthm7}
Let $\Phi$ be as in \eqref{111}.
Then the following statements
hold true.
\begin{itemize}
\item[\rm (i)]
There exist two bounded bilinear operators
$$S:\, H^1(\rn)\times \bmo(\mathbb{R}^n)\to L^1(\rn)$$ and
$$T:\, H^1(\rn)\times \bmo(\mathbb{R}^n)\to H_\ast^{\Phi}(\rn)$$
such that, for any $(f,g)\in H^1(\rn)\times \bmo(\mathbb{R}^n)$,
\begin{align*}
f\times g=S(f,\,g)+T(f,\,g)\qquad in\;\cs'(\rn).
\end{align*}
Moreover, there exists a positive constant $C$ such that, for any $(f,g)\in H^1(\rn)\times \bmo(\rn)$,
$$\lf\|S(f,g)\r\|_{L^1(\rn)}\le C \|f\|_{H^1(\rn)}\|g\|_{\bmo(\mathbb{R}^n)}$$
and
$$
\lf\|T(f,g)\r\|_{H_\ast^{\Phi}(\rn)}\le C \|f\|_{H^1(\rn)}\|g\|_{\bmo(\mathbb{R}^n)}.$$
\item[\rm (ii)] The pointwise
multiplier class of $\bmo(\mathbb{R}^n)$ equals to $L^\infty(\rn)\cap (H_\ast^{\Phi}(\rn))^*$.
\end{itemize}
\end{theorem}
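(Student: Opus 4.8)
The plan is to mimic the proofs of Theorems \ref{mainthm1} and \ref{mainthm4}, now in the endpoint case $p=1$, using the decomposition $fg=\sum_{i=1}^4\Pi_i'(f,g)$ together with the boundedness results just proved for $\Pi_1'$, $\Pi_3'$, $\Pi_4'$ (Propositions \ref{pp11}, \ref{pp33}, \ref{pp44}) and for $\Pi_2'$ (Proposition \ref{pp22}), and the duality and identification lemmas (Lemmas \ref{lemdu2} and \ref{asz2}).

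First I would prove (ii). By Lemma \ref{lemdu2}, $(H_\ast^{\Phi}(\rn))^*={\BMO}^{\Phi}(\rn)$, and by Lemma \ref{asz2}, $[L^\infty(\rn)\cap (H_\ast^{\Phi}(\rn))^\ast]=[L^\infty(\rn)\cap {\bmo}^{\Phi}(\rn)]$. It then remains to invoke the known characterization (due to Nakai and Yabuta \cite{NY85}; see also \cite{n17}) that the pointwise multiplier class of $\bmo(\rn)$ is precisely $L^\infty(\rn)\cap{\bmo}^{\Phi}(\rn)$, which immediately yields that this multiplier class coincides with $L^\infty(\rn)\cap (H_\ast^{\Phi}(\rn))^*$. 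This gives (ii) in one step.

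For (i), I would fix $f\in H^1(\rn)$ and $g\in\bmo(\rn)$. Since the wavelet system is an unconditional basis of $H^1(\rn)$ (for regularity $d\ge1$), there is a sequence $\{f_k\}_{k\in\nn}\subset H^1(\rn)$ with finite wavelet expansions converging to $f$ in $H^1(\rn)$; as in \eqref{88}, $f\times g=\lim_{k\to\fz}f_kg$ in $\cs'(\rn)$, where the product $f\times g$ is defined via duality as in \eqref{def-product} (here $\bmo(\rn)=(h^1(\rn))^*$ and, more to the point, the dual pairing between $H^1(\rn)$ and $\BMO(\rn)\supset\bmo(\rn)$ makes the pairing $\la g\varphi,f\ra$ meaningful for Schwartz, hence for $L^\infty\cap{\bmo}^\Phi$, multipliers $\varphi$). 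Using Remark \ref{rw3} to truncate $g$ by a cutoff and \eqref{eq-L2L2} (its global analogue with $\Pi_i'$), I would write $f_kg=\sum_{i=1}^4\Pi_i'(f_k,g)$ in $L^1(\rn)$, then pass to the limit: Propositions \ref{pp11}, \ref{pp33} give $\lim_k\Pi_i'(f_k,g)=\Pi_i'(f,g)$ in $H^1(\rn)$ for $i\in\{1,3\}$, Proposition \ref{pp22} gives convergence in $H_\ast^{\Phi}(\rn)$ for $i=2$ (noting $H^1(\rn)\subset H_\ast^{\Phi}(\rn)$), and Proposition \ref{pp44} gives convergence in $L^1(\rn)$ for $i=4$. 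Since $H^1(\rn)\subset H_\ast^{\Phi}(\rn)$ and convergence in $H_\ast^{\Phi}(\rn)$ or $L^1(\rn)$ implies convergence in $\cs'(\rn)$ (an analogue of Lemma \ref{lyyy}), one gets $f\times g=\sum_{i=1}^4\Pi_i'(f,g)$ in $\cs'(\rn)$. Then I set $S(f,g):=\Pi_4'(f,g)\in L^1(\rn)$ and $T(f,g):=\sum_{i=1}^3\Pi_i'(f,g)\in H_\ast^{\Phi}(\rn)$ (absorbing $\Pi_1',\Pi_3'$ into $H^1\subset H_\ast^\Phi$ and adding $\Pi_2'$); bilinearity is inherited from the $\Pi_i'$, and the norm bounds $\|S(f,g)\|_{L^1(\rn)}\le C\|f\|_{H^1(\rn)}\|g\|_{\bmo(\rn)}$ and $\|T(f,g)\|_{H_\ast^{\Phi}(\rn)}\le C\|f\|_{H^1(\rn)}\|g\|_{\bmo(\rn)}$ follow by collecting the four operator estimates.

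The main obstacle I anticipate is entirely concentrated in Proposition \ref{pp22}, i.e.\ the estimate for $\Pi_2'$, which is the only place where one must genuinely control the ``bad part'' $a g_B$ (where $a$ is a $(1,2,0)$-atom and $g_B$ its average over the enclosing ball) — this is handled by Lemma \ref{apW11} and its splitting into $|g-g_B|M(a,\varphi)$ (an $L^1$ term, hence $L_\ast^\Phi$) and $|g|M(a,\varphi)$ (using the product estimate $\|fh\|_{L_\ast^\Phi(\rn)}\lesssim\|f\|_{L^1(\rn)}\|h\|_{\bmo(\rn)}$ from \cite{cky1} together with the $H^1$ atomic characterization). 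Since all of Propositions \ref{pp11}--\ref{pp22} are already in place, the proof of the theorem itself is a routine assembly: the only subtlety to flag is that this argument, unlike the flawed one in \cite{cky1}, never uses boundedness of Riesz transforms on $h^1(\rn)$, and that the passage from atomic bounds to the full bilinear boundedness on $H^1(\rn)$ proceeds exactly as in the proof of Proposition \ref{prop-pi2}, via a normalization argument using that $\Phi$ has upper type $1$. I would write the proof of Theorem \ref{mainthm7} in a few lines, mirroring the proof of Theorem \ref{mainthm4} verbatim with $H^p(\rn)$, $\Lambda_{n\alpha}(\rn)$, $H^{\Phi_p}(\rn)$ replaced respectively by $H^1(\rn)$, $\bmo(\rn)$, $H_\ast^\Phi(\rn)$, and citing Lemmas \ref{lemdu2}, \ref{asz2} and Propositions \ref{pp11}--\ref{pp22}.
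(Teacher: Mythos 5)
Your proposal is correct and follows essentially the same route as the paper: part (i) is obtained by setting $S:=\Pi_4'$ and $T:=\sum_{i=1}^3\Pi_i'$ and assembling Propositions \ref{pp11}, \ref{pp33}, \ref{pp44} and \ref{pp22} via the limiting argument of Theorem \ref{mainthm1}, while part (ii) reduces to the known characterization of the pointwise multipliers of $\bmo(\rn)$ combined with Lemma \ref{asz2}. The only cosmetic difference is that for that multiplier characterization the paper cites \cite[Theorem 3.6(iii)]{zyy} rather than Nakai--Yabuta directly, but the underlying input is the same.
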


\begin{proof}
From \cite[Theorem 3.6(iii)]{zyy} and Lemma \ref{asz2}, it follows that (ii) holds true.

As for (i),
for any $(f,g)\in H^1(\rn)\times \bmo(\mathbb{R}^n)$, let
\begin{align}\label{So2}
S(f,\,g):=\Pi_4'(f,\,g)
\end{align}
and
\begin{align*}
T(f,\,g):=\sum_{i=1}^3\Pi_i'(f,\,g).
\end{align*}
Using Propositions \ref{pp11}, \ref{pp33}, \ref{pp44} and \ref{pp22}, and repeating some
arguments similar to those used in
the proof of Theorem \ref{mainthm1}, we obtain the desired conclusion of Theorem \ref{mainthm7}.
This finishes the proof of Theorem \ref{mainthm7}.
\end{proof}

\begin{remark}\label{442}
Let $p\in(0,1)$ and $\az:=1/p-1$.
\begin{itemize}
\item[\rm (i)]
In Theorem \ref{mainthm7}(i), if we replace $\bmo(\mathbb{R}^n)$ and $H_\ast^{\Phi}(\rn)$,
respectively, by $\BMO(\mathbb{R}^n)$ and $H^{\log}(\rn)$ as in \eqref{11z}, then
the corresponding conclusions of Theorem \ref{mainthm7}(i)
still hold true, which is just \cite[Theorem 1.1]{BGK12}.
Observe that, by \cite[Lemma 3.4]{zyy},
we have
$\bmo(\mathbb{R}^n)\subsetneqq
\BMO(\mathbb{R}^n)$
and
$H_\ast^{\Phi}(\rn)\subset
H^{\log}(\rn)$.
Thus, Theorem \ref{mainthm7} is of independent interest.
However, it is still unclear whether or not
$H_\ast^{\Phi}(\rn)$ is a proper subset of
$H^{\log}(\rn)$.

\item[\rm (ii)]
Also, in Theorem \ref{mainthm7}(i), if we replace $H^1(\mathbb{R}^n)$ and $H_\ast^{\Phi}(\rn)$,
respectively, by $h^1(\mathbb{R}^n)$ and $h_\ast^{\Phi}(\rn)$, then
the corresponding conclusions of Theorem \ref{mainthm7}(i)
still hold true, which is just \cite[Theorem 1.1(ii)]{cky1}.
This is quite natural by observing that $H_\ast^{\Phi}(\rn)\subsetneqq h_\ast^{\Phi}(\rn)$.
To see $H_\ast^{\Phi}(\rn)\subsetneqq h_\ast^{\Phi}(\rn)$, from the definitions of $H_\ast^{\Phi}(\rn)$ and $h_\ast^{\Phi}(\rn)$, it is easy to deduce that
$H_\ast^{\Phi}(\rn)\subset h_\ast^{\Phi}(\rn)$.
Using the atomic characterization of $h_\ast^{\Phi}(\rn)$
(see \cite[Theorem 2.18]{zyy}), we know
that $\mathbf{1}_{B(\vec{0}_n,2)}\in h_\ast^{\Phi}(\rn)$.
On the other hand, by an argument similar to that used in the estimation of \cite[(4.20)]{zyyw}, we conclude that, for any $f\in H_\ast^{\Phi}(\rn)\cap L^2(\rn)$
with compact supports, $\int_{\rn}f(x)\,dx=0$,
which implies that $\mathbf{1}_{B(\vec{0}_n,2)}\notin H_\ast^{\Phi}(\rn)$ and hence
$H_\ast^{\Phi}(\rn)\subsetneqq h_\ast^{\Phi}(\rn)$.

\item[\rm (iii)] By Theorem \ref{mainthm7},
\cite[Theorem 1.1]{BGK12} and \cite[Theorem 1.1]{cky1}, we have bilinear decompositions of $H^1(\rn)\times \bmo(\mathbb{R}^n)$,
$H^1(\rn)\times \BMO(\mathbb{R}^n)$ and $h^1(\rn)\times \bmo(\rn)$.
Motivated by these results, to complete the
whole picture, it is quite natural to conjecture that $h^1(\rn)\times \BMO(\mathbb{R}^n)$
should also have a bilinear decomposition. However,
the present definition \eqref{def-product} of the product
does not make sense for the product of functions in $h^1(\rn)$ and $\BMO(\mathbb{R}^n)$
because the dual space of
$h^1(\rn)$ is smaller than $\BMO(\mathbb{R}^n)$, namely,
$(h^1(\rn))^*=\bmo(\rn)
\subsetneqq\BMO(\mathbb{R}^n)$.
\item[\rm (iv)] The sharpness of  Theorem \ref{mainthm7} is implied by Theorem \ref{mainthm7}(ii). Indeed,
suppose that Theorem \ref{mainthm7} holds true with $H_\ast^{\Phi}(\rn)$ therein
replaced by a smaller quasi-Banach space $\cy$.
In this case, we know that the pointwise
multiplier class of $\bmo(\rn)$ equals to $L^\infty(\rn)\cap (\cy)^\ast$
From this and Theorem \ref{mainthm7}(ii), we deduce that
$L^\infty(\rn)\cap (\cy)^\ast=L^\infty(\rn)\cap (H_\ast^{\Phi}(\rn))^\ast$.
In this sense,
we say that Theorem \ref{mainthm7} is sharp. It
is still unclear whether or not the Orlicz Hardy
space $H_\ast^{\Phi}(\rn)$ is indeed the smallest space,
in the sense of the inclusion of sets, having the property
in Theorem \ref{mainthm7}.
\end{itemize}
\end{remark}

\section{Intrinsic structures of (local) Orlicz Hardy spaces $h^{\Phi_p}(\rn)$\\ and $H^{\Phi_p}(\rn)$
with $p\in (0,1)$}\label{s5}

In this section, we establish new structures of the spaces $h^{\Phi_p}(\rn)$ and $H^{\Phi_p}(\rn)$ by
showing that, for any $p\in(0,1)$, $h^{\Phi_p}(\rn)=h^{p}(\rn)+h^{1}(\rn)$ and
$H^{\Phi_p}(\rn)=H^{p}(\rn)+H^{1}(\rn)$ with
equivalent quasi-norms, and then further clarify the relations between $h^{\Phi_p}(\rn)$ and $h^{\phi_p}(\rn)$.

Recall that, in \cite{bl}, for any two quasi-Banach spaces $A_0$ and $A_1$,
the pair $(A_0,\,A_1)$ is said to be \emph{compatible} if there exists a Hausdorff
topological vector space $\mathbb{X}$ such that $A_0\subset\mathbb{X}$ and $A_1\subset\mathbb{X}$.
For any compatible pair $(A_0,\,A_1)$ of quasi-Banach spaces, the \emph{sum space}
$A_0+A_1$ is defined by setting
\begin{align}\label{1.8}
A_0+A_1:=\{a\in\mathbb{X}:\ \exists\, a_0\in A_0\ \textup{and}\ a_1\in A_1\
\textup{such}\ \textup{that}\ a=a_0+a_1\}
\end{align}
equipped with the quasi-norm
$$
\|a\|_{A_0+A_1}:=\inf\{\|a_0\|_{A_0}+\|a_1\|_{A_1}:\ a=a_0+a_1,\ a_0\in A_0\ \textup{and}\ a_1\in A_1\}.
$$
In what follows, we use $h^1(\rn)+h^p(\rn)$ to denote the sum space,
defined as in \eqref{1.8}, with $\mathbb{X}:=\mathcal{S}'(\rn)$, $A_0:=h^1(\rn)$
and $A_1:=h^p(\rn)$.

We first recall the following two lemmas, established in \cite{yy2},
on the Calder\'on--Zygmund decomposition of the elements of local
Musielak--Orlicz Hardy spaces.

The following lemma is a part of \cite[Corollary 4.8]{yy2}.

\begin{lemma}\label{den}
Let $p\in(0,1)$, $q\in(1,\infty)$ and $\Phi_p$ be as in \eqref{333}.
Then $L^q(\rn)\cap h^{\Phi_p}(\rn)$ is dense in $h^{\Phi_p}(\rn)$.
\end{lemma}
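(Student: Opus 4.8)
The plan is to prove Lemma \ref{den} (the statement ``$L^q(\rn)\cap h^{\Phi_p}(\rn)$ is dense in $h^{\Phi_p}(\rn)$'' for $p\in(0,1)$, $q\in(1,\infty)$ and $\Phi_p$ as in \eqref{333}) by invoking the general machinery for local Musielak--Orlicz Hardy spaces developed in \cite{yy2}, and then specializing it to the concrete Orlicz function $\Phi_p$. First I would verify, via Remark \ref{rem-add1}, that $\Phi_p$ is an Orlicz function of positive lower type $p$ and positive upper type $1$, so that $h^{\Phi_p}(\rn)$ fits the framework of a local Musielak--Orlicz Hardy space $h^{\varphi}(\rn)$ with the ``$x$-independent'' growth function $\varphi(x,t):=\Phi_p(t)$; one then checks that all structural hypotheses imposed in \cite{yy2} (the uniform upper type, the lower-type index being positive, the relevant Muckenhoupt-type and integrability conditions — all trivially satisfied when $\varphi$ does not depend on $x$) hold for $\Phi_p$. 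Having placed $h^{\Phi_p}(\rn)$ inside that setting, the density statement is exactly \cite[Corollary 4.8]{yy2}, which asserts that $L^q(\rn)\cap h^{\varphi}(\rn)$ is dense in $h^{\varphi}(\rn)$ for any $q\in(1,\infty)$ with $q$ larger than the relevant upper-type exponent (here $p_{\Phi_p}^+=1<q$).

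The core of that cited corollary, which I would at least sketch for the reader, rests on the local Calder\'on--Zygmund decomposition of a distribution $f\in h^{\Phi_p}(\rn)$ at height $\lz$: one writes $f=g^\lz+b^\lz$, where $g^\lz$ is a ``good'' function bounded (after a suitable truncation) and lying in $L^q(\rn)\cap h^{\Phi_p}(\rn)$, and $b^\lz=\sum_j b_j^\lz$ is a ``bad'' part supported on a Whitney-type family of cubes with controlled $h^{\Phi_p}$-norm. One then shows, using the quasi-concavity and the lower/upper type properties of $\Phi_p$ summarized in Lemma \ref{444}(ii) (in particular the subadditivity inequality \eqref{xx4} and the scaling $\Phi_p(s\tau)\le s^p\Phi_p(\tau)$ for $s\in(0,1)$), that $\|b^\lz\|_{h^{\Phi_p}(\rn)}\to0$ as $\lz\to\infty$; hence $g^\lz\to f$ in $h^{\Phi_p}(\rn)$, and since each $g^\lz\in L^q(\rn)\cap h^{\Phi_p}(\rn)$, the density follows. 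The key technical inputs — the maximal function characterization of $h^{\Phi_p}(\rn)$ (Definition \ref{defn-hmo-1}), the atomic/Calder\'on--Zygmund theory, and the type estimates for $\Phi_p$ — are all either recalled in Section \ref{s2} or available from \cite{yy2}.

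The main obstacle, and essentially the only nontrivial point, is the careful bookkeeping needed to show that the ``bad'' part $b^\lz$ of the Calder\'on--Zygmund decomposition does indeed have $h^{\Phi_p}$-quasi-norm tending to zero: this requires combining the Whitney covering estimates for the level set $\{m(f,\varphi)>\lz\}$ with the Orlicz modular bounds, and controlling the overlap of the cubes while using the lower-type exponent $p$ of $\Phi_p$ to absorb the small-scale contributions. Since this is precisely the content of the proof of \cite[Corollary 4.8]{yy2} and since we are allowed to assume results stated earlier in the excerpt, I would present the argument as a direct citation of that corollary, noting only that $\Phi_p$ has positive lower type $p$ and positive upper type $1$ by Remark \ref{rem-add1}, so that the hypotheses of \cite[Corollary 4.8]{yy2} are met with any $q\in(1,\infty)$. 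This yields the assertion of Lemma \ref{den} with no further work.
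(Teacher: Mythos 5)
Your proposal matches the paper exactly: the paper also proves this lemma by observing (via Remark \ref{rem-add1}) that $\Phi_p$ is an Orlicz function of positive lower type $p$ and positive upper type $1$, so that $h^{\Phi_p}(\rn)$ falls within the local Musielak--Orlicz framework, and then simply quotes \cite[Corollary 4.8]{yy2}. The additional sketch of the Calder\'on--Zygmund argument underlying that corollary is correct but not needed, since the paper treats the cited result as a black box.
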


Define,
for any $N\in\mathbb{N}$
and $\varphi\in\mathcal{S}(\rn)$,
$$
p_N(\varphi):=\sum_{\alpha\in\mathbb{Z}^n_+,|\alpha|\le N} \sup_{x\in\rn}(1+|x|)^{N+n}|\partial^\alpha\varphi(x)|,
$$
and let $\mathcal{F}_N(\rn):=\{\varphi\in\mathcal{S}(\rn):\ \ p_N(\varphi)\le1\}$. Also recall that
$\mathbb{R}_+^{n+1}:=\rn\times(0,\infty)$.

\begin{definition}\label{jdhs}
Let $\varphi\in\mathcal{S}(\rn)$, $N\in\mathbb{N}$ and $f\in\mathcal{S}'(\rn)$.
\begin{itemize}
\item[{\rm(i)}] The \emph{grand maximal function $M_N(f)$} is defined by setting, for any $x\in\rn$,
$$M_N(f)(x):=\sup\lf\{|\varphi_s \ast f(y)|:\ s\in(0,\infty),\  |x-y|<s,\  \varphi\in\mathcal{F}_N(\rn)\r\};$$
\item[{\rm(ii)}] The \emph{local grand maximal function $m_N(f)$} is defined by setting, for any $x\in\rn$,
$$m_N(f)(x):=\sup\lf\{|\varphi_s \ast f(y)|:\ s\in(0,1),\  |x-y|<s,\  \varphi\in\mathcal{F}_N(\rn)\r\};$$
\end{itemize}
\end{definition}

By the proof of \cite[Lemma 5.5]{yy2}, we immediately obtain
the following Calder\'on--Zygmund decomposition of the elements of the local
Musielak--Orlicz Hardy space $h^{\Phi_p}(\rn)$ and we omit the details.

\begin{lemma}\label{dec}
Let $p\in(0,1)$, $q\in(1,\infty)$ and $\Phi_p$ be as in \eqref{333}. Assume that $d\in\mathbb{Z}_+$ satisfies $d\ge\lfloor n(\frac{1}{p}-1)\rfloor$.
Then, for any
$f\in h^{\Phi_p}(\rn)\cap L^q(\rn)$,
there exists a sequence $\{h_i^k\}_{k\in\zz,i\in\nn}$
of elements in $L^\infty(\rn)$ supported, respectively, in balls
$\{B_i^k\}_{k\in\zz,i\in\nn}$
such that
\begin{equation}\label{c2}
f=\sum_{k\in\zz,i\in\nn}h_i^k \qquad  \textup{in}\;\cs'(\rn)
\end{equation}
and, for any $k\in\zz$,
\begin{equation}\label{c5}
\mathcal{O}_k=\cup_{i\in\nn}(B_i^k)^\ast\ and\
\{(B_i^k)^\ast\}_{i\in\nn}\ has\ finite\ intersection\ property,
\end{equation}
where $\mathcal{O}_k:=\{x\in\rn:\ m_N(f)(x)>2^k\}$
with $N:=\lfloor \frac{2n}{p}+1\rfloor$, and $(B_i^k)^\ast:=cB_i^k$ with $c\in(0,1)$ being a constant.
Moreover, there exists a positive constant $C$ such that, for any $k\in\zz$ and $i\in\nn$,
\begin{equation}\label{c6}
|h_i^k|\le C2^k
\end{equation}
and, when $|B_i^k|<1$, for any multi-index $\alpha$ satisfying $|\alpha|\leq d$, it holds true
that
\begin{equation}\label{c8}
\int_{\rn}x^\alpha h_i^k(x)\,dx=0.
\end{equation}
\end{lemma}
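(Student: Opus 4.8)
The plan is to follow the classical Calder\'on--Zygmund decomposition for Hardy-type spaces, in the version adapted to local Musielak--Orlicz Hardy spaces in \cite{yy2}, and to check that the Orlicz function $\Phi_p$ fits into that framework. By Remark \ref{rem-add1} and Lemma \ref{444}, $\Phi_p$ is an Orlicz function of positive lower type $p$ and positive upper type $1$; viewing it as a Musielak--Orlicz function independent of the space variable (equivalently, taking the trivial weight $w\equiv1$) we are exactly in the setting of \cite[Lemma 5.5]{yy2}, whose proof yields the assertion. First I would fix $q\in(1,\infty)$ and $f\in h^{\Phi_p}(\rn)\cap L^q(\rn)$, and work with the local grand maximal function $m_N(f)$ with $N:=\lfloor\frac{2n}{p}+1\rfloor$ as in Definition \ref{jdhs}(ii); the size of $N$ is dictated by the need for enough regularity in the later pointwise estimates and in the equivalence $\|m_N(f)\|_{L^{\Phi_p}(\rn)}\sim\|f\|_{h^{\Phi_p}(\rn)}$.

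For each $k\in\zz$ set $\mathcal{O}_k:=\{x\in\rn:\ m_N(f)(x)>2^k\}$. Since $m_N(f)$ is lower semicontinuous, $\mathcal{O}_k$ is open, and since $m_N(f)\in L^{\Phi_p}(\rn)$ one has $\Phi_p(2^k)|\mathcal{O}_k|\le\int_{\rn}\Phi_p(m_N(f)(x))\,dx<\infty$, whence $|\mathcal{O}_k|<\infty$. Applying the Whitney covering lemma to each $\mathcal{O}_k$ produces balls $\{B_i^k\}_{i\in\nn}$ with radii comparable to $\dist(B_i^k,\rn\setminus\mathcal{O}_k)$ such that $\mathcal{O}_k=\bigcup_{i\in\nn}B_i^k$ and, for a suitable $c\in(0,1)$, the dilates $(B_i^k)^\ast:=cB_i^k$ still satisfy $\mathcal{O}_k=\bigcup_{i\in\nn}(B_i^k)^\ast$ and have the finite intersection property; this gives \eqref{c5}. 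I would then fix a smooth partition of unity $\{\zeta_i^k\}_{i\in\nn}$ subordinate to $\{(B_i^k)^\ast\}_{i\in\nn}$ with $0\le\zeta_i^k\le1$, $\sum_{i\in\nn}\zeta_i^k=\mathbf1_{\mathcal{O}_k}$ and $|\partial^\alpha\zeta_i^k|\ls r_{B_i^k}^{-|\alpha|}$ for $|\alpha|\le N$.

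For the good/bad splitting, at level $k$ and for each $i$ I let $c_i^k$ be the polynomial in $\cp_d(\rn)$ (relative to the measure $\zeta_i^k\,dx$) chosen so that $\int_{\rn}[f(x)-c_i^k(x)]\zeta_i^k(x)x^\alpha\,dx=0$ for all $|\alpha|\le d$ when $|B_i^k|<1$, and simply $c_i^k:=0$ when $|B_i^k|\ge1$; then set $b_i^k:=(f-c_i^k)\zeta_i^k$ and $g^k:=f-\sum_{i}b_i^k$, so that $g^k=f$ on $\rn\setminus\mathcal{O}_k$. Using the definition of $m_N(f)$ together with the fact that $(B_i^k)^\ast$ meets $\rn\setminus\mathcal{O}_k$ (Whitney geometry) and a Lemma \ref{lem-ep}-type bound $\sup_{(B_i^k)^\ast}|c_i^k|\ls2^k$, one gets $|b_i^k|\ls2^k\mathbf1_{(B_i^k)^\ast}$ and hence $|g^k|\ls2^k$ almost everywhere. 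The functions $h_i^k$ are then obtained as the telescoping differences of the $g^k$ across consecutive levels, localized by the partitions of unity, exactly as in \cite[proof of Lemma 5.5]{yy2}; by construction each $h_i^k$ is supported in $B_i^k$, satisfies \eqref{c6}, and satisfies the vanishing moments \eqref{c8} when $|B_i^k|<1$. Finally \eqref{c2} follows since $g^k\to f$ in $\cs'(\rn)$ as $k\to+\infty$ (because $|\{g^k\ne f\}|=|\mathcal{O}_k|\to0$ and $\|g^k\|_{L^\infty(\rn)}\ls2^k$) and $g^k\to0$ in $L^q(\rn)$, hence in $\cs'(\rn)$, as $k\to-\infty$, where $f\in L^q(\rn)$ is used.

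The main obstacle is the uniform estimate $|h_i^k|\ls2^k$: one must compare the supports $(B_i^k)^\ast$ with $\rn\setminus\mathcal{O}_k$ through the Whitney geometry, control the local polynomial projections $c_i^k$ uniformly in $k$ and $i$ and uniformly over the two regimes $|B_i^k|<1$ and $|B_i^k|\ge1$ (this split is the only place where the locality of $h^{\Phi_p}(\rn)$ is really felt), and then sum the telescoping corrections without losing the $2^k$ bound. All of this is carried out in \cite{yy2}; here it suffices to verify that the hypotheses there are met by $\Phi_p$, and the decomposition with all stated properties follows.
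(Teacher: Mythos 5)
Your proposal follows exactly the route the paper takes: the paper establishes this lemma by simply invoking the proof of \cite[Lemma 5.5]{yy2}, and your outline is a faithful reconstruction of that Calder\'on--Zygmund argument (Whitney decomposition of $\mathcal{O}_k$, subordinate partition of unity, local polynomial projections only in the regime $|B_i^k|<1$, telescoping of the good parts, and convergence of the telescoping sum using $f\in L^q(\rn)$). One intermediate claim is mis-stated: the bad parts admit no pointwise bound $|b_i^k|\lesssim 2^k\mathbf{1}_{(B_i^k)^\ast}$, since $f$ itself may be large inside $\mathcal{O}_k$; what the argument actually yields is $|g^k|\lesssim 2^k$, via the identity $g^k=f\mathbf{1}_{\rn\setminus\mathcal{O}_k}+\sum_{i}c_i^k\zeta_i^k$ together with $|f|\le m_N(f)\le 2^k$ almost everywhere off $\mathcal{O}_k$ and $\sup_{(B_i^k)^\ast}|c_i^k|\lesssim 2^k$, and this is all that is needed for \eqref{c6}. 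With that correction the proposal is sound and coincides with the paper's (cited) proof.
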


\begin{definition}\label{17fa}
Let $p\in(0,\,1)$, $r\in(1,\,\infty]$ and $d\in\zz_+$.
Then a measurable function $a$ on $\rn$ is called a \emph{local} $(p,r,d)${\it-atom}
if there exists a ball $B\in\BB$ such that
\begin{itemize}
\item[(i)] $\supp a:=\{x\in\rn:\ a(x)\neq0\}\subset B$;
\item[(ii)] $\|a\|_{L^r(\rn)}\le |B|^{1/r-1/p}$;
\item[(iii)] if $|B|<1$, then $\int_{\rn}a(x)x^\alpha\,dx=0$ for any
$\alpha\in\zz_+^n$ with $|\alpha|\le d$.
\end{itemize}
\end{definition}

\begin{theorem}\label{FINN}
Let $p\in(0,1)$ and $\Phi_p$ be as in \eqref{333}. Then the space
$h^{\Phi_p}(\rn)$ and $h^1(\rn)+h^p(\rn)$ coincide with equivalent quasi-norms.
\end{theorem}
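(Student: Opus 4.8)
The strategy is to establish the two inclusions $h^1(\rn)+h^p(\rn)\subset h^{\Phi_p}(\rn)$ and $h^{\Phi_p}(\rn)\subset h^1(\rn)+h^p(\rn)$ separately, together with the corresponding quasi-norm estimates. The first inclusion is the easy direction: it is essentially contained in Lemma \ref{444}(i), which already gives $h^1(\rn)\subset h^{\Phi_p}(\rn)$, $h^p(\rn)\subset h^{\Phi_p}(\rn)$, and $\|f\|_{h^{\Phi_p}(\rn)}\le\min\{\|f\|_{h^1(\rn)},\|f\|_{h^p(\rn)}\}$ for $f$ in either space. Thus, if $f=f_0+f_1$ with $f_0\in h^1(\rn)$ and $f_1\in h^p(\rn)$, then by the quasi-triangle inequality for $\|\cdot\|_{h^{\Phi_p}(\rn)}$ (which holds since $\Phi_p$ has positive lower type $p$, via the $p$-subadditivity in \eqref{xx4} of Lemma \ref{444}(ii)), one gets $\|f\|_{h^{\Phi_p}(\rn)}\lesssim\|f_0\|_{h^1(\rn)}+\|f_1\|_{h^p(\rn)}$; taking the infimum over all such decompositions yields $\|f\|_{h^{\Phi_p}(\rn)}\lesssim\|f\|_{h^1(\rn)+h^p(\rn)}$.

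The harder direction is $h^{\Phi_p}(\rn)\subset h^1(\rn)+h^p(\rn)$ with the matching estimate, and this is where the Calder\'on--Zygmund machinery of Lemmas \ref{den} and \ref{dec} enters. First I would reduce to $f\in h^{\Phi_p}(\rn)\cap L^q(\rn)$ for some $q\in(1,\infty)$ by the density statement in Lemma \ref{den}: it suffices to prove the decomposition with uniform control of quasi-norms on this dense subclass and then pass to the limit using completeness of $h^1(\rn)+h^p(\rn)$ and the quasi-norm bound. For such $f$, apply Lemma \ref{dec} with $d:=\lfloor n(1/p-1)\rfloor$ to obtain $f=\sum_{k\in\zz,\,i\in\nn}h_i^k$ in $\cs'(\rn)$, where $h_i^k$ is supported in $B_i^k$, satisfies $|h_i^k|\le C2^k$, and has vanishing moments up to order $d$ whenever $|B_i^k|<1$. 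The natural splitting is by the sign of $k$: set
\begin{align*}
f_0:=\sum_{k\le 0}\sum_{i\in\nn}h_i^k\qquad\text{and}\qquad f_1:=\sum_{k\ge 1}\sum_{i\in\nn}h_i^k,
\end{align*}
so that $f=f_0+f_1$ (the convergence of each piece in $\cs'(\rn)$ being part of what must be checked, again via the local grand maximal function estimates as in Lemma \ref{lyyy}). The claim is then that $f_0\in h^1(\rn)$ with $\|f_0\|_{h^1(\rn)}\lesssim\|f\|_{h^{\Phi_p}(\rn)}$ and $f_1\in h^p(\rn)$ with $\|f_1\|_{h^p(\rn)}\lesssim\|f\|_{h^{\Phi_p}(\rn)}$.

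To prove these two claims I would renormalize the $h_i^k$ into local atoms. From $|h_i^k|\le C2^k$, $\supp h_i^k\subset B_i^k$, and the moment condition, one checks that $2^{-k}|B_i^k|^{-1}h_i^k$ is (up to a uniform constant) a local $(1,\infty,0)$-atom and also, after rescaling by $|B_i^k|^{1/p-1}$, a local $(p,\infty,d)$-atom; hence $\lambda_i^k:=C2^k|B_i^k|$ serves as the coefficient in the $h^1$-atomic decomposition of $f_0$ and $\mu_i^k:=C2^k|B_i^k|^{1/p}$ in the $h^p$-atomic decomposition of $f_1$. The finite-intersection property \eqref{c5} of $\{(B_i^k)^\ast\}_i$ together with $\cup_i(B_i^k)^\ast=\mathcal{O}_k=\{m_N(f)>2^k\}$ gives $\sum_i|B_i^k|\lesssim|\mathcal{O}_k|$, and the heart of the matter is the two estimates
\begin{align*}
\sum_{k\le 0}2^k|\mathcal{O}_k|\lesssim\|m_N(f)\|_{L^{\Phi_p}(\rn)}\qquad\text{and}\qquad
\sum_{k\ge 1}2^{kp}|\mathcal{O}_k|\lesssim\|m_N(f)\|_{L^{\Phi_p}(\rn)}^p,
\end{align*}
which come from comparing the distribution function of $m_N(f)$ against the Orlicz norm: for $t$ small, $\Phi_p(t)\sim t$, so the dyadic level sets with $k\le 0$ are controlled by an $L^1$-type quantity of $m_N(f)$ truncated at height $1$, whereas for $t$ large $\Phi_p(t)\sim t^p$, so the levels $k\ge 1$ are controlled by an $L^p$-type quantity. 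Summing the atomic quasi-norms via the $h^1$ and $h^p$ atomic characterizations (cited in the proof of Proposition \ref{ap} from \cite{g,t}) then yields $\|f_0\|_{h^1(\rn)}\lesssim\sum_{k\le0}\lambda_i^k\lesssim\|f\|_{h^{\Phi_p}(\rn)}$ and $\|f_1\|_{h^p(\rn)}^p\lesssim\sum_{k\ge1}(\mu_i^k)^p\lesssim\|f\|_{h^{\Phi_p}(\rn)}^p$. I expect the main obstacle to be the bookkeeping in these two dyadic-sum estimates and, relatedly, making precise the convergence of $f_0$ and $f_1$ in $\cs'(\rn)$ and the passage from the dense subclass $h^{\Phi_p}(\rn)\cap L^q(\rn)$ to general $f\in h^{\Phi_p}(\rn)$ — i.e. showing that the map $f\mapsto(f_0,f_1)$, a priori defined only on the dense subclass, extends so that the sum-space estimate survives in the limit; this uses the quasi-norm bound already proved together with the fact that convergence in $h^1$ or $h^p$ implies convergence in $\cs'(\rn)$.
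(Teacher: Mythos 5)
Your proof is correct and follows the same overall architecture as the paper's: the easy inclusion via Lemma \ref{444}(i), reduction to the dense class $h^{\Phi_p}(\rn)\cap L^q(\rn)$ via Lemma \ref{den}, the Calder\'on--Zygmund decomposition of Lemma \ref{dec}, renormalization of the pieces $h_i^k$ into local $(1,\infty,0)$- and $(p,\infty,d)$-atoms, and a limiting argument. The one genuine point of divergence is how you partition the index set $\{(k,i)\}$. You split by the sign of the level $k$ (levels $k\le0$ go to $h^1$, levels $k\ge1$ go to $h^p$), whereas the paper introduces $E:=\{x:m_N(f)(x)<1\}$ and sends $(k,i)$ to the $h^1$ part or the $h^p$ part according to whether $cB_i^k$ has at least half its measure in $E$ or in $E^\complement$; for $k\ge1$ the two rules coincide (since then $cB_i^k\subset\mathcal O_k\subset E^\complement$), but for $k\le0$ the paper's rule reroutes some pieces into the $h^p$ part. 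Both rules close: your version needs the slightly different bookkeeping $\sum_{k\le0}2^k|\mathcal O_k|\lesssim\int_\rn\min\{m_N(f),1\}\lesssim\int_\rn\Phi_p(m_N(f))$ and $\sum_{k\ge1}2^{kp}|\mathcal O_k|\lesssim\int_{\{m_N(f)>2\}}[m_N(f)]^p\lesssim\int_\rn\Phi_p(m_N(f))$, both of which are valid since $\min\{t,1\}\le2\Phi_p(t)$ for all $t\ge0$ and $t^p\le2\Phi_p(t)$ for $t\ge1$; the paper's version instead integrates $m_N(f)$ over $E$ and $[m_N(f)]^p$ over $E^\complement$, where in each case the integrand is pointwise comparable to $\Phi_p(m_N(f))$. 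Your variant is marginally more economical (no auxiliary set $E$, no half-measure dichotomy); the paper's variant makes the pointwise comparison with $\Phi_p$ completely transparent. Do make explicit, as the paper does, the normalization $\|f\|_{h^{\Phi_p}(\rn)}=1$ and the grand maximal characterization (\cite[Theorem 3.14]{yy}) that justifies $\int_\rn\Phi_p(m_N(f)(x))\,dx\lesssim1$, since your two dyadic estimates are stated against $\|m_N(f)\|_{L^{\Phi_p}(\rn)}$ but are really estimates against the modular $\int_\rn\Phi_p(m_N(f))$.
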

\begin{proof}
Let $p\in(0,1)$. We first establish the inclusion
$[h^1(\rn)+h^p(\rn)]\subset h^{\Phi_p}(\rn)$.
For any $f\in h^1(\rn)+h^p(\rn)$, let $f_0\in h^1(\rn)$ and
$f_1\in h^p(\rn)$ be such that $f=f_0+f_1$ in $\mathcal{S}'(\rn)$
and
$$
\|f_0+f_1\|_{h^1(\rn)+h^p(\rn)}\sim\|f_0\|_{h^1(\rn)}+\|f_1\|_{h^p(\rn)}.
$$
By Lemma \ref{444}(i), we know that
$$
\|f\|_{h^{\Phi_p}(\rn)}\lesssim\|f_0\|_{h^{\Phi_p}(\rn)}+\|f_1\|_{h^{\Phi_p}(\rn)}
\lesssim\|f_0\|_{h^1(\rn)}+\|f_1\|_{h^p(\rn)}
\sim\|f_0+f_1\|_{h^1(\rn)+h^p(\rn)}.
$$
This immediately implies the inclusion $[h^1(\rn)+h^p(\rn)]\subset h^{\Phi_p}(\rn)$.

It remains to prove $h^{\Phi_p}(\rn)\subset [h^1(\rn)+h^p(\rn)]$.
First, we let $q\in (1,\infty)$ and $f\in h^{\Phi_p}(\rn)\cap L^q(\rn)$.
Applying Lemma \ref{dec}, we know that there exists a sequence $\{h_i^k\}_{k\in\zz,i\in\nn}$
of elements in $L^\infty(\rn)$ supported, respectively, in balls
$\{B_i^k\}_{k\in\zz,i\in\nn}$
satisfying that,
for any $k\in\zz$,
$\sum_{i\in\nn}\mathbf{1}_{cB_{i,k}}\lesssim 1$ with $c\in(0,1)$ being a constant
such that
\begin{equation}\label{sll}
f=\sum_{k\in\zz}\sum_{i\in\nn}h_i^k \qquad  \textup{in}\;\cs'(\rn).
\end{equation}
Let
\begin{equation}\label{sdd}
E:=\lf\{x\in\rn:\ m_N(f)(x)<1 \r\},
\end{equation}
where $m(\cdot,\varphi)$ is as in Definition \ref{hp}(i).
For any $k\in\zz$ and $i\in\nn$, let
$$
B_{i,E}^k:=(cB_{i}^k)\cap E \ \ \ \ \text{and} \ \ \ \
B_{i,E^\complement}^k:=(cB_{i}^k)\cap E^\complement.
$$
Moreover, let
$$
\mathrm{I}_0:=\lf\{(k,i)\in\zz\times\nn:\ \lf|B_{i,E}^k\r|\geq\frac12\lf|cB_{i}^k\r|\r\}
 \ \ \ \ \text{and} \ \ \ \
\mathrm{I}_1:=\lf\{(k,i)\in\zz\times\nn:\ \lf|B_{i,E^\complement}^k\r|>\frac12\lf|cB_{i}^k\r|\r\}.
$$
It is easy to see that $\mathrm{I}_1\cap \mathrm{I}_0=\emptyset$ and
$$
\sum_{k\in\zz}\sum_{i\in\nn}h_i^k=\sum_{(k,i)\in \mathrm{I}_0}h_i^k+\sum_{(k,i)\in \mathrm{I}_1}h_i^k
\qquad  \textup{in}\;\cs'(\rn).
$$
Fix $k_0\in\zz$. By Lemma \ref{dec}, it holds true that
\begin{equation}\label{3.6}
\sum_{(k_0,i)\in \mathrm{I}_0}\lf|B_{i}^{k_0}\r|\leq\frac2c\sum_{(k_0,i)\in \mathrm{I}_0}\lf|B_{i,E}^{k_0}\r|
\lesssim \lf|\lf\{x\in E:\ m_N(f)(x)>2^{k_0}\r\}\r|.
\end{equation}
Similarly, we have
\begin{equation}\label{3.7}
\sum_{(k_0,i)\in \mathrm{I}_1}\lf|B_{i}^{k_0}\r|<\frac2c\sum_{(k_0,i)\in \mathrm{I}_1}\lf|B_{i,E^\complement}^{k_0}\r|
\lesssim \lf|\lf\{x\in E^\complement:\ m_N(f)(x)>2^{k_0}\r\}\r|.
\end{equation}
For any $(k,i)\in \mathrm{I}_0$, let $\lambda_{k,i}^{(0)}:=2^k|B_i^k|$ and
$a_{k,i}^{(0)}:=h_i^k/\lambda_{k,i}^{(0)}$ and, for any $(k,i)\in \mathrm{I}_1$,
let $\lambda_{k,i}^{(1)}:=2^k|B_i^k|^{1/p}$ and
$a_{k,i}^{(1)}:=h_i^k/\lambda_{k,i}^{(1)}$.
Then we write the decomposition in \eqref{sll} into
$$
f=\sum_{(k,i)\in \mathrm{I}_0}h_i^k+\sum_{(k,i)\in \mathrm{I}_1}h_i^k=:
\sum_{(k,i)\in \mathrm{I}_0}\lambda_{k,i}^{(0)}a_{k,i}^{(0)}+
\sum_{(k,i)\in \mathrm{I}_1}\lambda_{k,i}^{(1)}a_{k,i}^{(1)}=:f_0+f_1.
$$
Assume that $d\in\mathbb{Z}_+$ satisfies $d\ge\lfloor n(\frac{1}{p}-1)\rfloor$.
By Definition \ref{17fa} and Lemma \ref{dec}, it is easy to see that
$a_{k,i}^{(0)}$ is a local $(1,\infty,d)$-atom supported in the ball $B_i^k$
and $a_{k,i}^{(1)}$ a local $(p,\infty,d)$-atom supported in the ball $B_i^k$.
From \eqref{sdd}, \cite[Theorem 3.14]{yy} and the fact that $f\in h^{\Phi_P}(\rn)$, it follows that
\begin{align*}
\sum_{(k,i)\in \mathrm{I}_0}|\lambda_{k,i}^{(0)}|
&\lesssim\sum_{k\in\zz}2^k\lf|\lf\{x\in E:\ m_N(f)(x)>2^k\r\}\r|
\lesssim\int_{E}m_N(f)(x)\,dx\\
&\sim\int_{E}\frac{m_N(f)(x)}{1+[m_N(f)(x)]^{1-p}}\,dx
\sim\int_{\rn}\Phi_p(m_N(f)(x))\,dx\lesssim1.
\end{align*}
Moreover, using the atomic characterization of $h^1(\rn)$
(see \cite[Theorem 5]{g}), we know
that $f_0\in h^1(\rn)$ and $\|f_0\|_{h^1(\rn)}\lesssim\|f\|_{h^{\Phi_P}(\rn)}$.

As for $f_1$, by \eqref{3.7}, \cite[Theorem 3.14]{yy} and the fact that $f\in h^{\Phi_P}(\rn)$, we find that
\begin{align*}
\sum_{(k,i)\in \mathrm{I}_0}|\lambda_{k,i}^{(1)}|^p
&\lesssim\sum_{k\in\zz}2^{kp}\lf|\lf\{x\in E^\complement:\ m_N(f)(x)(x)>2^k\r\}\r|
\lesssim\int_{E^\complement}[m_N(f)(x)(x)]^{p}\,dx\\
&\sim\int_{E^\complement}\frac{m_N(f)(x)}{1+[m_N(f)(x)]^{1-p}}\,dx
\sim\int_{\rn}\Phi_p(m_N(f)(x))\,dx\lesssim1.
\end{align*}
Then, using the atomic characterization of $h^p(\rn)$ (see \cite[Theorem 5]{g}),
we know that $f_1\in h^p(\rn)$ and $\|f_1\|_{h^p(\rn)}\lesssim\|f\|_{h^{\Phi_P}(\rn)}$.

From the above estimates, we deduce that
$[h^{\Phi_p}(\rn)\cap L^q(\rn)]\subset [h^1(\rn)+h^p(\rn)]$ and, for any
$f\in h^{\Phi_P}(\rn)\cap L^q(\rn)$, there exist $f_0\in h^1(\rn)$ and $f_1\in h^p(\rn)$
such that $f=f_0+f_1$ in $\mathcal{S}'(\rn)$ and
\begin{align}\label{x12}
\|f_0\|_{h^1(\rn)}+\|f_1\|_{h^p(\rn)}\lesssim\|f\|_{h^{\Phi_P}(\rn)}.
\end{align}

Now, we consider the general case. For any $f\in h^{\Phi_P}(\rn)$, by Lemma \ref{den}, we know that
there exist $\{f_l\}_{l\in\nn}\subset h^{\Phi_p}(\rn)\cap L^q(\rn)$ such that
$f=\sum_{l\in\nn}f_l$ in $h^{\Phi_p}(\rn)$ and
$$
\|f_l\|_{h^{\Phi_p}(\rn)}\ls 2^{-l}\|f\|_{h^{\Phi_p}(\rn)}.
$$
From \eqref{x12}, we deduce that, for any $l\in\nn$,
there exist $f_{l,0}\in h^1(\rn)$ and $f_{l,1}\in h^p(\rn)$
such that $f_l=f_{l,0}+f_{l,1}$ in $\mathcal{S}'(\rn)$ and
\begin{align}\label{xx12}
\|f_{l,0}\|_{h^1(\rn)}+\|f_{l,1}\|_{h^p(\rn)}\lesssim\|f_l\|_{h^{\Phi_P}(\rn)}
\lesssim2^{-l}\|f\|_{h^{\Phi_p}(\rn)}.
\end{align}
By this, we know that,
for any $M,\ N\in\nn$ with $M>N$,
$$
\lf\|\sum^{M}_{l=N}f_{l,0}\r\|_{h^1(\rn)}\le\sum^{M}_{l=N}\lf\|f_{l,0}\r\|_{h^1(\rn)}
\lesssim\sum^{M}_{l=N}2^{-l}\|f\|_{h^{\Phi_p}(\rn)}\lesssim2^{-N}\|f\|_{h^{\Phi_p}(\rn)}
$$
and
$$
\lf\|\sum^{M}_{l=N}f_{l,1}\r\|_{h^p(\rn)}^p\le\sum^{M}_{l=N}\lf\|f_{l,1}\r\|_{h^p(\rn)}^p
\lesssim\sum^{M}_{l=N}2^{-lp}\|f\|_{h^{\Phi_p}(\rn)}^p\lesssim2^{-Np}\|f\|_{h^{\Phi_p}(\rn)}^p,
$$
which implies that
$\sum_{l\in\nn}f_{l,0}$ and
$\sum_{l\in\nn}f_{l,1}$ converge in $\cs'(\rn)$.
Let $f_0:=\sum_{l\in\nn}f_{l,0}$ and $f_1:=\sum_{l\in\nn}f_{l,1}$.
It follows from \eqref{xx12} that $f_0\in h^1(\rn)$, $f_1\in h^p(\rn)$
and $f_l=f_{l,0}+f_{l,1}$ in $\mathcal{S}'(\rn)$ satisfying
that
$$
\|f_0\|_{h^1(\rn)}\leq\sum_{l\in\nn}\|f_{l,0}\|_{h^1(\rn)}\lesssim\|f\|_{h^{\Phi_p}(\rn)}
$$
and
$$
\|f_1\|_{h^p(\rn)}^p\leq\sum_{l\in\nn}\|f_{l,1}\|_{h^p(\rn)}^p\lesssim\|f\|_{h^{\Phi_p}(\rn)}^p.
$$
Then we have $f=f_0+f_1$ in $\mathcal{S}'(\rn)$ and
$$
\|f_0\|_{h^1(\rn)}+\|f_1\|_{h^p(\rn)}\lesssim\|f\|_{h^{\Phi_P}(\rn)},
$$
which implies that
$f\in h^1(\rn)+h^p(\rn)$ and hence $h^{\Phi_p}(\rn)\subset
[h^1(\rn)+h^p(\rn)]$. This finishes the proof of Theorem \ref{FINN}.
\end{proof}

We have the following further relations on
$h^p(\rn)$ and $h^{\Phi_P}(\rn)$ with
$p\in(0,1)$ and $\Phi_p$ as in \eqref{333}.

\begin{corollary}\label{11p}
Let $p\in(0,1)$ and $\Phi_p$ be as in \eqref{333}.
Then
\begin{itemize}
\item[\rm(i)]
$h^p(\rn)\subsetneqq h^{\Phi_P}(\rn)$;
\item[\rm(ii)]
$[h^p(\rn)]^*$ and $[h^{\Phi_P}(\rn)]^*$ coincide with equivalent quasi-norms.
\end{itemize}
\end{corollary}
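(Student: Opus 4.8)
The plan is to treat the two assertions separately, in each case reducing to facts already available in the paper.

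\textbf{Part (i).} The inclusion $h^p(\rn)\subset h^{\Phi_p}(\rn)$, together with $\|\cdot\|_{h^{\Phi_p}(\rn)}\lesssim\|\cdot\|_{h^p(\rn)}$, is exactly part of Lemma \ref{444}(i), so only the strictness needs proof. By Theorem \ref{FINN} we have $h^{\Phi_p}(\rn)=h^1(\rn)+h^p(\rn)$ with equivalent quasi-norms; hence it suffices to produce an $f\in h^1(\rn)\setminus h^p(\rn)$, since then $f\in h^{\Phi_p}(\rn)$ while $f\notin h^p(\rn)$. I would take a lacunary sum of well-separated bumps: fix $\psi\in C_c^\infty(\rn)$ with $\psi\ge0$, $\supp\psi\subset B(\vec{0}_n,1/2)$ and $\int_\rn\psi\,dx>0$; pick $\{x_j\}_{j\ge2}\subset\rn$ with $|x_j-x_k|\ge10$ for $j\neq k$; and set $f:=\sum_{j\ge2}c_j\psi(\cdot-x_j)$ with $c_j:=[j(\log(j+1))^2]^{-1}$. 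A bounded compactly supported function supported in a ball of radius $<1$ lies in $h^1(\rn)$ (no vanishing moment is needed for $h^1$), and the $h^1$ quasi-norm is translation invariant, so $\|\psi(\cdot-x_j)\|_{h^1(\rn)}=\|\psi\|_{h^1(\rn)}$; since $\sum_jc_j<\infty$, the triangle inequality gives $f\in h^1(\rn)$. On the other hand, for $\varphi$ with $\int_\rn\varphi\,dx=1$ one has $m(f,\varphi)(x)\ge|f(x)|$ at Lebesgue points, and by disjointness of the supports $|f|^p=\sum_jc_j^p\psi(\cdot-x_j)^p$ pointwise; hence $\|f\|_{h^p(\rn)}^p\ge\|f\|_{L^p(\rn)}^p=\|\psi\|_{L^p(\rn)}^p\sum_jc_j^p=\infty$, because $p\in(0,1)$ forces $\sum_jc_j^p$ to diverge. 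Thus $f\in h^{\Phi_p}(\rn)\setminus h^p(\rn)$.

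\textbf{Part (ii).} I would identify both dual spaces and compare. Set $\alpha:=1/p-1$ and $d:=\lfloor n\alpha\rfloor=\lfloor n(1/p-1)\rfloor$. By Remark \ref{r11}(ii) together with Lemma \ref{GR-DDY}, $(h^p(\rn))^*=\mathcal{L}_{\loc}^{\alpha}(\rn)=\Lambda_{n\alpha}(\rn)$ with equivalent norms; by Remark \ref{rem-add1} (via Lemma \ref{lem-dualMHC}) with this $d$, $(h^{\Phi_p}(\rn))^*=\mathcal{L}_{\loc}^{\Phi_p}(\rn)$. So (ii) reduces to showing $\mathcal{L}_{\loc}^{\Phi_p}(\rn)=\Lambda_{n\alpha}(\rn)$ with equivalent norms. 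The bound $\|g\|_{\mathcal{L}_{\loc}^{\Phi_p}(\rn)}\lesssim\|g\|_{\Lambda_{n\alpha}(\rn)}$ is already established inside the proof of Theorem \ref{thm-main2} (see \eqref{rrr} and \eqref{r9}). For the converse, I would use $\|\mathbf{1}_B\|_{L^{\Phi_p}(\rn)}\sim\min\{|B|,|B|^{1/p}\}$ from Lemma \ref{444}(i), which equals $|B|^{1/p}$ when $|B|<1$ and $|B|$ when $|B|\ge1$: for $|B|<1$, $|B|^{-\alpha}\dashint_B|g-P_B^dg|\sim\|\mathbf{1}_B\|_{L^{\Phi_p}(\rn)}^{-1}\int_B|g-P_B^dg|\le\|g\|_{\mathcal{L}_{\loc}^{\Phi_p}(\rn)}$; for $|B|\ge1$, since $|B|^{1+\alpha}\ge|B|$, $|B|^{-\alpha}\dashint_B|g|\le|B|^{-1}\int_B|g|\sim\|\mathbf{1}_B\|_{L^{\Phi_p}(\rn)}^{-1}\int_B|g|\le\|g\|_{\mathcal{L}_{\loc}^{\Phi_p}(\rn)}$. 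Taking suprema and applying Lemma \ref{GR-DDY} yields $\|g\|_{\Lambda_{n\alpha}(\rn)}\sim\|g\|_{\mathcal{L}_{\loc}^{\alpha}(\rn)}\lesssim\|g\|_{\mathcal{L}_{\loc}^{\Phi_p}(\rn)}$, which completes (ii). Alternatively, one may combine Theorem \ref{FINN} with the standard duality $(A_0+A_1)^*=A_0^*\cap A_1^*$ recalled in \cite{bl} to get $(h^{\Phi_p}(\rn))^*=\bmo(\rn)\cap\Lambda_{n\alpha}(\rn)$, and then note $\Lambda_{n\alpha}(\rn)\subset\bmo(\rn)$ because functions in $\Lambda_{n\alpha}(\rn)$ are bounded.

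\textbf{Main obstacle.} The only genuinely non-mechanical step is the counterexample in (i) establishing $h^1(\rn)\not\subset h^p(\rn)$; everything else is a matter of unwinding the already-proven duality identifications and the elementary estimate for $\|\mathbf{1}_B\|_{L^{\Phi_p}(\rn)}$. The one subtlety in (ii) — that the $L^{\Phi_p}$-normalization and the $|B|^{\alpha}$-normalization in the definitions of $\mathcal{L}_{\loc}^{\Phi_p}(\rn)$ and $\mathcal{L}_{\loc}^{\alpha}(\rn)$ differ in the regime $|B|\ge1$ — is harmless precisely because $\Lambda_{n\alpha}(\rn)=\mathcal{L}_{\loc}^{\alpha}(\rn)$ consists of bounded functions, so on large balls both conditions are controlled by the $L^\infty$-norm.
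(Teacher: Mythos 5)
Your proposal is correct and follows essentially the same route as the paper: part (ii) is proved exactly as in the paper, by identifying both duals as local (Orlicz) Campanato spaces via Lemma \ref{lem-dualMHC} and Remark \ref{r11}(ii), observing that the two normalizations agree for $|B|<1$ because $\|\mathbf 1_B\|_{L^{\Phi_p}(\rn)}\sim|B|^{1/p}$ there, and handling the regime $|B|\ge 1$ through the boundedness of functions in $\Lambda_{n\alpha}(\rn)$ via Lemma \ref{GR-DDY}. For part (i) the paper simply asserts that strictness "easily follows" from Theorem \ref{FINN}; your lacunary sum of separated bumps with $\sum_j c_j<\infty$ but $\sum_j c_j^p=\infty$ supplies the missing witness of $h^1(\rn)\setminus h^p(\rn)$ and is a correct and welcome elaboration (your side remark invoking $(A_0+A_1)^*=A_0^*\cap A_1^*$ would, however, need justification for quasi-Banach couples and is best left as the unused alternative you present it as).
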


\begin{proof}
From Theorem \ref{FINN}, it easily follows that $h^p(\rn)\subsetneqq h^{\Phi_P}(\rn)$,
which completes the proof of (i).

Now we show (ii).
Let $d:=\lfloor n(1/p-1)\rfloor$ and $\alpha:=1/p-1$.
From this, Lemmas \ref{444}(i) and \ref{lem-dualMHC}, it follows that
$(h^{\Phi_{p}}(\rn))^*=\mathcal{L}_{\loc}^{\Phi_{p},1,d}(\rn)$
and, for any $f\in\mathcal{L}_{\loc}^{\Phi_{p},1,d}(\rn)$,
\begin{align}\label{pp}
\|f\|_{\mathcal{L}_{\loc}^{\Phi_{p},1,d}(\rn)}&\sim
\dsup_{\mathrm{ball\,}B\subset\rn,|B|<1} \frac1{|B|^{1/p}} \int_B |f(x)-P_B^d f(x)|\,dx\\\noz
&\quad+\dsup_{\mathrm{ball\,}B\subset\rn,|B|\geq1} \frac1{|B|} \int_B |f(x)|\,dx
<\infty,
\end{align}
where $P_B^df$ for any ball $B\subset\rn$ denotes the minimizing polynomial of $g$ on $B$ with degree
not greater than $d$.
Moreover, by Remark \ref{r11}(i), we know that
$(h^{p}(\rn))^*=\mathcal{L}_{\loc}^{\alpha,1,d}(\rn)$
and, for any $f\in\mathcal{L}_{\loc}^{\alpha,1,d}(\rn)$
\begin{align*}
\|f\|_{\mathcal{L}_{\loc}^{\alpha,1,d}(\rn)}&:=\sup_{\mathrm{ball\,}B\subset\rn,|B|<1}
\frac{1}{|B|^{1/p}}\lf[\int_{B}|f(x)-P_B^d f(x)|\,dx\r]\\
&\quad+\sup_{\mathrm{ball\,}B\subset\rr,|B|\geq1}
\frac{1}{|B|^{1/p}}\int_{B}|f(x)|\,dx
<\infty.
\end{align*}
From this and \eqref{pp}, we deduce that, for any $f\in\mathcal{L}_{\loc}^{\Phi_{p},1,d}(\rn)$,
\begin{align}\label{990}
\|f\|_{\mathcal{L}_{\loc}^{\alpha,1,d}(\rn)}\lesssim\|f\|_{\mathcal{L}_{\loc}^{\Phi_{p},1,d}(\rn)}.
\end{align}
Thus, $\mathcal{L}_{\loc}^{\Phi_{p},1,d}(\rn)\subset
\mathcal{L}_{\loc}^{\alpha,1,d}(\rn)$.
On the other hand, by Lemma \ref{GR-DDY}, we conclude that,
for any $f\in\mathcal{L}_{\loc}^{\az,1,d}(\rn)$,
\begin{align*}
\dsup_{\mathrm{ball\,}B\subset\rn,|B|\geq1} \frac1{|B|} \int_B |f(x)|\,dx
\le\|f\|_{L^{\infty}(\rn)}\le\|f\|_{\Lambda_{n\alpha}(\rn)}\sim
\|f\|_{\mathcal{L}_{\loc}^{\alpha,1,d}(\rn)}
<\infty,
\end{align*}
which implies that
$$
\|f\|_{\mathcal{L}_{\loc}^{\Phi_{p},1,d}(\rn)}\lesssim\|f\|_{\mathcal{L}_{\loc}^{\az,1,d}(\rn)}.
$$
Thus, $\mathcal{L}_{\loc}^{\alpha,1,d}(\rn)
\subset\mathcal{L}_{\loc}^{\Phi_{p},1,d}(\rn)$.
This, together with \eqref{990}, then finishes the proof of (ii) and hence
of Corollary \ref{11p}.
\end{proof}

To obtain a new structure of $H^{\Phi_p}(\rn)$,
we need the following Calder\'on--Zygmund decomposition of on its the elements, 
which was established in \cite[Section 5.2]{Ky14}.

\begin{lemma}\label{dec2}
Let $p\in(0,1)$, $q\in(1,\infty)$ and $\Phi_p$ be as in \eqref{333}. Assume that $d\in\mathbb{Z}_+$ satisfies $d\ge\lfloor n(\frac{1}{p}-1)\rfloor$.
Then, for any
$f\in H^{\Phi_p}(\rn)\cap L^q(\rn)$,
there exists a sequence $\{h_i^k\}_{k\in\zz,i\in\nn}$
of elements in $L^\infty(\rn)$ supported, respectively, in balls
$\{B_i^k\}_{k\in\zz,i\in\nn}$
such that
\begin{equation*}
f=\sum_{k\in\zz,i\in\nn}h_i^k \qquad  \textup{in}\;\cs'(\rn)
\end{equation*}
and, for any $k\in\zz$,
\begin{equation*}
\mathcal{O}_k=\cup_{i\in\nn}(B_i^k)^\ast\ and\
\{(B_i^k)^\ast\}_{i\in\nn}\ has\ finite\ intersection\ property,
\end{equation*}
where $\mathcal{O}_k:=\{x\in\rn:\ M_N(f)(x)>2^k\}$
with $N:=\lfloor \frac{n}{p}+1\rfloor$, and $(B_i^k)^\ast:=cB_i^k$ with $c\in(0,1)$ being a constant.
Moreover, there exists a positive constant $C$ such that, for any $k\in\zz$ and $i\in\nn$,
\begin{equation*}
|h_i^k|\le C2^k
\end{equation*}
and, for any multi-index $\alpha$ satisfying $|\alpha|\leq d$, it holds true
that
\begin{equation*}
\int_{\rn}x^\alpha h_i^k(x)\,dx=0.
\end{equation*}
\end{lemma}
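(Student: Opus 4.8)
The plan is to run the classical Calder\'on--Zygmund decomposition at height $2^k$, attached to the \emph{global} grand maximal function $M_N$, exactly in the spirit of \cite[Section 5.2]{Ky14} and of the argument behind Lemma \ref{dec}, but with two simplifications: since $\Phi_p$ in \eqref{333} carries no dependence on the spatial variable, none of the $\log(e+|x|)$ bookkeeping of the general Musielak--Orlicz theory is needed; and since we work with $M_N$ rather than $m_N$, every piece $h_i^k$ will automatically enjoy the full vanishing moment condition up to order $d$, with no restriction on the size of the supporting ball. First I would record the two elementary facts that make the construction go through: (a) for $N:=\lfloor n/p+1\rfloor$ large enough, $f\in H^{\Phi_p}(\rn)$ forces $M_N(f)\in L^{\Phi_p}(\rn)$, hence $M_N(f)<\fz$ almost everywhere, and $M_N(f)$ is lower semicontinuous, so that $\mathcal{O}_k:=\{x\in\rn:\ M_N(f)(x)>2^k\}$ is open; and (b) since $\Phi_p(2^k)>0$, the Chebyshev inequality gives $|\mathcal{O}_k|\le[\Phi_p(2^k)]^{-1}\int_{\rn}\Phi_p(M_N(f)(x))\,dx<\fz$, and $|\mathcal{O}_k|\to0$ as $k\to\fz$.

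Next, for each fixed $k\in\zz$ I would take a Whitney decomposition $\{Q_i^k\}_{i\in\nn}$ of $\mathcal{O}_k$ into dyadic cubes with pairwise disjoint interiors and $\ell(Q_i^k)\sim\dist(Q_i^k,\mathcal{O}_k^\complement)$, let $B_i^k$ be a ball concentric with a fixed small dilate of $Q_i^k$ and of comparable radius, fix a smooth partition of unity $\{\xi_i^k\}_{i\in\nn}$ subordinate to $\{(B_i^k)^\ast\}_{i\in\nn}$ with the usual derivative bounds, and for each $i$ choose the unique $P_i^k\in\cp_d(\rn)$ for which $\int_{\rn}[f(x)-P_i^k(x)]\xi_i^k(x)x^\alpha\,dx=0$ for all $|\alpha|\le d$. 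Putting $b_i^k:=(f-P_i^k)\xi_i^k$ and $g^k:=f-\sum_{i\in\nn}b_i^k$, the decomposition in the statement is obtained by the telescoping identity $f=\sum_{k\in\zz}(g^{k+1}-g^k)$ and by rewriting $g^{k+1}-g^k$ as a locally finite sum of functions $h_i^k$, each supported in $B_i^k$, each equal to $b_i^k$ corrected by polynomial multiples of those $\xi_j^{k+1}$ whose supports meet $B_i^k$; the correction is designed so that $\int_{\rn}x^\alpha h_i^k(x)\,dx=0$ for $|\alpha|\le d$. Convergence $\sum_{k,i}h_i^k=f$ in $\cs'(\rn)$ then follows from $g^k\to f$ as $k\to\fz$ (using $|\mathcal{O}_k|\to0$) and $g^k\to0$ as $k\to-\fz$ (using $M_N(f)<\fz$ a.e.), together with the finite overlap of $\{(B_i^k)^\ast\}_i$; the assertions $\mathcal{O}_k=\bigcup_i(B_i^k)^\ast$ and the finite intersection property are immediate from the Whitney construction.

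The main obstacle, as usual, is the uniform pointwise estimate $|h_i^k|\le C2^k$. The heart of it is to bound $\sup_{x\in B_i^k}|P_i^k(x)|$: by the Whitney property there is a point $y\in\mathcal{O}_k^\complement$ at distance $\lesssim\ell(Q_i^k)$ from $B_i^k$, so testing $M_N(f)(y)\le2^k$ against a rescaled bump adapted to a fixed dilate $c'B_i^k$ yields $\dashint_{c'B_i^k}|f(x)|\,dx\lesssim2^k$; combining this with a variant of Lemma \ref{lem-ep} for polynomials minimizing against the measure $\xi_i^k\,dx$ (equivalently, with the explicit formula for $P_i^k$ in a fixed polynomial basis) gives $\sup_{B_i^k}|P_i^k|\lesssim2^k$. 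Hence $|b_i^k|\le|f|\xi_i^k+\sup_{B_i^k}|P_i^k|\,\xi_i^k$, and on $\supp\xi_i^k\subset\mathcal{O}_k$ one uses once more that a grand maximal average of $f$ over a ball lying outside $\mathcal{O}_k$ already dominates $|f|$ there, so that in fact $|b_i^k|\lesssim2^k$; the corrector terms $P_j^{k+1}\xi_j^{k+1}$ appearing in $h_i^k$ are controlled identically at level $k+1$, and the finite overlap of $\{(B_j^{k+1})^\ast\}_j$ keeps the $j$-sum bounded. Since $\Phi_p$ is an honest Orlicz function (no spatial weight), all these bounds are uniform in $x$, and the whole scheme is just the scalar case of \cite{Ky14}, or equivalently the argument of \cite[Section 5]{yy2} with $m_N$ replaced by $M_N$ and the size restriction $|B|<1$ on the balls deleted.
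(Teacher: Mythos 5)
Your overall architecture is the right one, and it matches what the paper actually does here: the paper gives no proof of Lemma \ref{dec2} at all, but simply quotes the Calder\'on--Zygmund construction of \cite[Section 5.2]{Ky14} specialized to the spatially independent Orlicz function $\Phi_p$, which is exactly the scheme you describe (Whitney decomposition of $\mathcal{O}_k$, partition of unity, projections $P_i^k$ onto $\cp_d(\rn)$ with respect to $\xi_i^k\,dx$, telescoping $f=\sum_k(g^{k+1}-g^k)$, and second-order corrections $P^{k+1}_{i,j}\xi_j^{k+1}$ to restore the vanishing moments). Your preliminary observations (a) and (b), the support and finite-overlap claims, and the moment condition are all fine.

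There is, however, a genuine gap in your justification of the crucial estimate $|h_i^k|\le C2^k$. You claim that $|b_i^k|\lesssim 2^k$, arguing that on $\supp\xi_i^k\subset\mathcal{O}_k$ "a grand maximal average of $f$ over a ball lying outside $\mathcal{O}_k$ already dominates $|f|$ there." This is false: $M_N(f)(y)\le 2^k$ at a point $y\in\mathcal{O}_k^\complement$ controls smooth normalized averages of $f$ centered near $y$, not pointwise values of $f$ at other points, and inside $\mathcal{O}_k$ (in particular inside $\mathcal{O}_{k+1}$) the function $f$ is in general unbounded, so $b_i^k=(f-P_i^k)\xi_i^k$ is \emph{not} bounded by $C2^k$. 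For the same reason your intermediate claim $\dashint_{c'B_i^k}|f|\,dx\lesssim 2^k$ does not follow from testing $M_N(f)$ at a nearby exterior point (only averages against smooth bumps are controlled; this part is reparable since the projection coefficients $\int f\xi_i^k e_m\,dx$ are such averages). The correct route, which your sketch omits, is to use the cancellation built into the telescoping: writing
\begin{align*}
h_i^k=g^{k+1}\xi_i^k-P_i^k\xi_i^k+\sum_{j}P_{i,j}^{k+1}\xi_j^{k+1},
\end{align*}
one bounds $\|g^{k+1}\|_{L^\infty(\rn)}\lesssim 2^{k+1}$ (because $g^{k+1}=f$ a.e.\ on $\mathcal{O}_{k+1}^\complement$, where $|f|\le M_N(f)\le 2^{k+1}$ at Lebesgue points, while $g^{k+1}=\sum_jP_j^{k+1}\xi_j^{k+1}$ on $\mathcal{O}_{k+1}$, where each summand is $\lesssim2^{k+1}$ with bounded overlap), and then uses $\sup_{B_i^k}|P_i^k|\lesssim2^k$ and $\sup_j|P_{i,j}^{k+1}\xi_j^{k+1}|\lesssim2^{k+1}$. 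Without this cancellation of the unbounded part of $f$ on $\mathcal{O}_{k+1}$ between $f\xi_i^k$ and $\sum_j\xi_i^kb_j^{k+1}$, the uniform bound $|h_i^k|\le C2^k$, which is precisely what Theorem \ref{FIN2} needs, is not established.
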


\begin{theorem}\label{FIN2}
Let $p\in(0,1)$ and $\Phi_p$ be as in \eqref{333}. Then the space
$H^{\Phi_p}(\rn)$ and $H^1(\rn)+H^p(\rn)$ coincide with equivalent quasi-norms.
\end{theorem}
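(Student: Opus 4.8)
The plan is to argue exactly as in the proof of Theorem \ref{FINN}, replacing the local Calder\'on--Zygmund decomposition (Lemma \ref{dec}) by its homogeneous counterpart (Lemma \ref{dec2}), the local atomic characterizations of $h^1(\rn)$ and $h^p(\rn)$ by the classical atomic characterizations of $H^1(\rn)$ and $H^p(\rn)$ (see, for instance, \cite[p.\,21, Theorem 1.1]{Lu95}), and the local grand maximal function $m_N$ by the grand maximal function $M_N$. The easy inclusion $[H^1(\rn)+H^p(\rn)]\subset H^{\Phi_p}(\rn)$ is handled precisely as in Theorem \ref{FINN}: for any decomposition $f=f_0+f_1$ with $f_0\in H^1(\rn)$ and $f_1\in H^p(\rn)$, applying \eqref{555} to $M(f_0,\varphi)$ and $M(f_1,\varphi)$ gives $\|f_0\|_{H^{\Phi_p}(\rn)}\le\|f_0\|_{H^1(\rn)}$ and $\|f_1\|_{H^{\Phi_p}(\rn)}\le\|f_1\|_{H^p(\rn)}$, whence $\|f\|_{H^{\Phi_p}(\rn)}\lesssim\|f_0\|_{H^1(\rn)}+\|f_1\|_{H^p(\rn)}$ by Lemma \ref{444}(i) and the quasi-triangle inequality of $\|\cdot\|_{H^{\Phi_p}(\rn)}$; taking the infimum over all such decompositions yields $\|f\|_{H^{\Phi_p}(\rn)}\lesssim\|f\|_{H^1(\rn)+H^p(\rn)}$.

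For the reverse inclusion $H^{\Phi_p}(\rn)\subset[H^1(\rn)+H^p(\rn)]$, I would first treat $f\in H^{\Phi_p}(\rn)\cap L^q(\rn)$ for some fixed $q\in(1,\infty)$, and, after normalizing $\|f\|_{H^{\Phi_p}(\rn)}=1$, apply Lemma \ref{dec2} with $d:=\lfloor n(1/p-1)\rfloor$ to obtain $f=\sum_{k\in\zz,\,i\in\nn}h_i^k$ in $\cs'(\rn)$, with each $h_i^k$ supported in a ball $B_i^k$, $|h_i^k|\lesssim2^k$, $\int_{\rn}x^\alpha h_i^k(x)\,dx=0$ for every multi-index $\alpha$ with $|\alpha|\le d$, and, for each fixed $k$, the finite intersection property of $\{cB_i^k\}_{i\in\nn}$ inside $\{M_N(f)>2^k\}$ where $N=\lfloor n/p+1\rfloor$ and $c\in(0,1)$ is the constant of Lemma \ref{dec2}. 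Setting $E:=\{x\in\rn:\ M_N(f)(x)<1\}$, I would split the index set into $\mathrm{I}_0:=\{(k,i):\ |(cB_i^k)\cap E|\ge\frac12|cB_i^k|\}$ and $\mathrm{I}_1:=\{(k,i):\ |(cB_i^k)\cap E^\complement|>\frac12|cB_i^k|\}$, which are disjoint and cover all indices. For $(k,i)\in\mathrm{I}_0$ set $\lambda_{k,i}^{(0)}:=2^k|B_i^k|$ and $a_{k,i}^{(0)}:=h_i^k/\lambda_{k,i}^{(0)}$, and for $(k,i)\in\mathrm{I}_1$ set $\lambda_{k,i}^{(1)}:=2^k|B_i^k|^{1/p}$ and $a_{k,i}^{(1)}:=h_i^k/\lambda_{k,i}^{(1)}$; from the size bound $|h_i^k|\lesssim2^k$ and the moment conditions, up to a fixed multiplicative constant $a_{k,i}^{(0)}$ is a $(1,\infty,d)$-atom and $a_{k,i}^{(1)}$ a $(p,\infty,d)$-atom (in the classical sense). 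As in \eqref{3.6}, \eqref{3.7} and the subsequent lines of the proof of Theorem \ref{FINN}, the finite intersection property gives $\sum_{(k,i)\in\mathrm{I}_0}\lambda_{k,i}^{(0)}\lesssim\sum_{k\in\zz}2^k|\{x\in E:\ M_N(f)(x)>2^k\}|\lesssim\int_E M_N(f)(x)\,dx$ and $\sum_{(k,i)\in\mathrm{I}_1}[\lambda_{k,i}^{(1)}]^p\lesssim\sum_{k\in\zz}2^{kp}|\{x\in E^\complement:\ M_N(f)(x)>2^k\}|\lesssim\int_{E^\complement}[M_N(f)(x)]^p\,dx$; since $M_N(f)<1$ on $E$ and $M_N(f)\ge1$ on $E^\complement$, both right-hand sides are $\sim\int_{\rn}\Phi_p(M_N(f)(x))\,dx\lesssim1$ by the equivalence $\|f\|_{H^{\Phi_p}(\rn)}\sim\|M_N(f)\|_{L^{\Phi_p}(\rn)}$ (see \cite{Ky14}). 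The classical atomic decomposition theorems for $H^1(\rn)$ and $H^p(\rn)$ then give $f_0:=\sum_{(k,i)\in\mathrm{I}_0}\lambda_{k,i}^{(0)}a_{k,i}^{(0)}\in H^1(\rn)$ and $f_1:=\sum_{(k,i)\in\mathrm{I}_1}\lambda_{k,i}^{(1)}a_{k,i}^{(1)}\in H^p(\rn)$, with $f=f_0+f_1$ in $\cs'(\rn)$ and $\|f_0\|_{H^1(\rn)}+\|f_1\|_{H^p(\rn)}\lesssim1$.

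To pass to a general $f\in H^{\Phi_p}(\rn)$, I would use the telescoping density argument from the end of the proof of Theorem \ref{FINN}: since $H^{\Phi_p}(\rn)\cap L^q(\rn)$ is dense in $H^{\Phi_p}(\rn)$ (a standard fact for Orlicz Hardy spaces, provable exactly as Lemma \ref{den}; see also \cite{Ky14}), write $f=\sum_{l\in\nn}f_l$ in $H^{\Phi_p}(\rn)$ with $f_l\in H^{\Phi_p}(\rn)\cap L^q(\rn)$ and $\|f_l\|_{H^{\Phi_p}(\rn)}\lesssim2^{-l}\|f\|_{H^{\Phi_p}(\rn)}$, decompose each $f_l=f_{l,0}+f_{l,1}$ as above, and set $f_0:=\sum_{l\in\nn}f_{l,0}$ and $f_1:=\sum_{l\in\nn}f_{l,1}$; the geometric decay of the norms, the sub-additivity of $\|\cdot\|_{H^1(\rn)}$ and of $\|\cdot\|_{H^p(\rn)}^p$, and the completeness of $H^1(\rn)$ and $H^p(\rn)$ ensure that these series converge in $\cs'(\rn)$ to $f_0\in H^1(\rn)$ and $f_1\in H^p(\rn)$ with $f=f_0+f_1$ and $\|f_0\|_{H^1(\rn)}+\|f_1\|_{H^p(\rn)}\lesssim\|f\|_{H^{\Phi_p}(\rn)}$, giving the remaining inclusion together with the quasi-norm equivalence. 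The only steps requiring genuine care are the verification that the normalized Calder\'on--Zygmund pieces are atoms and the two layer-cake weak-type estimates, and both are in fact \emph{easier} here than in the local case of Theorem \ref{FINN}: Lemma \ref{dec2} supplies the full vanishing moment condition for every ball $B_i^k$ regardless of its measure, so there is no need to distinguish balls of measure less than $1$ from larger ones. Consequently I do not anticipate any serious obstacle.
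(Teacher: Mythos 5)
Your proposal is correct and coincides with the paper's own proof, which consists precisely of the instruction to repeat the argument for Theorem \ref{FINN} verbatim with Lemma \ref{dec} replaced by Lemma \ref{dec2} and the local atomic characterizations and maximal functions replaced by their global counterparts; you have simply written out the details the paper omits. Your observation that the homogeneous case is slightly easier (full moment conditions on all balls, no case distinction on $|B_i^k|$) is accurate.
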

\begin{proof}
Completely repeating the proof of Theorem \ref{FINN} via replacing
Lemma \ref{dec} used therein by the above Lemma \ref{dec2},
we can easily obtain the conclusion of Theorem \ref{FIN2} and we omit
the details here. This finishes the proof of Theorem \ref{FIN2}.
\end{proof}

\begin{remark}
\begin{itemize}
\item[\rm (i)] Let $p\in(0,1)$. From Theorem \ref{FIN2},
it follows that $H^p(\rn)\subsetneqq H^{\Phi_p}(\rn)$ and $(H^{\Phi_p}(\rn))^\ast\subset(H^p(\rn))^\ast$.
In Theorem \ref{FIN2}, if we replace $H^{\Phi_p}(\rn)$ and $H^p(\rn)$,
respectively, by $H^{\phi_p}(\rn)$ and $H^{p}_{W_p}(\rn)$,
where $H^{p}_{W_p}(\rn)$ denotes the weighted Hardy space associated with $W_p$ as in \cite[(1.7)]{c}, then
the corresponding conclusions of Theorem \ref{FIN2}
still hold true, which is just \cite[Theorem 1.2]{c}.
Moreover, using the equivalence that $\|\mathbf1_B\|_{L^{p}_{W_p}(\rn)}\sim\|\mathbf1_B\|_{L^{\phi_p}(\rn)}$
for any $B\subset\rn$ with the positive equivalence constants independent of $B$,
Cao et al. in \cite[Remark 3.1]{c} further showed that $[H^{p}_{W_p}(\rn)]^*$
and $[H^{\phi_p}(\rn)]^*$ coincide with equivalent quasi-norms.
However, from Lemma \ref{444}(i), we deduce that, for any $B\subset\rn$ with $|B|>1$,
$\|\mathbf1_B\|_{L^{\Phi_p}(\rn)}\sim|B|$ and $\|\mathbf1_B\|_{L^{p}(\rn)}=|B|^{1/p}$.
Thus, we cannot use a method similar to that used in the proof of \cite[Remark 3.1]{c} 
to obtain the coincidence of $(H^{\Phi_P}(\rn))^\ast$ and
$(H^p(\rn))^\ast$, which is still unknown.

\item[\rm (ii)] Let $\Phi$ be as in \eqref{111}. We point out that it looks
difficult to obtain intrinsic structures of $H_\ast^{\Phi}(\rn)$ and $h_\ast^{\Phi}(\rn)$ 
via classic Hardy spaces $H^p(\rn)$ or $h^p(\rn)$, with $p\in(0,1]$, because
the structure of the space $L_\ast^{\Phi}(\rn)$ is complicated
and, moreover, the lower type of $\Phi$ as an Orlicz function is uncertain, indeed,
$\Phi$ is an Orlicz function with positive upper type $1$ and positive lower type $q$ 
for any $q\in(0,1)$.
\end{itemize}
\end{remark}

Let $\az\in(0,\infty)$.
For any ball $B(c_B,r_B)\subset\rn$ with $c_B\in\rn$
and $r_B\in(0,\infty)$, let
\begin{align}\label{psiB}
\Psi_\alpha (B):=
\begin{cases} \dfrac{|B|^\az}{(1+|c_B|+r_B)^{n\az}} &\qquad \textup{when}\; n\alpha\notin \nn,
\vspace{0.2cm}\\
\dfrac{|B|^\az}{(1+|c_B|+r_B)^{n\az}\log(e+|c_B|+r_B)}&\qquad \textup{when}\; n\alpha\in \nn.
\end{cases}
\end{align}

We now recall some notions about
local BMO spaces in \cite{yy2}.

\begin{definition}\label{bmo1}
Let $p\in(0,1)$, $\az:=1/p-1$, $d:=\lfloor n(1/p-1)\rfloor$ and
$\phi_p$ be as in \eqref{356}.
\begin{itemize}
\item[(i)]
The \emph{local $\BMO$-type space ${\bmo}^{\az}(\rn)$} is defined
to be the set of all measurable functions $f\in L_{\loc}^1(\rn)$ such that
\begin{align*}
\|f\|_{{\bmo}^{\az}(\rn)}:&=
\sup_{\mathrm{ball\,}B\subset\rn,\ell(B)<1}
\frac{1}{\Psi_\alpha (B)|B|}\int_{B}|f(x)-P_B^df(x)|\,dx\\\noz
&\quad+\sup_{\mathrm{ball\,}B\subset\rn,\ell(B)\geq1}
\frac{1}{\Psi_\alpha (B)|B|}
\int_{B}|f(x)|\,dx
<\infty,
\end{align*}
where $f_B:=\frac{1}{|B|}\int_{B}f(x)\,dx$ for any ball $B\subset\rn$.
\item[(ii)] The \emph{Musielak--Orlicz space} $L^{\phi_p}(\rn)$
is defined to be the set of all measurable functions $f$
such that $$\|f\|_{L^{\phi_p}(\rn)}
:=\inf\lf\{\lambda\in(0,\infty):\ \int_{\rn}\phi_p
\lf(x,\,\frac{|f(x)|}{\lambda}\r)\,dx\leq1 \r\}<\infty.$$
\item[(iii)]
Then the \emph{local Hardy space $h^{\phi_p}(\rn)$}
is defined by setting
$$
h^{\phi_p}(\rn):=\lf\{f\in\cs'(\rn):\ \|f\|_{h^{\phi_p}(\rn)}:
=\lf\|m(f,\varphi)\r\|_{L^{\phi_p}(\rn)}<\infty\r\},
$$
where $\varphi\in\mathcal{S}(\rn)$ satisfies
$\int_{\rn}\varphi(x)\,dx\neq0.$
\end{itemize}
\end{definition}

We have the following further relations on
$h^{\phi_P}(\rn)$ and $h^{\Phi_P}(\rn)$ with
$p\in(0,1)$, $\Phi_p$ as in \eqref{333} and $\phi_p$ be as in \eqref{356}.

\begin{theorem}\label{cheng}
Let $p\in(0,1)$, $\az:=1/p-1$,
$\phi_p$ be as in \eqref{356} and
$\Phi_p$ be as in \eqref{333}.
\begin{itemize}
\item[\rm(i)]
The dual space of $h^{\phi_p}(\rn)$, denoted by $(h^{\phi_p}(\rn))^*$,
is ${\bmo}^{\az}(\rn)$.
\item[\rm(ii)] ${\bmo}^{\az}(\rn)\subsetneqq\mathcal{L}_{\loc}^{\alpha,1,d}(\rn)$.
\item[\rm(iii)]$h^{\Phi_p}(\rn)\subsetneqq h^{\phi_p}(\rn)$.
\end{itemize}
\end{theorem}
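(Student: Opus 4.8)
The plan is to prove the three assertions of Theorem \ref{cheng} in the order (i), (ii), (iii), deriving each from the preceding ones together with the structural results already established.

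For part (i), the duality $(h^{\phi_p}(\rn))^*={\bmo}^{\az}(\rn)$ should follow from the general duality theory for local Musielak--Orlicz Hardy spaces developed in \cite{yy2}. First I would verify that $\phi_p$ as in \eqref{356} is a growth function (an Orlicz function in the second variable, uniformly locally integrable in the first, of positive uniform upper type $1$ and positive uniform lower type $p$), so that $h^{\phi_p}(\rn)$ and the associated local BMO-type space in \cite{yy2} are well defined; then I would identify the space ${\bmo}^{\az}(\rn)$ of Definition \ref{bmo1}(i) with the local Musielak--Orlicz Campanato space $\mathcal{L}_{\loc}^{\phi_p,1,d}(\rn)$ of \cite{yy2} by unwinding the definition of the norm $\|\mathbf1_B\|_{L^{\phi_p}(\rn)}$ and checking that it is comparable to $\Psi_\alpha(B)|B|$ for every ball $B$ (splitting into the cases $|B|<1$ and $|B|\ge1$, and into $n\alpha\in\nn$ versus $n\alpha\notin\nn$, using the explicit form of $\phi_p$). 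The duality $(h^{\phi_p}(\rn))^*=\mathcal{L}_{\loc}^{\phi_p,1,d}(\rn)$ is then \cite[Theorem 7.5]{yy2} (the same result already quoted as Lemma \ref{lem-dualMHC} in the Orlicz case), with $d\ge\lfloor n(1/p^-_{\phi_p}-1)\rfloor=\lfloor n(1/p-1)\rfloor$.

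For part (ii), I would first prove the inclusion ${\bmo}^{\az}(\rn)\subset\mathcal{L}_{\loc}^{\alpha,1,d}(\rn)$ by a direct norm comparison. For any ball $B=B(c_B,r_B)$ with $|B|<1$ one has $\Psi_\alpha(B)|B|\le|B|^{\az+1}=|B|^{1/p}$ up to harmless constants coming from the factor $(1+|c_B|+r_B)^{n\az}$ (and the logarithm when $n\alpha\in\nn$), which are all $\ge1$; hence the local-part seminorm controlling $\frac1{|B|^{1/p}}\int_B|f-P_B^df|$ is bounded by $\|f\|_{{\bmo}^{\az}(\rn)}$. For $|B|\ge1$ one argues similarly with the $\int_B|f|$ term. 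This gives the inclusion with norm inequality $\|\cdot\|_{\mathcal{L}_{\loc}^{\alpha,1,d}(\rn)}\lesssim\|\cdot\|_{{\bmo}^{\az}(\rn)}$. For strictness I would exhibit an element of $\mathcal{L}_{\loc}^{\alpha,1,d}(\rn)=\Lambda_{n\az}(\rn)$ (by Lemma \ref{GR-DDY}) that is not in ${\bmo}^{\az}(\rn)$: a bounded Lipschitz-type function whose local oscillation does not decay like $\Psi_\alpha(B)$ as the ball recedes to infinity; for instance a suitable smooth bump summed over a lacunary sequence of far-away unit balls, or even a nonzero constant, which lies in $\Lambda_{n\az}(\rn)$ but fails the weighted condition defining ${\bmo}^{\az}(\rn)$ because of the growth factor $(1+|c_B|+r_B)^{-n\az}$ in $\Psi_\alpha(B)$. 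This weight forces genuine decay at infinity, which a constant (or a non-decaying Lipschitz function) does not have, so the inclusion is proper.

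For part (iii), the inclusion $h^{\Phi_p}(\rn)\subset h^{\phi_p}(\rn)$ is immediate from the pointwise inequality $\Phi_p(t)=\Phi_p(t\cdot1)\le\phi_p(x,t)$ for all $x\in\rn$ and $t\in[0,\infty)$ — since $1+|x|\ge1$ and $\log(e+|x|)\ge1$, comparing \eqref{333} with \eqref{356} term by term gives $\phi_p(x,t)\le\Phi_p(t)$... here I must be careful about the direction: in fact $1+[t(1+|x|)^n]^{1-p}\ge1+t^{1-p}$, so $\phi_p(x,t)\le\Phi_p(t)$, hence $L^{\Phi_p}(\rn)\subset L^{\phi_p}(\rn)$ with $\|\cdot\|_{L^{\phi_p}(\rn)}\le\|\cdot\|_{L^{\Phi_p}(\rn)}$, and therefore $\|m(f,\varphi)\|_{L^{\phi_p}(\rn)}\le\|m(f,\varphi)\|_{L^{\Phi_p}(\rn)}$, giving the continuous inclusion $h^{\Phi_p}(\rn)\subset h^{\phi_p}(\rn)$. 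For strictness, the cleanest route is by duality: by Corollary \ref{11p}(ii) and Remark \ref{rem-add1}, $(h^{\Phi_p}(\rn))^*=\mathcal{L}_{\loc}^{\Phi_p,1,d}(\rn)=\mathcal{L}_{\loc}^{\alpha,1,d}(\rn)$, while by part (i) $(h^{\phi_p}(\rn))^*={\bmo}^{\az}(\rn)$; if $h^{\Phi_p}(\rn)=h^{\phi_p}(\rn)$ with equivalent quasi-norms held, then their dual spaces would coincide, contradicting the strict inclusion ${\bmo}^{\az}(\rn)\subsetneqq\mathcal{L}_{\loc}^{\alpha,1,d}(\rn)$ from part (ii). Hence $h^{\Phi_p}(\rn)\subsetneqq h^{\phi_p}(\rn)$.

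The main obstacle I anticipate is part (ii): establishing the strictness of ${\bmo}^{\az}(\rn)\subsetneqq\mathcal{L}_{\loc}^{\alpha,1,d}(\rn)$ rigorously requires producing an explicit function in $\Lambda_{n\az}(\rn)$ and verifying, via the behavior of $\Psi_\alpha(B)$ on balls centered far from the origin, that its ${\bmo}^{\az}(\rn)$ seminorm diverges — the estimates there must track the polynomial (or logarithmic) decay weight carefully, and one must confirm that the minimizing polynomial $P_B^d f$ does not cancel the obstruction. The remaining comparisons are routine once the definitions in \cite{yy2} are matched up; the duality input for part (i) is a direct citation provided the hypotheses on $\phi_p$ are checked.
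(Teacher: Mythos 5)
Your overall route coincides with the paper's: part (i) is a matter of matching $\bmo^{\az}(\rn)$ with the Musielak--Orlicz Campanato dual from the literature; part (ii) follows from the elementary comparison $\Psi_\alpha(B)|B|\le|B|^{\alpha+1}$ together with the constant function $1$ as the witness of strictness (this is exactly the paper's counterexample, and your anticipated difficulty about the minimizing polynomial evaporates for it, since on balls with $|B|\ge1$ the $\bmo^{\az}$-norm uses $\int_B|f|$ with no polynomial subtraction, and $\sup_{|c_B|\to\infty}(1+|c_B|+1)^{n\az}=\infty$ does the job); and part (iii) uses the pointwise inequality $\phi_p(x,t)\le\Phi_p(t)$ for the inclusion and a duality contradiction for strictness.

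There is, however, one genuine gap in your strictness argument for (iii). The statement to be refuted is set-theoretic equality $h^{\Phi_p}(\rn)=h^{\phi_p}(\rn)$, but your contradiction is launched from the stronger hypothesis that the two spaces coincide \emph{with equivalent quasi-norms}. As written, your argument only rules out a bounded isomorphic identification, not coincidence as sets. To close this, you must upgrade set equality plus the one-sided inequality $\|\cdot\|_{h^{\phi_p}(\rn)}\le\|\cdot\|_{h^{\Phi_p}(\rn)}$ to full quasi-norm equivalence; the paper does this by observing that $(h^{\Phi_p}(\rn),\|\cdot\|_{h^{\Phi_p}(\rn)}^{p/2})$ and $(h^{\phi_p}(\rn),\|\cdot\|_{h^{\phi_p}(\rn)}^{p/2})$ are Fr\'echet spaces and invoking the norm-equivalence theorem (a consequence of the open mapping theorem, \cite[Corollary 2.12(d)]{R}). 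Once that step is inserted, the equivalence of quasi-norms yields $(h^{\Phi_p}(\rn))^\ast\subset(h^{\phi_p}(\rn))^\ast$, which contradicts the strict inclusion of duals obtained from (i), (ii) and Corollary \ref{11p}(ii). With this completeness argument supplied, your proof is correct and is essentially the paper's.
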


\begin{proof}
By \cite[Proposition 2.18]{bckly} and \cite[Corollary 7.6]{yy}, we easily obtain (i).

Now we prove (ii). Let $p\in(0,1)$, $\az:=1/p-1$ and
$d:=\lfloor n(1/p-1)\rfloor$.
Applying
the definitions of $\mathcal{L}_{\loc}^{\alpha,1,d}(\rn)$ and ${\bmo}^{\az}(\rn)$,
we easily find that
${\bmo}^{\az}(\rn)\subset\mathcal{L}_{\loc}^{\alpha,1,d}(\rn)$,
$1\in\mathcal{L}_{\loc}^{\alpha,1,d}(\rn)$, $1\notin{\bmo}^{\az}(\rn)$
and hence ${\bmo}^{\az}(\rn)\subsetneqq\mathcal{L}_{\loc}^{\alpha,1,d}(\rn)$.

As for (iii), from Definition \ref{campa} and
Corollary \ref{11p}, it follows that
$(h^{\Phi_p}(\rn))^\ast=\mathcal{L}_{\loc}^{\alpha,1,d}(\rn)$,
which, together with (i) and (ii), further
implies that
\begin{equation}\label{za}
(h^{\phi_p}(\rn))^\ast\subsetneqq(h^{\Phi_p}(\rn))^\ast.
\end{equation}
From the definitions of $h^{\phi_p}(\rn)$ and $h^{\Phi_p}(\rn)$, it is easy to deduce that
\begin{equation}\label{jkl3}
h^{\Phi_p}(\rn)\subset h^{\phi_p}(\rn)
\end{equation} and
\begin{equation}\label{jkl2}
\|\cdot\|_{h^{\phi_p}(\rn)}\le\|\cdot\|_{h^{\Phi_p}(\rn)}.
\end{equation}
Now we show that $h^{\phi_p}(\rn)\subsetneqq h^{\Phi_p}(\rn)$.
Assume that, as sets,
\begin{equation}\label{jkl}
h^{\phi_p}(\rn)= h^{\Phi_p}(\rn).
\end{equation}
From Remark \ref{rem-add1}, \cite[Proposition 2.12]{bckly}, Definitions \ref{bmo1}(iii) and \ref{defn-hmo-1},
we deduce that $(h^{\Phi_p}(\rn),\|\cdot\|_{h^{\Phi_p}(\rn)}^{p/2})$
and $(h^{\phi_p}(\rn),\|\cdot\|_{h^{\phi_p}(\rn)}^{p/2})$ are Fr\'echet
spaces (see, for instance, \cite[p.\,52, Definition 1]{y}),
which, together with \eqref{jkl2} and the norm-equivalence theorem
(see, for instance, \cite[Corollary 2.12(d)]{R}), further implies that
$\|\cdot\|_{h^{\Phi_p}(\rn)}^{p/2}\lesssim\|\cdot\|_{h^{\phi_p}(\rn)}^{p/2}$
and hence $\|\cdot\|_{h^{\Phi_p}(\rn)}\lesssim\|\cdot\|_{h^{\phi_p}(\rn)}$.
By this, we know that,
for any $L\in (h^{\Phi_p}(\rn))^\ast$ and $f\in h^{\phi_p}(\rn)$,
$$
|L(f)|\lesssim\|f\|_{h^{\Phi_p}(\rn)}\lesssim\|f\|_{h^{\phi_p}(\rn)}
$$
and hence $(h^{\Phi_p}(\rn))^\ast\subset(h^{\phi_p}(\rn))^\ast$,
which leads to a
contradiction with \eqref{za}. Thus, the assumption \eqref{jkl} is not true. From this and
\eqref{jkl3}, we further deduce that $h^{\Phi_p}(\rn)\subsetneqq h^{\phi_p}(\rn)$.
This finishes the proof of (iii) and hence of
Theorem \ref{cheng}.
\end{proof}

\section{Div-curl estimates}\label{s6}

In this section, we establish some estimates on the product of elements in the local Hardy space and its
dual space having, respectively, zero $\lfloor n\alpha\rfloor$-inhomogeneous curl and
zero divergence
(see Theorems \ref{div} and \ref{sssa} below).

\subsection{Div-curl estimates on the product of elements in
$h^p(\rn)$ and $\Lambda_{n\alpha}(\rn)$\\ with $p\in(0,1)$ and
$\az:=\frac1p-1$}

In this subsection, applying the target space $h^{\Phi_p}(\mathbb R^n)$ in the
bilinear decomposition theorem in Section \ref{s4}, we establish
an estimate on the product of elements in $h^p(\rn)$ with $p\in(0,1)$
and $\Lambda_{n\alpha}(\rn)$ with $\az:=\frac1p-1$
having, respectively, zero $\lfloor n\alpha\rfloor$-inhomogeneous curl and
zero divergence
(see Theorem \ref{div} below).

In what follows,
for any $\phi\in\cs(\rn)$, $\widehat\phi$ denotes its \emph{Fourier transform}
which is defined by setting, for any $\xi\in\rn$,
$$
\widehat\phi(\xi):=\int_\rn e^{-2\pi \imath x\xi}\phi(x)\,dx,
$$
where $\imath:=\sqrt{-1}$.
For any $f\in\cs'(\rn)$, $\widehat f$ is defined by setting, for any $\varphi\in\mathcal{S}(\rn)$,
$\la\widehat f,\varphi\ra:=\la f,\widehat\varphi\ra$; also, for any $f\in\mathcal{S}(\rn)$
[resp., $\mathcal{S}'(\rn)$],
$f^{\vee}$ denotes its \emph{inverse Fourier transform} which is defined by setting,
for any $\xi\in\rn$, $f^{\vee}(\xi):=\widehat{f}(-\xi)$ [resp., for any $\varphi\in\mathcal{S}(\rn)$,
$\la f^{\vee},\varphi\ra:=\la f,\varphi^{\vee}\ra$].

The following lemma clarifies the relation between $H^p(\rn)$ and
$h^p(\rn)$ (see \cite[Lemma 4]{g} for details).

\begin{lemma}\label{rhh}
Let $p\in(0,1]$ and $\psi \in \mathcal{S}(\rn)$ satisfy $\widehat\psi(\vec{0}_n)=1$ and
$\int_{\rn}\psi(x)x^\alpha\,dx=0$ for any
$\alpha\in\zz_+^n$ with $0<|\alpha|\le\lfloor n(\frac1p-1)\rfloor$.
Then there exists a positive constant
$C$ such that, for any $f \in h^p(\rn)$,
$$\lf\|f-\psi\ast f\r\|_{H^p(\rn)}\le C\|f\|_{h^p(\rn)}.$$
\end{lemma}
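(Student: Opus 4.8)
\textbf{Proof plan for Lemma \ref{rhh}.}

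The plan is to reduce the claim to the corresponding statement for local atoms, using the finite atomic decomposition of $h^p(\rn)$ (Lemma \ref{lem-hp}, or more classically \cite[Theorem 5]{g}) together with the subadditivity of $\|\cdot\|_{H^p(\rn)}^p$. First I would recall that it suffices to prove the estimate $\|a-\psi\ast a\|_{H^p(\rn)}\lesssim 1$ uniformly for every local $(p,2,d)$-atom $a$ with $d\ge\lfloor n(1/p-1)\rfloor$, because then, for a general $f\in h^p(\rn)$ written as $f=\sum_l\mu_l a_l$ with $(\sum_l|\mu_l|^p)^{1/p}\lesssim\|f\|_{h^p(\rn)}$, linearity of the map $f\mapsto f-\psi\ast f$ and the $p$-quasi-triangle inequality in $H^p(\rn)$ give $\|f-\psi\ast f\|_{H^p(\rn)}^p\le\sum_l|\mu_l|^p\|a_l-\psi\ast a_l\|_{H^p(\rn)}^p\lesssim\|f\|_{h^p(\rn)}^p$. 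A density argument (via Lemma \ref{lyyy}-type continuity, or simply the density of finite atomic sums in $h^p(\rn)$) then extends the bound to all of $h^p(\rn)$.

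Next I would fix a local $(p,2,d)$-atom $a$ supported in a ball $B=B(x_0,r)$ and split into the two regimes $r\ge 1$ and $r<1$. When $r\ge 1$: here $a$ need not have any vanishing moments, but $\|a\|_{L^2(\rn)}\le|B|^{1/2-1/p}$ and $\psi\in\mathcal{S}(\rn)$ has $\widehat\psi(\vec 0_n)=1$, so $a-\psi\ast a$ is an $L^2$ function supported essentially near $B$ with an $L^1$-integrable, rapidly decaying tail coming from $\psi\ast a$; one estimates $\|a-\psi\ast a\|_{L^{r_0}(\rn)}$ for suitable $r_0$ and the tail directly, and since bounded compactly-supported-with-controlled-tail functions of this scale lie in $H^p(\rn)$ with norm $\lesssim 1$ (standard when $r\gtrsim 1$, one does not need cancellation), this case is routine. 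When $r<1$: now $a$ satisfies $\int a\,x^\alpha\,dx=0$ for all $|\alpha|\le d$, and the key point is that $b:=a-\psi\ast a$ still has the same vanishing moments up to order $\lfloor n(1/p-1)\rfloor$, because $\widehat{\psi\ast a}=\widehat\psi\,\widehat a$ and $\widehat\psi(\vec 0_n)=1$ together with $\partial^\alpha\widehat\psi(\vec 0_n)=0$ for $0<|\alpha|\le\lfloor n(1/p-1)\rfloor$ (this is exactly the moment hypothesis on $\psi$) force $\partial^\alpha\widehat b(\vec 0_n)=0$ for $|\alpha|\le\lfloor n(1/p-1)\rfloor$, i.e. $\int b(x)x^\alpha\,dx=0$ for those $\alpha$. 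Then one shows $b\in H^p(\rn)$ with $\|b\|_{H^p(\rn)}\lesssim 1$ by producing an atomic (or molecular) decomposition of $b$: the "near" part $b\mathbf1_{2B}$ is an $L^2$-bounded function on $2B$ with the right cancellation, hence a multiple of an $H^p$-atom of harmless size, while the "far" part, governed by the rapid decay of $\psi$ and the cancellation of $a$, decays fast enough to be summed into atoms supported on the dyadic annuli $2^{k+1}B\setminus 2^kB$ with geometrically decaying coefficients whose $p$-th powers sum.

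The main obstacle I anticipate is the careful bookkeeping in the $r<1$ case for the tail of $\psi\ast a$: one must exploit \emph{both} the vanishing moments of $a$ (to gain the factor $r^{d+1}|x-x_0|^{-n-d-1}$-type decay from a Taylor expansion of $\psi$) \emph{and} the Schwartz decay of $\psi$, and then verify that the resulting annular pieces, after renormalization to unit-size $H^p$-atoms on $2^kB$, carry coefficients $c_k$ with $\sum_k|c_k|^p<\infty$ — this requires $d$ (the atom's moment order, inherited from the wavelet regularity) to be large enough relative to $n/p$, which is exactly why the hypotheses pin $d\ge\lfloor n(1/p-1)\rfloor$ and $\psi$ is required to kill moments up to $\lfloor n(1/p-1)\rfloor$. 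Once this convergence is checked, assembling the global bound via the atomic characterization of $H^p(\rn)$ (\cite[p.\,21, Theorem 1.1]{Lu95}) completes the proof; all remaining estimates are standard Calder\'on--Zygmund tail computations and I would not grind through them in detail.
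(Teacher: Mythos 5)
First, note that the paper does not actually prove this lemma: it is quoted as \cite[Lemma 4]{g} (Goldberg), so your argument is being compared against the classical proof rather than one in the text. Your overall strategy --- reduce to local atoms, use $\widehat{(a-\psi\ast a)}=(1-\widehat\psi\,)\widehat a$ together with the moment hypotheses on $\psi$ to produce cancellation, and then run a molecular/annular estimate --- is the standard and correct route, and your accounting of why $d\ge\lfloor n(1/p-1)\rfloor$ suffices for the summability of the annular coefficients is right.

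There is, however, one genuine error: in the case $r\ge 1$ you assert that ``bounded compactly-supported-with-controlled-tail functions of this scale lie in $H^p(\rn)$ with norm $\lesssim 1$ \ldots one does not need cancellation.'' This is false for every $p\in(0,1]$: a function such as $\mathbf{1}_{B(\vec 0_n,1)}$ belongs to $h^p(\rn)$ but not to $H^p(\rn)$, since an integrable, rapidly decaying element of $H^p(\rn)$ must have vanishing moments up to order $\lfloor n(1/p-1)\rfloor$. So size and tail estimates alone cannot close the $r\ge 1$ case, and the step as written fails. The repair is already in your own toolkit: the Leibniz computation $\partial^\alpha\bigl[(1-\widehat\psi\,)\widehat a\,\bigr](\vec 0_n)=0$ for $|\alpha|\le\lfloor n(1/p-1)\rfloor$ uses only $\widehat\psi(\vec 0_n)=1$ and $\partial^\beta\widehat\psi(\vec 0_n)=0$ for $0<|\beta|\le\lfloor n(1/p-1)\rfloor$, hence it gives the vanishing moments of $b:=a-\psi\ast a$ for \emph{every} local atom, large or small; you should then treat $b$ as an $H^p$-molecule adapted to $B$ in both cases. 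Relatedly, in the $r<1$ case the truncation $b\mathbf{1}_{2B}$ does \emph{not} inherit the vanishing moments of $b$, so you cannot call it an atom outright; either invoke the molecular characterization of $H^p(\rn)$ directly (as your parenthetical suggests) or subtract the appropriate polynomial projections on each annulus and telescope in the classical manner. With these two corrections the proof goes through.
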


The next lemma concerns the wavelet characterization
of the Hardy space (see, for instance, \cite[Theorem 4.2]{GM01} or \cite[Theorem 6.3(i)]{LY12}).

\begin{lemma}\label{thm17-3.2}
Let $p\in(0,\,1]$ and $d\in\nn$ with
$d\geq\lfloor n(1/p-1)\rfloor$.
Let $\{\psi_I^{\lz}\}_{I\in\mathcal{D},\,\lz\in E}$ be the wavelets
as in \eqref{www}
with the regularity parameter $d$.
Then $f\in H^p(\rn)$
if and only if
\begin{align*}
\lf\|\mathcal W_\psi f\r\|_{L^p(\rn)}&:=\lf\|\lf\{ \dsum_{I\in\cd}\sum_{\lz\in E}
\lf|\langle f,\,\psi_I^\lz \rangle\r|^2 |I|^{-1}
{\mathbf 1}_I\r\}^{\frac{1}{2}} \r\|_{L^p(\rn)}<\infty.
\end{align*}
Moreover,
$\|f\|_{H^p(\rn)}\sim \|\mathcal W_\psi f\|_{L^p(\rn)}$
with positive equivalence constants independent of $f$.
\end{lemma}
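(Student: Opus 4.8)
Since Lemma \ref{thm17-3.2} is a classical wavelet characterization of $H^p(\rn)$, the plan is to reduce both inclusions to the atomic characterization of $H^p(\rn)$ (for instance, \cite[p.\,21, Theorem 1.1]{Lu95}), which has already been used repeatedly in this article. Concretely, the two assertions to be proved are: (a) $\|\mathcal W_\psi f\|_{L^p(\rn)}\ls\|f\|_{H^p(\rn)}$ for any $f\in H^p(\rn)$, and (b) $\|f\|_{H^p(\rn)}\ls\|\mathcal W_\psi f\|_{L^p(\rn)}$ whenever the right-hand side is finite; combined, they also give the norm equivalence.

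For (a), I would first fix a $(p,2,d)$-atom $a$ as in Definition \ref{defa} supported in a ball $B:=B(c_B,r_B)$ and show that $\|\mathcal W_\psi a\|_{L^p(\rn)}^p\le C$ with $C$ independent of $a$; the general case then follows from the atomic decomposition of $f$ together with the $p$-subadditivity of $\|\cdot\|^p_{L^p(\rn)}$. To bound $\|\mathcal W_\psi a\|_{L^p(\rn)}$, I would split $\rn=2mB\cup(2mB)^\complement$ with $m$ as in \eqref{220}. Over $2mB$, the H\"older inequality together with the $L^2(\rn)$-boundedness of $\mathcal W_\psi$ --- a consequence of the orthonormality of $\{\psi_I^\lz\}_{I\in\cd,\lz\in E}$ and Plancherel's theorem --- and the size condition $\|a\|_{L^2(\rn)}\le|B|^{1/2-1/p}$ give the desired contribution. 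Over $(2mB)^\complement$ only those wavelets $\psi_I^\lz$ whose supports meet $B$ survive; pairing the $d$-order vanishing moments of $a$ (which hold since $d\ge\lfloor n(1/p-1)\rfloor$) against the smoothness estimate (P3) of $\psi_I^\lz$ via a Taylor expansion yields a pointwise decay bound for $|\langle a,\psi_I^\lz\rangle|$ in terms of $\dist(c_B,\cdot)$, which after summation over $I,\lz$ and integration in $x$ produces a constant independent of $a$.

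For (b), starting from the wavelet expansion $f=\sum_{I\in\cd}\sum_{\lz\in E}\langle f,\psi_I^\lz\rangle\psi_I^\lz$, I would run a Calder\'on--Zygmund--type stopping-time argument on the level sets $\mathcal O_k:=\{x\in\rn:\ \mathcal W_\psi f(x)>2^k\}$ with $k\in\zz$: take a Whitney decomposition of each $\mathcal O_k$ and, for each Whitney cube, group together the wavelets $\psi_I^\lz$ whose dilated supports are essentially contained in that cube but not already in $\mathcal O_{k+1}$. Using the orthonormality, the smoothness estimate (P3) and the vanishing moments (P4) of the wavelets, I would verify that each such grouped sum is a fixed constant multiple of $2^k$ times (the measure of a dilate of the Whitney cube)$^{1/p}$ times a $(p,2,d)$-atom, and that the $\ell^p$-sum of the corresponding coefficients is bounded by $\sum_{k\in\zz}2^{kp}|\mathcal O_k|\sim\|\mathcal W_\psi f\|_{L^p(\rn)}^p$. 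Invoking the atomic characterization of $H^p(\rn)$ again then yields $f\in H^p(\rn)$ with the required norm control.

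I expect the main obstacle to be precisely the atomic grouping in step (b): carefully deciding which wavelets attach to which Whitney cube, proving that the resulting sum is indeed (a normalized) $(p,2,d)$-atom --- in particular checking the support and $L^2(\rn)$-size conditions --- and showing that the square-function level-set sum controls the $\ell^p$-norm of the atomic coefficients. This is exactly the tent-space/frame machinery developed in \cite{GM01} and \cite{LY12}, from which one may alternatively simply quote the statement; accordingly, in the present article we invoke those references for the complete details.
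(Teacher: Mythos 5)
The paper does not prove this lemma at all: it is stated as a known result with a pointer to \cite[Theorem 4.2]{GM01} and \cite[Theorem 6.3(i)]{LY12}, which is precisely the option you settle on in your final paragraph. Your preceding sketch is the standard atomic-decomposition argument underlying those references (a uniform bound of $\|\mathcal W_\psi a\|_{L^p(\rn)}$ on $(p,2,d)$-atoms for one direction, and a stopping-time regrouping of the wavelet expansion into atoms for the other), and it is a sound outline consistent with the cited proofs.
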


The following lemma is on the wavelet characterization of
the Campanato space (see, for instance, \cite[Corollary 2]{LLL09} or \cite[Theorem 6.3(i)]{LY12}).

\begin{lemma}\label{thm 3.2}
Let $\az\in[0,\,\fz)$, and $d\in\nn$ with
$d>n/p$.
Let $\{\psi_I^{\lz}\}_{I\in\mathcal{D},\,\lz\in E}$ be the wavelets
as in \eqref{www}
with the regularity parameter $d$.
Then $g\in \mathfrak C_{\az}(\rn)$
if and only if its wavelet coefficients
$\{s_{I,\,\lz}\}_{I\in\mathcal{D},\,\lz\in {\rm E}}
:=\{\langle g,\,\psi^\lz_I\rangle\}_{I\in\mathcal{D},\,\lz\in {\rm E}}$
satisfy that
\begin{align*}
\lf\|\{s_{I,\,\lz}\}_{I\in\mathcal{D},\,\lz\in {E}}\r\|_{\mathcal{C}_\az(\rn)}:=\dsup_{I\in \mathcal{D}}
\lf\{\frac{1}{|I|^{2\az+1}}\dsum_{\gfz{J\in\mathcal{D}}{J\subset I}}\dsum_{\lz\in E}\lf|s_{J,\,\lz}\r|^2\r\}^{\frac{1}{2}}<\infty.
\end{align*}
Moreover, $\|g\|_{\mathfrak C_{\az}(\rn)}\sim \lf\|\{s_{I,\,\lz}\}_{I\in\mathcal{D},\,\lz\in {E}}\r\|_{\mathcal{C}_\az(\rn)}$
with positive equivalence constants independent of $g$.
\end{lemma}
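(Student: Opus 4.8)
The statement to prove is Lemma \ref{thm 3.2}, the wavelet characterization of the Campanato space $\mathfrak C_\az(\rn)$. The plan is to follow the standard two-sided estimate: bound the wavelet coefficient square sums by the Campanato norm, and conversely reconstruct the local mean oscillation from the wavelet coefficients. Throughout, we fix a family $\{\psi_I^\lz\}_{I\in\mathcal D,\lz\in E}$ of compactly supported wavelets with regularity parameter $d>n/p$ (so that, by property (P4), they have vanishing moments up to order $d>\lfloor n\az\rfloor$, which is what matters since $\az<d/n$), and we freely use properties (P1)--(P5) from Section \ref{s4}.

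First I would prove that, for any $g\in\mathfrak C_\az(\rn)$, the sequence $\{s_{I,\lz}\}=\{\langle g,\psi_I^\lz\rangle\}$ satisfies $\|\{s_{I,\lz}\}\|_{\mathcal C_\az(\rn)}\lesssim\|g\|_{\mathfrak C_\az(\rn)}$. Fix a dyadic cube $I\in\mathcal D$ and, enlarging slightly, let $B\supset mI$ be a ball comparable to $I$. For any subcube $J\subset I$ and $\lz\in E$, since $\psi_J^\lz$ has vanishing moments up to order $\lfloor n\az\rfloor$ and is supported in $mJ\subset mI\subset B$, we may replace $g$ by $g-P_B^{\lfloor n\az\rfloor}g$ in the pairing: $s_{J,\lz}=\langle g-P_B^{\lfloor n\az\rfloor}g,\psi_J^\lz\rangle$. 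By the almost-orthonormality of $\{\psi_J^\lz\}_{J\subset I,\lz\in E}$ in $L^2$, Bessel's inequality gives
\begin{align*}
\sum_{J\subset I}\sum_{\lz\in E}|s_{J,\lz}|^2
\le \int_{mI}\lf|g(x)-P_B^{\lfloor n\az\rfloor}g(x)\r|^2\,dx.
\end{align*}
Here the $L^2$ oscillation must be controlled by the $L^1$ oscillation defining $\mathfrak C_\az$; this uses the John--Nirenberg-type self-improvement for Campanato spaces (all exponents $r\in[1,\infty)$ give equivalent norms, cf. the discussion after Definition \ref{campa} and Lemma \ref{GR-DDY}; the global analogue holds for $\mathfrak C_\az$ by \cite{GR85}), together with Lemma \ref{lem-ep} to compare $P_B^{\lfloor n\az\rfloor}g$ on $mI$ versus on $B$. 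This yields $\sum_{J\subset I}\sum_\lz|s_{J,\lz}|^2\lesssim |I|^{2\az+1}\|g\|_{\mathfrak C_\az(\rn)}^2$, and taking the supremum over $I$ gives the claim.

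Conversely, suppose $\{s_{I,\lz}\}$ satisfies the Carleson-type condition, set $g:=\sum_{I\in\mathcal D}\sum_{\lz\in E}s_{I,\lz}\psi_I^\lz$, and show $g\in\mathfrak C_\az(\rn)$ with the reverse norm bound. Fix a ball $B$ and let $I_B$ be the smallest dyadic cube containing a fixed dilate of $B$ with $|I_B|\sim|B|$. Split $g=g_{\mathrm{near}}+g_{\mathrm{far}}$, where $g_{\mathrm{near}}$ collects the terms with $I\subset CI_B$ (for a suitable fixed dilation constant $C$) and $g_{\mathrm{far}}$ the rest. For $g_{\mathrm{near}}$: by Cauchy--Schwarz and the $L^2$ orthonormality, $\int_B|g_{\mathrm{near}}|\le|B|^{1/2}\|g_{\mathrm{near}}\|_{L^2}\le|B|^{1/2}(\sum_{I\subset CI_B}\sum_\lz|s_{I,\lz}|^2)^{1/2}\lesssim|B|^{1/2}|I_B|^{\az+1/2}\|\{s_{I,\lz}\}\|_{\mathcal C_\az(\rn)}\sim|B|^{\az+1}\|\{s_{I,\lz}\}\|_{\mathcal C_\az(\rn)}$, which is exactly the bound $\mathfrak C_\az$ wants (even without subtracting a polynomial). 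For $g_{\mathrm{far}}$: each contributing $\psi_I^\lz$ with $|I|\gtrsim|I_B|$ is smooth on $B$ after subtracting its Taylor polynomial $P_B^{\lfloor n\az\rfloor}\psi_I^\lz$, and one estimates $\sup_B|\psi_I^\lz-P_B^{\lfloor n\az\rfloor}\psi_I^\lz|\lesssim \ell_I^{-n/2-\lfloor n\az\rfloor-1}\ell_B^{\lfloor n\az\rfloor+1}$ using property (P3); summing the resulting geometric series over scales $\ell_I\ge\ell_B$ (using $\lfloor n\az\rfloor+1>n\az$) together with the Carleson bound on each scale gives $\frac1{|B|^{\az+1}}\int_B|g_{\mathrm{far}}-P_B^{\lfloor n\az\rfloor}g_{\mathrm{far}}|\lesssim\|\{s_{I,\lz}\}\|_{\mathcal C_\az(\rn)}$. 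Combining the two pieces and taking the supremum over $B$ completes the proof.

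I expect the main obstacle to be the self-improvement step in the first half — passing from the $L^2$ bound on the coefficient sum to the $L^1$-defined Campanato norm — together with the careful bookkeeping in the ``far'' estimate of the converse: one must choose the truncation geometry so that only finitely many scales straddle $|B|$ from below and the decay $\ell_B^{\lfloor n\az\rfloor+1}/\ell_I^{\lfloor n\az\rfloor+1}$ genuinely sums, and one must verify that the rearranged series $g=\sum s_{I,\lz}\psi_I^\lz$ converges in $\cs'(\rn)$ so that $P_B^{\lfloor n\az\rfloor}g$ and the splitting make sense (this last point is where $d>n/p$, rather than merely $d>\lfloor n\az\rfloor$, is used, matching the hypothesis of the lemma). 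Since all of this is classical and available from \cite{LLL09, LY12}, the write-up can afford to cite those sources for the routine parts and focus only on the structure above.
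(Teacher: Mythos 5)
The paper does not actually prove this lemma: it is stated as a direct quotation of known results ("just \cite[Corollary 2]{LLL09} or \cite[Theorem 6.3(i)]{LY12}"), so there is no in-paper argument to compare against. Your sketch is, in substance, the standard proof underlying those references, and both halves are sound: the forward direction (use the vanishing moments to insert $g-P_B^{\lfloor n\az\rfloor}g$ into the pairing, apply Bessel's inequality to the orthonormal family supported in $mI$, then invoke the John--Nirenberg self-improvement to pass from the $L^2$-oscillation back to the $L^1$-defined Campanato norm) and the converse (near/far splitting, Cauchy--Schwarz plus the Carleson packing condition for the near part, Taylor expansion of the coarse-scale wavelets with the decisive exponent $\lfloor n\az\rfloor+1-n\az>0$ for the far part) are both correct, including the observation that only $O(1)$ cubes per coarse scale meet $B$ so the scale-by-scale bounds sum geometrically.

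Two points would need tightening in a full write-up. First, the "if" direction of the lemma concerns a given locally integrable $g$ whose coefficients satisfy the Carleson condition, whereas you prove that the synthesis $\sum_{I,\lz}s_{I,\lz}\psi_I^\lz$ of an arbitrary admissible sequence lies in $\mathfrak C_{\az}(\rn)$; to close the loop one must also know that $g$ coincides with its wavelet series modulo $\cp_{\lfloor n\az\rfloor}(\rn)$, which is the reproducing formula in the $H^p$--$\mathfrak C_{\az}$ duality (this is precisely where the convergence issue you flag lives, and it is the content of \cite[Theorem 6.3]{LY12}). Second, the hypothesis ``$d>n/p$'' in the statement contains no $p$; your reading via $\az=1/p-1$, i.e. $d>n\az+n$, is clearly the intended one and comfortably covers both the moment-cancellation step and the order-$\lfloor n\az\rfloor+1$ Taylor remainder estimate.
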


We use the \emph{symbol} $C_{\mathrm{c}}^\fz(\rn)$ to denote the set of all infinitely differentiable
functions on $\rn$ with compact supports. Let $\psi\in\cs(\rn)$.
For a vector field $\mathbf{F}:= (F_1, \ldots , F_n)$
of
distributions in $\cs'(\rn)$, let $\psi\ast\mathbf{F}:= (\psi\ast F_1,\,\ldots,\,\psi\ast F_n)$.
The {\it divergence} $\mathrm{div}\,\mathbf{F}$ of $\mathbf{F}$ is defined by setting, for
any
$\phi\in C_{\mathrm{c}}^\fz(\rn)$,
\begin{align}\label{dis}
\lf\langle \mathrm{div}\,\mathbf{F},\,\phi  \r\rangle:=
-\sum_{j=1}^n \lf\langle F_j,\,\frac{\partial \phi}{\partial x_j} \r\rangle.
\end{align}
Moreover, the {\it curl} $\mathrm{curl}\, \mathbf{F}$ of $\mathbf{F}$ is defined by setting
$\mathrm{curl}\, \mathbf{F}:=\{(\mathrm{curl}\, \mathbf{F})_{i,\,j}\}_{i,\,j\in\{1,\,\ldots,\,n\}}$,
where, for any $i,\ j\in\{1,\ldots,n\}$ and
$\phi\in C_{\mathrm{c}}^\fz(\rn)$,
\begin{align}\label{dis2}
\lf\langle (\mathrm{curl}\, \mathbf{F})_{i,\,j},\,\phi  \r\rangle
:=\lf\langle F_j,\,\frac{\partial \phi}{\partial x_i} \r\rangle-
\lf\langle F_i,\,\frac{\partial \phi}{\partial x_j} \r\rangle;
\end{align}
see, for instance, \cite[p.\,507]{Da95}. Observe that div-curl estimates have been investigated in several
articles; see, for instance,
\cite{BFG,BGK12,cds05,cdy16,cdy10,cdy09,CLMS}.

For any $p\in(0,\,1]$, let
\begin{align}\label{eqn 5.1}
h^p(\rn;\,\rn):=\lf\{\mathbf{F}:=(F_1,\,\ldots,\,F_n):\ \ \text{for any}\ i\in\{1,\,\ldots,\,n\}, \ F_i\in h^p(\rn)\r\}
\end{align}
equipped with the quasi-norm
\begin{align*}
\|\mathbf{F}\|_{h^p(\rn;\,\rn)}:=\lf[\dsum_{i=1}^n\|F_i\|_{h^p(\rn)}^2\r]^{\frac{1}{2}}.
\end{align*}
In a similar way, we define $H^p(\rn;\,\rn)$, $L^2(\rn;\,\rn)$, $\bmo(\rn;\,\rn)$ and the \emph{vector-valued
inhomogeneous Lipschitz space}
$\Lambda_{n\alpha}(\rn;\,\rn)$ as well as the norms $\|\cdot\|_{H^p(\rn;\,\rn)}$,
$\|\cdot\|_{L^2(\rn;\,\rn)}$, $\|\cdot\|_{\bmo(\rn;\,\rn)}$ and $\|\cdot\|_{\Lambda_{n\alpha}(\rn;\,\rn)}$, where  $\az\in(0,\fz)$.

\begin{definition}\label{a2.15}
Let $\az\in\zz_+$. A vector field $\mathbf{F}:= (F_1, \ldots , F_n)$
of
distributions in $\cs'(\rn)$
is said to have the \emph{zero
$\alpha$-inhomogeneous curl}, denoted by \emph{$\alpha$-inhomogeneous $\mathrm{curl}\,\mathbf{F}=0$},
if
there exists a $\psi \in \mathcal{S}(\rn)$ satisfying $\widehat\psi(\vec{0}_n)=1$ and
$$\int_{\rn}\psi(x)x^\beta\,dx=0$$ for any
$\beta\in\zz_+^n$ with $0<|\beta|\le\az$
such that
$\mathrm{curl}\,[\mathbf{F}-\psi\ast\mathbf{F}]\equiv0$ in the sense
of \eqref{dis2}.
\end{definition}

\begin{remark}
Choose $\psi \in \mathcal{S}(\rn)$ such that
$\mathbf{1}_{Q({\vec 0_n},2)}\leq\psi \le \mathbf{1}_{Q({\vec 0_n},4)}.$
Then $\widehat\psi(\vec{0}_n)=1$ and
$$\int_{\rn}\psi(x)x^\beta\,dx=0$$ for any
$\beta\in\zz_+^n$ with $|\beta|>0$, namely, $\psi$
satisfies the assumptions of $\psi$ in Definition \ref{a2.15}.
\end{remark}

\begin{proposition}\label{weak}
Let $p\in(0,\,1]$, $\az:=1/p-1$ and $\mathbf{F}:= (F_1, \ldots , F_n)\in h^p(\rn;\,\rn)$
satisfy ${\rm curl}\, \mathbf{F}\equiv0$.
Then the vector field $\mathbf{F}$ also
satisfies $\lfloor n\alpha\rfloor$-inhomogeneous $\mathrm{curl}\,\mathbf{F}=0$ as in Definition \ref{a2.15}.
\end{proposition}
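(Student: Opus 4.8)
The plan is to reduce the assertion to a purely algebraic consequence of the classical identity $\mathrm{curl}\,\mathbf{F}=0$, exploiting the fact that convolution with a fixed Schwartz function commutes with the curl operator. First I would fix a function $\psi\in\mathcal{S}(\rn)$ as in Definition \ref{a2.15}, that is, satisfying $\widehat\psi(\vec{0}_n)=1$ and $\int_\rn\psi(x)x^\beta\,dx=0$ for all $\beta\in\zz_+^n$ with $0<|\beta|\le\lfloor n\az\rfloor$; such a $\psi$ exists by the Remark immediately following Definition \ref{a2.15}. Then I would observe that, for each $i\in\{1,\dots,n\}$ and each $F_i\in h^p(\rn)\subset\cs'(\rn)$, the convolution $\psi\ast F_i$ is well defined in $\cs'(\rn)$, so $\psi\ast\mathbf{F}:=(\psi\ast F_1,\dots,\psi\ast F_n)$ makes sense, and by bilinearity of the curl in \eqref{dis2} we have
\begin{align*}
\mathrm{curl}\,[\mathbf{F}-\psi\ast\mathbf{F}]=\mathrm{curl}\,\mathbf{F}-\mathrm{curl}\,[\psi\ast\mathbf{F}].
\end{align*}
Since $\mathrm{curl}\,\mathbf{F}\equiv0$ by hypothesis, it suffices to show $\mathrm{curl}\,[\psi\ast\mathbf{F}]\equiv0$ as well, because then $\mathrm{curl}\,[\mathbf{F}-\psi\ast\mathbf{F}]\equiv0$, which is precisely what Definition \ref{a2.15} requires.

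The key step is therefore the commutation $\mathrm{curl}\,[\psi\ast\mathbf{F}]=\psi\ast(\mathrm{curl}\,\mathbf{F})$ componentwise, in an appropriate distributional sense. For any $i,j\in\{1,\dots,n\}$ and any $\phi\in C_{\mathrm{c}}^\fz(\rn)$, I would unwind the definition \eqref{dis2}:
\begin{align*}
\lf\langle (\mathrm{curl}\,[\psi\ast\mathbf{F}])_{i,j},\,\phi\r\rangle
=\lf\langle \psi\ast F_j,\,\frac{\partial\phi}{\partial x_i}\r\rangle
-\lf\langle \psi\ast F_i,\,\frac{\partial\phi}{\partial x_j}\r\rangle,
\end{align*}
then move the convolution onto the test function using the standard adjoint relation $\la\psi\ast F,\,\eta\ra=\la F,\,\widetilde\psi\ast\eta\ra$ with $\widetilde\psi(\cdot):=\psi(-\cdot)$, and use that differentiation commutes with convolution, i.e. $\frac{\partial}{\partial x_i}(\widetilde\psi\ast\phi)=\widetilde\psi\ast\frac{\partial\phi}{\partial x_i}$. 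This yields
\begin{align*}
\lf\langle (\mathrm{curl}\,[\psi\ast\mathbf{F}])_{i,j},\,\phi\r\rangle
=\lf\langle F_j,\,\frac{\partial(\widetilde\psi\ast\phi)}{\partial x_i}\r\rangle
-\lf\langle F_i,\,\frac{\partial(\widetilde\psi\ast\phi)}{\partial x_j}\r\rangle
=\lf\langle (\mathrm{curl}\,\mathbf{F})_{i,j},\,\widetilde\psi\ast\phi\r\rangle=0,
\end{align*}
where the last equality holds because $\mathrm{curl}\,\mathbf{F}\equiv0$ and $\widetilde\psi\ast\phi\in C_{\mathrm{c}}^\fz(\rn)$. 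Since $i,j$ and $\phi$ were arbitrary, $\mathrm{curl}\,[\psi\ast\mathbf{F}]\equiv0$, and the proof is complete.

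I expect the main obstacle to be purely one of bookkeeping rather than genuine difficulty: one must check carefully that $\widetilde\psi\ast\phi$ remains an admissible test function for the curl pairing \eqref{dis2} (it is smooth, but $\phi$ having compact support does not force $\widetilde\psi\ast\phi$ to have compact support, since $\psi$ is only Schwartz). The cleanest way around this is to note that $\mathrm{curl}\,\mathbf{F}\equiv0$ as an identity in $\cs'(\rn)$ extends the pairing in \eqref{dis2} from $C_{\mathrm{c}}^\fz(\rn)$ to all of $\cs(\rn)$, and that $\widetilde\psi\ast\phi\in\cs(\rn)$ whenever $\phi\in\cs(\rn)$; alternatively, one approximates $\widetilde\psi\ast\phi$ in $\cs(\rn)$ by compactly supported smooth functions and passes to the limit using continuity of $F_i\in\cs'(\rn)$. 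Note that the vanishing-moment hypotheses on $\psi$ play no role in this particular proposition; they are only needed elsewhere (e.g. in Lemma \ref{rhh}) to control $\mathbf{F}-\psi\ast\mathbf{F}$ in $H^p(\rn;\rn)$, and so the assertion here is in fact robust to which admissible $\psi$ is chosen.
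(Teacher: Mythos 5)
Your argument is correct and follows essentially the same route as the paper's proof: reduce to showing $\mathrm{curl}\,[\psi\ast\mathbf{F}]\equiv0$, pass the convolution onto the test function, and handle the loss of compact support by extending the identity $\mathrm{curl}\,\mathbf{F}\equiv0$ from $C_{\mathrm{c}}^\fz(\rn)$ to $\cs(\rn)$ by density (the paper cites Rudin's Theorem 7.10(a) for exactly this point). Your closing remark that the vanishing-moment hypotheses on $\psi$ play no role here is also accurate.
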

\begin{proof}
Let $p\in(0,\,1]$. From \cite[Theorem 7.10(a)]{R}, we deduce that
$C_{\mathrm{c}}^\fz(\rn)$ is dense in $\cs(\rn)$,
which, combined with the fact that ${\rm curl}\, \mathbf{F}\equiv0$, implies
that, for any $i,\,j\in\{1,\,\ldots,\,n\}$ and $\phi\in\cs(\rn)$,
\begin{align}\label{pp2}
\lf\langle (\mathrm{curl}\,\mathbf{F})_{i,\,j},\,\phi  \r\rangle
:=\lf\langle F_j,\,\frac{\partial \phi}{\partial x_i} \r\rangle-
\lf\langle F_i,\,\frac{\partial \phi}{\partial x_j} \r\rangle=0.
\end{align}
Let $\psi \in \mathcal{S}(\rn)$ satisfy $\widehat\psi(\vec{0}_n)=1$ and
$\int_{\rn}\psi(x)x^\beta\,dx=0$ for any
$\beta\in\zz_+^n$ with $0<|\beta|\le\lfloor n\alpha\rfloor$.
By this and \eqref{pp2}, we know that, for any $i,\,j\in\{1,\,\ldots,\,n\}$ and $\phi\in\cs(\rn)$ ,
\begin{align*}
\lf\langle (\mathrm{curl}\, \psi\ast\mathbf{F})_{i,\,j},\,\phi  \r\rangle
&=\lf\langle \psi\ast F_j,\,\frac{\partial \phi}{\partial x_i} \r\rangle-
\lf\langle \psi\ast F_i,\,\frac{\partial \phi}{\partial x_j} \r\rangle\\
&=
\lf\langle F_j,\,\frac{\partial [\psi(-\cdot)\ast \phi]}{\partial x_i} \r\rangle-
\lf\langle F_i,\,\frac{\partial [\psi(-\cdot)\ast \phi]}{\partial x_j} \r\rangle=0
\end{align*}
and hence $\mathrm{curl}\,[\psi\ast\mathbf{F}]\equiv0$,
which further implies that $\mathrm{curl}\,[\mathbf{F}-\psi\ast\mathbf{F}]\equiv0$.
Thus, the vector field $\mathbf{F}$ also
satisfies $\lfloor n\alpha\rfloor$-inhomogeneous $\mathrm{curl}\,\mathbf{F}=0$ as in Definition \ref{a2.15}.
This finishes the proof of Proposition \ref{weak}.
\end{proof}

Using Theorem \ref{mainthm4},
we obtain the following priori estimate on the div-curl product of elements in $h^{\Phi_p}(\rn)$
and its dual space. Recall that, for any $j\in\{1,\ldots,n\}$ and $g\in L^2(\rn)$,
$R_j(g):=[-\imath\frac{\xi_j}{|\xi|}\widehat g(\xi)]^{\vee}$
denotes the \emph{$j$-th Riesz transform} of $g$, where $\imath:=\sqrt{-1}$.

\begin{theorem}\label{div}
Let $p\in(0,\,1)$, $\az:=1/p-1$ and
$\Phi_p$ be as in \eqref{333}.
Assume further that $\mathbf{F}\in h^p(\rn;\,\rn)$
satisfies $\lfloor n\alpha\rfloor$-inhomogeneous $\mathrm{curl}\,\mathbf{F}=0$ as in Definition \ref{a2.15}
and
$\mathbf{G}\in \Lambda_{n\alpha}(\rn;\,\rn)$
satisfies ${\rm div}\, \mathbf{G}\equiv0$
{\rm[}the equality holds true in the sense of \eqref{dis}{\rm]}.
Then the inner product $\mathbf{F}\cdot
\mathbf{G}\in h^{\Phi_p}(\rn)$ and
\begin{align*}
\| \mathbf{F}\cdot
\mathbf{G}\|_{h^{\Phi_p}(\rn)} \le C \|\mathbf{F}\|_{h^p(\rn;\,\rn)}
\|\mathbf{G}\|_{\Lambda_{n\alpha}(\rn;\,\rn)},
\end{align*}
where $C$ is a positive constant independent of $\mathbf{F}$ and $\mathbf{G}$.
\end{theorem}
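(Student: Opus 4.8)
The plan is to reduce the div-curl product $\mathbf{F}\cdot\mathbf{G}$ to a product of the type handled by the bilinear decomposition Theorem \ref{mainthm4}, by splitting off the ``global'' part of $\mathbf{F}$ via a smoothing function $\psi$. First I would fix $\psi\in\mathcal{S}(\rn)$ with $\widehat\psi(\vec0_n)=1$ and $\int_\rn\psi(x)x^\beta\,dx=0$ for all $\beta\in\zz_+^n$ with $0<|\beta|\le\lfloor n\az\rfloor$, realizing the assumption $\lfloor n\alpha\rfloor$-inhomogeneous $\mathrm{curl}\,\mathbf{F}=0$; so $\mathrm{curl}\,[\mathbf{F}-\psi\ast\mathbf{F}]\equiv0$. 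Writing
\begin{align*}
\mathbf{F}\cdot\mathbf{G}=(\mathbf{F}-\psi\ast\mathbf{F})\cdot\mathbf{G}+(\psi\ast\mathbf{F})\cdot\mathbf{G},
\end{align*}
I would treat the two summands separately. For the first, Lemma \ref{rhh} gives $\mathbf{F}-\psi\ast\mathbf{F}\in H^p(\rn;\rn)$ with $\|\mathbf{F}-\psi\ast\mathbf{F}\|_{H^p(\rn;\rn)}\lesssim\|\mathbf{F}\|_{h^p(\rn;\rn)}$, and moreover $\mathrm{curl}\,[\mathbf{F}-\psi\ast\mathbf{F}]\equiv0$ while $\mathrm{div}\,\mathbf{G}\equiv0$. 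This puts the first product squarely in the setting where the classical global div-curl machinery applies: using the Hodge-type representation, a curl-free $H^p$ vector field can be written as a gradient (modulo the range of Riesz transforms), and pairing against a divergence-free $\mathbf{G}$ lets one rewrite $(\mathbf{F}-\psi\ast\mathbf{F})\cdot\mathbf{G}$ as a finite sum of terms $(R_j E_i - R_i E_j)(\cdot)$-type expressions to which Theorem \ref{mainthm4}(i) applies, yielding membership in $L^1(\rn)+H^{\Phi_p}(\rn)$; since $L^1(\rn)\cap$(div-curl product) actually lands in the Hardy space by the compensated-compactness cancellation, one gets $(\mathbf{F}-\psi\ast\mathbf{F})\cdot\mathbf{G}\in H^{\Phi_p}(\rn)\subset h^{\Phi_p}(\rn)$ with the right bound, using Lemma \ref{444}(i) or Theorem \ref{FIN2} for the inclusion $H^{\Phi_p}(\rn)\subset h^{\Phi_p}(\rn)$.

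For the second summand $(\psi\ast\mathbf{F})\cdot\mathbf{G}$, the point is that $\psi\ast\mathbf{F}$ is a smooth, bounded, slowly-growing vector field (each component is $F_i$ convolved with a Schwartz function, hence in $C^\infty$ with all derivatives controlled by $\|F_i\|_{h^p(\rn)}$ via the grand maximal function estimate as in Lemma \ref{lyyy}), so this is essentially the product of a nice function with an element of $\Lambda_{n\az}(\rn)$. Here I would invoke Corollary \ref{cor-pwm} and Theorem \ref{thm-main2}: pointwise multipliers of $\Lambda_{n\az}(\rn)$ are exactly $L^\infty(\rn)\cap\mathcal{L}_{\loc}^{\Phi_p}(\rn)$, and $\psi\ast F_i$ belongs to this class with norm $\lesssim\|F_i\|_{h^p(\rn)}$ (boundedness is clear; the local Campanato bound follows from the smoothness estimates on $\psi\ast F_i$ together with a Taylor-remainder argument as in the proof of Lemma \ref{schwartz}). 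Thus $(\psi\ast\mathbf{F})\cdot\mathbf{G}\in\Lambda_{n\az}(\rn)$. However, to land in $h^{\Phi_p}(\rn)$ rather than merely in $\Lambda_{n\az}(\rn)$ I need to use that $\mathrm{div}\,\mathbf{G}\equiv0$ once more: integrating by parts, $(\psi\ast\mathbf{F})\cdot\mathbf{G}=\sum_i(\psi\ast F_i)G_i$ and, since $\psi\ast\mathbf{F}$ is (up to the curl hypothesis transferred through $\psi$, which forces $\mathrm{curl}\,[\psi\ast\mathbf{F}]\equiv0$ as in the proof of Proposition \ref{weak}) locally a gradient $\nabla h$ with $h$ smooth, and $\mathbf{G}$ divergence-free, a further application of Theorem \ref{mainthm4}(i) to the pair $(\psi\ast F_i, G_i)$ — noting $\psi\ast F_i\in H^p(\rn)$ after subtracting its low-frequency polynomial behaviour, or more directly noting $\psi\ast F_i$ lies in $h^1(\rn)\cap L^\infty$ — reduces this to $L^1+H^{\Phi_p}$, and the div-curl cancellation again kills the $L^1$ obstruction.

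The main obstacle I anticipate is handling the ``global'' correction $(\psi\ast\mathbf{F})\cdot\mathbf{G}$ cleanly: $\psi\ast\mathbf{F}$ need not decay, so one cannot directly say it is in any $H^p$ or even $h^p$, and one must carefully exploit both the transferred curl-freeness ($\mathrm{curl}\,[\psi\ast\mathbf{F}]\equiv0$) and the divergence-freeness of $\mathbf{G}$ to gain the compensated-compactness cancellation that forces the product into $h^{\Phi_p}(\rn)$ rather than only into the Lipschitz space. Concretely, the delicate point is to justify writing $\psi\ast F_i$ as $\partial_i h$ for a common potential $h$ in the sense of distributions (valid because $\rn$ is simply connected and $\mathrm{curl}\,[\psi\ast\mathbf{F}]\equiv0$), and then to integrate by parts against $\mathbf{G}$ with $\mathrm{div}\,\mathbf{G}\equiv0$ so that $(\psi\ast\mathbf{F})\cdot\mathbf{G}=\mathrm{div}(h\mathbf{G})$, after which membership in $h^{\Phi_p}(\rn)$ follows from the atomic/maximal-function characterization together with Theorem \ref{mainthm4}(i) and the local Calder\'on--Zygmund structure of $h^{\Phi_p}(\rn)$ from Lemma \ref{dec}. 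Once both pieces are in $h^{\Phi_p}(\rn)$ with norms $\lesssim\|\mathbf{F}\|_{h^p(\rn;\rn)}\|\mathbf{G}\|_{\Lambda_{n\az}(\rn;\rn)}$, adding them and invoking \eqref{xx4} (the subadditivity of $\Phi_p$) finishes the proof.
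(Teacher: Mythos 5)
Your splitting $\mathbf{F}=\mathbf{F}_1+\mathbf{F}_2$ with $\mathbf{F}_1:=\mathbf{F}-\psi\ast\mathbf{F}$ and your treatment of $\mathbf{F}_1\cdot\mathbf{G}$ are in line with the paper's argument: one applies Theorem \ref{mainthm4}(i) to each pair $(F_i^{(1)},G_i)$ and then shows that the $L^1$-part $\sum_i S(F_i^{(1)},G_i)$ actually lies in $H^1(\rn)$ by exploiting ${\rm curl}\,\mathbf{F}_1\equiv 0$ and ${\rm div}\,\mathbf{G}\equiv 0$ through the Riesz transforms (writing $F_k^{(1)}=R_k(f)$ with $f:=-\sum_jR_j(F_j^{(1)})$ and using $\sum_jR_j(G_j)=0$) together with an almost-diagonal wavelet estimate. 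You delegate this last, nontrivial cancellation entirely to ``compensated compactness'', but the mechanism you point to is the correct one.

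The genuine gap is in your treatment of $\mathbf{F}_2\cdot\mathbf{G}=(\psi\ast\mathbf{F})\cdot\mathbf{G}$. You assert that the hypothesis ``forces ${\rm curl}\,[\psi\ast\mathbf{F}]\equiv 0$ as in the proof of Proposition \ref{weak}''. It does not: the hypothesis only supplies \emph{some} $\psi$ with ${\rm curl}\,[\mathbf{F}-\psi\ast\mathbf{F}]\equiv0$, and Proposition \ref{weak} goes in the opposite direction (from ${\rm curl}\,\mathbf{F}\equiv0$ to the inhomogeneous condition); under the weaker inhomogeneous hypothesis neither ${\rm curl}\,\mathbf{F}$ nor ${\rm curl}\,[\psi\ast\mathbf{F}]$ need vanish, so the potential $h$ with $\psi\ast\mathbf{F}=\nabla h$ and the identity $(\psi\ast\mathbf{F})\cdot\mathbf{G}={\rm div}(h\mathbf{G})$ are unavailable. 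Your fallback claims are also unjustified or insufficient: $\psi\ast F_i$ need not lie in $h^1(\rn)\cap L^\infty(\rn)$ (it is only dominated, cube by cube, by a local maximal function of $F_i$, hence lies in $L^p$-type classes), and knowing that the product lies in $\Lambda_{n\az}(\rn)$ does not help, since $\Lambda_{n\az}(\rn)\not\subset h^{\Phi_p}(\rn)$ (a nonzero constant is not in $L^{\Phi_p}(\rn)$). The point you are missing is that \emph{no cancellation at all} is needed for this term: since $G_i\in\Lambda_{n\az}(\rn)\subset L^\infty(\rn)$, one writes $(\psi\ast F_i)G_i=\sum_{k\in\zz^n}\lambda_k a_k$ with $a_k$ supported in the cube $Q_k:=2k+[0,2)^n$; because $|Q_k|>1$, each normalized piece is a local $(p,\infty,d)$-atom with no moment condition required, and $\sum_k\lambda_k^p\ls\|F_i\|^p_{h^p(\rn)}\|G_i\|^p_{L^\infty(\rn)}$ follows from the pointwise domination of $\psi\ast F_i$ on $Q_k$ by the local radial maximal function of $F_i$. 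This places $\mathbf{F}_2\cdot\mathbf{G}$ directly in $h^p(\rn)\subset h^{\Phi_p}(\rn)$ with the desired bound.
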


\begin{remark}
In Theorem \ref{div}, if we replace $h^p(\rn;\,\rn)$,
$\lfloor n\alpha\rfloor$-inhomogeneous $\mathrm{curl}\,\mathbf{F}=0$,
$h^{\Phi_p}(\rn)$
and $\Lambda_{n\alpha}(\rn;\,\rn)$, respectively, by
$H^p(\rn;\,\rn)$, ${\rm curl}\, \mathbf{F}\equiv0$, $H^{\phi_p}(\rn)$ and $\mathfrak C_{\az}(\rn;\,\rn)$with
$\phi_p$ as in \eqref{356},
then the corresponding conclusion of Theorem \ref{div}
still holds true, which is just \cite[Theorem 1.5]{bckly}.
Observe that, when $p\in(0,1)$ and $\az:=\frac1p-1$,
$H^p(\rn;\,\rn)\subset h^p(\rn;\,\rn)$,
$\Lambda_{n\alpha}(\rn;\,\rn)\subset
\mathfrak C_{\az}(\rn;\,\rn)$
and $H^{\phi_p}(\rn)\subset h^{\phi_p}(\rn)$.
Thus, it is natural to use the space $h^{\phi_p}(\rn)$
as the target space in Theorem \ref{div}; however,
by Theorem \ref{cheng}(iii), we know that the target space $h^{\Phi_p}(\rn)$
in Theorem \ref{div} satisfies
$h^{\Phi_p}(\rn)\subsetneqq h^{\phi_p}(\rn)$. In this sense, Theorem \ref{div} is a little surprising.
Moreover, by Proposition \ref{weak}, we conclude that the assumption,
$\lfloor n\alpha\rfloor$-inhomogeneous $\mathrm{curl}\,\mathbf{F}=0$, in Theorem \ref{div}
is weaker than the assumption ${\rm curl}\, \mathbf{F}\equiv0$ in
\cite[Theorem 1.5]{bckly}, which perfectly matches
the inhomogeneity of $h^p(\rn;\,\rn)$ and $\Lambda_{n\alpha}(\rn;\,\rn)$.
\end{remark}

Now, we use Theorem \ref{mainthm4} to prove
Theorem \ref{div}.

Let $p\in(0,1]$ and $\az\in[0,\infty)$. For any sequence $\{a_I \}_{I\in\mathcal D}$ of complex numbers, we let
$$\lf\|\{a_I\}_{I\in\mathcal{D}}\r\|_{\mathcal{C}_\az(\rn)}:=\dsup_{I\in \mathcal{D}}
\lf\{\frac{1}{|I|^{2\az+1}}\dsum_{{J\in\mathcal{D}},\,{J\subset I}}\lf|a_{J}\r|^2\r\}^{\frac{1}{2}}
$$
and
\begin{align*}
\lf\|\{a_{I}\}_{I\in\mathcal{D}}\r\|_{ \dot{f}_{p,\,2}^0(\rn)}
:=\lf\|\lf\{\dsum_{{I\in\mathcal{D}}} \lf[\lf|a_{I}\r| |I|^{-\frac{1}2}
\mathbf1_I\r]^2\r\}^{\frac{1}{2}} \r\|_{L^p(\rn)}.
\end{align*}

We also need the following duality argument for sequence spaces, which is just
\cite[Lemma 5.2]{bckly}.

\begin{lemma}\label{lem4-13}
Let $p\in(0,\,1]$. Then there exists a positive constant $C$ such that,
for any sequences $\{a_I \}_{I\in\mathcal D}$  and $\{b_I \}_{I\in\mathcal D}$ of complex numbers,
\begin{align*}
\lf|\dsum_{I\in\mathcal{D}} a_I b_I\r|\le C
\lf\|\{a_I\}_{I\in \mathcal{D}}\r\|_{\dot{f}^0_{p,\,2}(\rn)}
\lf\|\{b_{I}\}_{I\in \mathcal{D}}\r\|_{\mathcal{C}_{1/p-1}(\rn)},
\end{align*}
whenever the right hand side of the above inequality is finite.
\end{lemma}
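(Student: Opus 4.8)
The plan is to derive the stated pairing inequality from the wavelet characterizations of $H^p(\rn)$ and of the Campanato space $\mathfrak C_\az(\rn)$, $\az:=1/p-1$, given in Lemmas \ref{thm17-3.2} and \ref{thm 3.2}, together with the duality $(H^p(\rn))^\ast=\mathfrak C_\az(\rn)/\cp_s(\rn)$ recorded in Remark \ref{rem-Cam}, where $s:=\lfloor n\az\rfloor$. Throughout I would fix a wavelet family $\{\psi_I^\lz\}_{I\in\mathcal{D},\lz\in E}$ as in \eqref{www} with regularity parameter $d\in\nn$ large enough that both of these lemmas apply (e.g. $d>n/p$); note that the sequence norms in the statement make no reference to wavelets, so we are free to choose such a family.

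First I would reduce to a convenient case. Replacing $a_I$ by $|a_I|$ and $b_I$ by $|b_I|$ leaves both $\|\{a_I\}_{I\in\mathcal{D}}\|_{\dot f^0_{p,2}(\rn)}$ and $\|\{b_I\}_{I\in\mathcal{D}}\|_{\mathcal C_\az(\rn)}$ unchanged while only enlarging $|\sum_I a_Ib_I|$ to $\sum_I a_Ib_I$; hence we may assume $a_I,b_I\in[0,\fz)$. Moreover, since the square function defining $\|\cdot\|_{\dot f^0_{p,2}(\rn)}$ only grows when the index set is enlarged, it suffices to establish the inequality for an arbitrary finitely supported sequence $\{a_I\}_{I\in\mathcal{D}}$; letting the (finite) supports exhaust $\mathcal{D}$ and invoking the monotone convergence theorem then simultaneously yields the absolute convergence of $\sum_I a_Ib_I$ and the bound in general.

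So let $\{a_I\}_{I\in\mathcal{D}}$ be finitely supported, fix $\lz_0\in E$, set $f:=\sum_{I\in\mathcal{D}}a_I\psi_I^{\lz_0}$ (a finite linear combination of wavelets), and let $g$ be the element of $\mathfrak C_\az(\rn)$ whose $(I,\lz_0)$-wavelet coefficient equals $b_I$ and whose other wavelet coefficients vanish. Applying Lemma \ref{thm17-3.2} to the coefficient array supported on the slice $\lz=\lz_0$ gives $f\in H^p(\rn)$ with $\|f\|_{H^p(\rn)}\sim\|\{a_I\}_{I\in\mathcal{D}}\|_{\dot f^0_{p,2}(\rn)}$, while Lemma \ref{thm 3.2} gives $\|g\|_{\mathfrak C_\az(\rn)}\sim\|\{b_I\}_{I\in\mathcal{D}}\|_{\mathcal C_\az(\rn)}$ together with $\la g,\psi_I^{\lz_0}\ra=b_I$ for all $I\in\mathcal{D}$. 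Since each $\psi_I^{\lz_0}$ has vanishing moments up to order $d\ge s$ by property (P4) of Section \ref{s4}, the finite-expansion distribution $f$ annihilates $\cp_s(\rn)$, so the pairing of $g$ with $f$ descends to the quotient $\mathfrak C_\az(\rn)/\cp_s(\rn)$ and, by the orthonormality of $\{\psi_I^\lz\}$,
\begin{align*}
\sum_{I\in\mathcal{D}}a_Ib_I=\sum_{I\in\mathcal{D}}a_I\la g,\psi_I^{\lz_0}\ra=\la g,f\ra.
\end{align*}
Invoking the duality of Remark \ref{rem-Cam},
\begin{align*}
\lf|\sum_{I\in\mathcal{D}}a_Ib_I\r|=|\la g,f\ra|\ls\|g\|_{\mathfrak C_\az(\rn)}\|f\|_{H^p(\rn)}\ls\|\{b_I\}_{I\in\mathcal{D}}\|_{\mathcal C_\az(\rn)}\|\{a_I\}_{I\in\mathcal{D}}\|_{\dot f^0_{p,2}(\rn)},
\end{align*}
which, combined with the reduction above, completes the proof.

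The step deserving the most care is the identification in the third paragraph: one must verify that prescribing the coefficients $\{b_I\}_{I\in\mathcal{D}}$ on the slice $\lz=\lz_0$ really defines an element $g\in\mathfrak C_\az(\rn)$ with equivalence of norms and with coefficient-recovery $\la g,\psi_I^{\lz_0}\ra=b_I$ (the synthesis half of Lemma \ref{thm 3.2}), and that testing $g$ against a finite-expansion distribution annihilating $\cp_s(\rn)$ produces a value $\la g,f\ra=\sum_I a_Ib_I$ independent of the representative of $g$ modulo $\cp_s(\rn)$ (exactly the content of Remark \ref{rem-Cam}). If one prefers a proof not routed through function spaces, the alternative is a Calder\'on--Zygmund stopping-time argument: normalize $\|\{b_I\}_{I\in\mathcal{D}}\|_{\mathcal C_\az(\rn)}=1$, set $\Omega_j:=\{x\in\rn:\ (\sum_{I\in\mathcal{D}}a_I^2|I|^{-1}\mathbf1_I(x))^{1/2}>2^j\}$, assign to each $I$ with $a_I\ne0$ the generation $j(I):=\max\{j\in\zz:\ I\subset\{M(\mathbf1_{\Omega_j})>1/2\}\}$ with $M$ the Hardy--Littlewood maximal operator (which forces $a_I\le 2^{j(I)+1}|I|^{1/2}$ and places $I$ inside a unique maximal dyadic cube of $\{M(\mathbf1_{\Omega_j})>1/2\}$), and sum over generations using the Carleson bound $\sum_{J\subset Q}b_J^2\le|Q|^{2\az+1}$ and the weak-type estimate $|\{M(\mathbf1_{\Omega_j})>1/2\}|\ls|\Omega_j|\ls 2^{-jp}\|\{a_I\}_{I\in\mathcal{D}}\|_{\dot f^0_{p,2}(\rn)}^p$; there the hard part is to organize the Cauchy--Schwarz steps within each stopping cube so that the resulting geometric series in $j$ converges, which is precisely where the exponent $2\az+1=2/p-1$ in $\|\cdot\|_{\mathcal C_\az(\rn)}$ is used decisively.
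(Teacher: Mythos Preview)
The paper does not actually prove this lemma: it is simply quoted as \cite[Lemma~5.2]{bckly}. So there is no argument in the text to compare against yours line by line.

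Your proof is essentially correct, and it is worth noting that it takes the ``dual'' route to the one usually given in the Frazier--Jawerth framework (and presumably in \cite{bckly}). You transfer the sequence-space pairing to a function-space pairing via the wavelet isomorphisms of Lemmas~\ref{thm17-3.2} and~\ref{thm 3.2}, and then invoke the known duality $(H^p(\rn))^\ast=\mathfrak C_\az(\rn)/\cp_s(\rn)$ from Remark~\ref{rem-Cam}. The standard direct proof --- exactly the stopping-time/Carleson argument you outline in your last paragraph --- stays entirely at the level of sequences and is in fact often used as the \emph{engine} behind the function-space duality. Your route is logically fine because Remark~\ref{rem-Cam} (via atomic decompositions) is established independently of any wavelet sequence-space duality, so there is no circularity; it has the virtue of being short, at the cost of invoking three black boxes. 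The direct argument is more self-contained and is what one would expect in the cited reference.

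The one point you correctly flag as delicate is the synthesis step: you need that a Carleson sequence $\{b_I\}$ on a single slice $\lz=\lz_0$ arises as the wavelet coefficients of some $g\in\mathfrak C_\az(\rn)$ with $\|g\|_{\mathfrak C_\az(\rn)}\ls\|\{b_I\}\|_{\mathcal C_\az(\rn)}$. Lemma~\ref{thm 3.2}, as stated, is phrased as a characterization of a \emph{given} $g$; to get the synthesis direction you should cite the underlying source (e.g.\ \cite{FJ90} or \cite{LY12}) explicitly, since that is where the reconstruction $g:=\sum_I b_I\psi_I^{\lz_0}$ is shown to converge in $\cs'(\rn)/\cp_s(\rn)$ with the right norm control and with coefficient recovery. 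Once that is in hand, your computation $\la g,f\ra=\sum_I a_I\la g,\psi_I^{\lz_0}\ra=\sum_I a_Ib_I$ is justified because $f$ is a finite sum of wavelets, each of which lies in $H^p(\rn)$ and annihilates $\cp_s(\rn)$.
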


Using Theorem \ref{mainthm1} and Lemma \ref{lem4-13},
we now turn to the proof of Theorem \ref{div}.

\begin{proof}[Proof of Theorem \ref{div}]
Let $p\in(0,1)$, $\az:=1/p-1$,
$\mathbf{F}:=(F_1, \ldots , F_n)\in h^p(\rn;\,\rn)$
with $\lfloor n\alpha\rfloor$-inhomogeneous $\mathrm{curl}\,\mathbf{F}=0$ as in Definition \ref{a2.15}, and
$\mathbf{G}:=(G_1, \ldots , G_n)\in \Lambda_{n\alpha}(\rn;\,\rn)$
with ${\rm div}\, \mathbf{G}\equiv0$ in the sense of \eqref{dis}.
By the fact that $\mathbf{F}$ is $\lfloor n\alpha\rfloor$-inhomogeneous $\mathrm{curl}\,\mathbf{F}=0$, we know that
there exists a $\psi \in \mathcal{S}(\rn)$ satisfying $\widehat\psi(\vec{0}_n)=1$ and
$\int_{\rn}\psi(x)x^\beta\,dx=0$ for any
$\beta\in\zz_+^n$ with $0<|\beta|\le\lfloor n\alpha\rfloor$
such that
\begin{equation}\label{dofa}
\mathrm{curl}\,[\mathbf{F}-\psi\ast\mathbf{F}]\equiv0
\end{equation}
in the sense
of \eqref{dis2}.
Then we write
\begin{equation}\label{dof}
\mathbf{F}=[\mathbf{F}-(\psi\ast F_1,\,\ldots,\,\psi\ast F_n)]+(\psi\ast F_1,\,\ldots,\,\psi\ast F_n)=:\mathbf{F}_1
+\mathbf{F}_2.
\end{equation}
By Lemma \ref{rhh}, we know that, for any $i\in\{1,\ldots,n\}$,
\begin{equation}\label{dof1}
\lf\|F_i-\psi\ast F_i\r\|_{H^p(\rn)}
\lesssim \|F_i\|_{h^p(\rn)} < \infty,
\end{equation}
which implies that $\mathbf{F}_1 \in H^p(\rn;\,\rn)$. Moreover, from \eqref{dofa}, it easily
follows that ${\rm curl}\,\mathbf{F}_1\equiv0$ in the sense of \eqref{dis2}.

Let $\Phi_p$ be as in \eqref{333}.
We first prove that
\begin{equation}\label{lne1}
\|\mathbf{F}_2\cdot\mathbf{G}\|_{h^{\Phi_p}(\rn)}\lesssim
\|\mathbf{F}\|_{h^p(\rn;\,\rn)}
\|\mathbf{G}\|_{\Lambda_{n\alpha}(\rn;\,\rn)}.
\end{equation}
By Lemma \ref{444}(i),
it suffices to show that, for any $i\in\{1,\ldots,n\}$,
\begin{equation}\label{lne}
\|(\psi\ast F_i)G_i\|_{h^{p}(\rn)}\lesssim
\|F_i\|_{h^p(\rn)}
\|G_i\|_{L^\infty(\rn)}.
\end{equation}
Fix $i\in\{1,\ldots,n\}$.
Let $Q_{k}:=2k+[0,2)^n$ for any   $k\in\mathbb{Z}^n$.
Then, for almost every $x\in\rn$, we have
$$(\psi\ast F_i)(x)G_i(x) = \sum_{k\in\mathbb{Z}^n}(\psi\ast F_i)(x)G_i(x)\mathbf{1}_{Q_{k}}(x).$$
For any $k\in\mathbb{Z}^n$, if $\|(\psi\ast F_i)G_i\|_{L^\infty(Q_k)}=0$, define
$$\lambda_k:=0\quad \text{and}\quad a_k:=0;$$
if $\|(\psi\ast F_i)G_i\|_{L^\infty(Q_k)}\neq0$, define
$$\lambda_k:=\|\mathbf{1}_{Q_k}\|_{L^p(\rn)}\|
(\psi\ast F_i)G_i\|_{L^\infty(Q_k)} \quad \text{and} \quad
a_k:=\frac{(\psi\ast F_i)G_i\mathbf{1}_{Q_k}}
{\|\mathbf{1}_{Q_k}\|_{L^p(\rn)}\|(\psi\ast F_i)G_i\|_{L^\infty(Q_k)}}.$$
Then, for almost every $x\in\rn$,
\begin{equation}\label{term21}
(\psi\ast F_i)(x)G_i(x)=\sum_{k\in\mathbb{Z}^n}\lambda_ka_k(x).
\end{equation}
From \cite[Theorem 2.3.20]{G1}, it follows that there exists an $N\in\nn$ such that,
for any $x\in\rn$,
$|(\psi\ast F_i)(x)|\lesssim(1+|x|)^N$,
which, combined with \eqref{term21} and the fact that $G_i\in L^\infty(\rn)$, further implies
that $(\psi\ast F_i)G_i\in\cs'(\rn)$ and
\begin{equation}\label{erm21}
(\psi\ast F_i)G_i=\sum_{k\in\mathbb{Z}^n}\lambda_ka_k\qquad \text{in}\;\cs'(\rn).
\end{equation}
Since $|Q_k|>1$ for any $k\in\mathbb{Z}^n$, it follows that,
for any $k\in\mathbb{Z}^n$, $a_k$ is a local $(p,\infty,d)$-atom.
Therefore, to show \eqref{lne}, by \eqref{erm21} and \cite[Theorem 5.1]{t}, it suffices to prove that
\begin{equation}\label{term22}
\lf[\sum_{k\in\mathbb{Z}^n}(\lambda_k)^p\r]^{1/p}
\lesssim \|F_i\|_{h^p(\rn)}
\|G_i\|_{L^\infty(\rn)}.
\end{equation}
Indeed, for any fixed $x_0\in\rn$, there exists only a $k_0\in\mathbb{Z}^n$
such that $x_0 \in Q_{k_0}$. Moreover, there exists a positive
constant $c_0$ such that, for any $k\in\mathbb{Z}^n$ and $x\in Q_k$,
$Q_k\subset B(x,c_0)$.
Then, from the definition of $\{\lambda_k\}_{k\in\mathbb{Z}^n}$
and the fact $G_i\in L^\infty(\rn)$, we deduce that
\begin{align*}
&\left[\sum_{k\in\mathbb{Z}^n}\left(\frac{\lambda_k}{\|\mathbf{1}_{Q_k}\|_{L^p(\rn)}}\right)^p
\mathbf{1}_{Q_k}(x_0)\right]^{1/p}\\
&\quad=\frac{\lambda_{k_0}}
{\|\mathbf{1}_{Q_{k_0}}\|_{L^p(\rn)}}
\le\lf\|\psi\ast F_i\r\|_{L^\infty(Q_{k_0})}\|G_i\|_{L^\infty(\rn)}
\leq\sup_{y\in B(x_0,c_0)}\lf|\psi\ast F_i(y)\r|\|G_i\|_{L^\infty(\rn)}\\\noz
&\quad\leq\sup_{t\in(0,2)}\left\{\sup_{y\in B(x_0,c_0t)}
\lf|\psi_t\ast F_i(y)\r|\right\}\|G_i\|_{L^\infty(\rn)}
\lesssim\sup_{t\in(0,1)}\left\{\sup_{y\in B(x_0,2c_0t)}
\lf|\widetilde{\psi}_t\ast F_i(y)\r|\right\}\|G_i\|_{L^\infty(\rn)},
\end{align*}
where  $\widetilde{\psi}(\cdot):=\psi(\frac{\cdot}{2})$.
By this and \cite[Theorem 5.3]{shyy}, we conclude that
\begin{align*}
\lf[\sum_{k\in\mathbb{Z}^n}(\lambda_k)^p\r]^{1/p}&=\lf[\sum_{k\in\mathbb{Z}^n}
\left(\frac{\lambda_k}
{\|\mathbf{1}_{Q_k}\|_{L^p(\rn)}}\right)^p|Q_k|\r]^{1/p}=
\left\|\left[\sum_{k\in\mathbb{Z}^n}\left(\frac{\lambda_k}
{\|\mathbf{1}_{Q_k}\|_{L^p(\rn)}}\right)^p
\mathbf{1}_{Q_k}\right]^{1/p}\right\|_{L^p(\rn)} \\\noz
&\lesssim \lf\|\sup_{t\in(0,1)}\left\{\sup_{y\in B(x_0,2c_0t)}
\lf|\widetilde{\psi}_t\ast F_i(y)\r|\right\}\r\|_{L^p(\rn)}\|G_i\|_{L^\infty(\rn)}\lesssim \|F_i\|_{h^p(\rn)}\|G_i\|_{L^\infty(\rn)},
\end{align*}
which further implies that \eqref{term22} holds true and hence \eqref{lne} holds true.
This finishes the proof of
\eqref{lne1}.

Now we show that
\begin{align}\label{10}
\mathbf{F}_1\cdot\mathbf{G}\in h^{\Phi_p}(\rn).
\end{align}
Let $(F_1^{(1)},\,\ldots,\,F_n^{(1)}):=\mathbf{F}_1$.
By Theorem \ref{mainthm4}(i), we know that there exist two bounded bilinear operators
$$S:\, H^p(\rn)\times \Lambda_{n\alpha}(\mathbb{R}^n)\to L^1(\rn)$$ and
$$T:\, H^p(\rn)\times \Lambda_{n\alpha}(\mathbb{R}^n)\to H^{\Phi_p}(\rn)$$
such that
\begin{align*}
\mathbf{F}_1\cdot \mathbf{G}&=\dsum_{i=1}^n F_i^{(1)}\times G_i= \dsum_{i=1}^n S(F_i^{(1)},\,G_i)+ \dsum_{i=1}^nT(F_i^{(1)},\,G_i)\\
&=:\mathrm{A}(\mathbf{F}_1,\,\mathbf{G})+\mathrm{B}(\mathbf{F}_1,\,\mathbf{G})
\qquad \text{in}\;\cs'(\rn).
\end{align*}
Moreover, by this, Theorem \ref{mainthm4}(i) and \eqref{dof1}, we know that
$\mathrm{B}(\mathbf{F}_1,\,\mathbf{G})\in h^{\Phi_p}(\rn)$ and
\begin{align}\label{00}
\lf\|\mathrm{B}(\mathbf{F}_1,\,\mathbf{G})\r\|_{h^{\Phi_p}(\rn)}&\le
\lf\|\mathrm{B}(\mathbf{F}_1,\,\mathbf{G})\r\|_{H^{\Phi_p}(\rn)}
\ls \|\mathbf{F}_1\|_{H^p(\rn;\,\rn)}
\|\mathbf{G}\|_{\Lambda_{n\alpha}(\rn;\,\rn)}\\\noz
&\ls \|\mathbf{F}\|_{h^p(\rn;\,\rn)}
\|\mathbf{G}\|_{\Lambda_{n\alpha}(\rn;\,\rn)}.
\end{align}
Now we prove that
\begin{align}\label{007}
\lf\|\mathrm{A}(\mathbf{F}_1,\,\mathbf{G})\r\|_{h^{\Phi_p}(\rn)}\ls
\|\mathbf{F}\|_{h^p(\rn;\,\rn)}\|\mathbf{G}\|_{\Lambda_{n\alpha}(\rn;\,\rn)}.
\end{align}
By the fact $H^1(\rn)\subset h^1(\rn)\subset h^{\Phi_p}(\rn)$
and \eqref{dof1}, it suffices to show that
$\mathrm{A}(\mathbf{F}_1,\,\mathbf{G})\in H^1(\rn)$ and
\begin{align}\label{008}
\lf\|\mathrm{A}(\mathbf{F}_1,\,\mathbf{G})\r\|_{H^1(\rn)}\ls
\|\mathbf{F}_1\|_{H^p(\rn;\,\rn)}\|\mathbf{G}\|_{\Lambda_{n\alpha}(\rn;\,\rn)}.
\end{align}
Let
$f:=-\sum_{j=1}^n R_j(F_j^{(1)}).$ From the boundedness of the Riesz transform on  $H^p(\rn)$
(see, for instance, \cite[Theorem 4.7]{Lu95}), we deduce that $f\in H^p(\rn)$.
From \eqref{dofa}, it follows that
${\rm curl} \,\mathbf{F}_1\equiv0$ in the sense of \eqref{dis2}.
By this, we know that,
for any $k,\,j\in\{1,\,\ldots,\,n\}$,
$$\frac{\partial F_k^{(1)}}{\partial x_j}-\frac{\partial {F_j^{(1)}}}{\partial x_k}=0$$
in $\cs'(\rn)$,
which further implies that $-\imath\frac{\xi_j}{|\xi|}\widehat {F_k^{(1)}}(\xi)=-\imath\frac{\xi_k}{|\xi|}\widehat {F_j^{(1)}}(\xi)$
in $\cs'(\rn)$
and hence $R_j(F_k^{(1)})=R_k(F_j^{(1)})$ in $\cs'(\rn)$.
From this and the definition of $f$, we deduce that, for any $k\in\{1,\,\ldots,\,n\}$,
$$R_k(f)=-\sum_{j=1}^nR_k(R_j(F_j^{(1)}))=-\sum_{j=1}^nR_j(R_k(F_j^{(1)}))=-\sum_{j=1}^nR_j^2(F_k^{(1)})=F_k^{(1)}
\qquad \text{in}\;\cs'(\rn).$$
By ${\rm div}\, \mathbf{G}\equiv 0$ in the sense of \eqref{dis},
we know that $\sum_{j=1}^n \frac{\partial G_j}{\partial x_j}=0$ in $\cs'(\rn)$,
which implies that  $\sum_{j=1}^nR_j(G_j)=0$
in $\cs'(\rn)$.
Thus, we have
\begin{align*}
\mathrm{A}(\mathbf{F}_1,\,\mathbf{G})=\dsum_{i=1}^n S(F_i^{(1)},\,G_i)=\dsum_{i=1}^n
\lf[S(R_i(f),\,G_i)+S(f,\,R_i(G_i))\r].
\end{align*}
From this, \cite[Theorem 6.3]{LY12} and the facts that $f\in H^p(\rn)$ and that, for any $i\in\{1,\,\ldots,\,n\}$,
$G_i\in\Lambda_{n\alpha}(\rn)\subset \mathfrak C_{\az}(\rn)$, it follows that, for any $i\in\{1,\,\ldots,\,n\}$,
\begin{align}\label{x11}
G_i=
\sum_{I\in \mathcal{D}}\sum_{\lz\in E} \langle G_i,\,\psi_I^\lz \rangle \psi_I^\lz
\end{align}
in the weak $\ast$-topology induced by $H^p(\rn)$
and
\begin{align}\label{x22}
f=\sum_{I\in \mathcal{D}}\sum_{\lz\in E} \langle f,\,\psi_I^\lz \rangle \psi_I^\lz
\end{align}
in $H^p(\rn)$.
Using \eqref{pi4}, \eqref{So}, \eqref{x11}, \eqref{x22} and Theorem \ref{mainthm4}(i),
we further conclude that, for any $i\in\{1,\,\ldots,\,n\}$,
\begin{align}\label{o33}
&S(R_i(f),\,G_i)+S(f,\,R_i(G_i))\\\noz
&\hs=\dsum_{I'\in\mathcal{D}}\dsum_{\lz'\in E}
\langle R_i f,\,\psi_{I'}^{\lz'} \rangle \langle G_i,\,\psi_{I'}^{\lz'} \rangle
\lf(\psi_{I'}^{\lz'}\r)^2+
\dsum_{I\in\mathcal{D}}\dsum_{\lz\in E}
\langle f,\,\psi_I^\lz \rangle\langle R_i G_i,\,\psi_I^\lz \rangle
\lf(\psi_{I}^{\lz}\r)^2\\\noz
&\hs=\dsum_{I,\,I'\in\mathcal{D}}\dsum_{\lz,\,\lz'\in E}
\langle f,\,\psi_I^\lz \rangle \langle G_i,\,\psi_{I'}^{\lz'} \rangle
\langle R_i \psi_I^\lz,\,\psi_{I'}^{\lz'} \rangle
\lf(\psi_{I'}^{\lz'}\r)^2\\\noz
&\hs\quad+\dsum_{I,\,I'\in\mathcal{D}}\dsum_{\lz,\,\lz'\in E}
\langle f,\,\psi_I^\lz \rangle \langle G_i,\,\psi_{I'}^{\lz'} \rangle
\langle R_i \psi_{I'}^{\lz'},\,\psi_{I}^{\lz} \rangle
\lf(\psi_{I}^{\lz}\r)^2\\\noz
&\hs=\dsum_{I,\,I'\in\mathcal{D}}\dsum_{\lz,\,\lz'\in E}
\langle f,\,\psi_I^\lz \rangle \langle G_i,\,\psi_{I'}^{\lz'} \rangle
\langle R_i \psi_I^\lz,\,\psi_{I'}^{\lz'} \rangle
\lf[\lf(\psi_{I'}^{\lz'}\r)^2-\lf(\psi_{I}^{\lz}\r)^2\r].
\end{align}
For any $I,\,I'\in\mathcal{D}$ and $\lz,\,\lz'\in E$,
by the atomic decomposition of $H^1(\rn)$, we obtain
\begin{align}\label{o34}
\lf\|\lf(\psi_{I'}^{\lz'}\r)^2-\lf(\psi_{I}^{\lz}\r)^2\r\|_{H^1(\rn)}
\lesssim\log\lf(\frac{2^{-j}+2^{-j'}+|x_I-x_{I'}|}{2^{-j}+2^{-j'}}\r),
\end{align}
where $x_I$ and $x_{I'}$ denote the centers of the cubes $I$ and $I'$, respectively.
By \cite[p.\,52, Proposition 1]{m}, we find that
there exists
some $\dz\in(0,1]$ such that, for any $i\in\{1,\,\ldots,\,n\}$,
$I,\,I'\in\mathcal{D}$ with $|I|=2^{-jn}$ and $|I'|=2^{-j'n}$, and $\lz,\,\lz'\in E$,
$$
\lf|\langle R_i \psi_I^\lz,\,\psi_{I'}^{\lz'} \rangle\r|\lesssim p_\dz(I,\,I'),
$$
where
$$
 p_\dz(I,\,I'):=2^{-|j-j'|(\dz+n/2)}\lf(\frac{2^{-j}+2^{-j'}}{2^{-j}+2^{-j'}+|x_I-x_{I'}|}\r)^{n+\dz}.
$$
From this, \eqref{o33}, \eqref{o34} and the fact that,
for any $I,\,I'\in\mathcal{D}$ with $|I|=2^{-jn}$ and $|I'|=2^{-j'n}$,
$$
\log\lf(\frac{2^{-j}+2^{-j'}+|x_I-x_{I'}|}{2^{-j}+2^{-j'}}\r)\leq\frac{2}{\dz}
\lf(\frac{2^{-j}+2^{-j'}+|x_I-x_{I'}|}{2^{-j}+2^{-j'}}\r)^{\frac{\dz}{2}},
$$
it follows that
\begin{align*}
\lf\|S(R_i(f),\,G_i)+S(f,\,R_i(G_i))\r\|_{H^1(\rn)}\ls
\dsum_{I,\,I'\in\mathcal{D}}\dsum_{\lz,\,\lz'\in E}
\lf| \langle f,\,\psi_I^\lz \rangle\r|
\lf| \langle G_i,\,\psi_{I'}^{\lz'} \rangle\r|p_{\dz'}(I,\,I'),
\end{align*}
where $\dz':=\frac{\dz}{2}$.
By this, Lemma \ref{lem4-13}, \cite[Theorem 3.3]{FJ90} together with
the fact that
$\{p_{\dz'}(I,\,I')\}_{I,\,I'\in\mathcal{D}}$ is \emph{almost diagonal }
(see \cite[p.\,53]{FJ90} for the precise definition),
Lemmas
\ref{thm17-3.2} and \ref{thm 3.2}, and
the boundedness of the Riesz transform on $H^p(\rn)$,
we conclude that
\begin{align*}
&\lf\|S(R_i(f),\,G_i)+S(f,\,R_i(G_i))\r\|_{H^1(\rn)}\\
&\hs\ls \dsum_{I'\in\mathcal{D}}\dsum_{\lz'\in E}\lf[ \dsum_{I\in\mathcal{D}}
\dsum_{\lz\in E}\lf| \langle f,\,\psi_{I}^{\lz} \rangle\r|p_{\dz'}(I,\,I')\r]
\lf|\langle G_i,\,\psi_{I'}^{\lz'} \rangle\r|\\
&\hs\ls \lf\|\lf\{\dsum_{I\in\mathcal{D}}\dsum_{\lz\in E}\lf| \langle f,\,\psi_{I}^{\lz}
\rangle\r|p_{\dz'}(I,\,I')\r\}_{I'\in \mathcal{D},\,\lz'\in E}\r\|_{ \dot{f}^0_{p,\,2}(\rn)}
\lf\|\lf\{\langle G_i,\,\psi_{I'}^{\lz'} \rangle\r\}_{I'\in \mathcal{D},\,\lz'\in E}\r\|_{\mathcal{C}_{1/p-1}(\rn)}\\
&\hs\ls \lf\|\lf\{ \langle f,\,\psi_{I}^{\lz} \rangle\r\}_{I\in \mathcal{D},\,\lz\in E}\r\|_{ \dot{f}^0_{p,\,2}(\rn)}
\lf\|\lf\{\langle G_i,\,\psi_{I'}^{\lz'} \rangle\r\}_{I'\in \mathcal{D},\,\lz'\in E}\r\|_{\mathcal{C}_{1/p-1}(\rn)}\\
&\hs\ls\lf\|\sum_{j=1}^n R_j(F_j^{(1)})\r\|_{H^p(\rn)}\|\mathbf{G}\|_{\Lambda_{n\alpha}(\rn;\,\rn)}
\ls\|\mathbf{F}_1\|_{H^p(\rn;\,\rn)}\|\mathbf{G}\|_{\Lambda_{n\alpha}(\rn;\,\rn)},
\end{align*}
which implies
\eqref{008}. Thus, \eqref{007} holds true.
This, combined with \eqref{dof}, \eqref{lne1} and \eqref{00}, shows that
\begin{align*}
\| \mathbf{F}\cdot
\mathbf{G}\|_{h^{\Phi_p}(\rn)} \le C \|\mathbf{F}\|_{h^p(\rn;\,\rn)}
\|\mathbf{G}\|_{\Lambda_{n\alpha}(\rn;\,\rn)},
\end{align*} and hence
finishes the proof of Theorem \ref{div}.
\end{proof}

\subsection{Div-curl estimate on the product of functions in
$h^1(\rn)$ and $\bmo(\rn)$}

In this subsection, we establish
an estimate on the product of functions in $h^1(\rn)$
and $\bmo(\rn)$ having, respectively, zero $0$-inhomogeneous curl and zero divergence
(see Theorem \ref{sssa} below).

\begin{theorem}\label{sssa}
Let $\Phi$ be as in \eqref{111}.
Assume that $\mathbf{F}\in h^1(\rn;\,\rn)$
satisfies $0$-inhomogeneous $\mathrm{curl}\,\mathbf{F}=0$ as in Definition \ref{a2.15}
and
$\mathbf{G}\in \bmo(\rn;\,\rn)$
satisfies ${\rm div}\, \mathbf{G}\equiv0$
{\rm[}the equality holds true in the sense of \eqref{dis}{\rm]}.
Then the inner product $\mathbf{F}\cdot
\mathbf{G}\in h_\ast^{\Phi}(\rn)$ and
\begin{align*}
\| \mathbf{F}\cdot
\mathbf{G}\|_{h_\ast^{\Phi}(\rn)} \le C \|\mathbf{F}\|_{h^1(\rn;\,\rn)}
\|\mathbf{G}\|_{\bmo(\rn;\,\rn)},
\end{align*}
where $C$ is a positive constant independent of $\mathbf{F}$ and $\mathbf{G}$.
\end{theorem}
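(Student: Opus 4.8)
The plan is to mimic the structure of the proof of Theorem \ref{div}, using the local-to-global splitting of $\mathbf{F}$ together with the bilinear decomposition of $H^1(\rn)\times\bmo(\rn)$ obtained in Theorem \ref{mainthm7}. Since $\mathbf{F}$ has $0$-inhomogeneous $\mathrm{curl}\,\mathbf{F}=0$, there exists $\psi\in\mathcal S(\rn)$ with $\widehat\psi(\vec 0_n)=1$ and all moments of positive order vanishing such that $\mathrm{curl}\,[\mathbf{F}-\psi\ast\mathbf{F}]\equiv0$. Write $\mathbf{F}=\mathbf{F}_1+\mathbf{F}_2$ with $\mathbf{F}_1:=\mathbf{F}-\psi\ast\mathbf{F}$ and $\mathbf{F}_2:=\psi\ast\mathbf{F}$. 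By Lemma \ref{rhh} (with $p=1$, so $\lfloor n(1/p-1)\rfloor=0$ and no moment conditions on $\psi$ beyond $\widehat\psi(\vec 0_n)=1$ are needed), we get $\mathbf{F}_1\in H^1(\rn;\,\rn)$ with $\|\mathbf{F}_1\|_{H^1(\rn;\,\rn)}\lesssim\|\mathbf{F}\|_{h^1(\rn;\,\rn)}$, and $\mathrm{curl}\,\mathbf{F}_1\equiv0$.

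First I would handle the ``good'' part $\mathbf{F}_2\cdot\mathbf{G}$. Fixing $i\in\{1,\ldots,n\}$, the function $(\psi\ast F_i)G_i$ should be decomposed over the unit-scale lattice $Q_k:=2k+[0,2)^n$, exactly as in the proof of Theorem \ref{div}: set $a_k$ proportional to $(\psi\ast F_i)G_i\mathbf 1_{Q_k}$ normalized by $\|\mathbf 1_{Q_k}\|_{L^1(\rn)}\|(\psi\ast F_i)G_i\|_{L^\infty(Q_k)}$, which is a local $(1,\infty,0)$-atom because $|Q_k|>1$. The key point here is that unlike in Theorem \ref{div} the function $G_i\in\bmo(\rn)$ need not be bounded, so $\|(\psi\ast F_i)G_i\|_{L^\infty(Q_k)}$ is not controlled by $\|G_i\|_{L^\infty}$. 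Instead I would use the local boundedness $\frac1{|Q_k|}\int_{Q_k}|G_i|\lesssim\|G_i\|_{\bmo(\rn)}$ together with an $L^1_*$-type estimate: by the estimate from \cite[p.\,25]{cky1} quoted in Lemma \ref{apW11}, $\|f h\|_{L_*^\Phi(\rn)}\lesssim\|f\|_{L^1(\rn)}\|h\|_{\bmo(\rn)}$, one reduces matters to estimating $\|m((\psi\ast F_i),\varphi)\|_{L^1(\rn)}\lesssim\|F_i\|_{h^1(\rn)}$, which follows from the grand maximal function characterization of $h^1(\rn)$. This gives $\|(\psi\ast F_i)G_i\|_{h_*^\Phi(\rn)}\lesssim\|F_i\|_{h^1(\rn)}\|G_i\|_{\bmo(\rn)}$ and hence, summing in $i$ and using $h^1(\rn)\subset h_*^\Phi(\rn)$ via $L^1(\rn)\subset L_*^\Phi(\rn)$, the bound for $\mathbf{F}_2\cdot\mathbf{G}$.

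For the ``bad'' part $\mathbf{F}_1\cdot\mathbf{G}$ I would apply Theorem \ref{mainthm7}(i) to each product $F_i^{(1)}\times G_i$, writing $\mathbf{F}_1\cdot\mathbf{G}=\mathrm A(\mathbf{F}_1,\mathbf{G})+\mathrm B(\mathbf{F}_1,\mathbf{G})$ where $\mathrm B:=\sum_i T(F_i^{(1)},G_i)\in H_*^\Phi(\rn)\subset h_*^\Phi(\rn)$ is immediately controlled by $\|\mathbf{F}_1\|_{H^1(\rn;\,\rn)}\|\mathbf{G}\|_{\bmo(\rn;\,\rn)}$, and $\mathrm A:=\sum_i S(F_i^{(1)},G_i)=\sum_i\Pi_4'(F_i^{(1)},G_i)$. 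For $\mathrm A$ one exploits the curl/div structure: since $\mathrm{curl}\,\mathbf{F}_1\equiv0$ we have $R_j(F_k^{(1)})=R_k(F_j^{(1)})$ and, setting $f:=-\sum_j R_j(F_j^{(1)})\in H^1(\rn)$ (Riesz transforms are bounded on $H^1$), $F_k^{(1)}=R_k(f)$; since $\mathrm{div}\,\mathbf{G}\equiv0$, $\sum_j R_j(G_j)=0$. Then $\mathrm A(\mathbf{F}_1,\mathbf{G})=\sum_i[S(R_i(f),G_i)+S(f,R_i(G_i))]$, and the paraproduct cancellation in $\Pi_4'$ combined with the almost-diagonal estimate for $\langle R_i\psi_I^\lz,\psi_{I'}^{\lz'}\rangle$ and the wavelet characterizations of $H^1$ and $\mathfrak C_0=\BMO$ (here for $p=1$, so the relevant ``$\mathcal C_\az$'' space is the one at $\az=0$) should give $\|\mathrm A(\mathbf{F}_1,\mathbf{G})\|_{H^1(\rn)}\lesssim\|\mathbf{F}_1\|_{H^1(\rn;\,\rn)}\|\mathbf{G}\|_{\bmo(\rn;\,\rn)}$, which embeds into $h_*^\Phi(\rn)$. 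The main obstacle I anticipate is the non-boundedness of elements of $\bmo(\rn)$: it forces the $L_*^\Phi$-product estimate (rather than a crude $L^\infty$ bound) in the $\mathbf{F}_2$ part, and it means one must be careful that the argument for $\mathrm A$ goes through with $\bmo(\rn)\subset\BMO(\rn)$ replacing the boundedness used in the $h^p$-case; in particular one must verify that the weak-$\ast$ wavelet expansion of $G_i$ in the sense induced by $H^1(\rn)$ is legitimate, as in \cite[Theorem 6.3]{LY12}.
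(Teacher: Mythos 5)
Your architecture is exactly the paper's: split $\mathbf{F}=\mathbf{F}_1+\mathbf{F}_2$ via the $0$-inhomogeneous curl hypothesis and Lemma \ref{rhh}, treat $\mathbf{F}_1\cdot\mathbf{G}$ with Theorem \ref{mainthm7}(i) plus the Riesz-transform/wavelet almost-diagonal argument for the $\Pi_4'$ part (this half of your plan matches the paper, whose proof simply invokes the argument used for \eqref{007}), and treat $\mathbf{F}_2\cdot\mathbf{G}$ by an atomic decomposition over the unit-scale lattice $Q_k=2k+[0,2)^n$. The gap is in the $\mathbf{F}_2\cdot\mathbf{G}$ term. First, normalizing $a_k$ by $\|(\psi\ast F_i)G_i\|_{L^\infty(Q_k)}$ does not produce a local $(1,\infty,0)$-atom: a $\bmo(\rn)$ function need not be locally bounded, so this quantity can be infinite and the decomposition degenerates. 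Second, the repair you propose --- invoking $\|fh\|_{L_*^\Phi(\rn)}\lesssim\|f\|_{L^1(\rn)}\|h\|_{\bmo(\rn)}$ and reducing to $\|m(\psi\ast F_i,\varphi)\|_{L^1(\rn)}$ --- does not go through, because membership in $h_*^\Phi(\rn)$ requires controlling $m\bigl((\psi\ast F_i)G_i,\varphi\bigr)$, and the local maximal function of a product is not pointwise dominated by $|G_i|\,m(\psi\ast F_i,\varphi)$. In Lemma \ref{apW11} that product estimate is applied only after $g$ has been replaced by the constant $g_B$, for which $M(ag_B,\varphi)=|g_B|M(a,\varphi)$ holds trivially; no such identity is available here.

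The paper's fix is to change the normalization: set $\lambda_k:=\|\mathbf 1_{Q_k}\|_{L^1(\rn)}\,\|\psi\ast F_i\|_{L^\infty(Q_k)}\,\|G_i\|_{\bmo(\rn)}$ (finite, since $\psi\in\mathcal S(\rn)$ and $F_i\in L^1(\rn)$ give $\psi\ast F_i\in L^\infty(\rn)$) and $a_k:=(\psi\ast F_i)G_i\mathbf 1_{Q_k}/\lambda_k$. Then $a_k$ is a local $(1,2,0)$-atom: since $|Q_k|\ge 1$, no moment condition is needed, and the $L^2$ size condition follows from $\|G_i\|_{L^2(Q_k)}\lesssim |Q_k|^{1/2}\|G_i\|_{\bmo(\rn)}$, which is where the large-cube half of the $\bmo$ norm (together with the $L^2$ formulation of the local Campanato norm) enters. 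After that, $\sum_k\lambda_k\lesssim\|F_i\|_{h^1(\rn)}\|G_i\|_{\bmo(\rn)}$ follows exactly as in your sketch from the maximal-function characterization of $h^1(\rn)$, and one concludes $(\psi\ast F_i)G_i\in h^1(\rn)\subset h_*^\Phi(\rn)$, which is even stronger than what your route aimed for. With this replacement the rest of your plan is sound.
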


\begin{proof}
Let $\mathbf{F}\in h^1(\rn;\,\rn)$
with
$0$-inhomogeneous $\mathrm{curl}\,\mathbf{F}=0$ as in Definition \ref{a2.15} and
$\mathbf{G}\in \bmo(\rn;\,\rn)$
with ${\rm div}\, \mathbf{G}\equiv0$ in the sense of \eqref{dis}.
By the fact that $\mathbf{F}$ is $0$-inhomogeneous $\mathrm{curl}\,\mathbf{F}=0$, we know that
there exists a $\psi \in \mathcal{S}(\rn)$ satisfying $\widehat\psi(\vec{0}_n)=1$
such that
\begin{equation}\label{dofa2}
\mathrm{curl}\,[\mathbf{F}-\psi\ast\mathbf{F}]\equiv0
\end{equation}
in the sense
of \eqref{dis2}.
Then we write
\begin{equation}\label{dof5}
\mathbf{F}=[\mathbf{F}-(\psi\ast F_1,\,\ldots,\,\psi\ast F_n)]+(\psi\ast F_1,\,\ldots,\,\psi\ast F_n)=:\mathbf{F}_1
+\mathbf{F}_2.
\end{equation}
By Lemma \ref{rhh}, we know that, for any $i\in\{1,\ldots,n\}$,
\begin{equation*}
\lf\|F_i-\psi\ast F_i\r\|_{H^1(\rn)}
\lesssim \|F_i\|_{h^1(\rn)} < \infty,
\end{equation*}
which implies that $\mathbf{F}_1 \in H^1(\rn;\,\rn)$. Moreover, from \eqref{dofa2}, it easily
follows that ${\rm curl}\,\mathbf{F}_1\equiv0$ in the sense of \eqref{dis2}.

Let $\Phi$ be as in \eqref{111}.
We first prove that
\begin{equation}\label{lne3}
\|\mathbf{F}_2\cdot\mathbf{G}\|_{h_\ast^{\Phi}(\rn)}\lesssim
\|\mathbf{F}\|_{h^1(\rn;\,\rn)}
\|\mathbf{G}\|_{\bmo(\rn;\,\rn)}.
\end{equation}
By the definition of $h_\ast^{\Phi}(\rn)$,
it suffices to show that, for any $i\in\{1,\ldots,n\}$,
\begin{equation}\label{lne4}
\|(\psi\ast F_i)G_i\|_{h^{1}(\rn)}\lesssim
\|F_i\|_{h^1(\rn)}
\|G_i\|_{\bmo(\rn)}.
\end{equation}
Fix $i\in\{1,\ldots,n\}$.
Let $Q_{k}:=2k+[0,2)^n$ for any $t\in(0,\infty)$ and $k\in\mathbb{Z}^n$.
Then, for almost every $x\in\rn$, we have
$$(\psi\ast F_i)(x)G_i(x) = \sum_{k\in\mathbb{Z}^n}(\psi\ast F_i)(x)G_i(x)\mathbf{1}_{Q_{k}}(x).$$
For any $k\in\mathbb{Z}^n$, if $\|(\psi\ast F_i)\|_{L^\infty(Q_k)}\|G_i\|_{\bmo(\rn)}=0$, define
$$\lambda_k:=0\quad \text{and}\quad a_k:=0;$$
if $\|(\psi\ast F_i)\|_{L^\infty(Q_k)}\|G_i\|_{\bmo(\rn)}\neq0$, define
$$\lambda_k:=\|\mathbf{1}_{Q_k}\|_{L^1(\rn)}
\|\psi\ast F_i\|_{L^\infty(Q_k)}\|G_i\|_{\bmo(\rn)} \quad \text{and} \quad
a_k:=\frac{(\psi\ast F_i)G_i\mathbf{1}_{Q_k}}
{\|\mathbf{1}_{Q_k}\|_{L^1(\rn)}\|\psi\ast F_i\|_{L^\infty(Q_k)}\|G_i\|_{\bmo(\rn)}}.$$
Then, for almost every $x\in\rn$,
\begin{equation}\label{trm21}
(\psi\ast F_i)(x)G_i(x)=\sum_{k\in\mathbb{Z}^n}\lambda_ka_k(x).
\end{equation}
From \cite[p.\,36, Corollary 1(4)]{g} and the fact that $G_i\in\bmo(\rn)$, it follows that
$\psi\ast G_i\in L^\infty(\rn)$,
which, together with \eqref{trm21} and the fact that $F_i\in L^1(\rn)$, further implies
that $(\psi\ast F_i)G_i\in L^1(\rn)\subset\cs'(\rn)$ and
\begin{equation}\label{serm21}
(\psi\ast F_i)G_i=\sum_{k\in\mathbb{Z}^n}\lambda_ka_k\qquad in\;\cs'(\rn).
\end{equation}
Since, for any $k\in\mathbb{Z}^n$, $|Q_k|>1$ and
\begin{align}\label{trm51}
\|a_k\|_{L^2(Q_k)}&=\frac{\|(\psi\ast F_i)G_i\|_{L^2(Q_k)}}
{\|\mathbf{1}_{Q_k}\|_{L^1(\rn)}\|\psi\ast F_i\|_{L^\infty(Q_k)}\|G_i\|_{\bmo(\rn)}}\\\noz
&\le\frac{\|G_i\|_{L^2(Q_k)}\|\psi\ast F_i\|_{L^\infty(Q_k)}}
{\|\mathbf{1}_{Q_k}\|_{L^1(\rn)}\|\psi\ast F_i\|_{L^\infty(Q_k)}\|G_i\|_{\bmo(\rn)}}\lesssim|Q_k|^{-\frac{1}{2}},
\end{align}
it follows that,
for any $k\in\mathbb{Z}^n$, $a_k$ is a local-$(1,2,0)$-atom.
Therefore, to show \eqref{lne4}, by \eqref{serm21} and  \cite[Theorem 5.1]{t}, it suffices to prove that
\begin{equation}\label{erm22}
\sum_{k\in\mathbb{Z}^n}\lambda_k
\lesssim \|F_i\|_{h^1(\rn)}
\|G_i\|_{\bmo(\rn)}.
\end{equation}
Indeed, for any fixed $x_0\in\rn$, there exists only a $k_0\in\mathbb{Z}^n$
such that $x_0 \in Q_{k_0}$. Moreover, there exists a positive
constant $c_0$ such that, for any $k\in\mathbb{Z}^n$ and $x\in Q_k$,
$Q_k\subset B(x,c_0)$.
Then, from the definition of $\{\lambda_k\}_{k\in\mathbb{Z}^n}$, we deduce that
\begin{align*}
\sum_{k\in\mathbb{Z}^n}\frac{\lambda_k}{\|\mathbf{1}_{Q_k}\|_{L^1(\rn)}}
\mathbf{1}_{Q_k}(x_0)&=\frac{\lambda_{k_0}}
{\|\mathbf{1}_{Q_{k_0}}\|_{L^1(\rn)}}
\le\lf\|\psi\ast F_i\r\|_{L^\infty(Q_{k_0})}\|G_i\|_{\bmo(\rn)}\\\noz
&\leq\sup_{y\in B(x_0,c_0)}\lf|\psi\ast F_i(y)\r|\|G_i\|_{\bmo(\rn)}\\\noz
&\leq\sup_{t\in(0,2)}\left\{\sup_{y\in B(x_0,c_0t)}
\lf|\psi_t\ast F_i(y)\r|\right\}\|G_i\|_{\bmo(\rn)}\\\noz
&\lesssim\sup_{t\in(0,1)}\left\{\sup_{y\in B(x_0,2c_0t)}
\lf|\widetilde{\psi}_t\ast F_i(y)\r|\right\}\|G_i\|_{\bmo(\rn)},
\end{align*}
where  $\widetilde{\psi}(\cdot):=\psi(\frac{\cdot}{2})$.
By this and \cite[Theorem 5.3]{shyy}, we conclude that
\begin{align*}
\sum_{k\in\mathbb{Z}^n}\lambda_k&=\sum_{k\in\mathbb{Z}^n}
\frac{\lambda_k}
{\|\mathbf{1}_{Q_k}\|_{L^1(\rn)}}|Q_k|=
\lf\| \sum_{k\in\mathbb{Z}^n}\frac{\lambda_k}
{\|\mathbf{1}_{Q_k}\|_{L^1(\rn)}}
\mathbf{1}_{Q_k}\r\|_{L^1(\rn)} \\\noz
&\lesssim \lf\|\sup_{t\in(0,1)}\left\{\sup_{y\in B(x_0,2c_0t)}
\lf|\widetilde{\psi}_t\ast F_i(y)\r|\right\}\r\|_{L^1(\rn)}\|G_i\|_{\bmo(\rn)}\lesssim
 \|f\|_{h^1(\rn)}\|G_i\|_{\bmo(\rn)},
\end{align*}
which further implies that \eqref{erm22} holds true and hence \eqref{lne4} holds true.
This finishes the proof of
\eqref{lne3}.

Now we show that
\begin{align*}
\mathbf{F}_1\cdot\mathbf{G}\in h_\ast^{\Phi}(\rn).
\end{align*}
Let $(F_1^{(1)},\,\ldots,\,F_n^{(1)}):=\mathbf{F}_1$.
By Theorem \ref{mainthm7}(i), we know that there exist two bounded bilinear operators
$$S:\, H^1(\rn)\times \bmo(\mathbb{R}^n)\to L^1(\rn)$$ and
$$T:\, H^1(\rn)\times \bmo(\mathbb{R}^n)\to H_\ast^{\Phi}(\rn)$$
such that
\begin{align*}
\mathbf{F}_1\cdot \mathbf{G}&=\dsum_{i=1}^n F_i^{(1)}\times G_i= \dsum_{i=1}^n S(F_i^{(1)},\,G_i)+ \dsum_{i=1}^nT(F_i^{(1)},\,G_i)\\
&=:\mathrm{A}(\mathbf{F}_1,\,\mathbf{G})+\mathrm{B}(\mathbf{F}_1,\,\mathbf{G})
\qquad \text{in}\;\cs'(\rn).
\end{align*}
Moreover, by \eqref{dof1}, we know that
$\mathrm{B}(\mathbf{F}_1,\,\mathbf{G})\in h^{\Phi_p}(\rn)$ and
\begin{align}\label{00a}
\lf\|\mathrm{B}(\mathbf{F}_1,\,\mathbf{G})\r\|_{h_\ast^{\Phi}(\rn)}&\le
\lf\|\mathrm{B}(\mathbf{F}_1,\,\mathbf{G})\r\|_{H_\ast^{\Phi}(\rn)}
\ls \|\mathbf{F}_1\|_{H^1(\rn;\,\rn)}
\|\mathbf{G}\|_{\bmo(\rn;\,\rn)}\\\noz
&\ls \|\mathbf{F}\|_{h^1(\rn;\,\rn)}
\|\mathbf{G}\|_{\bmo(\rn;\,\rn)}.
\end{align}
By an argument similar to that used in the estimation of
\eqref{007}, we also obtain
\begin{align*}
\lf\|\mathrm{A}(\mathbf{F}_1,\,\mathbf{G})\r\|_{h_\ast^{\Phi}(\rn)}\ls
\|\mathbf{F}\|_{h^1(\rn;\,\rn)}\|\mathbf{G}\|_{\bmo(\rn;\,\rn)}.
\end{align*}
This, combined with \eqref{dof5} and \eqref{00a}, shows that
\begin{align*}
\| \mathbf{F}\cdot
\mathbf{G}\|_{h_\ast^{\Phi}(\rn)} \le C \|\mathbf{F}\|_{h^1(\rn;\,\rn)}
\|\mathbf{G}\|_{\bmo(\rn;\,\rn)},
\end{align*} and hence
finishes the proof of Theorem \ref{sssa}.
\end{proof}

\begin{remark}
\begin{itemize}
\item[\rm (i)] In Theorem \ref{sssa}, if we replace $0$-inhomogeneous $\mathrm{curl}\,\mathbf{F}=0$
by ${\rm curl}\, \mathbf{F}\equiv0$, using Proposition \ref{weak},
we know that the corresponding conclusion of Theorem \ref{sssa}
still holds true, which is just \cite[Theorem 1.2]{cky1}.
In this case, we seal the gap appearing in the proof of \cite[Theorem 1.2]{cky1}.
\item[\rm (ii)]
In Theorem \ref{sssa}, if we replace $0$-inhomogeneous $\mathrm{curl}\,\mathbf{F}=0$, $h^1(\rn;\,\rn)$,
$\bmo(\rn;\,\rn)$ and $h_\ast^{\Phi}(\rn)$
replaced, respectively, by ${\rm curl}\, \mathbf{F}\equiv0$,
$H^1(\rn;\,\rn)$, $\BMO(\rn;\,\rn)$ and $H^{\log}(\rn)$
as in \eqref{11z},
then the corresponding conclusion of Theorem \ref{sssa}
still holds true, which is just \cite[Theorem 1.2]{BGK12}.
Observe that
$H^1(\rn;\,\rn)\subset h^1(\rn;\,\rn)$,
$\bmo(\rn;\,\rn)\subset
\BMO(\rn;\,\rn)$
and $H^{\log}(\rn)\subset h^{\log}(\rn)$.
Thus, it is quite natural to use the space $h^{\log}(\rn)$
as the target space in Theorem \ref{sssa}; however,
by \cite[Theorem 3.6(v)]{zyy}, we know that the target space $h_\ast^{\Phi}(\rn)$
in Theorem \ref{sssa} satisfies
$h_\ast^{\Phi}(\rn)\subsetneqq h^{\log}(\rn)$. In this sense, Theorem \ref{sssa} is a little bit surprising.
Moreover, by Proposition \ref{weak}, we conclude that the assumption,
$0$-inhomogeneous $\mathrm{curl}\,\mathbf{F}=0$, in Theorem \ref{sssa}
is weaker than the assumption ${\rm curl}\, \mathbf{F}\equiv0$ in
\cite[Theorem 1.5]{BGK12}, which perfectly matches
the locality of $h^1(\rn;\,\rn)$ and $\bmo(\rn;\,\rn)$.
\end{itemize}
\end{remark}

\noindent\textbf{Acknowledgements}.\quad Yangyang Zhang would like to thank Professors Liguang Liu
and Jun Cao for some helpful discussions on the topic of this article.

\bigskip

\noindent Yangyang Zhang, Dachun Yang (Corresponding author) and Wen Yuan

\smallskip

\noindent  Laboratory of Mathematics and Complex Systems
(Ministry of Education of China),
School of Mathematical Sciences, Beijing Normal University,
Beijing 100875, People's Republic of China

\smallskip

\noindent {\it E-mails}: \texttt{yangyzhang@mail.bnu.edu.cn} (Y. Zhang)

\noindent\phantom{{\it E-mails:}} \texttt{dcyang@bnu.edu.cn} (D. Yang)

\noindent\phantom{{\it E-mails:}} \texttt{wenyuan@bnu.edu.cn} (W. Yuan)

\end{document}